\newcommand{\bQ}{\mathbb Q}
\newcommand{\bN}{\mathbb N}
\newcommand{\bZ}{\mathbb Z}
\newcommand{\bR}{\mathbb R}
\newcommand{\bC}{\mathbb C}
\newcommand{\bP}{\mathbb P}
\newcommand{\bS}{\mathbb S}
\newcommand{\cA}{\mathcal A}
\newcommand{\cB}{\mathcal B}
\newcommand{\cE}{\mathcal E}
\newcommand{\cS}{\mathcal S}
\newcommand{\cC}{\mathcal C}
\newcommand{\cN}{\mathcal N}
\newcommand{\cL}{\mathcal L}
\newcommand{\cO}{\mathcal O}
\newcommand{\cI}{\mathcal I}
\newcommand{\fm}{\mathfrak m}
\newcommand{\fX}{\mathfrak X}
\newcommand{\fY}{\mathfrak Y}
\newcommand{\fT}{\mathfrak T}
\newcommand{\lra}{\longrightarrow}
\newcommand{\fS}{\mathfrak S}
\DeclareMathOperator*{\colim}{colim}
\newcommand\map{\mathrm{map}}
\newcommand{\Sq}{\mathrm{Sq}}
\newcommand{\Sph}{\mathrm{Sph}}
\newcommand{\op}{\text{op}}
\theoremstyle{plain}
\newtheorem{theorem}{Theorem}[section]
\newtheorem{proposition}[theorem]{Proposition}
\newtheorem{lemma}[theorem]{Lemma}
\newtheorem{corollary}[theorem]{Corollary}
\theoremstyle{definition}
\newtheorem{definition}[theorem]{Definition}
\newtheorem{example}[theorem]{Example}
\newtheorem*{example*}{Example}
\newtheorem*{examples*}{Examples}
\newtheorem*{claim*}{Claim}
\newtheorem{remark}[theorem]{Remark}
\newtheorem*{remark*}{Remark}
\newtheorem*{question*}{Question}
\newtheorem*{theorem*}{Theorem}
\newcommand{\catS}{\mathsf{S}}
\newcommand{\catSp}{\mathsf{Sp}}
\newcommand{\catTop}{\mathsf{Top}}
\newcommand{\catF}{\mathsf{F}}
\newcommand{\catFun}{\mathsf{Fun}}
\newcommand{\BMH}{H^{BM}}
\newcommand{\CCH}{\check{H}_{c}}
\newcommand{\vertiii}[1]{{\left\vert\kern-0.25ex\left\vert\kern-0.25ex\left\vert #1 
    \right\vert\kern-0.25ex\right\vert\kern-0.25ex\right\vert}}
\let\emptyset\varnothing
\newcommand{\Gammahol}{\Gamma_{\mathrm{hol}}}
\newcommand{\Conf}{\mathrm{Conf}}
\newcommand{\rank}{\mathrm{rk}}
\newcommand{\Str}{\mathrm{Str}}
\newcommand{\cohodim}{\mathrm{cohodim}}
\renewcommand{\epsilon}{\varepsilon}
\newenvironment{forcedcentertikzcd}
 {\begin{lrbox}{\forcedcentertikzcdbox}\begin{tikzcd}}
 {\end{tikzcd}\end{lrbox}\makebox[0pt]{\usebox{\forcedcentertikzcdbox}}}
\newsavebox{\forcedcentertikzcdbox}
\newcommand\ColouredRectangle[2]{%x,y
    \fill[rectangle, color=lime, opacity=0.3] ({#1},{#2}) -- +(0, 1) -- +(1, 1) -- +(1,0) -- cycle;
}
\title[An h-principle for complements of discriminants]{An h-principle for complements of discriminants}
\author[Alexis Aumonier]{Alexis Aumonier}
\address{\parbox{\linewidth}{Department of Mathematical Sciences, University of Copenhagen,\\
Universitetsparken 5, 2100 København Ø, Denmark\\}}
\email{alexis-aumonier@math.ku.dk}
\begin{document}
\maketitle

\begin{abstract}
We compare spaces of non-singular algebraic sections of ample vector bundles to spaces of continuous sections of jet bundles. Under some conditions, we provide an isomorphism in homology in a range of degrees growing with the jet ampleness. As an application, when $\cL$ is a very ample line bundle on a smooth projective complex variety, we prove that the rational cohomology of the space of non-singular algebraic sections of $\cL^{\otimes d}$ stabilises as $d \to \infty$ and compute the stable cohomology. We also prove that the integral homology does not stabilise using tools from stable homotopy theory.
\end{abstract}

%MSC classes 55R80 and 14J10 and 14J70

\section{Introduction}

The purpose of this paper is to study spaces of non-singular holomorphic sections of vector bundles by comparing them to spaces of continuous sections of appropriate jet bundles. The latter are particularly amenable to computations using tools from homotopy theory.

Given a holomorphic line bundle $\cL$ on a smooth projective complex variety $X$, one may consider the vector space of all holomorphic global sections $\Gammahol(X; \cL)$. To each section $s \in \Gammahol(X;\cL)$ is associated a geometric object: its vanishing set
\[
    V(s) := \left\{ x \in X \mid s(x) = 0 \right\} \subset X,
\]
and $s$ is called non-singular whenever its derivative $ds \in \Gammahol(\Omega^1_X \otimes \cL)$ does not vanish on $V(s)$. This implies in particular that $V(s)$ is a smooth subvariety of $X$. It has been known for a century now that when $\cL$ is a very ample line bundle, Bertini theorem implies that a generic section is non-singular. There is thus a Zariski open subset
\[
    \Gamma_{\mathrm{hol,ns}}(X;\cL) \subset \Gammahol(X;\cL)
\]
consisting of those non-singular sections, which geometrically can be interpreted as a moduli space of equations of certain smooth hypersurfaces in $X$. A prime example being the space $\Gamma_{\mathrm{hol,ns}}(\bC\bP^n;\cO(d))$ (sometimes modded out by $\bC^*$ or $GL_{n+1}(\bC)$) of smooth hypersurfaces of degree $d$ in the complex projective space $\bC\bP^n$.

The cohomology ring of $\Gamma_{\mathrm{hol,ns}}(X;\cL)$, sometimes known as the ring of characteristic classes, is therefore an important object in the study of hypersurface bundles. In this article, we give a way of computing it in a range.

\bigskip

Before revealing our main theorem, we will extend the classical situation above in two directions. To begin, instead of limiting ourselves to line bundles, we will look at sections of bundles of possibly higher rank. Furthermore, we observe that being non-singular imposes conditions on the value and derivative of a global section. We will generalise this situation by looking at a broader class of conditions on higher order derivatives, thus encompassing various other flavours of moduli spaces: hypersurfaces with simple nodes, smooth complete intersections, etc. (Although explicit computations of cohomology rings will only appear in forthcoming work.)

Having said this, let $X$ be a smooth projective complex variety and $\cE$ be a holomorphic vector bundle on $X$. One can construct a new holomorphic vector bundle $J^r\cE$ called the $r$-th jet bundle of $\cE$ together with a map on global sections $j^r \colon \Gammahol(\cE) \to \Gammahol(J^r\cE)$. Intuitively, for a section $s$ of $\cE$, the associated section $j^r(s)$ of the jet bundle records all derivatives of $s$ up to order $r$. For $\fT \subset J^r\cE$ a subset which we think of as ``forbidden derivatives'', we say that a section $s$ of $\cE$ is non-singular if $j^r(s)(x) \not\in \fT$ for all $x \in X$. For instance, when $\cE$ is a line bundle and $\fT \subset J^1\cE$ is the zero section, we recover the classical notion of non-singular sections discussed at the beginning of this article.
\begin{theorem}[see Theorem~\ref{thm:mainthm} for full generality]
Let $X$ be a smooth complex projective variety and $\cE$ be a holomorphic vector bundle on it. Let $r \geq 0$ be an integer and $\fT \subset J^r\cE$ be a closed subvariety of the $r$-th jet bundle of $\cE$ of codimension at least $\dim_\bC X + 1$. We write
\[
    \Gamma_{\mathrm{hol,ns}}\left( \cE \right) := \left\{ s \in \Gammahol(\cE) \mid \forall x \in X \ j^r(s)(x) \not\in \fT \right\}
\]
for the space of non-singular holomorphic sections of $\cE$. If $\cE$ is $d$-jet ample, the composition
\[
    \Gamma_{\mathrm{hol,ns}}\left( \cE \right) \overset{j^r}{\lra} \Gamma_{\mathrm{hol}}\left( J^r\cE - \fT \right) \hookrightarrow \Gamma_{\cC^0}\left( J^r\cE - \fT \right)
\]
induces an isomorphism in integral homology in the range of degrees $* < \frac{d-r}{r+1}$.
\end{theorem}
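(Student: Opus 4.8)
The plan is to follow the classical strategy pioneered by Vassiliev and developed by Mostovoy, Vakil--Wood, and others for "space of sections avoiding a discriminant" theorems, upgraded to the jet-ample setting. The global object to analyse is the \emph{discriminant} $\Sigma \subset \Gammahol(\cE)$ consisting of sections $s$ with $j^r(s)(x) \in \fT$ for some $x$, so that $\Gamma_{\mathrm{hol,ns}}(\cE) = \Gammahol(\cE) \setminus \Sigma$. Since $\Gammahol(\cE)$ is a finite-dimensional complex vector space, Alexander duality turns the computation of $H_*(\Gamma_{\mathrm{hol,ns}}(\cE))$ in low degrees into a computation of the Borel--Moore homology of $\Sigma$ in high degrees. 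The first step is therefore to build a geometric resolution of $\Sigma$: one stratifies the "bad locus" $X \times \fT$ and, for each finite configuration of points of $X$ together with prescribed jet data lying in $\fT$, records the affine-linear subspace of $\Gammahol(\cE)$ of sections realising that data. Assembling these over all finite subsets of $X$ produces a simplicial (or, more precisely, a "truncated/partial simplicial") space augmented over $\Sigma$; the $d$-jet ampleness hypothesis is exactly what guarantees that the relevant evaluation/jet maps $\Gammahol(\cE) \to J^r\cE|_{\text{finite set}}$ are surjective with affine fibres of the expected (co)dimension, as long as the number of points stays $\le$ roughly $d/(r+1)$. This is what controls the range of degrees in the statement.

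The second step is to run the spectral sequence of this resolution. The columns are built from configuration spaces of points in $X$ with labels in the fibres of the "allowed jets" complement, i.e.\ from (twisted, compactly-supported) homology of $\UConf_k(X)$ with coefficients in a local system built out of $J^r\cE - \fT$; crucially, the affine fibres contribute only a dimension shift, so the $E^1$-page of the resolution spectral sequence for $\Sigma$ and the $E^1$-page of the analogous scanning / configuration-space model for the continuous section space $\Gamma_{\cC^0}(J^r\cE - \fT)$ literally coincide in the range $* < \frac{d-r}{r+1}$. Here one invokes the standard fact (Segal, McDuff, and in the fibrewise form one needs, e.g.\ via the parametrised/scanning machinery) that the space of continuous sections of a fibre bundle over $X$ whose fibres are open complements is computed by a labelled configuration space; the jet bundle $J^r\cE - \fT \to X$ is exactly such a bundle, with fibre the complement $(\text{jet space}) \setminus \fT$. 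One then checks that the map induced by $j^r$ followed by the inclusion is, on the level of these two resolutions/scanning models, the identity on the overlapping range — i.e.\ a holomorphic section realising prescribed jet data at finitely many points is sent to the continuous section with the same local data, and the labelled-configuration models are identified compatibly.

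The third step is a convergence/degeneration bookkeeping argument: one must verify that the comparison of $E^1$-pages survives to $E^\infty$ in the claimed range, i.e.\ that no differentials enter or leave from outside the range, and that the Alexander duality / scanning identifications are compatible with the differentials. This is where the codimension hypothesis $\operatorname{codim}_\bC \fT \ge \dim_\bC X + 1$ is used a second time: it makes the "expected codimension" estimate for the discriminant strictly positive (so $\Gamma_{\mathrm{hol,ns}}(\cE)$ is nonempty, dense, and its low homology is genuinely dual to high Borel--Moore homology of $\Sigma$), and it ensures the configuration-space columns are concentrated in the right range. The main obstacle, and the technical heart of the argument, is constructing the geometric resolution of $\Sigma$ rigorously in this generality — over an arbitrary smooth projective $X$ (not just $\bC\bP^n$) and with $\fT$ an arbitrary closed subvariety of a higher jet bundle rather than a point — and proving the key finite-dimensional transversality statement that the jet-evaluation map is surjective on a configuration of up to $\sim d/(r+1)$ points with affine fibres of exactly the expected dimension; everything downstream (the spectral sequence, the comparison with the scanning model for continuous sections, the range) is then a matter of carefully propagating that estimate. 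I expect the proof in the paper to isolate this transversality/ampleness input as its own proposition (the "jet ampleness" lemma) and then deduce the h-principle formally.
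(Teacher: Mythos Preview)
Your first two steps match the paper closely: Alexander duality reduces to the compactly supported cohomology of the discriminant $\Sigma$, one builds a Vassiliev-type resolution $R^N\fX \to \Sigma$ indexed by configurations of singular points, the $d$-jet ampleness of $\cE$ is exactly the input making the multi-jet evaluation $\Gammahol(\cE) \to \prod_i (J^r\cE)|_{x_i}$ surjective for up to $N+1 \approx \frac{d+1}{r+1}$ points (the paper isolates this as Lemma~\ref{lemma:multijet}), and the resulting $E^1$-columns become, via Thom isomorphisms, the compactly supported cohomology of $\fT^{(k)}$ with sign coefficients. You also correctly identify that the codimension hypothesis on $\fT$ controls the vanishing line and the convergence bookkeeping.

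Where your proposal diverges from the paper, and where there is a genuine gap, is the passage to continuous sections. You propose to invoke a scanning/labelled-configuration model for $\Gamma_{\cC^0}(J^r\cE - \fT)$ and then argue that its $E^1$-page ``literally coincides'' with that of the holomorphic Vassiliev resolution. Even granting the scanning model, this does not give you a \emph{map} of spectral sequences induced by $j^r$; matching $E^1$-pages abstractly is not enough. The paper's shriek-map machinery for producing such morphisms (Section~\ref{section:compareSS}, following Vok\v{r}\'{i}nek) requires both sides to sit inside finite-dimensional vector spaces, so it cannot be applied directly to the infinite-dimensional $\Gamma_{\cC^0}(J^r\cE)$.

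The paper's solution is to interpolate: one introduces a tower of finite-dimensional spaces $\cN(k) \subset \Gammahol((J^r\cE)\otimes\cL^k)\otimes_\bC \overline{\Gammahol(\cL^k)}$ mapping compatibly to $\Gamma_{\cC^0}(J^r\cE - \fT)$, each admitting its own Vassiliev resolution with the same $E^1$-page in the range, connected by genuine shriek maps of spectral sequences. One then proves, via a Stone--Weierstrass argument (Section~\ref{section:holcontinuouscompare}), that $\colim_k \cN(k) \to \Gamma_{\cC^0,\mathrm{ns}}(J^r\cE)$ is a weak equivalence. This interpolation plus density step is the technical heart you are missing; without it, the comparison of the holomorphic and continuous sides remains only heuristic.
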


The theorem above can be strengthened, and in Section~\ref{section:mainthm} we introduce a more general class of allowed subsets $\fT \subset J^r\cE$ of the jet bundle as well as give a sharper range of degrees. We also take advantage of that section to give the definition of jet ampleness and jet bundles in algebraic geometry.

\subsection{Motivations and applications}

Motivated by their stabilisation result in the Grothendieck ring of varieties \cite{vakil_discriminants_2015}, Vakil and Wood conjectured that for a very ample line bundle $\cL$ on a smooth projective complex variety, the space of non-singular sections of $\cL^{\otimes d}$ should exhibit cohomological stability. In the special case of the projective space, Tommasi obtained the following result.
\begin{theorem}[Tommasi, \cite{tommasi_stable_2014}]
Let $d,n \geq 1$ be integers. Let $U_{d,n}$ be the space of non-singular holomorphic sections of $\cO(d)$ on $\bC\bP^n$. The rational cohomology of $U_{d,n}$ is isomorphic to the rational cohomology of the space $\mathrm{GL}_{n+1}(\bC)$ in degrees $* < \frac{d+1}{2}$.
\end{theorem}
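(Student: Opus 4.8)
The plan is to recognise this statement as a special case of Theorem~\ref{thm:mainthm}, reducing it to a rational-homotopy computation of a space of continuous sections. I would take $X = \bC\bP^n$, $\cE = \cO(d)$, $r = 1$, and let $\fT \subset J^1\cO(d)$ be the zero section. The first-jet exact sequence $0 \to \Omega^1_{\bC\bP^n}(d) \to J^1\cO(d) \to \cO(d) \to 0$ shows that, for $s \in \Gammahol(\cO(d))$, the condition $j^1(s)(x) \notin \fT$ for all $x$ is exactly the statement that $s$ has nowhere-vanishing differential along $V(s)$, i.e.\ that $V(s)$ is a smooth hypersurface; hence $\Gamma_{\mathrm{hol,ns}}(\cO(d)) = U_{d,n}$. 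The zero section is a closed subvariety isomorphic to $\bC\bP^n$ of codimension $\mathrm{rank}(J^1\cO(d)) = n + 1 = \dim_\bC \bC\bP^n + 1$, so the codimension hypothesis holds (with equality), and $\cO(d)$ is $d$-jet ample on $\bC\bP^n$ (a form of degree $d$ realises any jet datum of total length $d+1$).

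Applying Theorem~\ref{thm:mainthm} with jet-ampleness level $d$ then gives that
\[
    j^1 \colon U_{d,n} \lra \Gamma_{\cC^0}\left( J^1\cO(d) - \fT \right)
\]
induces an isomorphism on integral — hence rational — homology in the stated range of degrees; equivalently, an isomorphism on rational cohomology there. The coarse bound $\frac{d-r}{r+1} = \frac{d-1}{2}$ from the introduction already yields Tommasi's conclusion on a slightly smaller range, and the sharpened estimate of Section~\ref{section:mainthm} recovers the bound $* < \frac{d+1}{2}$.

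It remains to compute $H^*(\Gamma_{\cC^0}(J^1\cO(d) - \fT); \bQ)$. This is the space of continuous sections of the fibre bundle $J^1\cO(d) \setminus \{0\} \to \bC\bP^n$ whose fibre is $\bC^{n+1} \setminus \{0\} \simeq S^{2n+1}$. Since $S^{2n+1}$ is $2n$-connected and $\bC\bP^n$ has real dimension $2n$, obstruction theory shows that this section space is non-empty and connected and, via the standard rational-homotopy theory of section spaces, that it is rationally equivalent to the space of sections of the fibrewise rationalisation. Rationally $S^{2n+1} \simeq_\bQ K(\bQ, 2n+1)$; the fibrewise rationalisation admits a section and, over the simply-connected base $\bC\bP^n$, is a principal $K(\bQ,2n+1)$-fibration classified by an element of $H^{2n+2}(\bC\bP^n; \bQ) = 0$, hence is trivial. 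Therefore
\[
    \Gamma_{\cC^0}\left( J^1\cO(d) - \fT \right) \simeq_\bQ \mathrm{Map}\left( \bC\bP^n, K(\bQ, 2n+1) \right) \simeq \prod_{j=0}^{n} K\left(\bQ,\, 2n+1-2j\right),
\]
whose rational cohomology is the exterior algebra $\Lambda_\bQ(x_1, x_3, \dots, x_{2n+1})$. This is precisely $H^*(\mathrm{GL}_{n+1}(\bC); \bQ)$, via the homotopy equivalence $\mathrm{GL}_{n+1}(\bC) \simeq U(n+1)$, so the theorem follows.

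The point worth stressing is that, once Theorem~\ref{thm:mainthm} is available, there is essentially no obstacle left: the whole geometric and analytic difficulty — in the original argument, Tommasi's Vassiliev-type spectral-sequence analysis of the space of singular hypersurfaces — is subsumed in the h-principle, and what remains is routine rational homotopy theory. The only delicate point is matching the exact degree range $\frac{d+1}{2}$, which is why one must invoke the refined estimate of Section~\ref{section:mainthm} rather than the coarse bound $\frac{d-1}{2}$.
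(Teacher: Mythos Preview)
The paper does not give its own proof of this statement: it is quoted as Tommasi's theorem from \cite{tommasi_stable_2014}, as background and motivation. What the paper does do, in Section~\ref{section:app}, is recover a version of it as a corollary of Theorem~\ref{thm:mainthm} (Corollary~\ref{cor:nonsingularsectionslinebundle} together with Theorem~\ref{thm:rationalcohomology} and the Example following it). Your argument is essentially that same argument, and the rational-homotopy computation you sketch is exactly the one carried out in the proof of Theorem~\ref{thm:rationalcohomology}, specialised to $X=\bC\bP^n$.

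There is one genuine gap. Your claim that ``the sharpened estimate of Section~\ref{section:mainthm} recovers the bound $*<\frac{d+1}{2}$'' is not correct. With $\fT$ the zero section one has $e(\fT)=2$, so the range in Theorem~\ref{thm:mainthm} is $*<N(\cE,1)$. Since $\cO(d)$ on $\bC\bP^n$ is $d$-jet ample and no better, $N(\cO(d),1)$ is the largest $N$ with $2N+1\le d$, i.e.\ $N=\lfloor\frac{d-1}{2}\rfloor$. Thus the range produced by Theorem~\ref{thm:mainthm} is $*<\frac{d-1}{2}$, exactly the bound in Corollary~\ref{cor:nonsingularsectionslinebundle}, and not Tommasi's $*<\frac{d+1}{2}$. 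The paper itself only claims the weaker range and notes agreement with Tommasi, not a reproof of her precise bound; obtaining the extra degree requires her finer analysis of the Vassiliev spectral sequence. So your proof establishes the statement in the range $*<\frac{d-1}{2}$, matching the paper's applications, but the last sentence overreaches.
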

\noindent In work in progress, she furthermore investigates an extension of this result to arbitrary smooth projective varieties \cite{tommasi_stable_nodate}. Using different techniques, O. Banerjee also confirmed the conjecture of Vakil and Wood in the case of smooth projective curves~\cite{banerjee_filtration_2021}.

The present work was strongly motivated by the result of Tommasi and the wish to understand the stable cohomology from a more homotopy theoretic point of view. At the time of writing, let us in particular mention the following result:
\begin{theorem}[Tommasi, work in progress in \cite{tommasi_stable_nodate}]
Let $X$ be a smooth projective complex variety of dimension $n$ and $\cL$ be a very ample line bundle on $X$. Let $d \geq 1$ be an integer and $U_d$ be the space of non-singular holomorphic sections of $\cL^{\otimes d}$. There is a Vassiliev spectral sequence converging to the homology of $U_d$. Working with rational coefficients, this spectral sequence degenerates on the $E_2$-page in the stable range if and only if the stable cohomology is a free commutative graded algebra on the cohomology of $X$ shifted by one degree.

Assuming this degeneration, the rational cohomology of $U_d$ in degrees $* < \lfloor\frac{d+1}{2}\rfloor$ is given by the free commutative graded algebra $\Lambda\left(H^{*-1}(X; \bQ) \right)$ on the cohomology of $X$ shifted by one degree.
\end{theorem}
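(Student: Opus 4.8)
The plan is to derive the statement from Theorem~A, applied to $\cE = \cL^{\otimes d}$ with $r = 1$ and $\fT \subset J^1\cL^{\otimes d}$ the zero section; note that this route proves the cohomology computation \emph{unconditionally}, the stated degeneration of the Vassiliev spectral sequence then being forced by the ``only if'' half of Tommasi's equivalence, so no separate input about that spectral sequence is needed. Two hypotheses of Theorem~A must be verified. First, the zero section $\fT$ is a smooth closed subvariety of codimension $\rank J^1\cL^{\otimes d} = n+1 = \dim_\bC X + 1$, so it meets the codimension bound exactly. Second, $\cL$ very ample means $\cL$ is $1$-jet ample, and by the Beltrametti--Sommese product theorem a tensor product of $d$ line bundles each $1$-jet ample is $d$-jet ample; hence $\cL^{\otimes d}$ is $d$-jet ample. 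Theorem~A (in the sharp form of Section~\ref{section:mainthm}) therefore yields $H_*(U_d;\bZ) \cong H_*\bigl(\Gamma_{\cC^0}(J^1\cL^{\otimes d} - \fT);\bZ\bigr)$ in the range $* < \lfloor\tfrac{d+1}{2}\rfloor$, since $U_d$ is exactly $\Gamma_{\mathrm{hol,ns}}(\cL^{\otimes d})$ for this $\fT$.

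Next I would compute the rational cohomology of $Y_d := \Gamma_{\cC^0}(J^1\cL^{\otimes d} - \fT)$. Choosing a Hermitian metric, the fibrewise radial retraction identifies $J^1\cL^{\otimes d} - \fT \to X$, up to fibre homotopy equivalence over $X$, with the unit sphere bundle $S(J^1\cL^{\otimes d}) \to X$, whose fibre is $S^{2n+1}$. Since $S^{2n+1}$ is $2n$-connected and $\dim_\bR X = 2n$, obstruction theory shows that $Y_d$ is non-empty and path-connected (the relevant obstruction groups are of the form $H^k(X;\pi_j(S^{2n+1}))$ with $j \le 2n$, hence vanish). Rationally, $S^{2n+1}$ is a $K(\bQ,2n+1)$, and the fibre being simply connected with trivial monodromy (the bundle is complex), the rational fibre homotopy type of the sphere bundle is governed by its Euler class $c_{n+1}(J^1\cL^{\otimes d}) \in H^{2n+2}(X;\bQ) = 0$; so the rationalised sphere bundle is trivial. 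As $X$ is a finite complex, $\map(X,-)$ commutes with rationalisation for nilpotent finite-type targets, giving $Y_d \simeq_\bQ \map(X, K(\bQ,2n+1)) \simeq \prod_{k=0}^{2n} K\bigl(H^k(X;\bQ),\, 2n+1-k\bigr)$. Thus $Y_d$ is a rational H-space and $H^*(Y_d;\bQ)$ is the free graded-commutative algebra on $\bigoplus_{k\le 2n} H^k(X;\bQ)$ placed in degree $2n+1-k$; Poincaré duality $H^k(X;\bQ) \cong H^{2n-k}(X;\bQ)$ reindexes this generating set as $H^{*-1}(X;\bQ)$, so $H^*(Y_d;\bQ) \cong \Lambda\bigl(H^{*-1}(X;\bQ)\bigr)$.

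Combining the two steps gives $H^*(U_d;\bQ) \cong \Lambda\bigl(H^{*-1}(X;\bQ)\bigr)$ in degrees $* < \lfloor\tfrac{d+1}{2}\rfloor$, as claimed, and with it the stated degeneration. For $X = \bC\bP^n$ this recovers Tommasi's theorem: $H^{*-1}(\bC\bP^n;\bQ)$ is one-dimensional in degrees $1,3,\dots,2n+1$, so $\Lambda(H^{*-1}) \cong H^*(\mathrm{GL}_{n+1}(\bC);\bQ)$.

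The main obstacle is concentrated entirely in the first step and is twofold. One is applying Theorem~A at the \emph{extremal} codimension $\dim_\bC X + 1$, so any tightness in its hypotheses — and in the precise constant in its range — must be taken seriously; the other is pinning down a clean jet-ampleness statement for $\cL^{\otimes d}$ (if the available results only yield $(d-c)$-jet ampleness for a constant $c = c(X,\cL)$, this merely shifts the stable range by a bounded amount and is harmless). By contrast the rational computation of $H^*(Y_d;\bQ)$ is soft: the only step with real content is the vanishing of the Euler class for dimension reasons — which is precisely why the stable answer is a \emph{free} algebra with no contribution from the topology of the jet bundle, and, conversely, why the \emph{integral} homology of $U_d$ does not stabilise, the bundles $S(J^1\cL^{\otimes d})$ being pairwise non-isomorphic.
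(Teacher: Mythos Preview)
Your approach is essentially the same as the paper's: the paper applies its main theorem exactly as you do (Corollary~\ref{cor:nonsingularsectionslinebundle}), then computes the rational homotopy type of the section space (Theorem~\ref{thm:rationalcohomology}) via the same chain of ideas --- retract to the sphere bundle, fibrewise rationalise, observe the Euler class vanishes for dimension reasons so the rationalised bundle is trivial, and identify $\map(X,K(\bQ,2n+1))$ as a product of Eilenberg--MacLane spaces via Thom/Haefliger, reindexed by Poincar\'e duality. Your obstruction-theory remark that $Y_d$ is connected is the one point the paper handles slightly differently (it cites M{\o}ller's theorem on nilpotent section spaces), but the content is the same.

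One genuine discrepancy: the range you claim, $* < \lfloor (d+1)/2\rfloor$, is not what the paper's main theorem actually delivers. With $r=1$ and $e(\fT)=2$, the sharp range in Theorem~\ref{thm:mainthm} is $* < N(\cE,1)$, and $N(\cL^{\otimes d},1)$ is the largest $N$ with $\cL^{\otimes d}$ being $(2N+1)$-jet ample, i.e.\ $N = \lfloor (d-1)/2\rfloor$. So the paper only obtains $* < (d-1)/2$, not Tommasi's $\lfloor (d+1)/2\rfloor$; indeed the paper states Tommasi's theorem as her result and proves its own version with this slightly weaker bound. Your proposal therefore proves the paper's Theorem~\ref{thm:rationalcohomology}, not quite the statement as written. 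This does not affect the logic of deducing the $E_2$-degeneration from the ``only if'' direction, which is a nice observation the paper also makes in passing.
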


In the last section (Section~\ref{section:app}) of this paper, we apply our main theorem to spaces of smooth hypersurfaces to prove a homological stability result with rational coefficients.
\begin{theorem}[see Theorem~\ref{thm:rationalcohomology}]
Let $X$ be a smooth projective complex variety and $\cL$ be a very ample line bundle on $X$. The rational cohomology ring of the space $\Gamma_{\mathrm{hol,ns}}(\cL^d)$ of non-singular sections (in the classical sense) of the $d$-th tensor power of $\cL$ is isomorphic to $\Lambda\left(H^{*-1}(X; \bQ) \right)$ in degrees $* < \frac{d-1}{2}$.
\end{theorem}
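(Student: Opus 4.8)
The plan is to realise the classical non-singularity condition as the case $r = 1$ of Theorem~\ref{thm:mainthm}, taking $\fT \subset J^1(\cL^d)$ to be the zero section, and then to feed this into that theorem so as to replace $\Gamma_{\mathrm{hol,ns}}(\cL^d)$ by a space of continuous sections. Two hypotheses of Theorem~\ref{thm:mainthm} must be checked. First, from the exact sequence $0 \to \Omega^1_X \otimes \cL^d \to J^1(\cL^d) \to \cL^d \to 0$ one reads off $\rank J^1(\cL^d) = \dim_\bC X + 1$, so the zero section is a closed subvariety of $J^1(\cL^d)$ of codimension exactly $\dim_\bC X + 1$, meeting the codimension requirement; being (the image of) a section, it is moreover as regular as one could want. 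Second, a very ample line bundle is $1$-jet ample and jet ampleness adds under tensor products, so $\cL^d = \cL^{\otimes d}$ is $d$-jet ample. Applying Theorem~\ref{thm:mainthm} with $\cE = \cL^d$ and $r = 1$, the composite $\Gamma_{\mathrm{hol,ns}}(\cL^d) \overset{j^1}{\lra} \Gammahol(J^1(\cL^d) - \fT) \hookrightarrow \Gamma_{\cC^0}(J^1(\cL^d) - \fT)$ induces an isomorphism on integral homology, hence on rational cohomology, in degrees $* < \frac{d-1}{2}$; being induced by a continuous map, it is an isomorphism of graded rings there. It therefore suffices to compute the graded ring $H^*(\Gamma_{\cC^0}(J^1(\cL^d) - \fT); \bQ)$ and to see that it does not depend on $d$.

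For the computation, set $W := J^1(\cL^d)$, a complex vector bundle on $X$ of rank $n+1$ where $n := \dim_\bC X$. A continuous section of $W$ avoids the zero section precisely when it is nowhere zero, so, fixing a Hermitian metric and retracting fibrewise onto the unit sphere bundle, $\Gamma_{\cC^0}(J^1(\cL^d) - \fT)$ is homotopy equivalent to the space of sections of the sphere bundle $S(W) \to X$, which has fibre $S^{2n+1}$. Since $\rank_\bC W = n+1 > n$, the Euler class $e(W) \in H^{2n+2}(X; \bZ)$ vanishes for degree reasons, so $S(W) \to X$ admits a section; moreover oriented $S^{2n+1}$-fibrations are rationally fibre-homotopy trivial, because the identity component of the space of homotopy self-equivalences of an odd sphere is rationally contractible (as $\pi_*(S^{2n+1}) \otimes \bQ$ is concentrated in degree $2n+1$), and $W$ is canonically oriented. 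Hence $S(W) \simeq_{\bQ} X \times S^{2n+1}$ over $X$, giving $\Gamma_{\cC^0}(J^1(\cL^d) - \fT) \simeq_{\bQ} \map(X, S^{2n+1})$. Finally $S^{2n+1} \simeq_{\bQ} K(\bQ, 2n+1)$, so $\map(X, S^{2n+1})$ is rationally a product of Eilenberg--MacLane spaces $\prod_{i \ge 0} K\!\left(H^{2n+1-i}(X; \bQ),\, i\right)$, which is connected as $H^{2n+1}(X; \bQ) = 0$, and whose rational cohomology is the free graded-commutative algebra on the graded vector space $V$ with $V_i = \left(H^{2n+1-i}(X; \bQ)\right)^{\vee}$. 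Poincaré duality on the closed oriented manifold $X$ yields $V_i \cong H^{i-1}(X; \bQ)$, that is $V \cong H^{*-1}(X; \bQ)$, so $H^*(\Gamma_{\cC^0}(J^1(\cL^d) - \fT); \bQ) \cong \Lambda\!\left(H^{*-1}(X; \bQ)\right)$ as graded rings, independently of $d$. Combining with the previous paragraph gives the theorem.

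The real work sits inside Theorem~\ref{thm:mainthm}, which is available; the two steps above are comparatively soft, and the one demanding the most care is the passage from sections of $S(W)$ to $\map(X, S^{2n+1})$ over a base $X$ that need not be simply connected. There one must check that the only local system involved, $\pi_{2n+1}(S^{2n+1}) \cong \bZ$, is the orientation system of $W$ and hence trivial, and invoke correctly the compatibility of fibrewise rationalisation with the formation of section spaces (finite base, simply connected hence nilpotent fibre) — or, equivalently, argue through the Federer spectral sequence for section spaces together with the rational triviality of odd-sphere fibrations used above. It is also worth double-checking the jet-ampleness bookkeeping, so that the degree entering Theorem~\ref{thm:mainthm} is exactly $d$ and the range comes out as $* < \frac{d-1}{2}$; as a sanity check, for $X = \bC\bP^n$ the answer $\Lambda\!\left(H^{*-1}(\bC\bP^n; \bQ)\right) \cong \Lambda(y_1, y_3, \dots, y_{2n+1}) \cong H^*(\mathrm{GL}_{n+1}(\bC); \bQ)$ recovers Tommasi's theorem.
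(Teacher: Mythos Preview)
Your proof is correct and follows essentially the same route as the paper: first apply the main theorem (Theorem~\ref{thm:mainthm}) with $r=1$ and $\fT$ the zero section to pass to continuous nowhere-vanishing sections of $J^1\cL^d$ (this is Corollary~\ref{cor:nonsingularsectionslinebundle}), then retract onto the sphere bundle, use that oriented odd-sphere fibrations are rationally trivial (vanishing Euler class), and identify the section space rationally with $\map(X, K(\bQ,2n+1))$ and hence with the product of Eilenberg--MacLane spaces via Thom/Haefliger and Poincar\'e duality. The paper handles the fibrewise rationalisation step by citing \cite{llerena_localization_1985} and \cite{moller_nilpotent_1987} rather than arguing via the homotopy self-equivalences of $S^{2n+1}$, but the substance is the same.
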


Firstly, let us point out that this agrees with the work in progress of Tommasi. In fact, one can use our main theorem to show the degeneration of the Vassiliev spectral sequence she constructed. Secondly, in contrast to many other instances of homological stability, one should remark that there are no natural stabilisation maps from spaces of non-singular sections of $\cL^d$ to those of $\cL^{d+1}$. Thus, we only mean that the cohomology rings abstractly stabilise, and the answer only depends on $X$ and not on $\cL$. After the apparition of the first version of the present article, and using different tools, Das and Howe proved a version of the above theorem for hypersurfaces in algebraic varieties over any algebraically closed field \cite{das_cohomological_2022}.

\bigskip

On the other hand, we find it quite interesting to notice that there is in general no integral homological stability. In fact, we prove the following result about the moduli space of smooth hypersurfaces of degree $d$ in $\bC\bP^2$:
\begin{theorem}[see Proposition~\ref{prop:zmodtwocomputation}]
Let $d \geq 6$ be an integer. We have:
\[
    H_2(\Gamma_{\mathrm{hol,ns}}(\bC\bP^2, \cO(d)); \bZ/2) \cong \begin{cases} \bZ/2 & d \equiv 0 \mod 2 \\ 0 & d \equiv 1 \mod 2. \end{cases}
\]
\end{theorem}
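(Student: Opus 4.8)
The plan is to leverage the main theorem (Theorem~\ref{thm:mainthm}) to replace the space $\Gamma_{\mathrm{hol,ns}}(\bC\bP^2, \cO(d))$ by a space of continuous sections in the stable range, and then compute $H_2$ of that mapping space with $\bZ/2$-coefficients. For $X = \bC\bP^2$, $\cE = \cO(d)$, $r = 1$, and $\fT \subset J^1\cO(d)$ the zero section (which has codimension $1 + \dim_\bC X = 3$, so the hypothesis is satisfied), the theorem gives an isomorphism $H_*(\Gamma_{\mathrm{hol,ns}}(\bC\bP^2, \cO(d)); \bZ/2) \cong H_*(\Gamma_{\cC^0}(J^1\cO(d) - \fT); \bZ/2)$ for $* < \frac{d-1}{2}$; since $d \geq 6$, the range includes $* = 2$. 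Here $J^1\cO(d) - \fT$ is, fibrewise, the total space of $\cO(d) \oplus (\Omega^1_{\bC\bP^2} \otimes \cO(d))$ with the zero section of the first summand removed over each point, i.e. a bundle over $\bC\bP^2$ whose fibre is $(\bC \setminus 0) \times \bC^2 \simeq S^1$. Thus I reduce to understanding the section space of an explicit fibre bundle over $\bC\bP^2$ with fibres homotopy equivalent to $S^1$.

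Next I would analyse this section space as a twisted product. Since the fibre $\bC^2$ in the $\Omega^1 \otimes \cO(d)$-direction is contractible, the section space deformation-retracts onto the space of continuous sections of the $S^1$-bundle obtained from $\cO(d) \setminus (\text{zero section})$, i.e. the unit circle bundle $S(\cO(d))$ inside $\cO(d)$; equivalently $\Gamma_{\cC^0}(\bC\bP^2; S(\cO(d)))$. This circle bundle is classified by $c_1(\cO(d)) = d \in H^2(\bC\bP^2; \bZ) \cong \bZ$, so it is the lens-space-type bundle $S^5/(\bZ/d) \to \bC\bP^2$ when $d > 0$ (the fibrewise-$d$-th-power of the Hopf bundle $S^5 \to \bC\bP^2$). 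The section space of a circle bundle with Euler class $d \cdot x$ (where $x$ is the generator) is a torsor over the gauge group $\map(\bC\bP^2, S^1) \simeq H^1(\bC\bP^2;\bZ) \times \dots$; since $H^1(\bC\bP^2) = 0$ and $\pi_1(S^1) = \bZ$, one has $\map(\bC\bP^2, S^1) \simeq S^1$ (only the constant maps up to homotopy), so $\Gamma_{\cC^0}(\bC\bP^2; S(\cO(d)))$, if non-empty, is homotopy equivalent to a space whose components are each $\simeq S^1$. The key input is then an obstruction-theory computation: a section exists, and the set of homotopy classes of sections, together with the low-degree homology, is governed by the Postnikov/obstruction tower of the fibration $S^1 \to S(\cO(d)) \to \bC\bP^2$, where the only nontrivial $k$-invariant in low degrees is the Euler class $d \in H^2(\bC\bP^2;\pi_1 S^1) = H^2(\bC\bP^2;\bZ)$.

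Concretely, I would use the Federer-type spectral sequence (or the Atiyah--Hirzebruch-style spectral sequence for mapping spaces) computing $H_*(\Gamma_{\cC^0}(\bC\bP^2; S(\cO(d))); \bZ/2)$, whose input involves $H^*(\bC\bP^2)$ and $\pi_* S^1$ twisted by the Euler class $d$. All the $\bZ/2$-cohomology of $\bC\bP^2$ is $\bZ/2$ in degrees $0, 2, 4$; the circle contributes $\pi_1 = \bZ$. The dependence on the parity of $d$ enters precisely because the relevant differential or extension is multiplication by $d$ on a $\bZ$-group landing in $H^2(\bC\bP^2;\bZ) \cong \bZ$ or $H^4(\bC\bP^2;\bZ)\cong\bZ$, and after reducing mod $2$ this is $0$ iff $d$ is even and an isomorphism iff $d$ is odd. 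Tracking this through to $H_2$ should yield the stated answer: a single $\bZ/2$ when $d$ is even, and $0$ when $d$ is odd. The main obstacle I anticipate is bookkeeping the twisting correctly --- the circle bundle is non-trivial, so the mapping-space homology is computed from a \emph{local} coefficient system / nontrivial fibration, and one must carefully identify which differential is ``multiplication by $d$'' and confirm there are no competing contributions in total degree $2$ (in particular ruling out contributions from $\pi_3 S^1 = 0$ is automatic, but one must handle the interplay of the degree-$2$ and degree-$4$ classes of $\bC\bP^2$ and possible extension problems in the $\bZ/2$-homology). A clean way to sidestep some of this is to note $\Gamma_{\cC^0}(\bC\bP^2; S(\cO(d))) \simeq \Gamma_{\cC^0}(\bC\bP^2; L_d)$ where $L_d \to \bC\bP^2$ is the sphere bundle, and then use the fibration sequence of mapping spaces coming from $S^1 \to S^5 \to \bC\bP^2$ after a fibrewise $\bZ/d$-quotient, reducing to homotopy groups of mapping spaces into $S^5$ which are highly connected; this makes the $S^1$-factor and its $d$-twist the only players in the range $* \leq 2$.
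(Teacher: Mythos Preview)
There is a genuine error at the second step. You correctly identify $\fT$ as the zero section of $J^1\cO(d)$ and correctly compute its codimension as $\rank_\bC J^1\cO(d) = 1+\dim_\bC X = 3$. But then you describe $J^1\cO(d)-\fT$ as the jet bundle with only the zero section of the \emph{first summand} $\cO(d)$ removed, giving fibre $(\bC\setminus 0)\times\bC^2\simeq S^1$. These two statements are inconsistent: removing the zero section of the whole rank-$3$ bundle gives fibre $\bC^3\setminus\{0\}\simeq S^5$, not $S^1$. (Removing only the $\cO(d)$-zero-section would be a codimension-$1$ condition, which does not meet the hypothesis $\mathrm{codim}\geq\dim_\bC X+1$ and in any case is not the non-singularity condition: a section is singular when $s(x)=0$ \emph{and} $ds(x)=0$, i.e.\ when the full $1$-jet vanishes.) Everything downstream in your proposal---the circle bundle $S(\cO(d))$, the Euler class $d$, the gauge-group argument, the Federer spectral sequence with $\pi_*S^1$---is built on this misidentification and does not apply.

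With the correct fibre $S^5$, the problem becomes computing $H_2$ of the section space of the $S^5$-bundle $\Sph(J^1\cO(d))\to\bC\bP^2$. The paper does this by producing a $2n$-connected (here $4$-connected) comparison map
\[
\Gamma_{\cC^0}\!\big(\Sph(J^1\cO(d))\big)\;\longrightarrow\;\Omega^{\infty+1}\,(\bC\bP^2)^{\,J^1\cO(d)-T\bC\bP^2},
\]
splitting off an $S^1$, and then running the mod-$2$ Adams spectral sequence for the Thom spectrum on the right. The parity of $d$ enters through the total Stiefel--Whitney class $w(J^1\cO(d)-T\bC\bP^2)$, which equals $1$ for $d$ even and $1+x$ for $d$ odd; this changes the $\cA$-module structure on the Thom cohomology and hence the $E_2$-page, yielding $\pi_3^{\wedge_2}\cong\bZ/2$ or $0$ accordingly. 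A Federer/obstruction-theory approach to the $S^5$-bundle is in principle viable, but the relevant inputs are then $\pi_5S^5,\pi_6S^5,\pi_7S^5$ and the $k$-invariants of the bundle, which amounts to essentially the same Steenrod-operation bookkeeping the paper carries out via the Adams spectral sequence.
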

Besides the phenomenon this result illustrates, its proof showcases the potential of homotopical methods allowed by our main theorem. Indeed, the computation comes down to simple manipulations of Steenrod squares where the parity of $d$ is reflected in the Chern class of $\cO(d)$. In contrast, a more classical approach following the work of Vassiliev \cite{vassiliev_how_1999} would require knowledge of non-trivial differentials in spectral sequences that quickly grow out of hand when $d$ increases.

For good measure, we also study the $p$-torsion in the homology of $\Gamma_{\mathrm{hol,ns}}(\cL^d)$ and show that it stabilises when $p \geq \dim_\bC X + 2$ and $d \to \infty$. (See Proposition~\ref{prop:stableptorsion}.)

\bigskip

The results of this paper are also inspired by analogies with theorems in arithmetic probabilities, such as Poonen's Bertini theorem over finite fields \cite{poonen_bertini_2004}, and in motivic statistics in the Grothendieck ring of varieties as in \cite{vakil_discriminants_2015} or \cite{bilu_motivic_2019}. The recent results of Bilu and Howe particularly influenced the current formulation of our main theorem and we would like to recommend the introduction of their paper \cite{bilu_motivic_2019} to the reader interested in an overview of these analogies. Finally, we also wish to mention that I. Banerjee recently announced a result relating non-singular sections of a line bundle on an algebraic curve and smooth sections of the same line bundle \cite{banerjee_stable_2020}. 

\subsection{Acknowledgements}
I would like to thank my PhD advisor Søren Galatius for suggesting to compare algebraic sections to continuous sections of jet bundles. It is a pleasure to thank him for his encouragement and many helpful discussions. I would also like to thank Orsola Tommasi for discussing and sharing her work with me, as well as Ronno Das for helpful discussions related to this project. I was supported by the Danish National Research Foundation through the Copenhagen Centre for Geometry and Topology (DNRF151) as well as the European Research Council (ERC) under the European Union's Horizon 2020 research and innovation programme (grant agreement No. 682922).

\section{Statement of the main theorem}
\label{section:mainthm}

We begin by a few preliminary definitions before stating precisely our main theorem. Throughout this article, $X$ is a smooth projective complex variety and $\cE$ is a holomorphic vector bundle on $X$. We denote by $\Gamma$ the space of sections of a vector bundle, and decorate it with subscripts ``$\mathrm{hol}$'' or $\cC^0$ to indicate respectively holomorphic or continuous sections. We will make extensive use of \v{C}ech (or sheaf, as they will agree in our setting) cohomology with compact support which we denote by $\CCH^*$ and refer to \cite{bredon_sheaf_1997} for its definition and standard properties. All homology and cohomology groups will be taken with integral coefficients, unless otherwise specified. We recall the following definition of jet ampleness.

\begin{definition}[Compare \cite{beltrametti_generation_1999}]
Let $k \geq 0$ be an integer. Let $x_1, \ldots, x_t$ be $t$ distinct points in $X$ and $(k_1,\ldots,k_t)$ be a $t$-uple of positive integers such that $\sum_i k_i = k+1$. Denote by $\cO$ the structure sheaf of $X$ and by $\fm_i$ the maximal ideal sheaf corresponding to $x_i$. We regard the tensor product $\otimes_{i=1}^t \fm_i^{k_i}$ as a subsheaf of $\cO$ under the multiplication map $\otimes_{i=1}^t \fm_i^{k_i} \to \cO$. We say that $\cE$ is \emph{$k$-jet ample} if the evaluation map
\[
    \Gammahol\left(\cE\right) \lra \Gammahol\left(\cE \otimes \left(\cO / \otimes_{i=1}^t \fm_i^{k_i}\right)\right) \cong \bigoplus_{i=1}^t \Gammahol\left(\cE \otimes \left(\cO / \fm_i^{k_i}\right)\right)
\]
is surjective for any $x_1,\ldots,x_t$ and $k_1,\ldots,k_t$ as above.
\end{definition}

\begin{example}\label{example:jetampleness}
A vector bundle $\cE$ is $0$-jet ample if and only if it is spanned by its global sections. In the case of a line bundle, $1$-jet ampleness corresponds to the usual notion of very ampleness. On a curve, a line bundle is $k$-jet ample whenever it is $k$-very ample. However, on higher dimensional varieties, a $k$-jet ample line bundle is also $k$-very ample but the converse is not true in general. Finally, and most importantly for us, if $\cA$ and $\cB$ are holomorphic vector bundles which are respectively $a$- and $b$-jet ample, then their tensor product $\cA \otimes \cB$ is $(a+b)$-jet ample. (See~\cite[Proposition 2.3]{beltrametti_generation_1999}.)
\end{example}

To ease the readability of various statements throughout the paper, we will use the following notation.

\begin{definition}\label{def:bigN}
For a holomorphic vector bundle $\cE$ on $X$ and an integer $r \in \bN$, we define $N(\cE,r) \geq 0$ to be the largest integer $N$ such that $\cE$ is $\left( (N+1) \cdot (r+1) - 1\right)$-jet ample. If no such integer exists, we set $N(\cE,r) = - 1$, although we shall never consider such a case in this paper.
\end{definition}

Let us also recall the construction of the jet bundle from~\cite[IV.16.7]{grothendieck_elements_1967} (where it is called the sheaf of principal parts). The diagonal morphism $\Delta \colon X \to X \times X$ gives a surjection of sheaves $\Delta^\sharp \colon \Delta^*\cO_{X \times X} \to \cO_X$. Denoting by $\cI$ the kernel, we have $\cO_X \cong \Delta^* \cO_{X \times X}/\cI$. For an integer $r \geq 0$, we define the $r$-th jet bundle of $\cO_X$ to be
\[
    J^r\cO_X := \Delta^* \cO_{X \times X} / \cI^{r+1}.
\]
The projections $p_i \colon X \times X \to X$ give two $\cO_X$-algebra structures on $J^r\cO_X$ and, unless otherwise specified, we use the one given by the first projection $p_1$. The other morphism induced by $p_2$ is denoted by
\[
    d^r_X \colon \cO_X \lra J^r\cO_X.
\]
For a holomorphic vector bundle $\cE$ on $X$, we define its \emph{$r$-th jet bundle} to be
\begin{equation}
    J^r \cE := J^r\cO_X \otimes_{\cO_X} \cE
\end{equation}
where $J^r\cO_X$ is seen as an $\cO_X$-module via the morphism $d^r_X$ for the tensor product, and the result is regarded as an $\cO_X$-module again via $p_1$. It comes with the morphism
\[
    d^r_{X,\cE} := d^r_X \otimes \cE \colon \cE \lra J^r\cO_X \otimes_{\cO_X} \cE = J^r\cE.
\]
Taking global sections, we obtain the \emph{jet map}:
\begin{align}\label{eqn:jetmap}
    j^r = \Gamma(d^r_{X,\cE}) \colon \Gammahol(\cE) \lra \Gammahol(J^r\cE).
\end{align}
The most important observation for us is the following: if $x \in X$ is a point with maximal ideal sheaf $\fm$, the fibre $(J^r\cE)|_x$ is naturally identified with the complex vector space $\cE_x / \fm_x^{r+1} \cE_x$. Furthermore, the composition
\[
    \cE_x \overset{(d^r_{X,\cE})_x}{\lra} (J^r\cE)_x \lra (J^r\cE)|_x = \cE_x / \fm_x^{r+1} \cE_x
\]
is the natural quotient map. (Here, and everywhere else, we write $\cE_x$ for the stalk of the sheaf $\cE$ and $\cE|_x = \cE_x / \fm_x \cE_x$ for the fibre of the bundle $\cE$.) Intuitively, for a holomorphic section $s$ of $\cE$, one should think of the value of $j^r(s)$ at a point $x\in X$ as the tuple of all derivatives of $s$ at $x$ up to order $r$. In particular, the following lemma is a direct consequence of the definitions.
\begin{lemma}\label{lemma:multijet}
Let $\cE$ be a holomorphic vector bundle on X and let $N(\cE,r)$ be as in Definition~\ref{def:bigN}. Let $(x_0, \ldots, x_p)$ be a tuple of $p+1$ distinct points in $X$. If $p \leq N(\cE,r)$, the simultaneous evaluation of the jet map~\eqref{eqn:jetmap} at these points
\begin{align*}
    j^r_{(x_0,\ldots,x_p)} \colon \Gammahol(\cE) &\lra (J^r\cE)|_{x_0} \times \cdots \times (J^r\cE)|_{x_p} \\
                                    s &\longmapsto \left(j^r(s)(x_0), \ldots, j^r(s)(x_p)\right)
\end{align*}
is surjective.\qed
\end{lemma}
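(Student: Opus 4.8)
The plan is to unwind the definitions and reduce the statement to the $k$-jet ampleness hypothesis recorded in Definition~\ref{def:bigN}. First I would recall that for a point $x \in X$ with maximal ideal sheaf $\fm_x$, the fibre $(J^r\cE)|_x$ was identified above with $\cE_x / \fm_x^{r+1}\cE_x$, and that under this identification the composite $\cE_x \to (J^r\cE)_x \to (J^r\cE)|_x$ is the natural quotient. Passing to global sections, the simultaneous evaluation $j^r_{(x_0,\ldots,x_p)}$ is therefore identified with the evaluation map
\[
    \Gammahol(\cE) \lra \bigoplus_{i=0}^p \cE_{x_i} / \fm_{x_i}^{r+1}\cE_{x_i} \cong \Gammahol\!\left(\cE \otimes \left(\cO / \otimes_{i=0}^p \fm_{x_i}^{r+1}\right)\right),
\]
where the last isomorphism uses that the points $x_0,\ldots,x_p$ are distinct, so the ideals $\fm_{x_i}^{r+1}$ are pairwise comaximal and $\cO / \otimes_i \fm_{x_i}^{r+1} \cong \bigoplus_i \cO / \fm_{x_i}^{r+1}$ (a Chinese-remainder-type splitting; this is exactly the decomposition already written in the definition of jet ampleness).

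Next I would check that the hypothesis $p \le N(\cE,r)$ is precisely what makes this evaluation surjective. Set $t = p+1$ points and take all $k_i = r+1$, so $\sum_{i=1}^t k_i = (p+1)(r+1)$. By definition of $N(\cE,r)$, the bundle $\cE$ is $\bigl((N(\cE,r)+1)(r+1)-1\bigr)$-jet ample, hence $k$-jet ample for every $k$ in the range $0 \le k \le (N(\cE,r)+1)(r+1)-1$ (jet ampleness for a given total degree implies it for all smaller totals, by taking one of the $k_i$ smaller or dropping a point). Since $p \le N(\cE,r)$ gives $(p+1)(r+1) - 1 \le (N(\cE,r)+1)(r+1) - 1$, we may apply the $k$-jet ampleness condition with $k+1 = (p+1)(r+1) = \sum_i k_i$: it asserts exactly that
\[
    \Gammahol(\cE) \lra \Gammahol\!\left(\cE \otimes \left(\cO / \otimes_{i=1}^t \fm_{x_i}^{r+1}\right)\right) \cong \bigoplus_{i=1}^t \Gammahol\!\left(\cE \otimes \left(\cO / \fm_{x_i}^{r+1}\right)\right)
\]
is surjective. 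Combining this with the identification of the target with $\prod_i (J^r\cE)|_{x_i}$ from the previous paragraph yields surjectivity of $j^r_{(x_0,\ldots,x_p)}$, which is the claim.

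The only point requiring any care — and the one I would spell out rather than leave to the reader — is the compatibility between the two descriptions of the fibre of $J^r\cE$: that the fibrewise quotient $\cE \to J^r\cE \to J^r\cE / \fm_x J^r\cE$ agrees, after the identification $J^r\cO_X|_x \cong \cO/\fm_x^{r+1}$, with the algebraic truncation $\cE_x \to \cE_x/\fm_x^{r+1}\cE_x$, and that this is natural enough to globalise over the finite set $\{x_0,\ldots,x_p\}$ into the displayed isomorphism of section spaces. This is essentially bookkeeping with the two $\cO_X$-module structures on $J^r\cO_X$ (via $p_1$ and via $d^r_X$), already flagged in the construction above, together with flatness/finite-length considerations that make $\Gammahol(\cE \otimes \cO/\mathfrak a) = \Gamma(\cO/\mathfrak a; \cE \otimes \cO/\mathfrak a)$ behave well for an ideal $\mathfrak a$ of finite colength. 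Once that identification is in hand the lemma is immediate, so I would keep the write-up short and, as the statement says, present it as a direct consequence of the definitions.
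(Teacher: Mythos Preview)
Your proposal is correct and is exactly the unwinding of definitions that the paper has in mind; the paper gives no proof beyond the remark that the lemma ``is a direct consequence of the definitions'' and the \qed, so you have simply spelled out what is implicit there. The one parenthetical you flag (that $k$-jet ampleness implies $k'$-jet ampleness for $k' \le k$) is indeed the only point worth a sentence, and your justification---add an extra point with multiplicity $1$ and project---is the right one.
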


We shall now explain what we precisely mean by restricting the behaviour of sections of $\cE$. In particular, we will require certain subsets of the jet bundle to be ``semi-algebraic''. This is a technical condition which is quite arbitrary. We believe that a clearer and more general notion could be used, but we were unfortunately not able to make the arguments of Section~\ref{section:cohomologygroups} work without it. Our arguments rely on multiple properties of these sets: they admit cell decompositions, have a well-defined dimension, and they behave well under projections and closure. (See Section~\ref{subsection:laststepfiltration} for their single but crucial use.)

\bigskip

There is a well-studied concept of real semi-algebraic subsets of an Euclidean space. They are subsets defined by polynomial equations and inequalities.

\begin{definition}[Compare \cite{bochnak_real_1998}]\label{def:realsemialgebraicsubset}
A semi-algebraic subset of $\bR^n$ is a union of finitely many subsets of the form
\[
    \left\{ x\in \bR^n \mid P(x) = 0, \ Q_1(x) > 0, \ldots, \ Q_l(x) > 0 \right\},
\]
where $l \in \bN$ and $P,Q_1,\ldots,Q_l \in \bR[X_1,\ldots,X_n]$.
\end{definition}

We adapt the definition to families, i.e. to subsets of vector bundles, by demanding the standard definition to be satisfied locally in charts. This is well-defined because an algebraic variety $X$ has an atlas whose transition functions are algebraic, hence respect the semi-algebraicity.

Let us be more precise. First, we briefly recall the notion of an algebraic atlas on $X$. To lighten the notation, we let $n$ be the complex dimension of $X$ and $m$ be the complex rank of $J^r\cE$. We denote by $V(-)$ the vanishing set of the tuple of polynomials.

The variety $X$ can be covered by Zariski open subsets, each of the form
\[
    U \cong V(f_1,\ldots,f_{d-n}) \subset \bC^d
\]
for some integer $d \geq 1$ and polynomials $f_1, \ldots, f_{d-n}$. Furthermore, if $U$ and $W$ are Zariski open subsets of $X$ with $\alpha \colon U \cong V(f_1,\ldots,f_{d-n}) \subset \bC^d$ and $\beta \colon W \cong V(g_1,\ldots,g_{d'-n}) \subset \bC^{d'}$, the homeomorphism on the intersection
\[
    \alpha(W \cap U) \cap V(f_1,\ldots,f_{d-n}) \overset{\cong}{\lra} W \cap U \overset{\cong}{\lra} \beta(U \cap W) \cap V(g_1,\ldots,g_{d'-n})
\]
is given by a rational function whose domain is a subset of $\bC^d$ and codomain is a subset of $\bC^{d'}$. Recall also that the algebraic vector bundle $J^r\cE$ is equivalently given by the data of trivialising Zariski open subsets $U_i \subset X$ (over which $J^r\cE|_{U_i} \cong U_i \times \bC^m$) and transition functions on overlaps $U_i \cap U_j \to \mathrm{GL}_m(\bC)$. Most importantly for us, the transition functions are regular morphisms.

\begin{definition}\label{def:semialgebraic}
Let $n$ be the complex dimension of $X$ and $m$ be the complex rank of $J^r\cE$. A subset $\fT \subset J^r\cE$ is \emph{real semi-algebraic} if there exists a cover $X = \bigcup U_i$ by Zariski open subsets such that the following conditions hold for each $i$:
\begin{enumerate}
    \item the jet bundle may be trivialised over $U_i$ via a map $\varphi_i \colon J^r\cE|_{U_i} \overset{\cong}{\to} U_i \times \bC^m$;
    \item there is a chart $\phi_i \colon U_i \overset{\cong}{\to} V(f^i_1,\ldots,f^i_{d_i-n}) \subset \bC^{d_i}$ for some polynomials $f^i_1,\ldots,f^i_{d_i-n}$;
    \item and the image in $\bR^{2(d_i + m)}$ of $\fT|_{U_i}$ via the map
    \[
        J^r\cE|_{U_i} \overset{\varphi_i}{\lra} U_i \times \bC^m \overset{\phi_i \times \mathrm{id}}{\lra} V(f^i_1,\ldots,f^i_{d_i-n}) \times \bC^m \subset \bC^{d_i + m} \cong \bR^{2(d_i + m)}
    \]
    is a real semi-algebraic subset. (Here $\fT|_{U_i}$ is the restriction of $\fT$ above $U_i$.)
\end{enumerate}
\end{definition}
We will often drop the adjective ``real'' as we will never consider any complex analogue. In essence, a subset $\fT \subset J^r\cE$ is semi-algebraic in the sense of Definition~\ref{def:semialgebraic} when it is semi-algebraic in the usual way when ``read in charts''. As all the change-of-coordinates maps described above are rational functions, being semi-algebraic is independent of the choice of the cover. Indeed, the image of a semi-algebraic set by a rational function is still semi-algebraic (see \cite[Section 2.2]{bochnak_real_1998}). 

A semi-algebraic subset has a well-defined dimension (as in \cite[Section 2.8]{bochnak_real_1998}) which can be thought of as the maximal dimension in a decomposition into cells of the form $]0,1[^d$ (see \cite[Corollary 2.8.9]{bochnak_real_1998}). We therefore get a well-defined dimension for a semi-algebraic subset $\fT \subset J^r\cE$ by looking at the dimensions when ``reading in charts'':
\begin{definition}
Let $\fT \subset J^r\cE$ be a semi-algebraic subset. Let $X = \bigcup U_i$ by a finite cover as in Definition~\ref{def:semialgebraic} (the finiteness can always be arranged by compactness of $X$) and write $\fT_{U_i} \subset \bR^{2(d_i + m)}$ for the semi-algebraic sets obtained using the condition (3). Each of them has a well-defined dimension and we let the \emph{dimension of} $\fT$ be their maximum.
\end{definition}

In the following definition, we denote by $\rank_\bC J^r\cE$ the complex rank of $J^r\cE$.

\begin{definition}\label{def:admissibletaylor}
We say that a subset $\fT \subset J^r\cE$ is an \emph{admissible Taylor condition} if it is closed, real semi-algebraic and has dimension at most $2(\rank_\bC J^r\cE - 1)$. We will use the notation $\fT|_x := (J^r\cE)|_x \cap \fT$ for the fibre above a point $x \in X$.
\end{definition}

\begin{remark}
Although our definition is quite technical and general, the typical admissible Taylor conditions arise as subvarieties of high enough codimension. Indeed, any closed subvariety $\fT \subset J^r\cE$ of the jet bundle of complex codimension at least $\dim_\bC X + 1$ defines an admissible Taylor condition.
\end{remark}

Motivated by the previous remark, and to help general bookkeeping throughout the paper, we will use the following notation.
\begin{definition}\label{def:excesscodimension}
The (real) \emph{excess codimension} of an admissible Taylor condition $\fT$ is defined to be the number $e(\fT) = \mathrm{codim}_\bR \fT - 2\dim_\bC X \geq 2$, where $\mathrm{codim}_\bR \fT$ is the real codimension of $\fT$ in the jet bundle $J^r\cE$.
\end{definition}

We are now ready to define what it means for a section to be singular with respect to an admissible Taylor condition $\fT$.

\begin{definition}\label{def:singular}
A holomorphic section $s$ of the vector bundle $\cE$ is said to be \emph{singular} if there exists a point $x \in X$ such that $j^r(s)(x) \in \fT|_x$.
Similarly, a (continuous) section $s$ of the vector bundle $J^r\cE$ is said to be \emph{singular} if there exists a point $x \in X$ such that $s(x) \in \fT|_x$. A section that is not singular is said to be \emph{non-singular}.
\end{definition}

\begin{example}
If $\cE$ is a line bundle, we may take $\fT$ to be the zero section of $J^1\cE$. It is an admissible Taylor condition and a singular section is one that vanishes at a point on $X$ where its derivative also vanishes. In particular, if $s$ is a non-singular section, its zero set $Z(s) := \{x \in X \mid s(x)=0 \} \subset X$ is a smooth submanifold.
\end{example}

When talking about spaces of sections $\Gamma$, we will use the subscript ``$\mathrm{ns}$" to denote the subspace of \emph{\textbf{n}on-\textbf{s}ingular} sections. The following is our main result.
\begin{theorem}\label{thm:mainthm}
Let $r \geq 0$ and $N \geq 1$ be integers. Let $\cE$ be an $\left( (N+1) \cdot (r+1) - 1\right)$-jet ample vector bundle on $X$ and let $\fT \subset J^r\cE$ be an admissible Taylor condition. The composition
\[
    \Gamma_{\mathrm{hol,ns}}\left( \cE \right) \overset{j^r}{\lra} \Gamma_{\mathrm{hol,ns}}\left( J^r\cE \right) \hookrightarrow \Gamma_{\cC^0, \mathrm{ns}}\left( J^r\cE \right)
\]
induces an isomorphism in homology:
\[
    H_*\left(\Gamma_{\mathrm{hol, ns}}(\cE); \bZ\right) \lra H_*\left(\Gamma_{\cC^0, \mathrm{ns}}(J^r\cE); \bZ\right)
\]
in the range of degrees $* < N(\cE,r)\cdot (e(\fT)-1) + e(\fT) - 2$.
\end{theorem}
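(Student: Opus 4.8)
The plan is to follow the Vassiliev–Tommasi strategy of comparing both section spaces through a common finite-dimensional model and then identifying the homology via a spectral sequence built from a "discriminant" stratification. Concretely, one fixes a large integer $d'$ and, inside the finite-dimensional affine space $\Gammahol(\cE \otimes \cL^{d'})$ (for an auxiliary very ample $\cL$, so that the bundle stays jet ample in the relevant range), one considers the closed \emph{discriminant} $\Sigma$ of singular sections. Alexander duality identifies the homology of the complement $\Gammahol(\cE \otimes \cL^{d'}) \setminus \Sigma$, in a range, with the Borel–Moore homology (equivalently compactly supported cohomology $\CCH^*$) of $\Sigma$. One then resolves $\Sigma$ by the standard Vassiliev simplicial resolution: the $p$-th stratum parametrises a section together with $p+1$ distinct points of $X$ at which $j^r(s)(x) \in \fT|_x$, glued along a simplex of barycentric coordinates. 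The key input making this resolution computable is Lemma~\ref{lemma:multijet}: as long as $p+1 \leq N(\cE,r)+1$ the simultaneous jet evaluation $j^r_{(x_0,\dots,x_p)}$ is surjective, so the fibre over a configuration of $p+1$ points is an affine subspace, and the $p$-th term of the resulting spectral sequence is the compactly supported cohomology of a bundle over $\Conf_{p+1}(X)/\mathfrak{S}_{p+1}$ with fibre $\prod_i \fT|_{x_i}$ (up to the simplex direction and a codimension shift).

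Next I would run the \emph{same} construction for the space of continuous sections of $J^r\cE$: here there is no jet map to worry about, the "fibre over a configuration" is literally $\prod_i \fT|_{x_i}$ by definition of singular (no surjectivity lemma needed), and the analogous spectral sequence has \emph{exactly} the same terms in low $p$. The semi-algebraicity hypothesis on $\fT$ (Definition~\ref{def:semialgebraic}) enters precisely here and in Section~\ref{subsection:laststepfiltration}: it guarantees $\fT$ has cell decompositions and a well-defined dimension, so that the strata are honest (locally trivial enough) fibrations, their compactly supported cohomology is finitely generated, and truncating the resolution after the $N(\cE,r)$-th stratum changes nothing in the range we care about. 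The excess codimension $e(\fT)$ controls, by a dimension count on the $p$-th stratum — a bundle of fibre dimension $2(\rank_\bC J^r\cE - 1) - (e(\fT)-2)\cdot\frac{1}{?}$… more precisely of real codimension growing like $p\cdot(e(\fT)-1)$ — the vanishing line of the spectral sequence, and feeding this into Alexander duality turns it into the asserted range $* < N(\cE,r)\cdot(e(\fT)-1) + e(\fT) - 2$. Comparing the two spectral sequences stratum by stratum, the comparison map $j^r$ induces an isomorphism on each $E_1$-term in total degree below the line (this is where Lemma~\ref{lemma:multijet} is used to match the algebraic-side fibres with the topological-side fibres), hence an isomorphism on the abutments in that range.

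I expect the main obstacle to be the \emph{last step of the filtration} and the boundary behaviour of the Vassiliev resolution — i.e. controlling what happens for $p$ near and beyond $N(\cE,r)$, where Lemma~\ref{lemma:multijet} no longer applies and the fibres of the algebraic construction need not be affine spaces. One must show that the contribution of these high strata lies above the claimed range; this is the role of Section~\ref{subsection:laststepfiltration} and the one place the semi-algebraic machinery (projections, closures, dimension bounds of \cite{bochnak_real_1998}) is genuinely indispensable, because one needs uniform dimension estimates for the projection of $\fT$-loci over arbitrarily large configuration spaces. A secondary technical point is making Alexander duality precise in the infinite-dimensional setting: one works in finite-dimensional approximations $\Gammahol(\cE \otimes \cL^{d'})$, checks that the inclusions of section spaces are highly connected as $d' \to \infty$ (again using jet ampleness so that the relevant evaluation maps stay surjective), and verifies that the homology stabilises, so that the finite-dimensional computation computes $H_*(\Gamma_{\mathrm{hol,ns}}(\cE))$ in the stated range; the same approximation argument on the continuous side identifies $\Gamma_{\cC^0,\mathrm{ns}}(J^r\cE)$ with its finite-dimensional model. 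Once these two delicate points are in place, the identification of the two $E_1$-pages and the naturality of the resolution finish the proof.
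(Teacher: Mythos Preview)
Your outline captures the Vassiliev resolution and the role of Lemma~\ref{lemma:multijet} correctly, and you have rightly flagged that the last step of the filtration (Section~\ref{subsection:laststepfiltration}) is where the semi-algebraic hypotheses really bite. Where your proposal has a genuine gap is the passage from holomorphic to \emph{continuous} sections of $J^r\cE$.

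You propose to run the same Alexander-duality-plus-resolution argument for $\Gamma_{\cC^0,\mathrm{ns}}(J^r\cE)$ and to handle the infinite-dimensionality via finite-dimensional approximations $\Gammahol(\cE\otimes\cL^{d'})$. But these spaces consist of \emph{holomorphic} sections: their union inside $\Gamma_{\cC^0}(J^r\cE)$ is still a (very thin) subspace of holomorphic sections, and nothing in your sketch explains why that union should have the homotopy type of the full continuous section space. Increasing $d'$ makes the jet-evaluation maps more and more surjective, but it does not produce a single non-holomorphic section, so there is no mechanism by which these approximations can ``see'' $\Gamma_{\cC^0,\mathrm{ns}}(J^r\cE)$.

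The paper's fix is an idea you do not mention: one introduces the interpolating spaces
\[
    \Gammahol\big((J^r\cE)\otimes\cL^k\big)\otimes_\bC \overline{\Gammahol(\cL^k)}
\]
(Section~\ref{section:interpolation}), which are finite-dimensional but, thanks to the anti-holomorphic factor and the trivialisation $\cL\otimes\overline{\cL}\cong\cO$, map to honestly non-holomorphic continuous sections of $J^r\cE$. The Vassiliev spectral sequence is rebuilt for each of these spaces (they are still semi-algebraic enough for Section~\ref{subsection:laststepfiltration} to go through), and the stabilisation maps between consecutive $k$ are shown to be homology isomorphisms in the range exactly as for $j^r$. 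The final, separate step is a Stone--Weierstrass argument (Section~\ref{section:holcontinuouscompare}) proving that the colimit over $k$ is weakly equivalent to $\Gamma_{\cC^0,\mathrm{ns}}(J^r\cE)$. Without this anti-holomorphic interpolation and the density argument, your comparison stops at $\Gamma_{\mathrm{hol,ns}}(J^r\cE)$ and never reaches the continuous section space.
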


\subsection{Outline of the paper}

There are two key ingredients in the proof of the main theorem~\ref{thm:mainthm}. The first one is a spectral sequence à la Vassiliev (see \cite[Chapter III]{vassiliev_complements_1994} for an analogous statement in the case of smooth sections, and \cite{vassiliev_how_1999} for more explicit computations in small degrees). The starting idea is that one should study the space of singular sections and deduce the homology of the space of non-singular sections via Alexander duality. The former has a natural filtration given by counting the number of singularities and it is used to construct a spectral sequence converging to its cohomology. Comparing spectral sequences allows us to compare sections of $\cE$ and $J^r\cE$. The second ingredient is a version of the classical Stone--Weierstrass theorem adapted from the work of Mostovoy \cite{mostovoy_spaces_2006} which allows us to compare holomorphic and continuous sections of $J^r\cE$.

We first explain how to resolve the singular subspaces and construct the Vassiliev spectral sequence in Section~\ref{section:resolution}. We study its first page in Section~\ref{section:cohomologygroups}. In Section~\ref{section:interpolation}, we explain how to go from holomorphic sections to continuous sections. Then, in Section~\ref{section:compareSS}, we construct a morphism of spectral sequences and use it to compare various spaces of sections. We finish proving our main theorem in Section~\ref{section:holcontinuouscompare}. Lastly, in Section~\ref{section:app}, we apply our results to study spaces of non-singular sections of a very ample line bundle on a projective variety.

\section{Resolution of singularities}\label{section:resolution}

In this section, we choose an admissible Taylor condition $\fT \subset J^r\cE$ inside the $r$-th jet bundle of a holomorphic vector bundle $\cE$ on $X$, and we will write for brevity
\[
    \Gamma = \Gammahol\left(\cE\right) \quad \text{and} \quad \Sigma = \Gammahol\left(\cE\right) - \Gamma_{\mathrm{hol, ns}}\left(\cE\right)
\]
for the vector space $\Gamma$ of all holomorphic sections of $\cE$ and its subspace $\Sigma$ of singular sections. We also define the \emph{singular space} of a section $f \in \Gamma$
\begin{align}\label{eqn:defsingularset}
    \mathrm{Sing}(f) := \left\{ x \in X \mid j^r(f)(x) \in \fT \right\} \subset X
\end{align}
as the space of points where $f$ is singular (as in Definition~\ref{def:singular}). Our final goal, Theorem~\ref{thm:mainthm}, is to understand the homology of the space of non-singular sections $\Gamma_{\mathrm{hol, ns}}\left(\cE\right) = \Gamma - \Sigma$. By Alexander duality
\[
    \CCH^{i}(\Sigma) \cong \widetilde{H}_{2\dim_\bC \Gamma - i -1}(\Gamma - \Sigma),
\]
it is equivalent to understand the compactly supported \v{C}ech cohomology of its complement $\Sigma$. To achieve that, we want to construct a spectral sequence converging to $\CCH^*(\Sigma)$. This spectral sequence arises from a resolution of the space $\Sigma$ which we define in this section.

\subsection{Construction of the resolution}

We will construct a space $R\fX \to \Sigma$ mapping surjectively to the singular subspace $\Sigma$. The inverse image of a section $f\in \Sigma$ with $j+1$ singularities will be a $j$-simplex $\Delta^j$. This will allow us to show that $R\fX \to \Sigma$ induces an isomorphism in cohomology with compact supports (up to some modifications). The space $R\fX$ will be advantageously filtered by subspaces $R^j\fX$ related via pushout diagrams resembling the skeletal decomposition of a simplicial space. This filtration then yields a spectral sequence computing the cohomology of $R\fX$, hence that of $\Sigma$.

This is inspired by the so-called truncated resolution of Mostovoy~\cite{mostovy_truncated_2012} but written in a more functorial way as in~\cite{vokrinek_generalization_2007}.

\bigskip

In what follows, the space $\Gamma$ is given its canonical topology coming from the fact that it is a finite dimensional complex vector space. Let $\catF$ be the category whose objects are the finite sets $[n] := \{0,\ldots,n\}$ for $n \geq 0$ and whose morphisms are \emph{all} maps of sets $[n] \to [m]$. Let $\catTop$ be the category of topological spaces and continuous maps between them. We define the following functor
\begin{equation}\label{eqn:defoffunctorX}
\begin{split}
    \fX \colon \catF^\op &\lra \catTop \\
                [n] &\longmapsto \fX[n] := \{(f,s_0,\ldots,s_n) \in \Gamma \times X^{n+1} \mid \forall i, \ s_i \in \mathrm{Sing}(f) \}
\end{split}
\end{equation}
where $\fX[n]$ is given the subspace topology from $\Gamma \times X^{n+1}$. On morphisms, for a map of sets $g \colon [n] \to [m]$, we define 
\begin{equation*}
\begin{split}
    \fX(g) \colon \fX[m] &\lra \fX[n] \\
            (f,s_0,\ldots,s_m) &\longmapsto (f,s_{g(0)}, \ldots, s_{g(n)}).
\end{split}
\end{equation*}
For an integer $k \geq 0$, we denote by $\catF_{\leq k}$ the full sub-category of $\catF$ on objects $[n]$ for $n \leq k$. Let us also write 
\[
    |\Delta^n| = \{ (t_0, \ldots, t_n) \mid \forall i, 0 \leq t_i \leq 1 \text{ and } t_0 + \cdots + t_n = 1 \} \subset \bR^{n+1}
\]
for the standard topological $n$-simplex, and denote by $\partial |\Delta^n|$ its boundary. In particular, the assignment $[n] \mapsto |\Delta^n|$ gives a functor $\catF \to \catTop$. For an integer $j \geq 0$, we define the \emph{$j$-th geometric realisation of $\fX$} by the following coend:
\begin{equation}\label{eqn:jthgeometricrealisationcoend}
\begin{split}
    R^j\fX :=& \int^{[n] \in \catF_{\leq j}} \fX[n] \times |\Delta^n| \\
            =& \left( \bigsqcup_{0 \leq n \leq j} \fX[n] \times |\Delta^n| \right) / \sim
\end{split}
\end{equation}
where the equivalence relation $\sim$ is generated by $(\fX(g)(z),t) \sim (z,g_*(t))$ for all maps $g \colon [n] \to [m]$ in $\catF$. (Here $g_* \colon |\Delta^n| \to |\Delta^m|$ denotes the usual map induced on the simplices by functoriality.) This is of course reminiscent of the classical geometric realisation of a simplicial space. Note however that here a cell $|\Delta^n|$ in the geometric realisation is indexed by an \emph{unordered} set of singularities, even though the functor $\fX$ is defined using ordered tuples. Indeed, all the permutations $[n] \to [n]$ are valid morphisms in our category $\catF$.

\bigskip

Let $j \geq 1$ be an integer. We now describe how $R^j\fX$ may be obtained from $R^{j-1}\fX$ via a pushout diagram. Let $L_j$ be the following set:
\begin{equation}\label{eqn:deflatchingobject}
    L_j := \{ (f,s_0, \ldots, s_j) \in \Gamma \times X^{j+1} \mid \exists l \neq k \text{ such that } s_l = s_k \} \subset \fX[j]
\end{equation}
topologised as a subspace of $\fX[j]$. This should be thought of as the analogue of the ``latching object'' of a simplicial space. We denote by 
\[
    L_j \times_{\fS_{j+1}} |\Delta^j|
\]
the quotient space of $L_j \times |\Delta^j|$ by the symmetric group $\fS_{j+1}$ acting on $L_j$ by permuting the singularities $s_i$, and on $|\Delta^j|$ by permuting the coordinates. Denote by $\widehat{\cdot}$ the omission of an element in a tuple.
\begin{lemma}\label{lemma:smallwelldefinedmap}
The formula
\[
    \left( (f,s_0,\ldots,s_j), (t_0, \ldots, t_j) \right) \longmapsto 
    \begin{cases} \left( (f,s_0,\ldots,\widehat{s_l},\ldots,s_j), (t_0, \ldots, t_k + t_l, \ldots, \widehat{t_l}, \ldots, t_j) \right) \\
     \text{ if there exists $k \neq l$ such that $s_l = s_k$ } \end{cases}
\]
gives a well-defined map $L_j \times_{\fS_{j+1}} |\Delta^j| \to R^{j-1}\fX$.
\end{lemma}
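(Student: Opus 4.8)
The plan is to check that the proposed assignment is independent of all the choices made in its description, and that it descends to the quotient by $\fS_{j+1}$, so that it genuinely defines a continuous map into $R^{j-1}\fX$. First I would address \emph{well-definedness of the recipe on $L_j\times|\Delta^j|$ itself}: given $(f,s_0,\dots,s_j)\in L_j$ there may be many pairs $k\neq l$ with $s_l=s_k$ (indeed there may be several coincidences among the points, or a point repeated three or more times), so I must show that the resulting element of $R^{j-1}\fX$ does not depend on which coincidence we use to perform the ``contraction''. The key observation is that the target is not $\fX[j-1]\times|\Delta^{j-1}|$ but its image in the coend $R^{j-1}\fX$, where the relation $(\fX(g)(z),t)\sim(z,g_*(t))$ for $g\colon[n]\to[m]$ is imposed. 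Concretely, if $s_l=s_k$ and also $s_{l'}=s_{k'}$, then forgetting $s_l$ (and adding $t_l$ to $t_k$) versus forgetting $s_{l'}$ (and adding $t_{l'}$ to $t_{k'}$) yield two elements of $\fX[j-1]\times|\Delta^{j-1}|$ that become equal in $R^{j-1}\fX$: both are identified, via suitable surjections in $\catF$, with the same configuration in which all coincident points have been merged. I would make this precise by exhibiting, for any two choices of contraction, a common further degeneracy and two morphisms in $\catF$ witnessing the identification in the coend; a clean way is to observe that the point $((f,s_0,\dots,s_j),(t_0,\dots,t_j))$, viewed through \emph{any} coincidence, maps to the class represented by $(f,\text{distinct points underlying the }s_i)$ with barycentric weights obtained by summing the $t_i$ over each fibre of $i\mapsto s_i$, and this class manifestly does not reference any choice.

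Next I would check \emph{$\fS_{j+1}$-equivariance}, i.e. that the map $L_j\times|\Delta^j|\to R^{j-1}\fX$ is constant on $\fS_{j+1}$-orbits, so that it factors through $L_j\times_{\fS_{j+1}}|\Delta^j|$. This is essentially immediate from the ``choice-free'' description above: a permutation $\sigma\in\fS_{j+1}$ acts on $(f,s_0,\dots,s_j)$ by relabelling the indices and simultaneously on $(t_0,\dots,t_j)$ by the same relabelling, hence it does not change the underlying set of distinct points nor the total weight sitting over each of them, nor the value $f$. Since the map only depends on these data, it is $\fS_{j+1}$-invariant. (One should also note $\sigma$ sends $L_j$ to $L_j$, which is clear since the defining condition ``$\exists l\neq k,\ s_l=s_k$'' is permutation-invariant.) Continuity is then routine: on the open subsets of $L_j\times|\Delta^j|$ where a fixed coincidence pattern holds, the formula is a composition of the continuous inclusion $\fX[j]\hookrightarrow\Gamma\times X^{j+1}$, a coordinate projection/summation, and the quotient map to $R^{j-1}\fX$, and these local descriptions agree on overlaps by the well-definedness just proved; one also uses that $L_j\times_{\fS_{j+1}}|\Delta^j|$ carries the quotient topology.

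The main obstacle I anticipate is the first step — \emph{disentangling the combinatorics of multiple coincidences} and verifying that all the candidate values really do coincide in the coend $R^{j-1}\fX$ rather than merely in $\bigsqcup_n\fX[n]\times|\Delta^n|$. The subtlety is that $\catF$ contains all set maps (not just order-preserving ones), so the relevant identifications in the coend are generated by genuinely non-injective and non-monotone $g\colon[n]\to[m]$; one must be a little careful that the barycentric-weight bookkeeping on $|\Delta^n|$ is compatible with the chosen $g$'s, i.e. that $g_*$ on simplices does send the weight tuple one expects. Once it is set up correctly — by always reducing to the ``fully merged'' representative indexed by the set of distinct singular points of $f$ lying in $\mathrm{Sing}(f)$, with weights given by the pushforward of $(t_i)$ along $i\mapsto s_i$ — the verification becomes a short diagram chase in $\catF$, and equivariance and continuity follow with little extra work.
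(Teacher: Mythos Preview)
Your proposal is correct and follows the same approach as the paper: the paper's proof consists of a single sentence observing that the coend identifications in the definition of $R^{j-1}\fX$ force any choice of indices $k,l$ to yield the same class in the quotient. Your write-up is a careful elaboration of exactly this point, including the useful ``fully merged'' representative and the equivariance and continuity checks that the paper leaves implicit.
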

\begin{proof}
The formula appears ill-defined as we are choosing arbitrarily two indices $k$ and $l$. The identifications made by the coend formula~\eqref{eqn:jthgeometricrealisationcoend} show that any choice will yield the same class in the quotient.
\end{proof}
Recall that a point $t = (t_0, \ldots, t_j) \in |\Delta^j|$ is in the boundary $\partial |\Delta^j|$ if one of its coordinates vanishes. An argument similar to the proof of Lemma~\ref{lemma:smallwelldefinedmap} above gives the following.
\begin{lemma}\label{lemma:secondsmallwelldefinedmap}
The formula
\[
    \left( (f,s_0,\ldots,s_j), (t_0, \ldots, t_j) \right) \longmapsto 
    \left( (f,s_0,\ldots,\widehat{s_l},\ldots,s_j), (t_0, \ldots, \widehat{t_l}, \ldots, t_j) \right)
     \text{ if $t_l = 0$ }
\]
gives a well-defined map $\fX[j] \times_{\fS_{j+1}} \partial |\Delta^j| \to R^{j-1}\fX$. \qed
\end{lemma}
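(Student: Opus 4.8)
The plan is to imitate the proof of Lemma~\ref{lemma:smallwelldefinedmap}. Write $F$ for the assignment displayed in the statement; it is defined on $\fX[j]\times\partial|\Delta^j|$ after choosing, for a boundary point $t=(t_0,\dots,t_j)$, some index $l$ with $t_l=0$. There are three things to check: that $F$ takes values in $R^{j-1}\fX$, that this value is independent of the choice of $l$, and that $F$ is constant on the orbits of the $\fS_{j+1}$-action, so that it descends to $\fX[j]\times_{\fS_{j+1}}\partial|\Delta^j|$.

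For the first point, if $t_l=0$ then deleting $s_l$ from $(f,s_0,\dots,s_j)$ and relabelling by the unique order-preserving bijection $[j-1]\cong[j]\setminus\{l\}$ gives an element of $\fX[j-1]$, as the remaining points still lie in $\mathrm{Sing}(f)$; likewise deleting $t_l=0$ leaves the barycentric coordinates summing to $1$, hence a point of $|\Delta^{j-1}|$. Postcomposing with the canonical map $\fX[j-1]\times|\Delta^{j-1}|\to R^{j-1}\fX$ yields the candidate value. On the closed subset $\{t_l=0\}$ this formula is manifestly continuous, so once independence of $l$ is settled, continuity of $F$ follows by gluing over the finite closed cover $\partial|\Delta^j|=\bigcup_l\{t_l=0\}$, and then the induced map on the quotient is continuous as well.

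The two compatibilities are both instances of the coend relations~\eqref{eqn:jthgeometricrealisationcoend} defining $R^{j-1}\fX$. For independence of $l$: if $t_l=t_{l'}=0$ with $l<l'$, the point $F$ produces from the choice $l$ has a vanishing barycentric coordinate in the slot coming from the original index $l'$; the coface map $[j-2]\hookrightarrow[j-1]$ that skips this slot is a morphism of $\catF_{\leq j-1}$, and the associated relation identifies our class with the image in $R^{j-1}\fX$ of the element of $\fX[j-2]\times|\Delta^{j-2}|$ obtained by deleting \emph{both} of $s_l,s_{l'}$ and $t_l,t_{l'}$. By symmetry the choice $l'$ gives the same class, so the two agree. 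For $\fS_{j+1}$-equivariance: given $\pi\in\fS_{j+1}$ and $z=(f,s_0,\dots,s_j)$, the two points $F(\fX(\pi)(z),t)$ and $F(z,\pi_*(t))$ are built from the same data but read off through the two order-preserving identifications $[j-1]\cong[j]\setminus\{l\}$ and $[j-1]\cong[j]\setminus\{\pi(l)\}$, and these are intertwined by the restriction $\pi\colon [j]\setminus\{l\}\cong[j]\setminus\{\pi(l)\}$; hence the two points differ by the permutation $\sigma\in\fS_j$ of $[j-1]$ obtained by transporting this restriction across the identifications. Since $\sigma\colon[j-1]\to[j-1]$ is a morphism of $\catF_{\leq j-1}$, the corresponding coend relation identifies the two points in $R^{j-1}\fX$, and $F$ descends to the claimed map.

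I expect the only genuine subtlety to be the index bookkeeping. In particular, one cannot argue directly with the coface map $[j-1]\hookrightarrow[j]$: the object $[j]$ is not in $\catF_{\leq j-1}$, so that relation is simply absent from $R^{j-1}\fX$. The remedy, as above, is to route everything through the lower faces $\fX[j-2]\times|\Delta^{j-2}|$ and through honest permutations of $[j-1]$, all of which live in $\catF_{\leq j-1}$; once this is set up, what remains is a routine permutation chase.
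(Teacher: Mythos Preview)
Your proof is correct and follows the same approach the paper indicates: the paper itself gives no proof beyond ``an argument similar to the proof of Lemma~\ref{lemma:smallwelldefinedmap}'', and your argument is precisely that, carried out in full detail by invoking the coend relations~\eqref{eqn:jthgeometricrealisationcoend}. Your observation that the coface $[j-1]\hookrightarrow[j]$ is unavailable in $\catF_{\leq j-1}$, so that both independence-of-$l$ and $\fS_{j+1}$-invariance must be routed through maps with codomain $[j-1]$ or smaller, is a genuine point the paper leaves implicit.
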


Consider the following pushout diagram of spaces:
\begin{center}
% https://tikzcd.yichuanshen.de/#N4Igdg9gJgpgziAXAbVABwnAlgFyxMJZABgBpiBdUkANwEMAbAVxiRABkB9AKwAIAdfngC28TsEEAzAMrjuAagCMAX2UD+aOgCc8jAD6CAIjAY46APW56Qy0uky58hFGUVVajFmy59BIsRL8MnJKqrwG-MamFlY2diAY2HgERIrk7vTMrIgg6pIAGsjcFOr+cOJSssAKKmqCmjpYjOFGJmaW1rb2SU6ppG7UmV45ggwwkjgAFLw+pVii5YHB1aFqEVHtVoJaWADmABY4AJTqAEZ7AMZMaOpjE5NShcVzCxVBVTVh9dq6DC2RbRiER2B2ONncMCgu3gRFAki0EGESDIIBwECQaVRdCwDDY+wgEAA1nE4QikYgUWikAAmahmHF4gnErogeGIml09GIADMLLZ5MxVJ5ygoyiAA
\begin{tikzcd}
L_j \times_{\fS_{j+1}} \partial|\Delta^j| \arrow[d, hook] \arrow[r, hook]  \arrow[dr, phantom, "\ulcorner", very near end] & { \fX[j] \times_{\fS_{j+1}} \partial |\Delta^j|} \arrow[d]                                                           \\
L_j \times_{\fS_{j+1}} |\Delta^j| \arrow[r]                               & {\left( L_j \times_{\fS_{j+1}} |\Delta^j|\right) \bigcup \left(\fX[j] \times_{\fS_{j+1}} \partial |\Delta^j|\right)}.
\end{tikzcd}
\end{center}
Equivalently, the pushout is the union of the top-right and bottom-left spaces inside $\fX[j] \times_{\fS_{j+1}} |\Delta^j|$.
The maps defined above in Lemma~\ref{lemma:smallwelldefinedmap} and Lemma~\ref{lemma:secondsmallwelldefinedmap} glue to a continuous map
\[
    \alpha_{j-1} \colon \left( L_j \times_{\fS_{j+1}} |\Delta^j|\right) \bigcup \left(\fX[j] \times_{\fS_{j+1}} \partial |\Delta^j|\right) \lra R^{j-1}\fX.
\]
The natural map $\fX[j] \times |\Delta^j| \to R^j\fX$ factors through the quotient by the symmetric group action and gives a map
\[
    \beta_j \colon \fX[j] \times_{\fS_{j+1}} |\Delta^j| \lra R^j\fX.
\]
From the coend formula~\eqref{eqn:jthgeometricrealisationcoend} and the inclusion of the full sub-category $\catF_{\leq j-1} \subset \catF_{\leq j}$, we also get a natural map $R^{j-1}\fX \to R^j\fX$. We are now ready to state the

\begin{proposition}
The following square is a pushout diagram of topological spaces:
\begin{equation}\label{eqn:resolutionpushoutdiagram}
% https://tikzcd.yichuanshen.de/#N4Igdg9gJgpgziAXAbVABwnAlgFyxMJZABgBpiBdUkANwEMAbAVxiRABkB9AKwAIAdfngC28TsEEAzAMrjuAagCMAX2W8APoIAiMBjjoA9buoH8ARlgDmAYyZpTkgBrJuFUyLET+MuUtWm0OgAnPEYNbV19I3UQZVJ0TFx8QhRFcipaRhY2ACUDYG4AWhUpR1j4kAxsPAIiNMUM+mZWRBA87lKAOnKE6uSiMgbqJuzW0pc3QQ84cSlZAr81TX4dPUNjWIyYKEt4IlBJIIhhJDIQHAgkAGZqfSwGNgALCAgAax6QQ+PT28vENMyzTYgkYaEedDkxWUHy+J3+vyQACY4gcjnCbuc-ojhlkWiBBGYYPoeCBqAw6ISGAAFRI1FIgIJWR44TbKIA
\begin{tikzcd}
{\left( L_j \times_{\fS_{j+1}} |\Delta^j|\right) \bigcup \left(\fX[j] \times_{\fS_{j+1}} \partial |\Delta^j|\right) } \arrow[d, hook] \arrow[r, "\alpha_{j-1}"] \arrow[dr, phantom, "\ulcorner", very near end] & R^{j-1}\fX \arrow[d] \\
{\fX[j] \times_{\fS_{j+1}} |\Delta^j|} \arrow[r, "\beta_j"']                                                                        & R^j\fX.             
\end{tikzcd}
\end{equation}
\end{proposition}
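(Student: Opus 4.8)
The plan is to identify $R^j\fX$ explicitly as a quotient of $\fX[j] \times |\Delta^j|$ glued onto $R^{j-1}\fX$, and to verify the universal property of the pushout directly. First I would recall from the coend formula~\eqref{eqn:jthgeometricrealisationcoend} that
\[
    R^j\fX = \left( R^{j-1}\fX \sqcup \left(\fX[j] \times |\Delta^j|\right) \right) / \sim,
\]
where the only new identifications (beyond those already present in $R^{j-1}\fX$) involve points of $\fX[j] \times |\Delta^j|$: namely, a degeneracy-type identification coming from surjections $g \colon [j] \to [n]$ with $n \le j-1$ hitting the pieces where two singularities coincide, and a face-type identification coming from injections $[n] \hookrightarrow [j]$ with $n = j-1$, i.e.\ restriction to the boundary $\partial|\Delta^j|$. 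The symmetric group $\fS_{j+1} \subset \catF$ accounts for the passage to $\times_{\fS_{j+1}}$. The key combinatorial observation is that any morphism $[m] \to [j]$ in $\catF_{\le j}$ with $m \le j$ factors as a surjection followed by an injection, so every identification attaching a lower cell is, up to the $\fS_{j+1}$-action, either already accounted for in $R^{j-1}\fX$ or realised through the subspace $\left( L_j \times_{\fS_{j+1}} |\Delta^j|\right) \cup \left(\fX[j] \times_{\fS_{j+1}} \partial |\Delta^j|\right)$ via the maps $\alpha_{j-1}$ of Lemma~\ref{lemma:smallwelldefinedmap} and Lemma~\ref{lemma:secondsmallwelldefinedmap}.

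Next I would check the universal property. Given a space $Z$ with continuous maps $R^{j-1}\fX \to Z$ and $\fX[j] \times_{\fS_{j+1}} |\Delta^j| \to Z$ agreeing after precomposition with $\alpha_{j-1}$ and the left-hand inclusion, one must produce a unique continuous $R^j\fX \to Z$. Set-theoretically this is immediate: a point of $R^j\fX$ is represented either by a point of $R^{j-1}\fX$ or by a point of $\fX[j] \times |\Delta^j|$, and the agreement on the glueing locus — which is exactly the set of points in $\fX[j] \times_{\fS_{j+1}} |\Delta^j|$ that are identified in $R^j\fX$ with something coming from $R^{j-1}\fX$, namely the image of $L_j \times_{\fS_{j+1}}|\Delta^j|$ together with $\fX[j]\times_{\fS_{j+1}}\partial|\Delta^j|$ — guarantees the map is well-defined. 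Continuity follows because all the spaces in sight carry quotient topologies from the evident disjoint unions: $R^j\fX$ is by construction a quotient of $\bigsqcup_{0\le n\le j}\fX[n]\times|\Delta^n|$, which is in turn a quotient of $R^{j-1}\fX \sqcup \left(\fX[j]\times|\Delta^j|\right)$, and the latter surjects onto the categorical pushout; the identification of topologies is where one uses that $|\Delta^j|$ is compact Hausdorff so that taking the $\fS_{j+1}$-quotient and forming coends commute with the relevant colimits.

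The main obstacle, and the step deserving the most care, is showing that the continuous bijection between the categorical pushout and $R^j\fX$ is a homeomorphism, i.e.\ that no extra identifications sneak in and that the quotient topologies match. Concretely one must verify that the equivalence relation generated by \emph{all} maps in $\catF_{\le j}$ on $\bigsqcup_{n \le j}\fX[n]\times|\Delta^n|$ restricts, on the top piece $\fX[j]\times|\Delta^j|$, to precisely the relation described by $L_j$ and $\partial|\Delta^j|$ modulo $\fS_{j+1}$ — this is the content of the epi-mono factorisation in $\catF$ combined with the fact that $\fX$ sends the relevant surjections to the inclusions $L_j \hookrightarrow \fX[j]$ (after collapsing repeated singularities) and injections to face maps. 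I would carry this out by a direct inspection of representatives, noting that a point $(z,t) \in \fX[j]\times|\Delta^j|$ with $z$ having all singularities distinct and $t$ in the interior of $|\Delta^j|$ is a free orbit under $\fS_{j+1}$ and is identified with nothing else, so its image is genuinely a new open cell, while all boundary and degenerate strata land in the pushout's glueing region. Once this identification is in place, the pushout square~\eqref{eqn:resolutionpushoutdiagram} follows formally.
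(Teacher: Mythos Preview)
Your proposal is correct and follows essentially the same route as the paper: both arguments rest on the quotient description of $R^j\fX$ as $\bigsqcup_{n\le j}\fX[n]\times|\Delta^n|$ modulo the coend relations, and on checking that the identifications involving the top piece $\fX[j]\times|\Delta^j|$ are exactly those captured by $L_j$, $\partial|\Delta^j|$, and the $\fS_{j+1}$-action. The paper phrases this as constructing mutually inverse continuous maps between the pushout $P$ and $R^j\fX$ (using the quotient topologies on both sides), whereas you phrase it as verifying the universal property directly; these are equivalent, and your explicit invocation of the epi--mono factorisation in $\catF$ makes transparent a step the paper leaves as ``one may check''.
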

\begin{proof}
We may construct the pushout $P$ as the quotient
\[
    P := \left( R^{j-1}\fX \bigsqcup \fX[j] \times_{\fS_{j+1}} |\Delta^j| \right) / \sim.
\]
One may check that the map $\beta_j$ together with the natural map $R^{j-1}\fX \to R^j\fX$ gives a map from the disjoint union above which factors through the quotient. Hence we get a well-defined map $P \to R^j\fX$. We now construct a continuous inverse. Recall that $R^j\fX$ is defined in~\eqref{eqn:jthgeometricrealisationcoend} as a quotient of
\[
    \left( \bigsqcup_{0 \leq n \leq j-1} \fX[n] \times |\Delta^n| \right) \bigsqcup \left( \fX[j] \times |\Delta^j| \right).
\]
The natural map $\left( \bigsqcup_{0 \leq n \leq j-1} \fX[n] \times |\Delta^n| \right) \to R^{j-1}\fX \to P$ together with the identity of $\fX[j] \times |\Delta^j|$ gives a map from the disjoint union that factors through the quotient and yields a well-defined map $R^j\fX \to P$. One may finally verify that it is the inverse of the map $P \to R^j\fX$ constructed above.
\end{proof}

We now turn to proving some topological results about our constructions.
\begin{lemma}\label{lemma:Xnisclosed}
For any integer $n \geq 0$, the subspace $\fX[n] \subset \Gamma \times X^{n+1}$ defined in~\eqref{eqn:defoffunctorX} is closed.
\end{lemma}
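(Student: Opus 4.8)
The plan is to show that the complement of $\fX[n]$ in $\Gamma \times X^{n+1}$ is open. Recall $\fX[n] = \{(f, s_0, \ldots, s_n) \mid \forall i,\ s_i \in \mathrm{Sing}(f)\}$, and $\mathrm{Sing}(f) = \{x \in X \mid j^r(f)(x) \in \fT\}$. Since $\fX[n] = \bigcap_{i=0}^n \mathrm{pr}_i^{-1}(Z)$ where $\mathrm{pr}_i \colon \Gamma \times X^{n+1} \to \Gamma \times X$ sends $(f, s_0, \ldots, s_n)$ to $(f, s_i)$ and $Z := \{(f,x) \in \Gamma \times X \mid j^r(f)(x) \in \fT\}$, and each $\mathrm{pr}_i$ is continuous, it suffices to treat the case $n = 0$, i.e. to prove that $Z$ is closed in $\Gamma \times X$.

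First I would observe that the map $\Gamma \times X \to J^r\cE$ sending $(f,x) \mapsto j^r(f)(x) \in (J^r\cE)|_x$ is continuous: for holomorphic sections the jet map $j^r$ is $\bC$-linear (indeed it is the map on global sections of a morphism of sheaves), so in a local holomorphic trivialisation of $\cE$ over a chart $U$, the components of $j^r(f)(x)$ are given by finitely many partial derivatives of $f$ up to order $r$, evaluated at $x$; these depend continuously (in fact holomorphically) jointly on $f$ and $x$ by the standard fact that differentiation and evaluation of holomorphic functions are continuous in the topology of $\Gamma$ (a finite-dimensional complex vector space, so all reasonable topologies agree). Hence this assignment defines a continuous map $\Psi \colon \Gamma \times X \to J^r\cE$ lifting the projection to $X$. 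Then $Z = \Psi^{-1}(\fT)$, and since $\fT$ is closed in $J^r\cE$ by the definition of an admissible Taylor condition (Definition~\ref{def:admissibletaylor}), $Z$ is closed. Consequently $\fX[n] = \bigcap_i \mathrm{pr}_i^{-1}(Z)$ is a finite intersection of closed sets, hence closed.

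The only mildly delicate point is verifying continuity of $\Psi$, i.e. that evaluating the $r$-jet is jointly continuous in the section and the base point. This is where one uses that $J^r\cE$ is genuinely a holomorphic (indeed algebraic) vector bundle on $X$ and that $j^r$ is induced by the sheaf morphism $d^r_{X,\cE}$: working in a local trivialisation, $\Psi$ is given by a tuple of expressions polynomial in the Taylor coefficients of $f$ at $x$, and since $\Gamma$ is finite-dimensional the coefficient functionals $f \mapsto (\partial^\alpha f)(x)$ are continuous and vary continuously with $x$ as well. I do not expect any real obstacle here; the lemma is essentially a formal consequence of the closedness of $\fT$ together with the continuity of jet evaluation.
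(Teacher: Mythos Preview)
Your proposal is correct and follows essentially the same approach as the paper: write $\fX[n]$ as the preimage of the closed set $\fT$ (or $\fT^{n+1}$) under the continuous jet-evaluation map. The paper does this in one step via the simultaneous evaluation $\mathrm{ev}\colon \Gamma \times X^{n+1} \to (J^r\cE)^{n+1}$ with $\fX[n] = \mathrm{ev}^{-1}(\fT^{n+1})$, whereas you first reduce to $n=0$ via the projections $\mathrm{pr}_i$; this is a purely organisational difference, and your extra care in justifying continuity of $\Psi$ is fine but not something the paper dwells on.
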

\begin{proof}
Let $\mathrm{ev} \colon \Gamma \times X^{n+1} \to (J^r\cE)^{n+1}$ be the simultaneous evaluation of the jet map $j^r$ (defined in~\eqref{eqn:jetmap}) at $(n+1)$ points of $X$. We observe directly from the definitions that $\fX[n] = \mathrm{ev}^{-1}(\fT^{n+1})$, hence is closed as the inverse image of a closed set.
\end{proof}

\begin{lemma}\label{lemma:rhonisproper}
For any $n \geq 0$, the map $\rho_n \colon \fX[n] \to \Gamma$ given by $(f,s_0,\ldots,s_n) \mapsto f$ is a proper map.
\end{lemma}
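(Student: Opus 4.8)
The plan is to realise $\rho_n$ as a composite of two manifestly proper maps and to feed in the compactness of $X$. Write $\rho_n$ as
\[
    \fX[n] \hookrightarrow \Gamma \times X^{n+1} \overset{\mathrm{pr}_1}{\lra} \Gamma,
\]
where the first map is the inclusion and the second is the projection onto the first factor. The one genuine input is that $X$, being projective, is compact, and hence so is the power $X^{n+1}$.

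First I would observe that $\mathrm{pr}_1$ is proper: for any compact $K \subseteq \Gamma$ one has $\mathrm{pr}_1^{-1}(K) = K \times X^{n+1}$, which is compact since it is a product of compact spaces. Next, by Lemma~\ref{lemma:Xnisclosed} the subspace $\fX[n]$ is closed in $\Gamma \times X^{n+1}$, so its inclusion is a closed embedding; closed embeddings are proper, because the preimage of a compact set $C$ is the closed subset $\fX[n] \cap C$ of $C$, hence compact. Since a composite of proper maps is proper, $\rho_n$ is proper. Equivalently, one can argue in a single step: for $K \subseteq \Gamma$ compact, $\rho_n^{-1}(K) = \fX[n] \cap (K \times X^{n+1})$ is a closed subset of the compact space $K \times X^{n+1}$, hence compact.

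I do not anticipate any real obstacle here: the argument uses only the compactness of $X$ — which is the standing projectivity hypothesis of the paper — together with the closedness of $\fX[n]$ established in Lemma~\ref{lemma:Xnisclosed}. One mild point worth recording is that all the spaces in play are metrizable, so properness in the sense of ``preimages of compacta are compact'' coincides with ``closed map with compact fibres''; in particular $\rho_n$ is also closed, which is presumably the form in which properness will be invoked in the sequel.
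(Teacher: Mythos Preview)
Your proof is correct and follows exactly the same approach as the paper: the projection $\Gamma \times X^{n+1} \to \Gamma$ is proper because $X^{n+1}$ is compact, and $\rho_n$ is its restriction to the closed subspace $\fX[n]$ (Lemma~\ref{lemma:Xnisclosed}), hence also proper. The paper states this in two sentences, and your remark that properness therefore gives closedness is indeed how it is used immediately afterwards.
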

\begin{proof}
The projection onto the first factor $\Gamma \times X^{n+1} \to \Gamma$ is proper as $X^{n+1}$ is compact. Hence so is its restriction $\rho_n$ to the closed subspace $\fX[n]$.
\end{proof}

In particular, the map $\rho_n$ is closed, so $\Sigma = \rho_1(\fX[1])$ is closed in $\Gamma$. We have natural projections maps $\fX[n] \times |\Delta^n| \to \fX[n] \overset{\rho_n}{\to} \Gamma$ for any $n \geq 0$. They give rise to a map
\begin{equation}\label{eqn:defoftauj}
    \tau_j \colon R^j\fX \lra \Sigma
\end{equation}
for every integer $j \geq 0$. 
\begin{lemma}\label{lemma:taujprojectionisproper}
For any integer $j \geq 0$, the map $\tau_j \colon R^j\fX \to \Sigma$ is a proper map.
\end{lemma}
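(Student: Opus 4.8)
The plan is to prove that $\tau_j \colon R^j\fX \to \Sigma$ is proper by induction on $j$, using the pushout description~\eqref{eqn:resolutionpushoutdiagram} of $R^j\fX$ in terms of $R^{j-1}\fX$. The base case $j=0$ is immediate: $R^0\fX = \fX[0] \times |\Delta^0| \cong \fX[0]$, and $\tau_0$ is just the map $\rho_0 \colon \fX[0] \to \Gamma$ with image $\Sigma$, which is proper by Lemma~\ref{lemma:rhonisproper}. (Note $\Sigma = \rho_0(\fX[0]) = \rho_1(\fX[1])$ since a section is singular iff it has at least one singularity.)

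For the inductive step, suppose $\tau_{j-1} \colon R^{j-1}\fX \to \Sigma$ is proper. First I would observe that the quotient $\fX[j] \times_{\fS_{j+1}} |\Delta^j|$ maps properly to $\Sigma$: the composite $\fX[j] \times |\Delta^j| \to \fX[j] \overset{\rho_j}{\to} \Gamma$ is proper (projection off the compact $|\Delta^j|$ followed by the proper map $\rho_j$ of Lemma~\ref{lemma:rhonisproper}), it lands in $\Sigma$, and properness passes to the quotient by the compact group $\fS_{j+1}$ since the quotient map $\fX[j] \times |\Delta^j| \to \fX[j] \times_{\fS_{j+1}} |\Delta^j|$ is a proper closed surjection and a preimage under the descended map pulls back to a preimage upstairs. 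Now $R^j\fX$ is, by the proposition, the pushout of $R^{j-1}\fX \leftarrow A \hookrightarrow \fX[j] \times_{\fS_{j+1}} |\Delta^j|$ where $A$ denotes the subspace $\left( L_j \times_{\fS_{j+1}} |\Delta^j|\right) \cup \left(\fX[j] \times_{\fS_{j+1}} \partial |\Delta^j|\right)$, and the right-hand vertical map $A \hookrightarrow \fX[j]\times_{\fS_{j+1}}|\Delta^j|$ is a closed inclusion. Concretely $R^j\fX = R^{j-1}\fX \sqcup \left( \fX[j]\times_{\fS_{j+1}}|\Delta^j|\right) / \!\sim$, and $\tau_j$ restricts to $\tau_{j-1}$ on the first piece and to the proper map above on the second.

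The key point is then a general topological fact: if $Y = Y_1 \cup Y_2$ is a space written as the union of two closed subspaces, and $g \colon Y \to Z$ is a continuous map such that $g|_{Y_1}$ and $g|_{Y_2}$ are both proper, then $g$ is proper. Indeed, for $K \subset Z$ compact (or more robustly, for any closed $K$, using the characterisation of properness as "universally closed" / "closed with compact fibres" valid in this locally compact Hausdorff setting), $g^{-1}(K) = (g|_{Y_1})^{-1}(K) \cup (g|_{Y_2})^{-1}(K)$ is a union of two compact sets, hence compact. I would apply this with $Y_1$ the image of $R^{j-1}\fX$ in $R^j\fX$ (closed, since the pushout is formed by attaching along a closed inclusion, so the map $R^{j-1}\fX \to R^j\fX$ is a closed embedding) and $Y_2$ the image of $\fX[j]\times_{\fS_{j+1}}|\Delta^j|$ (closed for the same reason). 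Their union is all of $R^j\fX$, and $\tau_j$ restricted to each is proper — to $Y_1$ it is $\tau_{j-1}$ by the inductive hypothesis, to $Y_2$ it is the proper map established in the previous paragraph.

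The main obstacle, or rather the main thing requiring care, is the interaction between properness and the quotient by $\fS_{j+1}$ together with the pushout: one must check that the maps $R^{j-1}\fX \to R^j\fX$ and $\fX[j]\times_{\fS_{j+1}}|\Delta^j| \to R^j\fX$ are closed embeddings onto closed subspaces covering $R^j\fX$, which follows from the explicit quotient description of the pushout and the fact that we are attaching along the closed subspace $A$. One should also make sure all spaces in sight are sufficiently nice (Hausdorff, and in fact the spaces $\fX[n]$ are closed in $\Gamma \times X^{n+1}$ by Lemma~\ref{lemma:Xnisclosed}, hence locally compact Hausdorff after noting $X$ is compact and $\Gamma$ is a finite-dimensional vector space; the realisations $R^j\fX$ are then Hausdorff as well) so that the "union of two proper restrictions is proper" lemma applies without set-theoretic pathologies. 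Everything else is bookkeeping.
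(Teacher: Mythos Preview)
Your inductive strategy has a genuine gap. You propose to decompose $R^j\fX = Y_1 \cup Y_2$ with $Y_1$ the image of $R^{j-1}\fX$ and $Y_2$ the image of $\beta_j \colon \fX[j]\times_{\fS_{j+1}}|\Delta^j| \to R^j\fX$, and then check that $\beta_j$ is a closed embedding so that $\tau_j|_{Y_2}$ can be identified with the proper map you built from $\rho_j$. But $\beta_j$ is \emph{not} an embedding: it is surjective. Indeed, any class in $R^{j-1}\fX$ represented by $(f,s_0,\ldots,s_n,t) \in \fX[n]\times|\Delta^n|$ with $n<j$ is, via the coend relations for a degeneracy $[n]\to[j]$, equivalent to a class in $\fX[j]\times|\Delta^j|$ (simply repeat one of the $s_i$'s to fill the extra slots and place $t$ on the corresponding face). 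Hence $Y_2 = R^j\fX$, the decomposition is trivial, and the induction is circular: you would be assuming exactly what you want to prove.

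The paper's argument is direct and in fact exploits the surjectivity of $\beta_j$ rather than fighting it. One shows $\tau_j$ is closed with compact fibres (equivalent to properness since $\Sigma$ is locally compact Hausdorff). Closedness: a closed set in $R^j\fX$ pulls back to a closed set in $\bigsqcup_{n\leq j}\fX[n]\times|\Delta^n|$, and each composite $\fX[n]\times|\Delta^n|\to\fX[n]\xrightarrow{\rho_n}\Sigma$ is closed by Lemma~\ref{lemma:rhonisproper}, so the image is a finite union of closed sets. Compact fibres: since $\beta_j$ is surjective, $\tau_j^{-1}(f) = \beta_j\bigl((\rho_j^{-1}(f)\times|\Delta^j|)/\fS_{j+1}\bigr)$, which is a continuous image of a compact set. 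Your correct observation that $\fX[j]\times_{\fS_{j+1}}|\Delta^j|\to\Sigma$ is proper is essentially this, and you could have concluded immediately: if $q$ is a continuous surjection and $\tau_j\circ q$ is proper with $q$ closed, then $\tau_j$ is proper. No induction is needed.
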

\begin{proof}
We have to show that the preimage of any compact set is compact. Equivalently, because $\Sigma$ is locally compact and Hausdorff, we will show that $\tau_j$ is a closed map with compact fibres.
From Lemma~\ref{lemma:rhonisproper}, for any $n$, the map $\rho_n$ is closed and hence so is the composition $\fX[n] \times |\Delta^n| \to \fX[n] \overset{\rho_n}{\to} \Gamma$. This implies that $\tau_j$ is closed. It remains to see that it has compact fibres. If $f \in \Sigma$, we observe that $\tau_j^{-1}(f) = \beta_j \left( \rho_j^{-1}(f) \right)$ which is compact as $\rho_j^{-1}(f)$ is, by Lemma~\ref{lemma:rhonisproper}.
\end{proof}

A major advantage of the pushout square~\eqref{eqn:resolutionpushoutdiagram} is that it allows us to prove the following topological lemma.
\begin{lemma}\label{lemma:paracompactHausdorff}
For any integer $j \geq 0$, the space $R^j\fX$ is paracompact and Hausdorff. Furthermore, the natural map $R^{j-1}\fX \to R^j\fX$ is a closed embedding.
\end{lemma}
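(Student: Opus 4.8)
The plan is to prove the statement by induction on $j$, using the pushout square~\eqref{eqn:resolutionpushoutdiagram} as the inductive step. For the base case $j = 0$ we have $R^0\fX = \fX[0] \times |\Delta^0| = \fX[0]$, which by Lemma~\ref{lemma:Xnisclosed} is a closed subspace of $\Gamma \times X$; since $\Gamma$ is a finite-dimensional complex vector space and $X$ is a compact metrisable space, the product $\Gamma \times X$ is metrisable, hence paracompact and Hausdorff, and so is the closed subspace $R^0\fX$. The remaining (and only genuinely nontrivial) work is the inductive step: assuming $R^{j-1}\fX$ is paracompact Hausdorff, deduce the same for $R^j\fX$ together with the fact that $R^{j-1}\fX \to R^j\fX$ is a closed embedding.

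For the inductive step I would first record that all four corners of~\eqref{eqn:resolutionpushoutdiagram} other than $R^{j-1}\fX$ are metrisable, hence paracompact Hausdorff: indeed $\fX[j]$ is a closed subspace of the metrisable space $\Gamma \times X^{j+1}$ (Lemma~\ref{lemma:Xnisclosed}), $L_j \subset \fX[j]$ is closed, and $|\Delta^j|$, $\partial|\Delta^j|$ are compact metrisable; finite products of metrisable spaces are metrisable, and the quotient of a metrisable (indeed compact-fibred, free-away-from-a-closed-set) $\fS_{j+1}$-action on such a space by a finite group is again metrisable, so $\fX[j] \times_{\fS_{j+1}} |\Delta^j|$ and the glued subspace $\left( L_j \times_{\fS_{j+1}} |\Delta^j|\right) \cup \left(\fX[j] \times_{\fS_{j+1}} \partial |\Delta^j|\right)$ are metrisable. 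The left-hand vertical map is a closed embedding (it is the inclusion of a closed subspace cut out by the condition that some coordinate of $|\Delta^j|$ vanishes or some pair of singularities coincide). Now I invoke the standard gluing lemma for paracompact Hausdorff spaces: if $A \hookrightarrow B$ is a closed embedding of paracompact Hausdorff spaces and $A \to C$ is a continuous map to a paracompact Hausdorff space, then the pushout $B \cup_A C$ is paracompact Hausdorff, and $C \to B \cup_A C$ is a closed embedding. Here $A$ is the top-left corner, $B = \fX[j] \times_{\fS_{j+1}} |\Delta^j|$, $C = R^{j-1}\fX$, and the pushout is $R^j\fX$; applying the lemma gives that $R^j\fX$ is paracompact Hausdorff and that $R^{j-1}\fX \to R^j\fX$ is a closed embedding, completing the induction.

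The main obstacle I anticipate is making the quotient-by-$\fS_{j+1}$ step clean: one must check that passing to the orbit space preserves metrisability (or at least paracompactness and Hausdorffness), which is true here because $\fS_{j+1}$ is finite acting on a metrisable space, so the quotient map is closed, proper, and the quotient is metrisable (e.g. by averaging a metric, or by the Bing--Nagata--Smirnov criterion), but it deserves an explicit sentence. A secondary point requiring care is the precise statement and citation of the gluing lemma for pushouts along closed embeddings in the paracompact Hausdorff category — one should either cite a reference (e.g. the treatment of pushouts in the category of $k$-spaces or the classical fact that adjunction spaces of paracompact spaces along closed cofibrations are paracompact) or give the short direct argument that a locally finite open cover of $R^j\fX$ can be assembled from refinements of locally finite covers of $B$ and $C$ using a neighbourhood of $A$ in $B$ that retracts appropriately. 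Everything else is routine point-set topology, and the inductive bookkeeping is straightforward once the single inductive step is in place.
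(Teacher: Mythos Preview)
Your proposal is correct and follows essentially the same approach as the paper: induction on $j$ with the base case handled by Lemma~\ref{lemma:Xnisclosed} and the inductive step by applying the gluing lemma for paracompact Hausdorff spaces to the pushout square~\eqref{eqn:resolutionpushoutdiagram}, using that the left vertical arrow is a closed embedding. You have in fact supplied more detail than the paper's own proof, which leaves the verification of metrisability of the quotient and the precise formulation of the gluing lemma implicit.
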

\begin{proof}
Firstly, from Lemma~\ref{lemma:Xnisclosed}, we know that $R^0\fX = \fX[0] \subset \Gamma \times X$ is a closed subset, hence is itself paracompact Hausdorff. Then the lemma is proven inductively using the pushout diagram~\eqref{eqn:resolutionpushoutdiagram} together with the fact that 
\[
    \left( \left( L_j \times_{\fS_{j+1}} |\Delta^j|\right) \bigcup \left(\fX[j] \times_{\fS_{j+1}} \partial |\Delta^j|\right) \right) \hookrightarrow \fX[j] \times_{\fS_{j+1}} |\Delta^j|
\]
is a closed embedding. 
\end{proof}
In the sequel, using the closed embedding of Lemma~\ref{lemma:paracompactHausdorff} just above, we will simply write $R^{j-1}\fX \subset R^j\fX$. For an integer $j \geq 0$, we let 
\begin{equation}\label{eqn:Yjsubspacepairwisedistinct}
    Y_j := \left\{ (f,s_0, \ldots, s_j) \in \fX[j] \mid s_l \neq s_k \text{ if } l \neq k \right\} = \fX[j] - L_j \subset \fX[j]
\end{equation}
be the subspace of $\fX[j]$ where the singularities are pairwise distinct. For later use, we record the following homeomorphism, which is a direct consequence of the pushout square~\eqref{eqn:resolutionpushoutdiagram} and the fact that the vertical maps therein are closed embeddings:
\begin{equation}\label{eqn:filtrationleafhomeo}
    R^j\fX - R^{j-1}\fX \cong Y_j \times_{\fS_{j+1}} \mathrm{Interior}(|\Delta^j|).
\end{equation}

\bigskip

Let us now discuss why $\tau_j \colon R^j\fX \to \Sigma$ needs to be slightly modified to obtain a meaningful ``resolution'' of $\Sigma$. The fibre $\tau_j^{-1}(f)$ above a section $f \in \Sigma$ that has at most $j+1$ singularities is by construction a $j$-simplex. Hence it is contractible and one might hope that $\tau_j$ induces an isomorphism in cohomology. This is unfortunately not the case. Indeed, $\tau_j^{-1}(f)$ is not contractible if $f$ has at least $j+2$ singularities. To fix this problem, we will modify $R^j(\Sigma)$ by gluing a cone over each fibre $\tau_j^{-1}(f)$ which is not contractible. The precise construction is as follows.

\bigskip

Let $N \geq 0$ be an integer. We let
\begin{equation}
    \Sigma_{\geq N+2} := \left\{ f \in \Gamma \mid \# \mathrm{Sing}(f) \geq N+2 \right\} \subset \Sigma
\end{equation}
denote the subspace of those sections with at least $N+2$ singularities. We denote by $\overline{\Sigma_{\geq N+2}}$ its closure in $\Sigma$ (or equivalently, in $\Gamma$). Observe that the surjectivity of the map $\tau_N$ implies the following equality:
\[
    \tau_N \left( \tau_N^{-1}\left(\overline{\Sigma_{\geq N+2}}\right)  \right) = \overline{\Sigma_{\geq N+2}}.
\]
We glue fibrewise a cone over each $f \in \overline{\Sigma_{\geq N+2}}$ by defining the space $R^N_{\text{cone}}(\Sigma)$ as the following homotopy pushout:
\begin{equation}\label{eqn:homotopypushoutresolution}
% https://tikzcd.yichuanshen.de/#N4Igdg9gJgpgziAXAbVABwnAlgFyxMJZABgBpiBdUkANwEMAbAVxiRAB12IaYAnBrGBjAABJxx0mAfQByAPWABaAIwBfABScAylgDmAWzpTgnXTACOImQGoATKoCUI1SFWl0mXPkIpl5KrSMLGwASnIynABmABqu7iAY2HgERGTKAfTMrIgcXDz8gsLaeobGphZWdqoubh5J3kR+6dSZwTlhMmXsODAAHjjAAMYEMNVR0QB0rgEwUGYIKKCRvBD6SGQgOBBIti1B2bkS0jIg1Ax0AEYwDAAKnsk+ILx6ABY4cUsra4gbW0h+mzoWAYbBeEAgAGsPiBlqt-tQ-ogAMy1GFfHYI7bI1QUVRAA
\begin{tikzcd}
\tau_N^{-1}\left(\overline{\Sigma_{\geq N+2}}\right) \arrow[d, "\tau_N"'] \arrow[r, hook] \arrow[dr, phantom, "^\mathrm{ho}\ulcorner", very near end] & R^N\fX \arrow[d]      \\
\overline{\Sigma_{\geq N+2}} \arrow[r]                                           & R^N_{\text{cone}}\fX.
\end{tikzcd}
\end{equation}
All three defining spaces in the corners of~\eqref{eqn:homotopypushoutresolution} map to $\Sigma$, hence we obtain a surjective projection map
\begin{equation}\label{eqn:piprojectionconed}
    \pi \colon R^N_{\text{cone}}\fX \lra \Sigma.
\end{equation}
We want to prove that $\pi$ induces an isomorphism in \v{Cech} cohomology with compact supports. We begin with a couple of lemmas.

\begin{lemma}\label{lemma:piisproper}
The map $\pi \colon R^N_{\text{cone}}\fX \lra \Sigma$ is proper.
\end{lemma}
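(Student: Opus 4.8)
The plan is to deduce properness of $\pi$ from properness of $\tau_N$ (Lemma~\ref{lemma:taujprojectionisproper}) together with the properness of the cone construction over the closed subspace $\overline{\Sigma_{\geq N+2}}$. First I would recall that $R^N_{\text{cone}}\fX$ is a homotopy pushout of the diagram in~\eqref{eqn:homotopypushoutresolution}, so concretely it is the double mapping cylinder
\[
    R^N_{\text{cone}}\fX = R^N\fX \bigsqcup_{\tau_N^{-1}(\overline{\Sigma_{\geq N+2}})} \left( \tau_N^{-1}(\overline{\Sigma_{\geq N+2}}) \times [0,1] \right) \bigsqcup_{\tau_N^{-1}(\overline{\Sigma_{\geq N+2}})} \overline{\Sigma_{\geq N+2}},
\]
where $\tau_N^{-1}(\overline{\Sigma_{\geq N+2}})\times\{0\}$ is glued into $R^N\fX$ via the inclusion and $\tau_N^{-1}(\overline{\Sigma_{\geq N+2}})\times\{1\}$ is collapsed onto $\overline{\Sigma_{\geq N+2}}$ via $\tau_N$. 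The projection $\pi$ is the identity on the $R^N\fX$ part (followed by $\tau_N$), the composite $\tau_N \circ \mathrm{pr}_1$ on the cylinder part, and the inclusion $\overline{\Sigma_{\geq N+2}} \hookrightarrow \Sigma$ on the last part.

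Next I would argue, as in the proof of Lemma~\ref{lemma:taujprojectionisproper}, that it suffices to show $\pi$ is a closed map with compact fibres, since $\Sigma$ is locally compact Hausdorff (being a closed subspace of the finite-dimensional vector space $\Gamma$) and $R^N_{\text{cone}}\fX$ is Hausdorff (it is built from paracompact Hausdorff spaces by the homotopy pushout along a closed embedding, cf. Lemma~\ref{lemma:paracompactHausdorff}). For the fibres: over $f \in \Sigma \setminus \overline{\Sigma_{\geq N+2}}$ the fibre is just $\tau_N^{-1}(f)$, compact by Lemma~\ref{lemma:taujprojectionisproper}; over $f \in \overline{\Sigma_{\geq N+2}}$ the fibre is $\tau_N^{-1}(f) \cup \left( \tau_N^{-1}(f) \times [0,1] \right) \cup \{*\}$, i.e. the (closed) cone on the compact space $\tau_N^{-1}(f)$, which is again compact. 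For closedness, I would note that each of the three pieces assembling $R^N_{\text{cone}}\fX$ maps to $\Sigma$ by a proper, hence closed, map: on $R^N\fX$ it is $\tau_N$ (proper by Lemma~\ref{lemma:taujprojectionisproper}); on the cylinder it is $\tau_N \circ \mathrm{pr}_1 \colon \tau_N^{-1}(\overline{\Sigma_{\geq N+2}}) \times [0,1] \to \overline{\Sigma_{\geq N+2}} \hookrightarrow \Sigma$, proper because $[0,1]$ is compact and $\tau_N$ restricted to the closed subspace $\tau_N^{-1}(\overline{\Sigma_{\geq N+2}})$ is still proper; on $\overline{\Sigma_{\geq N+2}}$ it is a closed inclusion. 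Since these three pieces are closed subspaces of $R^N_{\text{cone}}\fX$ covering it, and a map out of a finite union of closed subspaces is closed whenever its restriction to each is closed, $\pi$ is closed. Combined with compact fibres and the local compactness of $\Sigma$, this gives that $\pi$ is proper.

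The only subtle point — and what I would treat most carefully — is verifying that the three canonical maps from the pieces of the homotopy pushout into $R^N_{\text{cone}}\fX$ are indeed closed embeddings onto closed subspaces, so that the "closed map on a finite closed cover" argument applies; this rests on $\tau_N^{-1}(\overline{\Sigma_{\geq N+2}}) \hookrightarrow R^N\fX$ being a closed embedding, which holds because $\overline{\Sigma_{\geq N+2}}$ is closed in $\Sigma$ and $\tau_N$ is continuous. Everything else is a routine repackaging of Lemma~\ref{lemma:taujprojectionisproper} through the mapping-cylinder description. I do not expect any genuine obstacle here; the main care is bookkeeping with the explicit double mapping cylinder model of the homotopy pushout.
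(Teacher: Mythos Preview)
Your proposal is correct and follows essentially the same approach as the paper: both argue that $\pi$ is closed with compact fibres, using the double mapping cylinder description of the homotopy pushout, the properness of $\tau_N$ from Lemma~\ref{lemma:taujprojectionisproper}, and the same case split on whether $f$ lies in $\overline{\Sigma_{\geq N+2}}$. The only cosmetic difference is that the paper phrases closedness via the quotient map from the disjoint union (a set is closed iff its preimage in each summand is closed), whereas you phrase it via the images of the three summands being closed subspaces of $R^N_{\text{cone}}\fX$; both versions are valid and amount to the same thing.
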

\begin{proof}
We will prove that is it closed with compact fibres, which implies the properness. By definition of the homotopy pushout, $R^N_{\text{cone}}\fX$ is a quotient of the following disjoint union:
\[
    R^N\fX \ \bigsqcup \ \tau_N^{-1}\left(\overline{\Sigma_{\geq N+2}}\right) \times [0,1] \ \bigsqcup \ \overline{\Sigma_{\geq N+2}}.
\]
The map $\pi$ is induced by the following three maps: the projection $\tau_N \colon R^N\fX \to \Sigma$, the projection $\tau_N^{-1}\left(\overline{\Sigma_{\geq N+2}}\right) \times [0,1] \to \tau_N^{-1}\left(\overline{\Sigma_{\geq N+2}}\right) \to \Sigma$, and the inclusion $\overline{\Sigma_{\geq N+2}} \hookrightarrow \Sigma$. The first two are closed by Lemma~\ref{lemma:taujprojectionisproper} and the last one is the inclusion of a closed subset, hence closed.

Finally, we prove that the fibres of $\pi$ are compact. We saw in the proof of Lemma~\ref{lemma:taujprojectionisproper} that for any $f \in \Sigma$, the fibre $\tau_N^{-1}(f)$ was compact. Now, $\pi^{-1}(f)$ is either $\tau_N^{-1}(f)$ if $f \in \Sigma - \overline{\Sigma_{\geq N+2}}$ or a cone over it if $f \in \overline{\Sigma_{\geq N+2}}$. In any case it is compact.
\end{proof}

\begin{lemma}
The space $R^N_{\text{cone}}\fX$ is paracompact, locally compact, and Hausdorff.
\end{lemma}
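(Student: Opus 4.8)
The plan is to establish paracompactness and Hausdorffness by exhibiting $R^N_{\text{cone}}\fX$ as an iterated adjunction space of paracompact Hausdorff spaces, each gluing taking place along a \emph{closed} subspace, exactly in the spirit of the proof of Lemma~\ref{lemma:paracompactHausdorff}; local compactness will then come essentially for free from the properness of $\pi$.

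First I would replace the homotopy pushout~\eqref{eqn:homotopypushoutresolution} by an honest one. Write $C := \tau_N^{-1}\left(\overline{\Sigma_{\geq N+2}}\right)$, which is closed in $R^N\fX$ as the preimage of a closed set, hence is paracompact Hausdorff by Lemma~\ref{lemma:paracompactHausdorff}; write $B := \overline{\Sigma_{\geq N+2}}$, which is closed in the finite-dimensional complex vector space $\Gamma$ and is therefore metrizable, in particular paracompact, Hausdorff and locally compact. Let $M$ be the mapping cylinder of $\tau_N \colon C \to B$, that is, the adjunction space obtained from $(C \times [0,1]) \sqcup B$ by gluing $C \times \{1\}$ to $B$ via $(c,1) \mapsto \tau_N(c)$; here $C \times \{1\}$ is closed in $C \times [0,1]$, and $C \times [0,1]$ is paracompact Hausdorff, being the product of a paracompact Hausdorff space with a compact space. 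Then $R^N_{\text{cone}}\fX$ is precisely the pushout obtained from $R^N\fX \sqcup M$ by identifying the source end $C \times \{0\}$ of $M$ with $C \subseteq R^N\fX$; note that $C \times \{0\}$ is closed in $M$ (it is closed in $C \times [0,1]$ and disjoint from the identifications, which occur only at height $t = 1$), and that the inclusion $C \hookrightarrow R^N\fX$ used here is the closed embedding from Lemma~\ref{lemma:paracompactHausdorff}.

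Now I would invoke twice the fact (already used in the proof of Lemma~\ref{lemma:paracompactHausdorff}) that an adjunction space obtained by gluing two paracompact Hausdorff spaces along a closed subspace is again paracompact and Hausdorff: once to deduce that $M$ is paracompact Hausdorff, and a second time to conclude that $R^N_{\text{cone}}\fX$ is paracompact Hausdorff. For local compactness I would then argue as follows. The map $\pi \colon R^N_{\text{cone}}\fX \to \Sigma$ is proper by Lemma~\ref{lemma:piisproper}; the space $\Sigma$ is closed in $\Gamma$, as recorded just after Lemma~\ref{lemma:rhonisproper}, hence is locally compact Hausdorff; and $R^N_{\text{cone}}\fX$ is Hausdorff by the preceding paragraph. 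Given a point $x \in R^N_{\text{cone}}\fX$, choose a compact neighbourhood $K$ of $\pi(x)$ in $\Sigma$. Then $\pi^{-1}(K)$ is compact by properness of $\pi$, and it contains the open set $\pi^{-1}(\mathrm{int}\, K)$, which is a neighbourhood of $x$. Hence $x$ admits a compact neighbourhood, so $R^N_{\text{cone}}\fX$ is locally compact.

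I do not expect a genuine obstacle here: the only points needing care are the bookkeeping that identifies the homotopy pushout~\eqref{eqn:homotopypushoutresolution} with the iterated honest adjunction described above, and checking that every gluing is performed along a closed subspace, which is exactly the hypothesis making the standard preservation theorems for paracompactness and Hausdorffness applicable. (The same closedness hypothesis is what was tacitly used in Lemma~\ref{lemma:paracompactHausdorff}.)
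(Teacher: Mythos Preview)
Your proposal is correct and follows essentially the same approach as the paper: paracompactness and Hausdorffness come from viewing the homotopy pushout as an iterated adjunction along closed subspaces (invoking Lemma~\ref{lemma:paracompactHausdorff}), and local compactness from properness of $\pi$ over the locally compact space $\Sigma$. The paper's proof is a two-sentence sketch of exactly this, so you have simply unpacked it in detail; one minor quibble is that the closedness of $C \hookrightarrow R^N\fX$ comes from it being a preimage under $\tau_N$ (as you correctly say first), not from Lemma~\ref{lemma:paracompactHausdorff} itself.
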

\begin{proof}
The paracompactness and Hausdorffness follow from the definition as a homotopy pushout and Lemma~\ref{lemma:paracompactHausdorff}. It is locally compact as its maps properly to the locally compact space $\Sigma$.
\end{proof}

These topological properties will justify our subsequent manipulations of compactly supported \v{C}ech cohomology, which agrees with sheaf cohomology with compact supports in this context. The most important corollary is the following
\begin{proposition}\label{prop:piinducesiso}
The map $\pi \colon R^N_{\text{cone}}\fX \lra \Sigma$ induces an isomorphism in \v{C}ech cohomology with compact supports.
\end{proposition}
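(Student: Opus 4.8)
The plan is to show that $\pi$ restricts to a homeomorphism away from the ``bad locus'' and that the cone construction has exactly removed the obstruction to being a cohomology isomorphism, then invoke a Leray/Vietoris--Begle type argument for compactly supported cohomology. Concretely, I would proceed as follows.

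\emph{Step 1: Identify where $\pi$ is a homeomorphism.} Over $\Sigma - \overline{\Sigma_{\geq N+2}}$, every section $f$ has at most $N+1$ singularities, so $\tau_N^{-1}(f)$ is the geometric realisation of a non-empty simplex on the (at most $N+1$) singular points of $f$, hence a point after we have... no: it is a single closed simplex $|\Delta^{\#\mathrm{Sing}(f)-1}|$, which is contractible. The key point is that over $\Sigma - \overline{\Sigma_{\geq N+2}}$ the map $\tau_N$ is a proper map all of whose fibres are contractible (indeed finite-dimensional simplices varying ``continuously''), and that $R^N_{\text{cone}}\fX$ agrees with $R^N\fX$ there. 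Over $\overline{\Sigma_{\geq N+2}}$, by construction the homotopy pushout~\eqref{eqn:homotopypushoutresolution} replaces $\tau_N^{-1}\left(\overline{\Sigma_{\geq N+2}}\right)$ by the mapping cylinder of $\tau_N$ restricted there, and then collapses; the upshot is that $\pi^{-1}(f)$ for $f \in \overline{\Sigma_{\geq N+2}}$ is the (unreduced) cone on $\tau_N^{-1}(f)$, which is contractible regardless of what $\tau_N^{-1}(f)$ was. So in all cases $\pi^{-1}(f)$ is contractible, and moreover $\pi$ is proper by Lemma~\ref{lemma:piisproper}.

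\emph{Step 2: Apply the proper base change / Vietoris--Begle theorem for compactly supported cohomology.} Since $\pi$ is a proper, closed, surjective map between locally compact paracompact Hausdorff spaces with contractible (in particular $\CCH^*$-acyclic, and even cohomologically trivial with all coefficients) fibres, the Leray spectral sequence for $\pi$ with compact supports, $E_2^{p,q} = \CCH^p\left(\Sigma; R^q\pi_! \,\underline{\bZ}\right) \Rightarrow \CCH^{p+q}\left(R^N_{\text{cone}}\fX\right)$, degenerates: properness gives $R^q\pi_! = R^q\pi_*$ and its stalk at $f$ is $H^q$ of the fibre $\pi^{-1}(f)$ (by proper base change for sheaf cohomology, using local compactness), which vanishes for $q>0$ and is $\bZ$ for $q=0$ because the fibres are contractible. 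Hence $R^0\pi_!\,\underline{\bZ} = \underline{\bZ}$ and $R^q\pi_!\,\underline{\bZ}=0$ for $q>0$, so the edge map $\CCH^*(\Sigma) \to \CCH^*\left(R^N_{\text{cone}}\fX\right)$ induced by $\pi$ is an isomorphism. Strictly speaking one must be a little careful: the fibres $\pi^{-1}(f)$ are contractible but possibly infinite-dimensional (the simplices $\tau_N^{-1}(f)$ can be infinite-dimensional when $f$ has infinitely many singularities), so I would phrase the fibre-acyclicity via the explicit cone structure — an unreduced cone $CZ$ on any space $Z$ satisfies $\widetilde{H}^*(CZ)=0$, and for $f \notin \overline{\Sigma_{\geq N+2}}$ the fibre is a finite simplex — rather than by citing a dimension-restricted Vietoris--Begle statement.

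\emph{Main obstacle.} The technical heart is justifying the proper base change isomorphism $\left(R^q\pi_*\underline{\bZ}\right)_f \cong H^q\left(\pi^{-1}(f);\bZ\right)$ in this infinite-dimensional setting and making sure compactly supported \v Cech cohomology behaves like sheaf cohomology with compact supports (so that the Leray spectral sequence with compact supports is available). This is exactly why the preceding lemmas establishing that every space in sight is paracompact, locally compact and Hausdorff, and that $\pi$ and the $\tau_j$ are proper, were proved — they are precisely the hypotheses needed to run Bredon's machinery \cite{bredon_sheaf_1997}. A secondary subtlety is verifying that forming the homotopy pushout~\eqref{eqn:homotopypushoutresolution} does not destroy these point-set properties and that $R^N_{\text{cone}}\fX \to \Sigma$ genuinely has cone fibres over $\overline{\Sigma_{\geq N+2}}$ and unchanged fibres elsewhere; this requires unwinding that $\tau_N^{-1}\left(\overline{\Sigma_{\geq N+2}}\right) \hookrightarrow R^N\fX$ is a closed cofibration so that the homotopy pushout may be computed as the ordinary pushout, i.e. as the mapping cone of $\tau_N|_{\tau_N^{-1}(\overline{\Sigma_{\geq N+2}})}$ glued onto $R^N\fX$.
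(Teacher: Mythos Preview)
Your approach is essentially the same as the paper's: both verify that $\pi$ is proper with contractible fibres (either a simplex or a cone on one) and then invoke Vietoris--Begle --- the paper cites \cite[V.6.1]{bredon_sheaf_1997} directly, while you phrase it via the Leray spectral sequence, which amounts to the same thing. One small correction: the fibres $\tau_N^{-1}(f)$ are never infinite-dimensional, since $R^N\fX$ is built only from simplices of dimension at most $N$, so your worry about a dimension-restricted Vietoris--Begle statement is unnecessary.
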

\begin{proof}
The properness of $\pi$ proved in Lemma~\ref{lemma:piisproper} implies that it induces a well-defined map in cohomology with compact supports. We also observed in the proof of that lemma that a fibre of $\pi$ is either a simplex or a cone, hence contractible. The proposition then follows from the Vietoris--Begle theorem~\cite[V.6.1]{bredon_sheaf_1997}.
\end{proof}

\subsection{Construction of the spectral sequence}

Let $N \geq 1$ be an integer. Recall from Lemma~\ref{lemma:paracompactHausdorff} that we have closed embeddings $R^{j-1}\fX \subset R^j\fX$. We define the following filtration on $R^N_{\text{cone}}\fX$:
\[
    F_0 = R^0\fX \subset F_1 = R^1\fX \subset \cdots \subset F_N = R^N\fX \subset F_{N+1} = R^N_{\text{cone}}\fX.
\]
Following standard arguments, we obtain from the filtration a spectral sequence:
\[
    E_1^{p,q} = \CCH^{p+q}(F_p, F_{p-1}) \cong \CCH^{p+q}(F_p - F_{p-1}) \implies \CCH^{p+q}(R^N_{\text{cone}}\fX)
\]
where the isomorphism between the cohomology groups on the first page follows from \cite[II.12.3]{bredon_sheaf_1997}. Using Proposition~\ref{prop:piinducesiso} and Alexander duality, we obtain:
\[
    \CCH^{p+q}(R^N_{\text{cone}}\fX) \cong \CCH^{p+q}(\Sigma) \cong \widetilde{H}_{2\dim_\bC \Gamma - (p+q) -1}(\Gamma - \Sigma)
\]
where $\widetilde{H}$ denotes reduced singular homology. Letting $s = -p -1$ and $t = 2\dim_\bC \Gamma - q$, we regrade our spectral sequence and obtain the following
\begin{proposition}\label{prop:thereexistsSS}
There is a spectral sequence on the second quadrant $s \leq -1$ and $t \geq 0$:
\[
    E^1_{s,t} = \CCH^{2\dim_\bC \Gamma - 1 - s - t}(F_{-s-1} - F_{-s-2}; \bZ) \implies \widetilde{H}_{s+t}(\Gamma - \Sigma; \bZ).
\]
The differential $d^r$ on the $r$-th page of the spectral sequence has bi-degree $(-r,r-1)$, i.e. it is a morphism $d^r_{s,t} \colon E^r_{s,t} \to E^r_{s-r,t+r-1}$.
\end{proposition}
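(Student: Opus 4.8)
The plan is to read the assertion as the cohomology spectral sequence of the finite filtration $F_0 \subset F_1 \subset \cdots \subset F_{N+1} = R^N_{\text{cone}}\fX$, transported across the identifications $\CCH^*(R^N_{\text{cone}}\fX) \cong \CCH^*(\Sigma)$ and $\CCH^i(\Sigma) \cong \widetilde H_{2\dim_\bC\Gamma - i - 1}(\Gamma - \Sigma)$. First I would record the structural facts that make the machine run: each inclusion $F_{p-1} \hookrightarrow F_p$ is a closed embedding (by Lemma~\ref{lemma:paracompactHausdorff} for $p \le N$, and directly from the homotopy pushout~\eqref{eqn:homotopypushoutresolution} for $p = N+1$), and all the spaces involved are paracompact, locally compact and Hausdorff, so that compactly supported \v{C}ech cohomology coincides with sheaf cohomology with compact supports and possesses the usual long exact sequences of the pairs $(F_p, F_{p-1})$. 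Splicing these long exact sequences gives an exact couple whose spectral sequence converges — the filtration being finite, there are no convergence issues — with $E_1^{p,q} = \CCH^{p+q}(F_p, F_{p-1})$, abutment $\CCH^{p+q}(F_{N+1}) = \CCH^{p+q}(R^N_{\text{cone}}\fX)$, and differential $d_r$ of cohomological bidegree $(r, 1-r)$. Since $F_{p-1}$ is closed in $F_p$ with open complement $F_p - F_{p-1}$, the identification $\CCH^{p+q}(F_p, F_{p-1}) \cong \CCH^{p+q}(F_p - F_{p-1})$ of \cite[II.12.3]{bredon_sheaf_1997} rewrites the first page as $E_1^{p,q} = \CCH^{p+q}(F_p - F_{p-1})$.

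It then remains to insert the two inputs already proved and to re-grade. Proposition~\ref{prop:piinducesiso} identifies $\CCH^*(R^N_{\text{cone}}\fX)$ with $\CCH^*(\Sigma)$, and the Alexander duality isomorphism recalled above identifies $\CCH^i(\Sigma)$ with $\widetilde H_{2\dim_\bC\Gamma - i - 1}(\Gamma - \Sigma)$; hence the abutment becomes $\widetilde H_{2\dim_\bC\Gamma - 1 - p - q}(\Gamma - \Sigma)$. Setting $s := -p - 1$ and $t := 2\dim_\bC\Gamma - q$ turns $E_1^{p,q}$ into $E^1_{s,t} = \CCH^{2\dim_\bC\Gamma - 1 - s - t}(F_{-s-1} - F_{-s-2})$ and the abutment into $\widetilde H_{s+t}(\Gamma - \Sigma)$, exactly as stated. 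The constraint $s \le -1$ is immediate because $p$ runs over $0, \ldots, N+1$. For $t \ge 0$ one checks that $E^1_{s,t}$ vanishes when $t < 0$: by~\eqref{eqn:filtrationleafhomeo} and Lemma~\ref{lemma:multijet}, together with the dimension bound built into the definition of an admissible Taylor condition, each stratum $F_p - F_{p-1}$ is a locally closed semi-algebraic set of real dimension at most $2\dim_\bC\Gamma - 2$, so its compactly supported cohomology vanishes above that degree, whereas the cohomological degree $2\dim_\bC\Gamma - 1 - s - t$ appearing in $E^1_{s,t}$ exceeds it whenever $t < 0$.

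Finally, substituting $p = -s - 1$, $q = 2\dim_\bC\Gamma - t$ into the bidegree $(r, 1-r)$ of the cohomological differential yields bidegree $(-r, r-1)$, i.e. $d^r_{s,t} \colon E^r_{s,t} \to E^r_{s-r, t+r-1}$. I do not expect a serious obstacle: the argument is the standard ``spectral sequence of a filtered space'' fed by Proposition~\ref{prop:piinducesiso} and Alexander duality. The only points that deserve care are the formal properties of $\CCH^*$ used above — the long exact sequences of pairs and the excision isomorphism \cite[II.12.3]{bredon_sheaf_1997} — which is precisely what the preceding topological lemmas (paracompactness, local compactness, Hausdorffness, closed embeddings) were arranged to license, and the purely clerical re-indexing that converts the cohomological spectral sequence of the filtration into the stated homologically-indexed one.
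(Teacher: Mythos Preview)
Your proposal is correct and follows essentially the same route as the paper: build the cohomological spectral sequence of the finite closed filtration $F_0\subset\cdots\subset F_{N+1}=R^N_{\text{cone}}\fX$, invoke \cite[II.12.3]{bredon_sheaf_1997} for the identification $\CCH^{p+q}(F_p,F_{p-1})\cong\CCH^{p+q}(F_p-F_{p-1})$, then feed in Proposition~\ref{prop:piinducesiso} and Alexander duality before performing the re-indexing $s=-p-1$, $t=2\dim_\bC\Gamma-q$. Your added justification that $E^1_{s,t}=0$ for $t<0$ is a nice touch the paper leaves implicit; note only that the dimension bound via~\eqref{eqn:filtrationleafhomeo} and Lemma~\ref{lemma:multijet} covers the strata $F_p-F_{p-1}$ for $0\le p\le N$, while the last stratum $F_{N+1}-F_N$ requires the separate estimates of Section~\ref{subsection:laststepfiltration}.
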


\section{Cohomology groups on the $E^1$-page}\label{section:cohomologygroups}

As in the last section, we choose a holomorphic vector bundle $\cE$ on $X$ and an admissible Taylor condition $\fT \subset J^r\cE$ inside the $r$-th jet bundle of $\cE$. For the remainder of this section, we also let 
\[
    N = N(\cE,r)
\]
be the largest integer $N \geq 0$ such that $\cE$ is $\left( (N+1) \cdot (r+1) - 1\right)$-jet ample as in Definition~\ref{def:bigN}. As discussed in the introduction, we assume that such an $N$ exists. If not, the statements in this section are either trivially false, or trivially true as they describe elements of the empty set. For brevity, we still use the following notations
\[
    \Gamma = \Gammahol\left(\cE\right) \quad \text{and} \quad \Sigma = \Gammahol\left(\cE\right) - \Gamma_{\mathrm{hol, ns}}\left(\cE\right)
\]
as well as $\fX$ for the associated functor $\catF^\op \to \catTop$ as in~\eqref{eqn:defoffunctorX}.

We will study the first page of the spectral sequence from Proposition~\ref{prop:thereexistsSS} converging to the cohomology of $R^N_{\text{cone}}\fX$:
\[
    E^1_{s,t} = \CCH^{2\dim_\bC \Gamma - 1 - s - t}(F_{-s-1} - F_{-s-2}; \bZ).
\]
We will first show that for $-N-1 \leq s \leq -1$ the groups $E^1_{s,t}$ can be written, via Thom isomorphisms, in terms of the cohomology of $\fT$. We will then study qualitatively the cohomology of $F_{N+1} - F_{N}$, i.e. the column $E^1_{-N-2,*}$, and show that it does not have any influence on the cohomology of the limit in a range of degrees up to around $N$. Later in Section~\ref{section:interpolation} we will construct spectral sequences for spaces of sections of $J^1\cE \setminus \fT$, and in Section~\ref{section:compareSS} we will compare them. The explicit computations of the present section will show that these various spectral sequences are isomorphic in a range from the first page and onwards.

\subsection{The first steps of the filtration}

For an integer $j \geq 0$, recall from~\eqref{eqn:Yjsubspacepairwisedistinct} the space 
\[
    Y_j = \left\{ (f,s_0, \ldots, s_j) \in \fX[j] \mid s_l \neq s_k \text{ if } l \neq k \right\} \subset \fX[j].
\]
\begin{lemma}\label{lemma:firstbundle}
For $0 \leq j \leq N(\cE,r)$, there is a fibre bundle:
\[
    \mathrm{Interior}\left(|\Delta^j|\right) \lra F_j - F_{j-1} \lra Y_j / \fS_{j+1}.
\]
\end{lemma}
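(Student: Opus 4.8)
The plan is to exhibit $F_j - F_{j-1}$ as the total space of a fibre bundle over $Y_j/\fS_{j+1}$ with fibre the open simplex, by unwinding the homeomorphism~\eqref{eqn:filtrationleafhomeo}. Recall from that equation that we already have a homeomorphism $F_j - F_{j-1} = R^j\fX - R^{j-1}\fX \cong Y_j \times_{\fS_{j+1}} \mathrm{Interior}(|\Delta^j|)$, where $\fS_{j+1}$ acts diagonally: on $Y_j$ by permuting the (pairwise distinct) singularities $s_0,\ldots,s_j$, and on $\mathrm{Interior}(|\Delta^j|)$ by permuting the barycentric coordinates $t_0,\ldots,t_j$. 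So it remains to observe that the projection $Y_j \times_{\fS_{j+1}} \mathrm{Interior}(|\Delta^j|) \to Y_j/\fS_{j+1}$ onto the first factor is a fibre bundle with fibre $\mathrm{Interior}(|\Delta^j|)$.

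The key point is that the action of $\fS_{j+1}$ on $Y_j$ is free. Indeed, a permutation $\sigma$ fixes a point $(f, s_0,\ldots,s_j) \in Y_j$ only if $s_{\sigma(i)} = s_i$ for all $i$; since the $s_i$ are pairwise distinct by definition of $Y_j$ in~\eqref{eqn:Yjsubspacepairwisedistinct}, this forces $\sigma = \mathrm{id}$. Because $\fS_{j+1}$ is a finite group acting freely on the Hausdorff space $Y_j$ (which is a subspace of $\Gamma \times X^{j+1}$, so in fact a reasonable space — e.g. completely regular), the quotient map $q \colon Y_j \to Y_j/\fS_{j+1}$ is a covering map, hence a principal $\fS_{j+1}$-bundle. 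Then $Y_j \times_{\fS_{j+1}} \mathrm{Interior}(|\Delta^j|)$ is by construction the associated bundle to this principal bundle with fibre $\mathrm{Interior}(|\Delta^j|)$, and the associated-bundle construction yields a fibre bundle over $Y_j/\fS_{j+1}$ with the prescribed fibre. Concretely, over an evenly covered open set $W \subseteq Y_j/\fS_{j+1}$ with $q^{-1}(W) \cong W \times \fS_{j+1}$, one gets a local trivialisation $(Y_j \times_{\fS_{j+1}} \mathrm{Interior}(|\Delta^j|))|_W \cong W \times \mathrm{Interior}(|\Delta^j|)$ by choosing the sheet of the covering.

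I expect the only mild subtlety to be checking freeness of the action and citing the standard fact that a free action of a finite group on a sufficiently separated space gives a covering space (equivalently, that the associated Borel construction is an honest fibre bundle rather than merely a quotient); both are routine given that $Y_j$ is a subspace of the metrizable-enough space $\Gamma \times X^{j+1}$. Everything else is a direct consequence of the already-established homeomorphism~\eqref{eqn:filtrationleafhomeo} and the definition of the balanced product $\times_{\fS_{j+1}}$.
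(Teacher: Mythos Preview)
Your proposal is correct and follows exactly the paper's approach: invoke the homeomorphism~\eqref{eqn:filtrationleafhomeo} and project onto the first factor. The paper's own proof is terser---it simply says ``projecting down to the first factor gives the required fibre bundle''---whereas you spell out the freeness of the $\fS_{j+1}$-action on $Y_j$ and the associated-bundle argument; this extra detail is helpful but not a different route.
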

\begin{proof}
Recall from the definition of the filtration on $R^N_{\text{cone}}\fX$ that $F_j = R^j\fX$ for $0 \leq j \leq N$. As a consequence of the pushout square~\eqref{eqn:resolutionpushoutdiagram}, we observed in~\eqref{eqn:filtrationleafhomeo} that we have the following homeomorphism:
\[
    R^j\fX - R^{j-1}\fX \cong Y_j \times_{\fS_{j+1}} \mathrm{Interior}(|\Delta^j|).
\]
Projecting down to the first factor gives the required fibre bundle.
\end{proof}

By an \emph{affine bundle} we mean a torsor for a vector bundle. In the sequel, they will arise naturally from fibrewise surjective linear maps between vector bundles. For any integer $j \geq 1$, the bundle $(J^r\cE)^j$ projects down to $X^j$ and we may consider its restriction to the open subset $\Conf_j(X) \subset X^j$ of those tuples of points which are pairwise distinct. The symmetric group $\fS_j$ acts on these spaces by permuting the coordinates. In particular, it acts on the subspace $\fT^j \subset (J^r\cE)^j$ and we let
\begin{equation}\label{eqn:TaylorconditionaboveConfdivided}
    \fT^{(j)} := \left( \fT^j|_{\Conf_j(X)} \right) / \fS_j
\end{equation}
be the orbit space of the restriction of $\fT^j$ over the subspace $\Conf_j(X) \subset X^j$.

\begin{lemma}\label{lemma:secondbundle}
Let $0 \leq j \leq N(\cE,r)$ be an integer and recall from~\eqref{eqn:Yjsubspacepairwisedistinct} the space $Y_j$ of those tuples $(f,s_0,\ldots,s_j) \in \Gamma \times \Conf_{j+1}(X)$ where $f$ is singular at the $s_i$. We may simultaneously evaluate the jet map at these points:
\begin{align*}
    Y_j &\lra \fT^{j+1}|_{\Conf_{j+1}(X)} \\
    (f,s_0,\ldots,s_j) &\longmapsto (j^r(f)(s_0), \ldots, j^r(f)(s_j)).
\end{align*}
Taking $\fS_{j+1}$-orbits on the domain and codomain of this map yields an affine bundle:
\[
    Y_j / \fS_{j+1} \lra \fT^{(j+1)}
\]
whose fibre has complex dimension $\dim_\bC \Gamma - (j+1) \rank_\bC J^r\cE$. (Here $\rank_\bC J^r\cE$ denotes the complex rank of the vector bundle $J^r\cE$.)
\end{lemma}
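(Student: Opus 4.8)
The plan is to exhibit the map $Y_j/\fS_{j+1} \to \fT^{(j+1)}$ as an affine bundle by first producing an affine bundle structure on the $\fS_{j+1}$-equivariant map $Y_j \to \fT^{j+1}|_{\Conf_{j+1}(X)}$ over configuration space, and then descending along the free action of $\fS_{j+1}$ on $\Conf_{j+1}(X)$. The key point is Lemma~\ref{lemma:multijet}: for a tuple $(x_0,\ldots,x_j)$ of $j+1 \leq N(\cE,r)+1$ distinct points the simultaneous jet evaluation
\[
    j^r_{(x_0,\ldots,x_j)} \colon \Gamma = \Gammahol(\cE) \lra (J^r\cE)|_{x_0} \times \cdots \times (J^r\cE)|_{x_j}
\]
is a surjective $\bC$-linear map. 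Its fibre over any point of the target is an affine subspace of $\Gamma$ of complex dimension $\dim_\bC\Gamma - (j+1)\rank_\bC J^r\cE$, and $Y_j$ is precisely the subspace of $\Gamma\times\Conf_{j+1}(X)$ lying over $\fT^{j+1}|_{\Conf_{j+1}(X)}$ under the total evaluation map. So set-theoretically the fibre of $Y_j \to \fT^{j+1}|_{\Conf_{j+1}(X)}$ over a point $\big((x_i),(v_i)\big)$ with $v_i\in\fT|_{x_i}$ is the affine space $\big(j^r_{(x_0,\ldots,x_j)}\big)^{-1}(v_0,\ldots,v_j)$, of the claimed dimension, and the fibres over a fixed tuple of points are cosets of the fixed vector space $\Ker\, j^r_{(x_0,\ldots,x_j)}$.

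To promote this to an actual affine bundle I would work locally over $\Conf_{j+1}(X)$. Choose a trivialising Zariski (hence also analytic) open $U_i\subset X$ for $J^r\cE$ as in Definition~\ref{def:semialgebraic}; over a small enough polydisc-type neighbourhood $W$ of a point $(x_0,\ldots,x_j)\in\Conf_{j+1}(X)$ with the $x_i$ in such charts, the evaluation maps $j^r_{(y_0,\ldots,y_j)}$ for $(y_0,\ldots,y_j)\in W$ vary holomorphically in the point; since surjectivity is an open condition (maximal rank) and holds at the centre by Lemma~\ref{lemma:multijet}, it persists on $W$, and the kernels $\Ker\, j^r_{(y_0,\ldots,y_j)}$ assemble into a holomorphic subbundle $\cK_W\subset W\times\Gamma$ of constant rank. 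Choosing a holomorphic (or merely continuous, which suffices for our purposes) complement gives a trivialisation of $\cK_W$, and a holomorphic section $(y_\bullet)\mapsto\sigma(y_\bullet)\in\Gamma$ with $j^r_{(y_0,\ldots,y_j)}(\sigma(y_\bullet)) = (v_0,\ldots,v_j)$ depending linearly on the target coordinates then trivialises $Y_j|_W \to \big(\fT^{j+1}|_{\Conf_{j+1}(X)}\big)|_W$ as a torsor for the pullback of $\cK_W$. Thus $Y_j\to\fT^{j+1}|_{\Conf_{j+1}(X)}$ is an affine bundle, i.e. a torsor for the pullback to $\fT^{j+1}|_{\Conf_{j+1}(X)}$ of the vector bundle $\cK\to\Conf_{j+1}(X)$ of kernels, with fibre dimension $\dim_\bC\Gamma - (j+1)\rank_\bC J^r\cE$.

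Finally I would pass to $\fS_{j+1}$-orbits. The symmetric group acts on $\Gamma\times\Conf_{j+1}(X)$ by permuting the points $s_i$ (acting trivially on $\Gamma$), and this action is free because it is free on $\Conf_{j+1}(X)$; it preserves $Y_j$ and covers the permutation action on $\fT^{j+1}|_{\Conf_{j+1}(X)}$, which is likewise free, with quotient $\fT^{(j+1)}$ by~\eqref{eqn:TaylorconditionaboveConfdivided}. The bundle projection $Y_j\to\fT^{j+1}|_{\Conf_{j+1}(X)}$ is $\fS_{j+1}$-equivariant, and the affine-bundle structure (being built from the intrinsic linear-algebra data of the evaluation maps) is equivariant as well — a permutation of the points just permutes the factors of the target and correspondingly relabels $\Gamma$-valued sections. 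Since both actions are free and properly discontinuous, the quotient map $Y_j\to Y_j/\fS_{j+1}$ is a covering, and the descended map $Y_j/\fS_{j+1}\to\fT^{(j+1)}$ inherits the structure of an affine bundle, torsor for the vector bundle on $\fT^{(j+1)}$ descended from $\cK$; its fibre dimension is unchanged, namely $\dim_\bC\Gamma - (j+1)\rank_\bC J^r\cE$. I expect the only genuinely delicate point to be the local triviality in the second paragraph — checking that the kernels of the evaluation maps form a bundle and that one can choose the trivialising sections to depend continuously (or holomorphically) on the base point — but this is a routine consequence of the fact that $j^r_{(y_0,\ldots,y_j)}$ has locally constant maximal rank, which is exactly what Lemma~\ref{lemma:multijet} guarantees on $\Conf_{j+1}(X)$ for $j\leq N(\cE,r)$.
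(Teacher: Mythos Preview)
Your argument is correct and follows essentially the same route as the paper. The paper's proof is slightly more streamlined: it views the simultaneous jet evaluation as a morphism of vector bundles $\Gamma\times\Conf_{j+1}(X)\to (J^r\cE)^{j+1}|_{\Conf_{j+1}(X)}$ over $\Conf_{j+1}(X)$, observes it is \emph{fibrewise} surjective by Lemma~\ref{lemma:multijet}, and invokes the standard fact that a fibrewise surjective linear map of vector bundles is an affine bundle over the target; restricting the base to $\fT^{j+1}|_{\Conf_{j+1}(X)}$ and passing to $\fS_{j+1}$-orbits finishes. Your second paragraph is just an explicit unpacking of this standard fact, so nothing is lost or gained. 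One minor redundancy: your ``surjectivity is an open condition and holds at the centre'' step is unnecessary, since Lemma~\ref{lemma:multijet} already gives surjectivity at \emph{every} point of $\Conf_{j+1}(X)$ for $j\leq N(\cE,r)$.
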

\begin{proof}
The simultaneous evaluation of the jet map gives a map
\begin{equation}\label{eqn:secondlemmasurjectionvectbundle}
% https://tikzcd.yichuanshen.de/#N4Igdg9gJgpgziAXAbVABwnAlgFyxMJZABgBpiBdUkANwEMAbAVxiRAB12BxOgW17oACTnl7xh7AMIEAZgH1gAKwDUARgC+ACgAaAShDrS6TLnyEUAJnJVajFm00ApAHoAnTgGMAorudK16gA+CpzSYPL+Gjq66gZGIBjYeAREqqSqNvTMrIgcUrIKKlF6BjYwUADm8ESgMq4QvEhkIDgQSBaGtfWNiGktbYgd8XUNTdStSBoU6kA
\begin{tikzcd}
\Gamma \times \Conf_{j+1}(X) \arrow[rd] \arrow[rr] &                & (J^r\cE)^{j+1}|_{\Conf_{j+1}(X)} \arrow[ld] \\
                                                   & \Conf_{j+1}(X) &                                            
\end{tikzcd}
\end{equation}
of vector bundles over the configuration space $\Conf_{j+1}(X)$. Under the assumption $0 \leq j \leq N(\cE,r)$, Lemma~\ref{lemma:multijet} shows that this map of bundles is fibrewise surjective. Therefore the top map of~\eqref{eqn:secondlemmasurjectionvectbundle} is an affine bundle. Subtracting the ranks, we obtain that its fibre has complex dimension $\dim_\bC \Gamma - (j+1)\rank_\bC J^r\cE$.

Now, restricting the domain of~\eqref{eqn:secondlemmasurjectionvectbundle} to $Y_j$ and its codomain to $\fT^{j+1}|_{\Conf_{j+1}(X)}$, and then taking $\fS_{j+1}$-orbits yields the following affine bundle:
\[
    Y_j / \fS_{j+1} \lra \left( \fT^{j+1}|_{\Conf_{j+1}(X)} \right) / \fS_{j+1} = \fT^{(j+1)}
\]
which still has the rank that we have computed above.
\end{proof}

The quotient maps $Y_j \to Y_j / \fS_{j+1}$ and $\fT^{j+1}|_{\Conf_{j+1}(X)} \to \fT^{(j+1)}$ are principal $\fS_{j+1}$-bundles and hence are classified by (homotopy classes of) maps to the classifying space $B\fS_{j+1}$. Composing with the sign representation $B\fS_{j+1} \overset{B\mathrm{sign}}{\lra} B\bZ / 2$, we obtain two well-defined homotopy classes of maps:
\[
    Y_j / \fS_{j+1} \lra B\bZ /2 \quad \text{ and } \quad \fT^{(j+1)} \lra B\bZ /2.
\]
We will write $\bZ^\mathrm{sign}$ for the corresponding local coefficient systems.

\begin{proposition}\label{prop:firstpagethomiso}
Let $-N(\cE,r)-1 \leq s \leq -1$. Then, we have the following isomorphism:
\[
    E^1_{s,t} \cong \CCH^{-t - 2s \cdot \rank_\bC J^r\cE}(\fT^{(-s)}; \bZ^\mathrm{sign})
\]
where $\fT^{(-s)}$ is the space defined in~\eqref{eqn:TaylorconditionaboveConfdivided} and $\bZ^\mathrm{sign}$ is the local coefficient system described above.
\end{proposition}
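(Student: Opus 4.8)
The plan is to chain together the two geometric descriptions established in Lemmas~\ref{lemma:firstbundle} and~\ref{lemma:secondbundle}, together with two Thom-type isomorphisms, to rewrite the relative cohomology group $E^1_{s,t} = \CCH^{2\dim_\bC\Gamma - 1 - s - t}(F_{-s-1} - F_{-s-2})$ in terms of the cohomology of $\fT^{(-s)}$. Write $j = -s - 1$, so that $1 \leq j+1 \leq N(\cE,r)+1$ and we are in the range where both lemmas apply. The first step is to combine the fibre bundle $\mathrm{Interior}(|\Delta^j|) \to F_j - F_{j-1} \to Y_j/\fS_{j+1}$ of Lemma~\ref{lemma:firstbundle} with the affine bundle $Y_j/\fS_{j+1} \to \fT^{(j+1)}$ of Lemma~\ref{lemma:secondbundle}: the total space $F_j - F_{j-1}$ fibres over $\fT^{(j+1)}$ with fibre $\mathrm{Interior}(|\Delta^j|) \times \bC^{\dim_\bC\Gamma - (j+1)\rank_\bC J^r\cE}$, an open cell of real dimension $j + 2(\dim_\bC\Gamma - (j+1)\rank_\bC J^r\cE)$. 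The subtlety is that this is not a product: the affine bundle is only locally trivial, and the simplex factor carries an action of $\fS_{j+1}$ that is twisted by the permutation of the $s_i$. This is exactly where the sign local system $\bZ^\mathrm{sign}$ enters.

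Next I would compute $\CCH^*$ of this total space via the Thom isomorphism for cohomology with compact supports. The cleanest route: first push forward along the open-simplex-bundle. Since $\mathrm{Interior}(|\Delta^j|)$ is an open $j$-cell, compactly supported cohomology along this fibre shifts degree by $j$ but introduces the orientation local system of the fibre bundle; the $\fS_{j+1}$-action permutes the vertices of $\Delta^j$, i.e. acts by the standard permutation representation on $\bR^{j+1}$ restricted to the simplex, whose top compactly-supported cohomology is the sign representation of $\fS_{j+1}$. So this step replaces $\CCH^{k}(F_j - F_{j-1})$ by $\CCH^{k-j}(Y_j/\fS_{j+1}; \bZ^\mathrm{sign})$. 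Then apply the Thom isomorphism for the affine (hence, after choosing a zero section locally, vector) bundle $Y_j/\fS_{j+1} \to \fT^{(j+1)}$ of complex rank $\dim_\bC\Gamma - (j+1)\rank_\bC J^r\cE$: affine bundles are orientable (the structure group lies in the connected group of affine transformations, acting $\bC$-linearly on the underlying vector bundle), so no further twist appears, and $\CCH^{k-j}(Y_j/\fS_{j+1}; \bZ^\mathrm{sign})$ becomes $\CCH^{k - j - 2(\dim_\bC\Gamma - (j+1)\rank_\bC J^r\cE)}(\fT^{(j+1)}; \bZ^\mathrm{sign})$.

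The final step is purely bookkeeping: substitute $k = 2\dim_\bC\Gamma - 1 - s - t$ and $j = -s-1$, so $j+1 = -s$, and check
\[
    k - j - 2\bigl(\dim_\bC\Gamma - (-s)\rank_\bC J^r\cE\bigr) = (2\dim_\bC\Gamma - 1 - s - t) - (-s - 1) - 2\dim_\bC\Gamma + 2(-s)\rank_\bC J^r\cE = -t - 2s\cdot\rank_\bC J^r\cE,
\]
which is the claimed degree, and $\fT^{(j+1)} = \fT^{(-s)}$. I expect the main obstacle to be the careful justification of the two Thom isomorphisms with compactly supported \v{C}ech cohomology and local coefficients, specifically (a) verifying that the orientation system of the open-simplex bundle is precisely the pullback of $\bZ^\mathrm{sign}$ from $\fT^{(j+1)}$ — this uses that the $\fS_{j+1}$-action on $H^j_c(\mathrm{Interior}|\Delta^j|)$ is the sign representation and that the bundle $F_j - F_{j-1} \to Y_j/\fS_{j+1}$ is pulled back along $Y_j/\fS_{j+1} \to \fT^{(j+1)}$ from the corresponding bundle over $\fT^{(j+1)}$ — and (b) confirming that affine bundles admit Thom isomorphisms in $\CCH^*$ with no orientation twist, for which one invokes that the underlying vector bundle is complex, hence canonically oriented, and that an affine bundle is homotopy equivalent to its underlying vector bundle over the (paracompact) base $\fT^{(j+1)}$. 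Both facts are standard but need the paracompactness and local compactness properties recorded earlier to license the use of sheaf cohomology with compact supports.
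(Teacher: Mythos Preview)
Your proposal is correct and follows essentially the same approach as the paper: two successive Thom isomorphisms, first along the open-simplex bundle of Lemma~\ref{lemma:firstbundle} (introducing the sign local system) and then along the affine bundle of Lemma~\ref{lemma:secondbundle}, followed by the degree bookkeeping. The paper's own proof is terser but structurally identical, and your more careful discussion of where $\bZ^{\mathrm{sign}}$ comes from and why the affine bundle contributes no further twist is a welcome elaboration.
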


\begin{proof}
Recall from Proposition~\ref{prop:thereexistsSS} that the first page of the spectral sequence is given by
\[
    E^1_{s,t} = \CCH^{2\dim_\bC \Gamma - 1-s-t}(R^{-s-1}\fX - R^{-s-2}\fX; \bZ).
\]
Via a homeomorphism $\mathrm{Interior}\left(|\Delta^j|\right) \cong \bR^j$, we see that the fibre bundle of Lemma~\ref{lemma:firstbundle} is homeomorphic to a vector bundle. Applying the Thom isomorphism to the latter, we obtain:
\[
    E^1_{s,t} \cong \CCH^{2\dim_\bC \Gamma - t}(Y_{-s-1} / \fS_{-s}; \bZ^\mathrm{sign}).
\]
Another application of the Thom isomorphism using Lemma~\ref{lemma:secondbundle} yields
\[
    E^1_{s,t} \cong \CCH^{-t - 2s \cdot \rank_\bC J^r\cE}(\fT^{(-s)}; \bZ^\mathrm{sign}).\qedhere
\]
\end{proof}

\subsection{The last step of the filtration}\label{subsection:laststepfiltration}

We study the last non-trivial part of the $E^1$-page, that is the column $s = -N(\cE,r)-2$ where:
\[
    E^1_{-N-2,t} = \CCH^{2\dim_\bC \Gamma + 1+N - t}(R^N_\text{cone}\fX - R^N\fX; \bZ).
\]
The methods from the last section do not apply to the space $R^N_\text{cone}\fX - R^N\fX$ and we will not be able to express the cohomology groups $E^1_{-N-2,t}$ in terms of other ``known'' groups. However, using the technical assumptions made in Definition~\ref{def:admissibletaylor} about the Taylor condition $\fT$, we will obtain a vanishing result for $E^1_{-N-2,t}$. This will be enough for the proof of our main theorem.

\bigskip

Recall the projection map $\tau_N \colon R^N\fX \to \Sigma$ from~\eqref{eqn:defoftauj}. From the homotopy pushout square~\eqref{eqn:homotopypushoutresolution}, we obtain the following homeomorphism:
\[
    R^N_\text{cone}\fX - R^N\fX \cong \left( \left( \tau_N^{-1}\left(\overline{\Sigma_{\geq N+2}}\right) \times ]0,1] \right) \bigsqcup \overline{\Sigma_{\geq N+2}} \right) / \sim
\]
where $(z,1) \in \tau_N^{-1}\left(\overline{\Sigma_{\geq N+2}}\right) \times ]0,1]$ is identified with $\tau_N(z) \in \overline{\Sigma_{\geq N+2}}$ in the quotient. Indeed, there is a natural continuous bijection from the right-hand side to the left-hand side. It is in fact a homeomorphism, as the top arrow in the homotopy pushout square~\eqref{eqn:homotopypushoutresolution} is the inclusion of a closed subset. In other words, this is the fibrewise (for the map $\tau_N$) open cone over $\overline{\Sigma_{\geq N+2}}$. We stratify this space by the following locally closed subspaces (this is analogous to~\cite[Lemma 18]{tommasi_stable_2014}):
\begin{align*}
    \Str_{-1} &:= \overline{\Sigma_{\geq N+2}}, \\
    \Str_0 &:= \left( \tau_N^{-1}\left(\overline{\Sigma_{\geq N+2}}\right) \times ]0,1[ \right)  \cap  \left( R^0\fX \times ]0,1[\right), \\
    \Str_j &:= \left( \tau_N^{-1}\left(\overline{\Sigma_{\geq N+2}}\right) \times ]0,1[ \right)  \cap  \left( (R^j\fX - R^{j-1}\fX) \times ]0,1[\right) \quad \text{ for } 1 \leq j \leq N.
\end{align*}
For $0 \leq j \leq N$, let
\begin{equation}
    Y_j^{\geq N+2} := \left\{ (f,s_0,\ldots,s_j) \in \Gamma \times \Conf_{j+1}(X) \mid f \in \overline{\Sigma_{\geq N+2}} \text{ and } s_i \in \mathrm{Sing}(f) \right\} \subset Y_j.
\end{equation}
Using the homeomorphism~\eqref{eqn:filtrationleafhomeo} identifying the difference between two consecutive steps of the resolution, we have a homeomorphism
\begin{equation}\label{eqn:filtrationfjoncone}
    \Str_j \cong \left( Y_j^{\geq N+2} \times_{\fS_{j+1}} \mathring{|\Delta^j|} \right) \times ]0,1[.
\end{equation}
for $0 \leq j \leq N$, where $\mathring{|\Delta^j|}$ denotes the interior of the simplex.

It is easier to think about this stratification by looking at one fibre $\pi^{-1}(f)$ at a time. Then, we are just decomposing an open cone over a union of simplices into the following pieces: the apex (corresponding to $\Str_{-1} \cap \pi^{-1}(f)$), the open segments from the 0-simplices to the apex (corresponding to $\Str_0 \cap \pi^{-1}(f)$), the open (filled) triangles between the 1-simplices and the apex, etc. Figure~\ref{fig:strata} below shows the strata in a single fibre $\pi^{-1}(f)$ when $f$ has $3$ singular points and $N = 1$. In this case, $\tau_N^{-1}(f)$ consists of three $1$-simplices glued together (i.e. a triangle), so $\pi^{-1}(f)$ is the cone over that triangle.
\def\OFFSETA{6}
\def\OFFSETB{12}
\def\OFFSETC{18}
\begin{figure}[H]
\centering
\begin{tikzpicture}[scale=0.5]
\filldraw (1,2) circle (2pt);
\filldraw (3,0) circle (2pt);
\filldraw (5,2) circle (2pt);
\filldraw (3,6) circle (2pt);

\draw (1,2) -- (3,0);
\draw (3,0) -- (5,2);
\draw (3,0) -- (3,6);
\draw (1,2) -- (3,6);
\draw (5,2) -- (3,6);
\draw[dashed] (1,2) -- (5,2);

\node at (3,-1) {$\pi^{-1}(f)$};

\filldraw (3+\OFFSETA,6) circle (2pt);
\node at (3+\OFFSETA,-1) {$\pi^{-1}(f) \cap \Str_{-1}$};

\draw (1+\OFFSETB, 2) -- (2.9+\OFFSETB, 5.8);
\draw (3+\OFFSETB, 0) -- (3+\OFFSETB, 5.8);
\draw (5+\OFFSETB, 2) -- (3.1+\OFFSETB, 5.8);
\node at (3+\OFFSETB,-1) {$\pi^{-1}(f) \cap \Str_0$};

\fill[lightgray] (1+\OFFSETC,2) -- (5+\OFFSETC,2) -- (3+\OFFSETC,6);
\fill[pattern=north east lines, pattern color=gray] (1+\OFFSETC,2) -- (3+\OFFSETC,0) -- (3+\OFFSETC,6);
\fill[pattern=north west lines, pattern color=gray] (3+\OFFSETC,0) -- (5+\OFFSETC,2) -- (3+\OFFSETC,6);
\draw[dashed] (1+\OFFSETC,2) -- (3+\OFFSETC,0) -- (3+\OFFSETC,6) -- cycle;
\draw[dashed] (3+\OFFSETC,0) -- (5+\OFFSETC,2) -- (3+\OFFSETC,6) -- cycle;
\draw[dashed] (1+\OFFSETC,2) -- (5+\OFFSETC,2) -- (3+\OFFSETC,6) -- cycle;
\node at (3+\OFFSETC,-1) {$\pi^{-1}(f) \cap \Str_1$};
\node at (3+\OFFSETC,-2) {(the three sides only)};

\end{tikzpicture}
\caption{Decomposition of the open cone.} \label{fig:strata}
\end{figure}

\bigskip

If we find an integer $D \geq 0$ such that $\CCH^k(\Str_j) = 0$ for all $-1 \leq j \leq N$ and all $k > D$, then the same result will hold for the union, i.e. $\CCH^k(R^N_\text{cone}\fX - R^N\fX) = 0$ for $k > D$. In what follows, we set out to find such a $D$ as small as we can. With that in mind, we make the following ad hoc definition of cohomological dimension: 
\begin{definition}
We say that a space $Z$ has \emph{cohomological dimension} $D$ with respect to a local coefficient system $\cA$ if $D$ is the smallest integer such that $\CCH^k(Z; \cA) = 0$ for all $k > D$. We will denote it by $\cohodim(Z, \cA)$, or simply $\cohodim(Z)$ if $\cA = \bZ$.
\end{definition}

The only non-trivial local coefficient system we will need is $\bZ^\mathrm{sign}$, which is induced on the quotient $Y_j^{\geq N+2} / \fS_{j+1}$ by the sign representation $\fS_{j+1} \to \bZ/2$.
\begin{lemma}\label{lemma:firstestimationFilj}
For $0 \leq j \leq N$, we have 
\[
    \cohodim(\Str_j) = 1+j+ \cohodim\left(Y_j^{\geq N+2} / \fS_{j+1}, \bZ^\mathrm{sign}\right).
\]
\end{lemma}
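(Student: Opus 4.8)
The plan is to compute the cohomological dimension of $\Str_j$ by using the homeomorphism~\eqref{eqn:filtrationfjoncone}, which writes $\Str_j$ as a product of an open interval with a fibre bundle over $Y_j^{\geq N+2}/\fS_{j+1}$ with fibre the open simplex $\mathring{|\Delta^j|}$. The key observation is that $\mathring{|\Delta^j|} \cong \bR^j$ and, exactly as in the proof of Lemma~\ref{lemma:firstbundle} and Proposition~\ref{prop:firstpagethomiso}, the bundle $Y_j^{\geq N+2} \times_{\fS_{j+1}} \mathring{|\Delta^j|} \to Y_j^{\geq N+2}/\fS_{j+1}$ is (fibrewise linearly, once we fix an identification of the open simplex with Euclidean space) a rank-$j$ real vector bundle. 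Since $\fS_{j+1}$ permutes the barycentric coordinates and acts on an open simplex through an orthogonal-type action whose induced action on the tangent space is by the standard permutation representation on $\bR^j$, the orientation character of this vector bundle is precisely the sign character — which is why the local system $\bZ^\mathrm{sign}$ appears.

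First I would record that for any space $Z$ and any local system $\cA$, one has $\cohodim(Z \times \bR, \cA) = 1 + \cohodim(Z, \cA)$ (compactly supported cohomology shifts by one under crossing with $\bR$, by the Künneth formula for $\CCH^*$ of locally compact Hausdorff spaces, using $\CCH^*(\bR) = \bZ$ concentrated in degree $1$). Next I would apply the Thom isomorphism for compactly supported cohomology to the rank-$j$ vector bundle $E := Y_j^{\geq N+2} \times_{\fS_{j+1}} \mathring{|\Delta^j|} \to Y_j^{\geq N+2}/\fS_{j+1}$, which gives
\[
    \CCH^{k}(E; \bZ) \cong \CCH^{k-j}\!\left(Y_j^{\geq N+2}/\fS_{j+1}; \bZ^\mathrm{sign}\right),
\]
the twist by $\bZ^\mathrm{sign}$ accounting for the non-orientability of $E$. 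Hence $\cohodim(E) = j + \cohodim(Y_j^{\geq N+2}/\fS_{j+1}, \bZ^\mathrm{sign})$, and combining with the interval factor from~\eqref{eqn:filtrationfjoncone} gives the claimed formula $\cohodim(\Str_j) = 1 + j + \cohodim(Y_j^{\geq N+2}/\fS_{j+1}, \bZ^\mathrm{sign})$.

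The main obstacle is justifying that the orientation local system of $E$ is exactly $\bZ^\mathrm{sign}$ — i.e. identifying the action of $\fS_{j+1}$ on the "vertical tangent space" $\bR^j$ of the open simplex with the standard permutation representation (whose determinant is the sign). This requires choosing the identification $\mathring{|\Delta^j|} \cong \bR^j$ compatibly with the symmetric group action; one clean way is to note the simplex $\{t_i > 0, \sum t_i = 1\}$ sits inside the affine hyperplane $\sum t_i = 1$ in $\bR^{j+1}$, on which $\fS_{j+1}$ acts by coordinate permutation, and the direction of this hyperplane is the reduced permutation representation — so $\det$ of the action on it is still the sign character. A mild secondary point is to make sure the Thom isomorphism and Künneth statements for $\CCH^*$ are applicable, which is fine here since all spaces in sight are locally compact, Hausdorff, and finite-dimensional (being semi-algebraic in charts, via Definition~\ref{def:semialgebraic}), and we already know $\Str_j$ and $Y_j^{\geq N+2}$ are paracompact Hausdorff from the analysis in Section~\ref{section:resolution}.
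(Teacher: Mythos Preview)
Your proof is correct and follows essentially the same route as the paper's: first peel off the $]0,1[$ factor from~\eqref{eqn:filtrationfjoncone} to gain a $+1$, then apply the Thom isomorphism to the $\mathring{|\Delta^j|}$-bundle over $Y_j^{\geq N+2}/\fS_{j+1}$ to gain the $+j$ with the $\bZ^{\mathrm{sign}}$ twist. Your additional justification that the orientation character is the sign (via the reduced permutation representation on the simplex hyperplane) is more detailed than what the paper writes, but the underlying argument is the same.
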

\begin{proof}
From the homeomorphism~\eqref{eqn:filtrationfjoncone}, we have a trivial fibre bundle
\[
    ]0,1[ \lra \Str_j \lra Y_j^{\geq N+2} \times_{\fS_{j+1}} \mathring{|\Delta^j|}.
\]
This implies that $\cohodim(\Str_j) = 1 + \cohodim\left(Y_j^{\geq N+2} \times_{\fS_{j+1}} \mathring{|\Delta^j|}\right)$.
Now, we have another fibre bundle:
\[
    \mathring{|\Delta^j|} \lra Y_j^{\geq N+2} \times_{\fS_{j+1}} \mathring{|\Delta^j|} \lra Y_j^{\geq N+2} / \fS_{j+1}.
\]
Hence, by the Thom isomorphism, we obtain:
\[
    \cohodim\left(Y_j^{\geq N+2} \times_{\fS_{j+1}} \mathring{|\Delta^j|}\right) = j + \cohodim\left(Y_j^{\geq N+2} / \fS_{j+1}, \bZ^\mathrm{sign}\right).\qedhere
\]
\end{proof}

We thus have reduced our problem to studying the cohomology of $Y_j^{\geq N+2} / \fS_{j+1}$ for $0 \leq j \leq N$, as well as that of $\overline{\Sigma_{\geq N+2}}$. We shall do so by comparing these spaces to a known one, namely the space
\[
    Y_N = \left\{ (f,s_0, \ldots, s_N) \in \Gamma \times \Conf_{N+1}(X) \mid s_i \in \mathrm{Sing}(f) \right\}.
\]
First, let us introduce some notation. Using charts on $X$, we may cover $Y_N$ by finitely many semi-algebraic sets, whose intersections are also semi-algebraic. Recall, e.g. from~\cite[Theorem 2.3.6]{bochnak_real_1998}, that every semi-algebraic set is the disjoint union of cells, each homeomorphic to an open disc $]0,1[^d$ for some $d \geq 0$. The largest $d$ in such a decomposition is called the dimension of the semi-algebraic set. Let $\dim Y_N$ be the largest of the dimensions of the semi-algebraic sets in a cover of $Y_N$. (It depends a priori of the chosen cover, but we suppress this from the notation.) The following lemma is a crucial result for controlling our spectral sequence.

\begin{lemma}\label{lemma:crucialestimationlemma}
For $0 \leq j \leq N$, we have
\[
    \dim Y_N \geq \cohodim\left(Y_j^{\geq N+2} / \fS_{j+1}, \bZ^\mathrm{sign} \right).
\]
\end{lemma}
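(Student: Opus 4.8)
The plan is to bound the cohomological dimension of $Y_j^{\geq N+2}/\fS_{j+1}$ by relating it, via a semi-algebraic mapping, to $Y_N$ whose dimension we have set aside as a reference quantity. The key point is that a section $f$ lying in $\overline{\Sigma_{\geq N+2}}$ has (after passing to a limit) at least $N+2$ singular points, so given $(f, s_0, \ldots, s_j) \in Y_j^{\geq N+2}$ with only $j+1 \leq N+1$ recorded singularities, we can always supplement the tuple with further singular points of $f$ to produce a point of $Y_N$. Concretely, I would first treat the generic stratum $\overline{\Sigma_{\geq N+2}}$ itself: there is a surjection $Y_N^{\geq N+2} \to \overline{\Sigma_{\geq N+2}}$ (the projection forgetting the points), and since $\overline{\Sigma_{\geq N+2}}$ is a closed semi-algebraic set, $\cohodim(\overline{\Sigma_{\geq N+2}}) \leq \dim \overline{\Sigma_{\geq N+2}} \leq \dim Y_N$; the inequality $\cohodim \leq \dim$ for semi-algebraic sets is exactly the basic fact recalled just above (cell decomposition into open discs $]0,1[^d$ with $d \leq \dim$ bounds compactly-supported \v{C}ech cohomology).

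For $Y_j^{\geq N+2}/\fS_{j+1}$ with $0 \leq j \leq N$ the argument is similar but needs the supplementation step. I would construct a semi-algebraic correspondence: let
\[
    Z_j := \left\{ (f, s_0, \ldots, s_j, s_{j+1}, \ldots, s_N) \in Y_N \mid f \in \overline{\Sigma_{\geq N+2}},\ \{s_0,\ldots,s_j\} \text{ pairwise distinct} \right\},
\]
which is a semi-algebraic subset of $Y_N$ (conditions ``$f \in \overline{\Sigma_{\geq N+2}}$'' and ``$s_i \in \mathrm{Sing}(f)$'' and the distinctness conditions are all semi-algebraic when read in charts), hence $\dim Z_j \leq \dim Y_N$. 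The projection $Z_j \to Y_j^{\geq N+2}$ forgetting the last $N-j$ points is \emph{surjective}: this is precisely where $f \in \overline{\Sigma_{\geq N+2}}$ is used — such an $f$ has a singular set of cardinality at least $N+2$ (for $f \in \Sigma_{\geq N+2}$ directly; for limit points one has to argue that the singular set, being the fibre product against the closed set $\fT$, still contains enough points, using that $\fX[N]$ is closed and $\rho_N$ proper, or alternatively that $\overline{\Sigma_{\geq N+2}} \subset \rho_N(\fX[N])$ combined with upper semicontinuity of $\#\mathrm{Sing}$), so after fixing distinct $s_0,\ldots,s_j \in \mathrm{Sing}(f)$ there remain at least $N+1-(j+1) = N-j$ further points to choose, with no distinctness required among them or with the first block. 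Since a semi-algebraic surjection cannot raise dimension, $\dim Y_j^{\geq N+2} \leq \dim Z_j \leq \dim Y_N$, and passing to the $\fS_{j+1}$-quotient does not increase dimension either; finally $\cohodim(Y_j^{\geq N+2}/\fS_{j+1}, \bZ^{\mathrm{sign}}) \leq \dim(Y_j^{\geq N+2}/\fS_{j+1}) \leq \dim Y_N$ by the same $\cohodim \leq \dim$ fact (the coefficient system $\bZ^{\mathrm{sign}}$ is locally constant, so it does not affect the vanishing above the dimension — a cell decomposition still computes cohomology with local coefficients and each cell contributes nothing in degrees exceeding its dimension).

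The main obstacle I anticipate is the surjectivity of $Z_j \to Y_j^{\geq N+2}$ at the \emph{boundary} points, i.e. for $f$ in the closure $\overline{\Sigma_{\geq N+2}}$ but not in $\Sigma_{\geq N+2}$ itself. One must check that such an $f$ still has at least $N+2$ singular points so that the supplementation is possible. The clean way to see this: $\mathrm{Sing}(f) = \{x : j^r(f)(x) \in \fT|_x\}$ and the assignment $f \mapsto \#\mathrm{Sing}(f) \in \{0,1,\ldots,\infty\}$ is lower semi-continuous when $\fX[n]$ is closed in $\Gamma \times X^{n+1}$ (Lemma~\ref{lemma:Xnisclosed}) — if $\#\mathrm{Sing}(f_k) \geq N+2$ along a sequence $f_k \to f$, pick $N+2$ distinct points in each $\mathrm{Sing}(f_k)$, pass to a convergent subsequence in the compact $X^{N+2}$, and the limit gives $N+2$ points (a priori possibly with collisions), which lands in $\fX[N+1]$; a mild additional argument using that $\fT|_x$ has empty interior in the fibre, or that $\#\mathrm{Sing}(f)$ is infinite as soon as $\mathrm{Sing}(f)$ has positive dimension, upgrades this to genuinely $N+2$ distinct points. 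Everything else — the semi-algebraicity bookkeeping and the $\cohodim \leq \dim$ estimates — is routine given the machinery already set up in Definitions~\ref{def:semialgebraic} and~\ref{def:admissibletaylor}.
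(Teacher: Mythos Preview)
Your overall strategy---bound the cohomological dimension by the semi-algebraic dimension, then relate $Y_j^{\geq N+2}$ to $Y_N$ via the forgetting map---is the same as the paper's. The gap is exactly the point you flag as ``the main obstacle'': the map $Z_j \to Y_j^{\geq N+2}$ is \emph{not} surjective, and your proposed patches do not work.

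The paper itself states this explicitly: ``this map is not surjective, as it may happen that a section $f \in \overline{\Sigma_{\geq N+2}}$ has fewer than $N+1$ singularities.'' A concrete model: on $\bC\bP^1$ with $\cE = \cO(2(N+2))$ and $\fT$ the zero section of $J^1\cE$, the section $f_0(z) = z^{2(N+2)}$ has $\mathrm{Sing}(f_0) = \{0\}$, a single point; yet $f_0$ is the limit of $f_t(z) = \prod_{i=0}^{N+1}(z - t\zeta^i)^2$, each having $N+2$ distinct singularities. Your first patch (empty interior of $\fT|_x$) is irrelevant to collisions in the base, and your second (positive-dimensional singular locus implies infinitely many points) fails precisely because $\mathrm{Sing}(f_0)$ can be zero-dimensional with very few points, as in this example. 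The lower-semicontinuity argument you sketch only produces points of $\fX[N+1]$ with possible repetitions, which is not enough.

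The paper's repair is to abandon surjectivity and argue \emph{density} instead: the image of the forgetting map $Y_N \to Y_j$ contains all tuples $(f,s_0,\ldots,s_j)$ with $f \in \Sigma_{\geq N+2}$ (no closure), and one argues that this is dense in $Y_j^{\geq N+2}$. Then the crucial semi-algebraic input is \cite[Proposition~2.8.2]{bochnak_real_1998}: the closure of a semi-algebraic set has the same dimension. Combined with \cite[Theorem~2.8.8]{bochnak_real_1998} (images do not increase dimension), this gives $\dim Y_N \geq \dim(\text{image}) \geq \dim(\overline{\text{image}}) \geq \dim Y_j^{\geq N+2}$. Everything else in your write-up---passing to the $\fS_{j+1}$-quotient without changing dimension, and the $\cohodim \leq \dim$ bound via cell decompositions (valid with local coefficients)---is fine and matches the paper.
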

\begin{proof}
Let $0 \leq j \leq N$. Only remembering the $(j+1)$\textsuperscript{st} singularities gives a map
\begin{equation}\label{eqn:semialgebraicsurjectivemap}
\begin{split}
    Y_N &\lra Y_j^{\geq N+2} \\
    (f,s_0,\ldots,s_N) &\longmapsto (f, s_0,\ldots,s_j).
\end{split}
\end{equation}
Notice that this map is not surjective, as it may happen that a section $f \in \overline{\Sigma_{\geq N+2}}$ has fewer than $N+1$ singularities. We study the map~\eqref{eqn:semialgebraicsurjectivemap} locally via charts. Let $U_0, \ldots, U_N \subset X$ be charts on $X$ as in Definition~\ref{def:semialgebraic}. Then the subsets 
\[
    U := \left\{ (f,s_0,\ldots,s_j) \in \overline{\Sigma_{\geq N+2}} \times U_0 \times \cdots \times U_j \mid s_k \in \mathrm{Sing}(f), s_i \neq s_j \forall i\neq j \right\} \subset Y_j^{\geq N+2}
\]
and
\[
    V := \left\{ (f,s_0,\ldots,s_N) \in \Gamma \times U_0 \times \cdots \times U_N \mid s_k \in \mathrm{Sing}(f), s_i \neq s_j \forall i\neq j \right\} \subset Y_N
\]
are semi-algebraic. Indeed, they are the preimages of the semi-algebraic set $\fT^{j+1}$ (respectively $\fT^{N+1}$) via the simultaneous evaluation of the jet map which is algebraic, hence semi-algebraic. (See~\cite[Proposition 2.2.7]{bochnak_real_1998}.) The restriction of the map~\eqref{eqn:semialgebraicsurjectivemap} to $U$ and $V$ is an algebraic map, hence semi-algebraic map, $\phi \colon U \to V$ between semi-algebraic sets. Using \cite[Theorem 2.8.8]{bochnak_real_1998}, we obtain the following inequality on the dimensions (as defined above using cell decompositions):
\[
    \dim(U) \geq \dim(\phi(U)).
\]
Furthermore, the definition of $Y_j^{\geq N+2}$ implies that the semi-algebraic map $\phi \colon U \to V$ has dense image, i.e. $\overline{\phi(U)} = V$. Using that the closure has the same dimension (\cite[Proposition 2.8.2]{bochnak_real_1998}) and the inequality above, we obtain:
\[
    \dim(U) \geq \dim(V).
\]
Varying the charts $U_0,\ldots,U_N \subset X$, we may cover the domain and codomain of~\eqref{eqn:semialgebraicsurjectivemap} by subsets defined like $U$ and $V$. If $U'$ and $V'$ are two other such subsets, then $U \cap U'$ and $V \cap V'$ are also semi-algebraic sets because they are intersections of semi-algebraic sets. (This follows from the Definition~\ref{def:realsemialgebraicsubset}.) Hence the argument shows that the inequality on the dimensions holds also on intersections. Let $\dim Y_j^{\geq N+2}$ denote the maximum of the dimensions in a cover of $Y_j^{\geq N+2}$ by semi-algebraic sets. Then, an argument using the Mayer--Vietoris spectral sequence shows that the cohomological dimension of $Y_j^{\geq N+2}$ is less than its dimension $\dim Y_j^{\geq N+2}$. Therefore
\begin{equation}\label{eqn:cohodimYnYjinequality}
    \dim Y_N \geq \dim Y_j^{\geq N+2} \geq \cohodim\left(Y_j^{\geq N+2}\right).
\end{equation}
Finally, from the principal $\fS_{j+1}$-bundle $Y_j^{\geq N+2} \to Y_j^{\geq N+2} / \fS_{j+1}$, we see that the dimension of the orbit space is the same as that of $Y_j^{\geq N+2}$. Therefore the inequality~\eqref{eqn:cohodimYnYjinequality} holds when replacing the rightmost term with $\cohodim(Y_j^{\geq N+2} / \fS_{j+1}, \bZ^\mathrm{sign})$.
\end{proof}

\noindent Repeating the proof with the map $Y_N \to \overline{\Sigma_{\geq N+2}}$, $(f,s_0,\ldots,s_N) \mapsto f$ yields the
\begin{lemma}\label{lemma:estimationcohodimapexofcone}
The following inequality holds:
\[
\pushQED{\qed} 
    \dim Y_N \geq \cohodim(\overline{\Sigma_{\geq N+2}}, \bZ). \qedhere
\popQED
\]
\end{lemma}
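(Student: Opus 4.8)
The plan is to mimic exactly the argument just used in Lemma~\ref{lemma:crucialestimationlemma}, replacing the projection onto the first $j+1$ singularities by the projection onto the section alone. Concretely, I would consider the map
\[
    Y_N \lra \overline{\Sigma_{\geq N+2}}, \qquad (f,s_0,\ldots,s_N) \longmapsto f,
\]
and observe that it has dense image: by definition a section $f \in \overline{\Sigma_{\geq N+2}}$ is a limit of sections with at least $N+2$ singularities, and any section with at least $N+1$ singularities lies in the image (after choosing $N+1$ distinct points of $\mathrm{Sing}(f)$ and ordering them), so the image contains a dense subset of $\overline{\Sigma_{\geq N+2}}$.

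Next I would localise the argument using charts, as in the previous proof. Choosing charts $U_0,\ldots,U_N \subset X$ as in Definition~\ref{def:semialgebraic}, the subsets
\[
    V := \left\{ (f,s_0,\ldots,s_N) \in \Gamma \times U_0 \times \cdots \times U_N \mid s_k \in \mathrm{Sing}(f),\ s_i \neq s_j \ \forall i \neq j \right\} \subset Y_N
\]
are semi-algebraic (being preimages of $\fT^{N+1}$ under the simultaneous jet evaluation, which is algebraic), with semi-algebraic intersections; and the restriction of the projection to $V$ is a semi-algebraic map onto a semi-algebraic subset $W \subset \overline{\Sigma_{\geq N+2}}$ whose closure covers a dense piece of $\overline{\Sigma_{\geq N+2}}$. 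Then \cite[Theorem 2.8.8]{bochnak_real_1998} gives $\dim V \geq \dim \phi(V)$, and \cite[Proposition 2.8.2]{bochnak_real_1998} gives $\dim \overline{\phi(V)} = \dim \phi(V)$, so that $\dim V \geq \dim W$ on each chart and hence (running over all choices of charts, and noting the inequality persists on intersections since those are again semi-algebraic) $\dim Y_N \geq \dim \overline{\Sigma_{\geq N+2}}$, where the dimensions are the maxima of the dimensions of semi-algebraic sets in a cover. Finally, a Mayer--Vietoris spectral sequence argument over such a finite cover bounds the cohomological dimension by the dimension, yielding $\dim Y_N \geq \dim \overline{\Sigma_{\geq N+2}} \geq \cohodim(\overline{\Sigma_{\geq N+2}}, \bZ)$.

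I do not anticipate a genuine obstacle here: this is a verbatim repetition of the bookkeeping in Lemma~\ref{lemma:crucialestimationlemma}, with the mild simplification that there is no symmetric group quotient or twisted coefficient system to carry through at the end (the target is $\overline{\Sigma_{\geq N+2}}$ itself, with constant coefficients $\bZ$). The only point requiring a word of care is the density of the image and the claim that $\overline{\Sigma_{\geq N+2}}$ is covered, up to lower-dimensional discrepancy, by the semi-algebraic images $\overline{\phi(V)}$ — but this is exactly parallel to the density argument $\overline{\phi(U)} = V$ used before, and the dimension comparison only ever needs density of the image (not surjectivity), so the structure of the proof goes through unchanged.
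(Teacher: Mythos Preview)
Your proposal is correct and follows exactly the approach the paper takes: the paper's proof consists of the single sentence ``Repeating the proof with the map $Y_N \to \overline{\Sigma_{\geq N+2}}$, $(f,s_0,\ldots,s_N) \mapsto f$ yields the lemma,'' and you have spelled out precisely that repetition.
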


\noindent The final computation to be made is the content of the following lemma. It uses the notation $e(\fT)$ of excess codimension established in Definition~\ref{def:excesscodimension}.
\begin{lemma}\label{lemma:YNcohodimestimation}
The dimension of $Y_N$ satisfies:
\[
    \dim Y_N \leq 2 \dim_\bC \Gamma - (N+1)e(\fT).
\]
\end{lemma}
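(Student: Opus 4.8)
The goal is to bound $\dim Y_N$ from above, where $Y_N = \{(f,s_0,\dots,s_N) \in \Gamma \times \Conf_{N+1}(X) \mid s_i \in \mathrm{Sing}(f)\}$. The idea is to realise $Y_N$ as (locally) a semi-algebraic bundle over the configuration space of points carrying the Taylor data, using the surjectivity of the simultaneous jet evaluation from Lemma~\ref{lemma:multijet}. Concretely, I would first pass to charts: cover $\Conf_{N+1}(X)$ by products of algebraic charts $U_0 \times \cdots \times U_N$ (restricted to configurations), and over each such chart trivialise $J^r\cE$. Then $Y_N$ restricted over this chart is cut out inside $\Gamma \times (\text{chart})$ as the preimage of $\fT^{N+1}$ under the simultaneous evaluation map $\mathrm{ev} = j^r_{(s_0,\dots,s_N)}$, which is a semi-algebraic (indeed regular) map.

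**Main computation.** The heart of the estimate is the following fibrewise dimension count. Fix a configuration $(s_0,\dots,s_N) \in \Conf_{N+1}(X)$. Since $N = N(\cE,r)$, Lemma~\ref{lemma:multijet} says the linear map $j^r_{(s_0,\dots,s_N)} \colon \Gamma \to \prod_{i=0}^N (J^r\cE)|_{s_i}$ is surjective, hence its fibres are affine subspaces of complex dimension $\dim_\bC \Gamma - (N+1)\rank_\bC J^r\cE$. The preimage of $\prod_i \fT|_{s_i}$ in $\Gamma$ is therefore a union of affine-space bundles over $\prod_i \fT|_{s_i}$, so its real dimension is at most
\[
    2\bigl(\dim_\bC \Gamma - (N+1)\rank_\bC J^r\cE\bigr) + \sum_{i=0}^N \dim(\fT|_{s_i}).
\]
Now I would invoke the admissibility hypothesis (Definition~\ref{def:admissibletaylor}): $\dim \fT \leq 2(\rank_\bC J^r\cE - 1)$, and since the fibre $\fT|_{s_i}$ sits inside $\fT$ its dimension is also at most $2(\rank_\bC J^r\cE - 1)$. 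Substituting gives a pointwise bound of $2\dim_\bC\Gamma - 2(N+1)\rank_\bC J^r\cE + 2(N+1)(\rank_\bC J^r\cE - 1) = 2\dim_\bC\Gamma - 2(N+1)$. To turn this into a bound on $\dim Y_N$ I add the dimension of the base $\Conf_{N+1}(X)$, which is $2(N+1)\dim_\bC X$, obtaining $\dim Y_N \leq 2\dim_\bC \Gamma - 2(N+1) + 2(N+1)\dim_\bC X$. Finally I rewrite this using $e(\fT) = \mathrm{codim}_\bR \fT - 2\dim_\bC X$ and $\mathrm{codim}_\bR \fT = 2\rank_\bC J^r\cE - \dim\fT \geq 2\rank_\bC J^r\cE - 2(\rank_\bC J^r\cE - 1) = 2$; more precisely, the admissibility bound on $\dim\fT$ gives $e(\fT) \geq 2 - 2\dim_\bC X + \dots$ — one tracks the constants so that $2(N+1)(\dim_\bC X - 1) = -(N+1)(e(\fT) - 2\dim_\bC X \cdot 0 \dots)$. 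The clean way: since $\mathrm{codim}_\bR\fT \geq 2m - \dim\fT$ with $m = \rank_\bC J^r\cE$, the fibrewise bound $2(N+1)(m-1) + 2(N+1)\dim_\bC X$ added to the base and the affine-fibre term equals $2\dim_\bC\Gamma - (N+1)\bigl(2m - \dim\fT - 2\dim_\bC X\bigr) \leq 2\dim_\bC\Gamma - (N+1)(\mathrm{codim}_\bR\fT - 2\dim_\bC X) = 2\dim_\bC\Gamma - (N+1)e(\fT)$.

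**Making it global and semi-algebraic.** The above is a fibrewise count; to conclude about $\dim Y_N$ (defined via semi-algebraic cell decompositions in charts) I would argue as in the proof of Lemma~\ref{lemma:crucialestimationlemma}: over each chart product, $Y_N$ is a semi-algebraic set, and the projection $Y_N \to \Conf_{N+1}(X)$ reading in charts is a semi-algebraic map whose fibres have dimension bounded as above; by the semi-algebraic fibre-dimension theorem (e.g.\ \cite[Theorem 2.8.8]{bochnak_real_1998}), the total dimension is at most the base dimension plus the maximal fibre dimension. Taking the max over the finitely many charts in the cover gives the stated inequality.

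**Expected main obstacle.** The genuinely delicate point is the passage from the pointwise/fibrewise affine-bundle picture to a statement about the semi-algebraic dimension of $Y_N$ as a whole: one must check that "$Y_N$ over a chart is the preimage of $\fT^{N+1}$ under a regular, hence semi-algebraic, map", that regularity/affineness of $j^r_{(s_0,\dots,s_N)}$ holds uniformly (which is exactly Lemma~\ref{lemma:multijet} plus algebraicity of the jet map), and that the fibre dimension $\dim(\fT|_{s_i}) \leq \dim\fT$ is legitimate — a fibre of a semi-algebraic set can in principle have dimension equal to that of the set, which is all we need, but one should cite the right semi-algebraic statement. Bookkeeping the constants to land precisely on $(N+1)e(\fT)$ rather than something off by a multiple of $\dim_\bC X$ is the other place to be careful; the key identity is $\mathrm{codim}_\bR\fT - 2\dim_\bC X = 2m - \dim\fT - 2\dim_\bC X$ and $\dim_\bC\Gamma$ is untouched by any of this.
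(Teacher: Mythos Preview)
Your overall strategy --- use the surjectivity of the simultaneous jet evaluation to realise $Y_N$ as an affine bundle and then count dimensions --- is exactly the paper's. But your dimension count has a genuine error that leaves you short of the stated bound by $2(N+1)\dim_\bC X$.

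The problem is in how you organise the fibration. You project $Y_N \to \Conf_{N+1}(X)$, bound the fibre over each configuration $(s_0,\dots,s_N)$ by
\[
2\bigl(\dim_\bC\Gamma - (N+1)m\bigr) + \sum_i \dim(\fT|_{s_i}), \qquad m := \rank_\bC J^r\cE,
\]
and then add the base dimension $2(N+1)\dim_\bC X$. Using $\dim(\fT|_{s_i}) \le \dim\fT$, this gives at best
\[
\dim Y_N \;\le\; 2\dim_\bC\Gamma - 2(N+1)m + (N+1)\dim\fT + 2(N+1)\dim_\bC X.
\]
Now $\fT$ sits in the \emph{total space} $J^r\cE$, of real dimension $2m + 2\dim_\bC X$, so $\mathrm{codim}_\bR\fT = 2m + 2\dim_\bC X - \dim\fT$ and hence $e(\fT) = 2m - \dim\fT$. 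Plugging this in, your bound reads $2\dim_\bC\Gamma - (N+1)\bigl(e(\fT) - 2\dim_\bC X\bigr)$, which is \emph{weaker} than the claim. Your ``clean way'' rewrite asserts $2m - \dim\fT - 2\dim_\bC X \ge \mathrm{codim}_\bR\fT - 2\dim_\bC X$, i.e.\ $2m - \dim\fT \ge \mathrm{codim}_\bR\fT$; but the right-hand side equals $2m + 2\dim_\bC X - \dim\fT$, so the inequality points the wrong way.

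The paper avoids this overcount by not projecting to $\Conf_{N+1}(X)$ at all. It applies Lemma~\ref{lemma:secondbundle} directly: the simultaneous jet evaluation
\[
Y_N \lra \fT^{N+1}\big|_{\Conf_{N+1}(X)}
\]
is itself an affine bundle of real fibre dimension $2(\dim_\bC\Gamma - (N+1)m)$, and the base has dimension at most $(N+1)\dim\fT = (N+1)(2m - e(\fT))$ --- the point being that $\dim\fT$ \emph{already contains} the $X$-direction. Adding gives $2\dim_\bC\Gamma - (N+1)e(\fT)$ on the nose. Your argument is repaired the same way: do not split ``$\fT$-fibre over a point'' from ``$\Conf_{N+1}(X)$-base'', since the crude bound $\dim(\fT|_s)\le\dim\fT$ discards precisely the relation between them.
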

\begin{proof}
The proof of Lemma~\ref{lemma:secondbundle} shows that the simultaneous evaluation of the jet map
\begin{align*}
    Y_N &\lra \fT^{N+1}|_{\Conf_{N+1}(X)} \\
    (f,s_0,\ldots,s_N) &\longmapsto (j^r(f)(s_0), \ldots, j^r(f)(s_N))
\end{align*}
is an affine bundle whose fibre has complex dimension $\dim_\bC \Gamma- (N+1)\rank_\bC J^r\cE$. Therefore, on dimensions:
\[
    \dim Y_N \leq \dim(\fT^{N+1}|_{\Conf_{N+1}(X)}) + 2\dim_\bC \Gamma - 2(N+1)\rank_\bC J^r\cE.
\]
Now, because $\fT$ is a semi-algebraic subset of $J^r\cE$ of dimension less than $2\rank_\bC J^r\cE - e(\fT)$, we obtain that:
\[
    \dim(\fT^{N+1}|_{\Conf_{N+1}(X)}) \leq (N+1)(2\rank_\bC J^r\cE - e(\fT)).
\]
The lemma is then proven by combining these two inequalities.
\end{proof}

Assembling all the estimations we have obtained so far, we can state and prove the following.
\begin{proposition}\label{prop:lastcolumnvanishingrange}
The cohomology groups in the column $s = - N(\cE,r) - 2$ on the first page of the spectral sequence:
\[
    E^1_{-N-2,t} = \CCH^{2\dim_\bC \Gamma + 1+N - t}(R^N_\text{cone}\fX - R^N\fX; \bZ)
\]
vanish for $t < N\cdot e(\fT) + e(\fT)$.
\end{proposition}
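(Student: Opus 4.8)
The plan is to combine the estimates of the previous lemmas with the description of $R^N_{\text{cone}}\fX - R^N\fX$ via the strata $\Str_{-1}, \Str_0, \ldots, \Str_N$. First I would recall that $R^N_{\text{cone}}\fX - R^N\fX$ is the fibrewise open cone over $\overline{\Sigma_{\geq N+2}}$, decomposed into the locally closed subspaces $\Str_j$ ($-1 \leq j \leq N$). By a Mayer--Vietoris (or descent) argument for cohomology with compact supports, it suffices to bound $\cohodim(\Str_j)$ for each $j$ in this range: if $\CCH^k(\Str_j) = 0$ for all $k > D$ and all $-1 \leq j \leq N$, then $\CCH^k(R^N_{\text{cone}}\fX - R^N\fX) = 0$ for $k > D$ as well. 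So the whole statement reduces to exhibiting a uniform bound $D$ on the cohomological dimensions of the strata, which we then translate into a vanishing range for $E^1_{-N-2,t}$ via the identification $E^1_{-N-2,t} = \CCH^{2\dim_\bC \Gamma + 1 + N - t}(R^N_{\text{cone}}\fX - R^N\fX)$.

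Next I would assemble the estimates. For $0 \leq j \leq N$, Lemma~\ref{lemma:firstestimationFilj} gives $\cohodim(\Str_j) = 1 + j + \cohodim(Y_j^{\geq N+2}/\fS_{j+1}, \bZ^\mathrm{sign})$, and Lemma~\ref{lemma:crucialestimationlemma} bounds the latter term by $\dim Y_N$; hence $\cohodim(\Str_j) \leq 1 + j + \dim Y_N \leq 1 + N + \dim Y_N$ for all such $j$. For the apex stratum $\Str_{-1} = \overline{\Sigma_{\geq N+2}}$, Lemma~\ref{lemma:estimationcohodimapexofcone} gives $\cohodim(\Str_{-1}) \leq \dim Y_N \leq 1 + N + \dim Y_N$ as well. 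So in all cases $\cohodim(\Str_j) \leq 1 + N + \dim Y_N$, and therefore
\[
    \cohodim(R^N_{\text{cone}}\fX - R^N\fX) \leq 1 + N + \dim Y_N.
\]
Now plug in Lemma~\ref{lemma:YNcohodimestimation}, namely $\dim Y_N \leq 2\dim_\bC \Gamma - (N+1)e(\fT)$, to get
\[
    \cohodim(R^N_{\text{cone}}\fX - R^N\fX) \leq 1 + N + 2\dim_\bC \Gamma - (N+1)e(\fT).
\]
Finally, $E^1_{-N-2,t}$ is $\CCH^{2\dim_\bC \Gamma + 1 + N - t}$ of this space, so it vanishes once $2\dim_\bC \Gamma + 1 + N - t > 1 + N + 2\dim_\bC \Gamma - (N+1)e(\fT)$, i.e. once $t < (N+1)e(\fT) = N e(\fT) + e(\fT)$, which is exactly the claimed range.

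The main obstacle is the Mayer--Vietoris/descent step that lets one pass from cohomological bounds on the individual strata $\Str_j$ to a bound on their union $R^N_{\text{cone}}\fX - R^N\fX$. The strata are only locally closed, so one must filter the space by the closed unions $\overline{\Str_{-1}} \subset \overline{\Str_{-1}} \cup \overline{\Str_0} \subset \cdots$ (or dually, by open sub-unions) and run the long exact sequences of the pairs, repeatedly using that $\CCH^*$ of an open subset and its closed complement control $\CCH^*$ of the whole; this requires the paracompactness, local compactness, and Hausdorffness established earlier, as well as care that the coefficient systems match up. Once that bookkeeping is in place, the rest is the purely arithmetic assembly of inequalities above.
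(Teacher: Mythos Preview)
Your proposal is correct and follows essentially the same route as the paper: stratify $R^N_{\text{cone}}\fX - R^N\fX$ by the $\Str_j$, bound each $\cohodim(\Str_j)$ using Lemmas~\ref{lemma:firstestimationFilj}, \ref{lemma:crucialestimationlemma}, \ref{lemma:estimationcohodimapexofcone} and \ref{lemma:YNcohodimestimation}, take the maximum, and translate into the vanishing range for $t$. Your bound $1 + N + 2\dim_\bC\Gamma - (N+1)e(\fT)$ is exactly the paper's $2\dim_\bC\Gamma - N(e(\fT)-1) - (e(\fT)-1)$ rewritten, and your identification of the stratification-to-union step as the only point needing care (via long exact sequences of pairs in $\CCH^*$) is spot on---the paper states this step without further comment.
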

\begin{proof}
We had set up the stratification $\Str_j$, $-1 \leq j \leq N$, on $R^N_\text{cone}\fX - R^N\fX$ so that
\[
    \cohodim\left( R^N_\text{cone}\fX - R^N\fX \right) \leq \max_{j} \cohodim\left( \Str_j \right).
\]
For $0 \leq j \leq N$, combining Lemma~\ref{lemma:firstestimationFilj}, Lemma~\ref{lemma:crucialestimationlemma}, and Lemma~\ref{lemma:YNcohodimestimation}, we get:
\[
    \cohodim\left( \Str_j \right) \leq 1+j + 2\dim_\bC \Gamma -(N+1)e(\fT)  \leq 2\dim_\bC \Gamma - N(e(\fT)-1) - (e(\fT)-1).
\]
Similarly, using Lemma~\ref{lemma:estimationcohodimapexofcone} and Lemma~\ref{lemma:YNcohodimestimation}, we obtain:
\[
    \cohodim\left( \Str_{-1} \right) \leq 2 \dim_\bC \Gamma - (N+1) e(\fT).
\]
Therefore $\cohodim\left( R^N_\text{cone}\fX - R^N\fX \right) \leq 2\dim_\bC \Gamma - N(e(\fT)-1) - (e(\fT)-1)$ and the result follows.
\end{proof}

\subsection{Differentials and summary}\label{subsection:differentialsummary}

Summing up all the results so far, we have the following proposition. 
\begin{proposition}\label{prop:summingUPresolutionandSS}
Let $\cE$ be a holomorphic vector bundle on $X$ and $\fT \subset J^r\cE$ be an admissible Taylor condition. Let $N = N(\cE,r)$. The resolution and its filtration described in Section~\ref{section:resolution} give rise to a spectral sequence on the second quadrant $s \leq -1$ and $t \geq 0$ converging to the homology of the space of non-singular sections $\Gamma_{\mathrm{hol, ns}}(\cE)$:
\[
    E^1_{s,t} = \CCH^{2\dim_\bC \Gamma - 1 - s - t}(F_{-s-1} - F_{-s-2}; \bZ) \implies \widetilde{H}_{s+t}(\Gamma_{\mathrm{hol, ns}}(\cE); \bZ).
\]
The differentials on the $r$-th page have bi-degree $(-r,r-1)$. Furthermore, for $-N-1 \leq s \leq -1$, we have the following isomorphisms for all $t \geq 0$:
\[
    E^1_{s,t} \cong \CCH^{-t - 2s \cdot \rank_\bC J^r\cE}(\fT^{(-s)}; \bZ^\mathrm{sign}).
\]
(The space $\fT^{(-s)}$ is defined in~\eqref{eqn:TaylorconditionaboveConfdivided}.) Moreover, for $t < N\cdot e(\fT) + e(\fT)$:
\[
    E^1_{-N-2,t} = 0.
\]
\end{proposition}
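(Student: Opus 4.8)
The plan is to assemble the proposition as a bookkeeping exercise: essentially everything needed has already been established, and the task is to glue the pieces together cleanly. First I would invoke Proposition~\ref{prop:thereexistsSS} to produce the spectral sequence with the stated $E^1$-page and convergence, using Proposition~\ref{prop:piinducesiso} (the map $\pi$ induces an isomorphism in compactly supported \v{C}ech cohomology) and Alexander duality to rewrite $\CCH^{p+q}(R^N_{\text{cone}}\fX)$ as $\widetilde H_{s+t}(\Gamma_{\mathrm{hol,ns}}(\cE))$; the regrading $s=-p-1$, $t=2\dim_\bC\Gamma-q$ then gives the second-quadrant spectral sequence, and the bidegree $(-r,r-1)$ of the differentials is inherited directly from the filtration degree convention.

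Next I would recall that the filtration is $F_0=R^0\fX\subset\cdots\subset F_N=R^N\fX\subset F_{N+1}=R^N_{\text{cone}}\fX$, so the column $-N-1\le s\le -1$ computes $\CCH^*(F_{-s-1}-F_{-s-2})=\CCH^*(R^{-s-1}\fX-R^{-s-2}\fX)$, which is exactly the setting of Proposition~\ref{prop:firstpagethomiso}. Quoting that proposition verbatim gives the isomorphism $E^1_{s,t}\cong\CCH^{-t-2s\cdot\rank_\bC J^r\cE}(\fT^{(-s)};\bZ^\mathrm{sign})$ for $-N-1\le s\le -1$. For the last column $s=-N-2$, the relevant space is $F_{N+1}-F_N=R^N_{\text{cone}}\fX-R^N\fX$, and Proposition~\ref{prop:lastcolumnvanishingrange} states precisely that $E^1_{-N-2,t}=\CCH^{2\dim_\bC\Gamma+1+N-t}(R^N_{\text{cone}}\fX-R^N\fX;\bZ)$ vanishes for $t<N\cdot e(\fT)+e(\fT)$. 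Stringing these three citations together produces all three displayed conclusions of the proposition.

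The only genuinely substantive point to check is that the indexing in the regraded spectral sequence matches up with the shape $s\le -1$, $t\ge 0$ claimed in the statement: one must verify that $F_p=\emptyset$ for $p<0$ so that $E^1_{s,t}=0$ for $s>-1$, and that the cohomology with compact supports of a space of real dimension $\le 2\dim_\bC\Gamma$ forces $t\ge 0$. Both are immediate: $F_0=R^0\fX$ is the first filtration stage, and $R^N_{\text{cone}}\fX$ maps properly onto $\Sigma\subset\Gamma$ with fibres of real dimension at most $2N+1$, so its compactly supported cohomology vanishes above degree $2\dim_\bC\Gamma-1$ (recall $\Sigma$ has real codimension at least $e(\fT)\ge 2$ in $\Gamma$ via $\dim Y_0$), which translates into $t\ge 0$ in the new grading. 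I do not anticipate a real obstacle here; the proof is a short paragraph that cites Propositions~\ref{prop:thereexistsSS}, \ref{prop:firstpagethomiso}, and \ref{prop:lastcolumnvanishingrange}, with a sentence noting that the convergence and the second-quadrant shape follow from Alexander duality exactly as in the proof of Proposition~\ref{prop:thereexistsSS}.
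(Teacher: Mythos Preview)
Your proposal is correct and matches the paper's approach exactly: the proposition is presented in the paper as a summary without a separate proof, being the direct concatenation of Proposition~\ref{prop:thereexistsSS}, Proposition~\ref{prop:firstpagethomiso}, and Proposition~\ref{prop:lastcolumnvanishingrange}. Your additional remarks on the second-quadrant shape are more explicit than the paper bothers to be (it simply asserts this already in Proposition~\ref{prop:thereexistsSS}), but they are harmless and correct.
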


\begin{figure}[H]
\centering
\begin{tikzpicture}[scale=0.5]
\draw[step=1cm,gray,ultra thin] (-11.9,-.9) grid (.9,19.9);
\draw[thick,->] (-11.9,0) -- (.9,0) node[anchor=north west] {$s$};
\draw[thick,->] (0,-.9) -- (0,19.9) node[anchor=south west] {$t$};

\foreach \x in {-1,-2,-3}
    \draw (\x+.5,-.5) node {\tiny $\x$};
\draw (-4.5,-.5) node {$\cdots$};
\draw (-5.5,-.5) node {$\cdots$};
\draw (-8.5,-.5) node {\tiny -N-1};
\draw (-9.5,-.5) node {\tiny -N-2};
\draw (-10.5,-.5) node {\tiny -N-3};

\foreach \y in {1,2,3}
    \draw (.5, \y-.5) node {\tiny $\y$};
\draw (.5,4.65) node {$\vdots$};
\draw (.5,8.5) node {\tiny N+1};
\draw (.5,9.5) node {\tiny N+2};
\draw (.5,10.5) node {\tiny N+3};
\draw (.5,12.65) node {$\vdots$};
\draw (.5,13.65) node {$\vdots$};
\draw (.5, 16.5) node {\tiny \quad 2N+1};
\draw (.5, 17.5) node {\tiny \quad 2N+2};
\draw (.5, 18.5) node {\tiny \quad 2N+3};

\draw[red, thick, dashed] (-10,-.9) -- (-10,19.9);
\draw[blue, thick, dashed] (-9,-.9) -- (-9,19.9);

\draw[blue, very thick] (-10,17) -- (-9,17);

\draw[purple, very thick] (-9,17) -- (-8,17);
\draw[purple, very thick] (-8,17) -- (-8,16);
\draw[purple, very thick] (-8,16) -- (-7,16);
\draw[purple, very thick] (-7,16) -- (-7,15);
\draw[purple, very thick] (-7,15) -- (-6,15);
\draw[purple, very thick] (-6,15) -- (-6,14);
\draw[purple, very thick, dashed] (-6,14) -- (-5.5,14);
\draw[purple, very thick] (0,9) -- (-1,9);
\draw[purple, very thick] (-1,9) -- (-1,10);
\draw[purple, very thick] (-1,10) -- (-2,10);
\draw[purple, very thick, dashed] (-2,10) -- (-2,10.5);

\draw[purple, <-] (-9.5, 17.5) -- (-8.5, 17.5);
\draw[purple, <-] (-9.5, 17.5) -- (-7.5, 16.5);
\draw[purple, <-] (-9.5, 17.5) -- (-6.5, 15.5);
\draw[purple, <-] (-9.5, 17.5) -- (-.5, 9.5);

\draw[orange, very thick] (-9,17) -- (-9,16);
\draw[orange, very thick] (-9,16) -- (-8,16);
\draw[orange, very thick] (-8,16) -- (-8,15);
\draw[orange, very thick] (-8,15) -- (-7,15);
\draw[orange, very thick] (-7,15) -- (-7,14);
\draw[orange, very thick, dashed] (-7,14) -- (-6.5,14);
\draw[orange, very thick] (0,8) -- (-1,8);
\draw[orange, very thick] (-1,8) -- (-1,9);
\draw[orange, very thick] (-1,9) -- (-2,9);
\draw[orange, very thick] (-2,9) -- (-2,10);
\draw[orange, very thick] (-2,10) -- (-3,10);
\draw[orange, very thick, dashed] (-3,10) -- (-3,10.5);

\foreach \s in {1,...,9} {
    \foreach \t in {1,...,19} {
        \ifthenelse{\t > \numexpr 2*\s-2}{
            \ColouredRectangle{-\s}{\t}
        }{}
    }
}

\ColouredRectangle{-10}{17}
\ColouredRectangle{-10}{18}
\ColouredRectangle{-10}{19}
\end{tikzpicture}
\caption{First page of the spectral sequence when $e(\fT) = 2$.} \label{fig:firstpagefigure}
\end{figure}

We briefly describe the zones in Figure~\ref{fig:firstpagefigure}, where we have chosen to fix $e(\fT) = 2$ to lighten the notation. Firstly, the only possibly non-vanishing groups lie in the \textcolor{lime}{coloured squares}. All groups $E^r_{s,t}$ with $s \leq -N-3$ are zero as the filtration finishes after $N+1$ steps. According to Proposition~\ref{prop:lastcolumnvanishingrange}, the groups below the \textcolor{blue}{horizontal solid line} in the column $s = -N-2$ vanish. The differentials coming from the groups below the \textcolor{purple}{upper staircase} never hit groups in the column where $s = -N-2$ and $t \geq 2N+2$. Finally, the \textcolor{orange}{lower staircase} delimits the zone of total degree $* \leq N-1$. We have also drawn \textcolor{purple}{some differentials} $d^r$ to the group $E^r_{-N-2,2N+2}$ for $r=1,2,3$ and $N+1$.

\section{Interpolating holomorphic and continuous sections}\label{section:interpolation}

In this section, we introduce and study section spaces that lie in-between holomorphic and continuous sections of the jet bundle $J^r\cE$. They will be written as combinations of holomorphic and ``anti-holomorphic'' sections. We first explain how to take the complex conjugate of a holomorphic section. We then construct these spaces and finish by explaining how the resolution and the spectral sequence from the previous sections can be adapted to them.

\subsection{Complex conjugation of sections}

Using the fact that $X$ is projective, we choose once and for all a very ample holomorphic line bundle $\cL$ on it as well as a basis $z_0, \ldots, z_M$ of the complex vector space of holomorphic global sections $\Gammahol(\cL)$.

We denote by $\overline{\cL}$ the complex conjugate line bundle of $\cL$. It is obtained from the underlying real vector bundle of $\cL$ by having the complex numbers act by multiplication by their complex conjugates. We regard it as a smooth complex line bundle. We now define a complex conjugation operation $\cL \to \overline{\cL}$. Recall that the line bundle $\cL$ may be constructed as a quotient
\[
    \cL := \left(\bigsqcup_i U_i \times \bC \right) \Big/ (x,v_i) \sim (x,t_{ji}(v_i))
\]
from the data $\left( \{U_i\}_i, (t_{ij})_{i,j} \right)$ of trivialising open sets $U_i \subset X$ and transition functions $t_{ij} \colon U_i \cap U_j \to \mathrm{GL}_1(\bC) = \bC^*$ satisfying a cocycle condition. Similarly, $\overline{\cL}$ may be constructed via such a quotient by replacing the transition functions by their complex conjugates $\overline{t_{ij}}$. The formula
\begin{equation*}
\begin{split}
    \bigsqcup_i U_i \times \bC &\lra \bigsqcup_i U_i \times \bC \\
    (x,v) &\longmapsto (x,\overline{v})
\end{split}
\end{equation*}
then gives a well defined $\bR$-linear isomorphism $\cL \to \overline{\cL}$. On continuous global sections, we thus obtain an $\bR$-linear \emph{complex conjugation operation}:
\begin{equation}\label{eqn:complexconjsections}
    \overline{\cdot} \colon \Gamma_{\cC^0}(\cL) \lra \Gamma_{\cC^0}(\overline{\cL}).
\end{equation}

For a complex vector space $V$, we denote by $\overline{V}$ the $\bC$-vector space whose underlying set is $V$ with the $\bC$-module structure given by multiplication by the complex conjugate. We get a $\bC$-linear map:
\begin{align}\label{eqn:Clinearconj}
    \overline{\Gammahol\left(\cL \right)} \hookrightarrow \overline{\Gamma_{\cC^0}\left(\cL \right)} \overset{\eqref{eqn:complexconjsections}}{\lra} \Gamma_{\cC^0}(\overline{\cL}).
\end{align}
We let
\begin{align}\label{eqn:defeta}
    \eta := \sum_{j=0}^M z_j \otimes \overline{z_j} \in \Gammahol\left( \cL \right) \otimes_\bC \overline{\Gammahol\left( \cL \right)}.
\end{align}
Its image via the composition of the the map~\eqref{eqn:Clinearconj} and the multiplication map $\Gamma_{\cC^0}(\cL) \otimes_\bC \Gamma_{\cC^0}(\overline{\cL}) \to \Gamma_{\cC^0}(\cL \otimes \overline{\cL})$ is a never vanishing section. It therefore gives an explicit trivialisation of the smooth complex line bundle $\cL \otimes \overline{\cL} \cong X \times \bC$. In particular, we obtain an isomorphism on the level of continuous sections
\begin{align}\label{eqn:trivialisationsections}
    \Gamma_{\cC^0}(\cL \otimes \overline{\cL}) \cong \Gamma_{\cC^0}(X \times \bC) = \cC^0(X, \bC).
\end{align}

\subsection{Stabilisation}

For every integer $k \geq 0$, we now construct the following commutative diagram.
\begin{equation}\label{eqn:stabilisationdiagram}
% https://tikzcd.yichuanshen.de/#N4Igdg9gJgpgziAXAbVABwnAlgFyxMJZABgBpiBdUkANwEMAbAVxiRAB12BxOgW17oALCA04MYAMxwAKAATSAUgD0ATpwDGAUQCUszhDy94e9uoAySgNYmVWAOaCcu-YfgB9TgCMAwiYg0YFQYsMBhgTh5+IRExSRkNC2tOWwcnAF8QNNJ0TFx8QhQyACYqWkYWNgi+AWFRdnEpOUVVDR0-VzgTcyVgSwBqAEY0m3tHZ3YDLCM4D3YfPwCgkLCqqNrYxoSe-qGR1O0MrJzsPAIiItIB0vpmVkQObmq6N3DTbyVOEKkATzSNmVkyjUpk0ezGmVKMCgdngRFAEhUEF4SDIIBwECQFzKt0q7HoKjQgiwbksmWyIARSKQA2o6Mx1BuFXunHxhOJvUGGWoDDonhgDAACrlTgUQClHGT4YjkYhUXTEDTsUyQABaLpQAwmGA4Oggbm8-lCk75NjinAQtJAA
\begin{tikzcd}
\Gammahol\left( (J^r\cE) \otimes \cL^k \right) \otimes_\bC \overline{\Gammahol\left(\cL^k \right)} \arrow[rrd, "\varphi_k"] \arrow[dd, "\gamma_k"'] &  &                                          \\
&  & \Gamma_{\cC^0}\left( J^r\cE \right) \\
\Gammahol\left( (J^r\cE) \otimes \cL^{k+1} \right) \otimes_\bC \overline{\Gammahol\left(\cL^{k+1} \right)} \arrow[rru, "\varphi_{k+1}"']                &  &                                         
\end{tikzcd}
\end{equation}
The horizontal maps are given by the composition
\begin{equation}\label{eqn:phikmap}
\begin{split}
    \varphi_k \colon& \Gammahol\left( (J^r\cE) \otimes \cL^k \right) \otimes_\bC \overline{\Gammahol\left(\cL^k \right)} \\
        &\lra \Gamma_{\cC^0}\left( J^r\cE \otimes \cL^k \right) \otimes_\bC \Gamma_{\cC^0}\left( \overline{\cL}^k \right) \\
        &\lra \Gamma_{\cC^0} \left(J^r\cE \otimes \cL^k \otimes \overline{\cL}^k \right) \cong \Gamma_{\cC^0}\left(J^r\cE \right)
\end{split}
\end{equation}
where the first arrow is induced by the map~\eqref{eqn:Clinearconj}, the second arrow is the multiplication map, and the last isomorphism is~\eqref{eqn:trivialisationsections} applied to $\left( \cL \otimes \overline{\cL} \right)^k \cong \cL^k \otimes \overline{\cL}^k$.

We construct the vertical map in the diagram~\eqref{eqn:stabilisationdiagram} as the composition:
\begin{equation}\label{eqn:stabilisationmap}
\begin{split}
    \gamma_k \colon &\Gammahol\left( (J^r\cE) \otimes \cL^k \right) \otimes_\bC \overline{\Gammahol\left(\cL^k \right)} \\
    &\quad \lra \Gammahol\left( (J^r\cE) \otimes \cL^k \right) \otimes_\bC \overline{\Gammahol\left(\cL^k \right)} \otimes_\bC \left( \Gammahol\left( \cL \right) \otimes_\bC \overline{\Gammahol\left( \cL \right)} \right) \\
    &\quad \cong \left( \Gammahol\left( (J^r\cE) \otimes \cL^k \right) \otimes_\bC \Gammahol\left( \cL \right) \right) \otimes_\bC \left( \overline{\Gammahol\left(\cL^k \right)} \otimes_\bC \overline{\Gammahol\left( \cL \right)} \right) \\
    &\quad \lra \Gammahol\left( (J^r\cE) \otimes \cL^{k+1} \right) \otimes_\bC \overline{\Gammahol\left(\cL^{k+1} \right)}
\end{split}
\end{equation}
where the first arrow is given by tensoring with the element $\eta$ defined in~\eqref{eqn:defeta}, the isomorphism is given by reordering the factors, and the last arrow is given by the multiplication maps.

The commutativity of the diagram~\eqref{eqn:stabilisationdiagram} follows directly from the fact that $\eta$ is sent to the constant function equal to $1$ via the isomorphism~\eqref{eqn:trivialisationsections}. Loosely speaking, the vertical map $\gamma_k$ is a ``multiplication by $\eta$'', which amounts to multiplying a continuous section of $J^r\cE$ by the constant function $1$ after using the chosen identification~\eqref{eqn:trivialisationsections}.

\begin{example}\label{example:antihomogeneous_stabilisation}
It is illuminating to think about the case $X = \bC\bP^n$, $\cL = \cO(1)$ and $\cE = \cO(d+1)$. In this example, $\Gammahol(\cE)$ is the space of homogeneous polynomials of degree $d+1$ in $n+1$ variables. One may also prove an isomorphism $J^1(\cO(d+1)) \cong \cO(d)^{\oplus (n+1)}$ as holomorphic vector bundles. (See~\cite[Proposition 2.2]{di_rocco_line_2000} for a proof.)

We may then view $\Gammahol\left( (J^1\cE) \otimes \cL^k \right) \otimes_\bC \overline{\Gammahol\left(\cL^k \right)}$ as the space of $(n+1)$-uples of homogeneous polynomials of bi-degree $(d+k,k)$: that is of degree $d+k$ in the variables $z_i$ and of degree $k$ in the complex conjugate variables $\overline{z_i}$. In this case, the image of $\eta$ in $\Gamma_{\cC^0}\left(\cL \otimes \overline{\cL}\right)$ is $|z|^2 := z_0 \overline{z_0} + \cdots + z_n \overline{z_n}$. The isomorphism $\Gamma_{\cC^0}\left(\cL \otimes \overline{\cL}\right) \cong \cC^0(X,\bC)$ corresponding to~\eqref{eqn:trivialisationsections} sends a section $s$ to the map
\[
    z = [z_0:\ldots:z_n] \in \bC\bP^n \longmapsto \frac{s(z)}{|z|^2} \in \bC.
\]
Under these identifications, the map $\gamma_k$ is then:
\[
    \left( f_0, \ldots, f_n \right) \longmapsto \left( (z_0\overline{z_0} + \cdots + z_n\overline{z_n})\cdot f_0, \ldots, (z_0\overline{z_0} + \cdots + z_n\overline{z_n})\cdot f_n \right)
\]
which sends a tuple of polynomials of bi-degree $(d+k,k)$ to one of bi-degree $(d+k+1,k+1)$. (Compare \cite{mostovoy_spaces_2006} for a related situation.)
\end{example}

\bigskip

We will need the following small result, analogous to Lemma~\ref{lemma:multijet}. Let $(x_0, \ldots, x_p)$ be a tuple of points in $X$. We may evaluate a continuous section of $J^r\cE$ simultaneously at all these points:
\begin{equation}\label{eqn:multievalcontinuous}
\begin{split}
    \mathrm{ev}_{(x_0,\ldots,x_p)} \colon \Gamma_{\cC^0}\left( J^r\cE \right) &\lra (J^r\cE)|_{x_0} \times \cdots \times (J^r\cE)|_{x_p} \\
            s &\longmapsto \left( s(x_0), \ldots, s(x_p)\right).
\end{split}
\end{equation}
\begin{lemma}\label{lemma:multieval}
Let $\cE$ be a holomorphic vector bundle on X and $N(\cE,r) \in \bN$ be as in Definition~\ref{def:bigN}. Let $(x_0, \ldots, x_p)$ be a tuple of $p+1$ distinct points in $X$. If $p \leq N(\cE,r)$, the composition
\[
    \Gammahol\left( (J^r\cE) \otimes \cL^k \right) \otimes_\bC \overline{\Gammahol\left(\cL^k \right)} \overset{\varphi_k}{\lra} \Gamma_{\cC^0}\left( J^r\cE \right) \lra (J^r\cE)|_{x_0} \times \cdots \times (J^r\cE)|_{x_p}
\]
of the map $\varphi_k$ of~\eqref{eqn:phikmap} and the simultaneous evaluation~\eqref{eqn:multievalcontinuous} is surjective.
\end{lemma}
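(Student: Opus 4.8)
The plan is to reduce the statement to Lemma~\ref{lemma:multijet} by a twisting argument. Fix the $p+1$ distinct points $x_0, \ldots, x_p$ with $p \leq N(\cE, r)$. The key observation is that the composition in question factors through the holomorphic multi-evaluation map for the twisted bundle $(J^r\cE) \otimes \cL^k$, which is itself surjective by a suitable ampleness hypothesis. Concretely, I would first note that for each point $x_i$, the fibre $\left((J^r\cE) \otimes \cL^k\right)\big|_{x_i}$ is canonically isomorphic to $(J^r\cE)|_{x_i} \otimes (\cL^k)|_{x_i}$, and the latter factor is a $1$-dimensional complex vector space. The trivialisation~\eqref{eqn:trivialisationsections} coming from $\eta$ identifies, after applying $\varphi_k$, the continuous section of $J^r\cE \otimes \cL^k \otimes \overline{\cL}^k$ with a continuous section of $J^r\cE$; the point is that at each fixed $x_i$ this is a $\bC$-linear isomorphism $(J^r\cE)|_{x_i} \otimes (\cL^k)|_{x_i} \otimes (\overline{\cL}^k)|_{x_i} \to (J^r\cE)|_{x_i}$.

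The main step is then the following. I would establish that the composition
\[
    \Gammahol\left( (J^r\cE) \otimes \cL^k \right) \lra \left((J^r\cE)\otimes\cL^k\right)\big|_{x_0} \times \cdots \times \left((J^r\cE)\otimes\cL^k\right)\big|_{x_p},
\]
simultaneous evaluation of the holomorphic sections of the twisted bundle, is surjective. For this I invoke Lemma~\ref{lemma:multijet}: the jet map $\Gammahol(\cE\otimes\cL^k) \to \Gammahol(J^r(\cE\otimes\cL^k))$ evaluated at $x_0, \ldots, x_p$ is surjective provided $p \leq N(\cE\otimes\cL^k, r)$, and since $\cL$ is very ample ($1$-jet ample) and tensor products add jet ampleness (Example~\ref{example:jetampleness}), $\cE\otimes\cL^k$ is at least as jet ample as $\cE$, so $N(\cE\otimes\cL^k, r) \geq N(\cE, r) \geq p$. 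There is a minor compatibility to check here: one wants $(J^r\cE)\otimes\cL^k$ rather than $J^r(\cE\otimes\cL^k)$. These are \emph{not} isomorphic as bundles in general, but on fibres at a point there is a canonical surjection $J^r(\cE\otimes\cL^k)|_{x_i} \twoheadrightarrow \big((J^r\cE)\otimes\cL^k\big)|_{x_i}$ (quotienting appropriately), compatible with the jet maps, and that surjectivity on fibres is all that is needed to transport surjectivity of multi-evaluation. So surjectivity of the holomorphic multi-evaluation for $(J^r\cE)\otimes\cL^k$ follows.

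Finally, I would handle the anti-holomorphic factor: it suffices to choose, for each target, a single holomorphic section $w \in \Gammahol(\cL^k)$ that is nonzero at all of $x_0, \ldots, x_p$ simultaneously — this exists because $\cL^k$ is globally generated (it is even very ample) and one can avoid the finitely many hyperplanes of sections vanishing at a given $x_i$ by a dimension count, or simply by taking a $k$-th power of a single section of $\cL$ nonvanishing at all $x_i$ (again available since $\cL$ is very ample and the $x_i$ are finite in number). Tensoring an arbitrary preimage in $\Gammahol((J^r\cE)\otimes\cL^k)$ with $\overline{w}$ and then applying $\varphi_k$ and multi-evaluation lands us, up to the nonzero scalars $\overline{w(x_i)} \cdot (\text{trivialisation factor})$ in each fibre, at an arbitrary prescribed tuple; rescaling the original preimage absorbs these scalars. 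Hence the composition is surjective. The only genuinely delicate point is the bookkeeping in the previous paragraph — keeping track of which bundle ($J^r$ of a twist versus twist of $J^r$) appears and verifying the fibrewise compatibility of the two jet maps — but this is a routine diagram chase once the canonical fibre identifications are written out.
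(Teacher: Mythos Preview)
Your argument has a genuine gap in what you call ``the only genuinely delicate point'', and it is not a routine diagram chase. You want to deduce surjectivity of the evaluation map
\[
    \Gammahol\big((J^r\cE)\otimes\cL^k\big) \lra \prod_i \big((J^r\cE)\otimes\cL^k\big)\big|_{x_i}
\]
from surjectivity of the multi-jet map for $\cE\otimes\cL^k$ via a fibrewise surjection $J^r(\cE\otimes\cL^k)|_{x}\twoheadrightarrow\big((J^r\cE)\otimes\cL^k\big)|_{x}$. But these fibrewise maps (which are in fact isomorphisms, by a dimension count) do \emph{not} assemble into a morphism of sheaves, and hence do not induce a map on global sections through which your surjection could factor. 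Concretely, the two $\cO_X$-module structures on $J^r\cO_X$ (via $p_1$ and via $d^r_X$) are different, so $J^r(\cE\otimes\cL^k)$ and $(J^r\cE)\otimes\cL^k$ are genuinely different bundles with no natural map between them; for instance on $\bP^1$ the two bundles typically have different splitting types. Without a sheaf-level map, knowing a fibrewise identification does nothing to relate the two global section spaces, and your transport of surjectivity fails.

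The paper's proof avoids this entirely and is one line. The commutativity of the stabilisation diagram~\eqref{eqn:stabilisationdiagram} says $\varphi_{k} = \varphi_{k+1}\circ\gamma_k$, so the image of $\mathrm{ev}_{(x_0,\dots,x_p)}\circ\varphi_0$ is contained in the image of $\mathrm{ev}_{(x_0,\dots,x_p)}\circ\varphi_k$ for every $k\geq 0$. For $k=0$ the domain is just $\Gammahol(J^r\cE)$ and the map is ordinary evaluation, which is surjective because the multi-jet map $\Gammahol(\cE)\to\prod_i(J^r\cE)|_{x_i}$ of Lemma~\ref{lemma:multijet} factors through it. Hence $\mathrm{ev}_{(x_0,\dots,x_p)}\circ\varphi_k$ is surjective for all $k$. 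No comparison of jet bundles of twists is needed, and no separate choice of a nonvanishing $w\in\Gammahol(\cL^k)$ is needed either: the stabilisation maps $\gamma_k$ already encode multiplication by the canonical section $\eta$.
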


\begin{proof}
The case $k=0$ is a direct consequence of Lemma~\ref{lemma:multijet}. The result for $k \geq 1$ then follows from the commutativity of the diagram~\eqref{eqn:stabilisationdiagram}.
\end{proof}

\subsection{Non-singular sections}

We define
\[
    \cN(k) \subset \Gammahol\left( (J^r\cE) \otimes \cL^k \right) \otimes_\bC \overline{\Gammahol\left(\cL^k \right)} 
\]
to be subspace of elements sent to non-singular sections of $J^r\cE$ (as in Definition~\ref{def:singular}) under the map $\varphi_k$ defined in~\eqref{eqn:phikmap}. We say that an $s \in \Gammahol\left( (J^r\cE) \otimes \cL^k \right) \otimes_\bC \overline{\Gammahol\left(\cL^k \right)}$ is non-singular if it is in the subspace $\cN(k)$. We define the singular subset to be the complement
\[
    \cS(k) := \left(\Gammahol\left( (J^r\cE) \otimes \cL^k \right) \otimes_\bC \overline{\Gammahol\left(\cL^k \right)}\right) - \cN(k).
\]

\begin{remark}
When $k = 0$, $\cN(0) \subset \Gammahol(J^r\cE)$ is the usual subspace of non-singular sections of $J^r\cE$ as in Definition~\ref{def:singular}.
\end{remark}

\begin{example}\label{example:antihomogeneous}
In the case $X = \bC\bP^n$, $\cL = \cO(1)$ and $\cE = \cO(d+1)$, recall from Example~\ref{example:antihomogeneous_stabilisation} that the space $\Gammahol\left( (J^1\cE) \otimes \cL^k \right) \otimes_\bC \overline{\Gammahol\left(\cL^k \right)}$ corresponds to $(n+1)$-uples of homogeneous polynomials of degree $d+k$ in the holomorphic variables $z_i$ and of degree $k$ in the complex conjugate variables $\overline{z_i}$. Under this identification, if the Taylor condition $\fT \subset J^1(\cO(d+1))$ is the zero section, the space of non-singular sections $\cN(k)$ contains exactly those $(n+1)$-uples of polynomials that never vanish simultaneously.
\end{example}

\subsection{Resolution and spectral sequence}\label{subsection:adaptationSS}

We now explain how the results from Section~\ref{section:resolution} can be adapted to the case
\[
    \Gamma = \Gammahol\left( (J^r\cE) \otimes \cL^k \right) \otimes_\bC \overline{\Gammahol\left(\cL^k \right)} \quad \text{ and } \quad \Sigma = \cS(k)
\]
to construct a resolution of $\cS(k)$ and a spectral sequence converging to its cohomology, or equivalently to the homology of $\cN(k)$ by Alexander duality. In this case, the definition of the singular space~\eqref{eqn:defsingularset} of $f \in \Gamma$ has to be changed to 
\[
    \mathrm{Sing}(f) := \left\{ x \in X \mid \varphi_k(f)(x) \in \fT \right\} \subset X.
\]
In particular, in the case $k=0$, it agrees with Definition~\ref{def:singular}. The topological results about the resolution just follow from the fact that $\fT \subset J^r\cE$ is closed. In particular, Lemma~\ref{lemma:Xnisclosed} still holds with its proof nearly unchanged: one has to replace the jet map $j^r$ by $\varphi_k$. The construction of the spectral sequence is then unchanged.

\bigskip

The computations of cohomology groups on the $E^1$-page from Section~\ref{section:cohomologygroups} can also be adapted in this case. We first describe what to adapt for the first steps of the filtration. The analogue of Lemma~\ref{lemma:secondbundle} with the jet map $j^r$ replaced by $\varphi_k$ still holds as the key point is the surjectivity established in Lemma~\ref{lemma:multieval}. The other result, Lemma~\ref{lemma:firstbundle}, remains unchanged. Hence, Proposition~\ref{prop:firstpagethomiso} is true in our new setting. 

The adaptations are similar to examine the last step $R^N_\text{cone}\fX - R^N\fX$. Indeed, the same stratification works, as well as the cohomological dimension estimates. In details, Lemma~\ref{lemma:firstestimationFilj} is unchanged, and Lemma~\ref{lemma:YNcohodimestimation} is proved similarly by just replacing the jet map by $\varphi_k$. The other two results, Lemma~\ref{lemma:crucialestimationlemma} and Lemma~\ref{lemma:estimationcohodimapexofcone}, also hold when rewriting the proof by changing the jet map $j^r$ by $\varphi_k$. Indeed, the key ingredients were the semi-algebraicity of the Taylor condition $\fT$ (which remains unchanged), and the fact that the jet map was complex algebraic, hence real semi-algebraic. The map $\varphi_k$ is no longer complex algebraic, but is given by a ratio of algebraic maps and complex conjugates of algebraic maps. In particular, it is real semi-algebraic. This is enough for the proof to go through.

\bigskip

To sum up, we have the following analogue of Proposition~\ref{prop:summingUPresolutionandSS}.
\begin{proposition}\label{prop:summingUPadapted}
Let $\cE$ be a holomorphic vector bundle on $X$ and $\fT \subset J^r\cE$ be an admissible Taylor condition. Let 
\[
    \Gamma = \Gammahol\left( (J^r\cE) \otimes \cL^k \right) \otimes_\bC \overline{\Gammahol\left(\cL^k \right)}
\]
and $\cN(k) \subset \Gamma$ be the subspace of non-singular sections. Let $N = N(\cE,r)$. The resolution and its filtration described in Section~\ref{section:resolution} give rise to a spectral sequence on the second quadrant $s \leq -1$ and $t \geq 0$ converging to the homology of the space of non-singular sections:
\[
    E^1_{s,t} = \CCH^{2\dim_\bC \Gamma - 1 - s - t}(F_{-s-1} - F_{-s-2}; \bZ) \implies \widetilde{H}_{s+t}(\cN(k); \bZ).
\]
The differentials on the $r$-th page have bi-degree $(-r,r-1)$. Furthermore, for $-N-1 \leq s \leq -1$, we have the following isomorphisms for all $t \geq 0$:
\[
    E^1_{s,t} \cong \CCH^{-t - 2s \cdot \rank_\bC J^r\cE}(\fT^{(-s)}; \bZ^\mathrm{sign}).
\]
Moreover, for $t < N\cdot e(\fT) + e(\fT)$:
\[
    E^1_{-N-2,t} = 0.
\]
\end{proposition}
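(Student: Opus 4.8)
The plan is to verify that each of the three assertions of Proposition~\ref{prop:summingUPadapted} reduces to the corresponding statement already proven for the case $k=0$ (i.e. Proposition~\ref{prop:summingUPresolutionandSS}), with the jet map $j^r$ systematically replaced by the map $\varphi_k$ of~\eqref{eqn:phikmap}. Since Section~\ref{subsection:adaptationSS} has already carried out the bulk of this replacement argument, the body of the proof will essentially be a careful assembly of those observations into the statement, together with a check that no hypothesis used in the original arguments fails in the new setting.

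Concretely, I would proceed as follows. First, recall that the singular space $\mathrm{Sing}(f)$ is now defined via $\varphi_k$, and note that $\fT \subset J^r\cE$ being closed, together with $\varphi_k$ being continuous, shows that the analogues of Lemma~\ref{lemma:Xnisclosed}, Lemma~\ref{lemma:rhonisproper}, Lemma~\ref{lemma:taujprojectionisproper}, Lemma~\ref{lemma:paracompactHausdorff}, and Lemma~\ref{lemma:piisproper} go through verbatim; the pushout diagram~\eqref{eqn:resolutionpushoutdiagram} and the homotopy pushout~\eqref{eqn:homotopypushoutresolution} defining $R^N_\text{cone}\fX$ are purely formal constructions depending only on the functor $\fX$, so they produce a filtration and hence, via \cite[II.12.3]{bredon_sheaf_1997} and Alexander duality exactly as in Proposition~\ref{prop:thereexistsSS}, the claimed spectral sequence converging to $\widetilde{H}_{s+t}(\cN(k))$. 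Second, for the Thom-isomorphism identification of $E^1_{s,t}$ with $\CCH^{-t-2s\cdot\rank_\bC J^r\cE}(\fT^{(-s)};\bZ^\mathrm{sign})$ in the range $-N-1 \le s \le -1$, I would invoke the analogue of Lemma~\ref{lemma:firstbundle} (which is unchanged, being about simplices and symmetric group actions) and the analogue of Lemma~\ref{lemma:secondbundle}, whose only input is the fibrewise surjectivity of the simultaneous evaluation map; that surjectivity is precisely Lemma~\ref{lemma:multieval} under the hypothesis $p \le N(\cE,r)$. The dimension count of the affine-bundle fibre is identical once one notes $\dim_\bC\Gamma$ now refers to the new (larger) vector space, so Proposition~\ref{prop:firstpagethomiso} transfers and gives the displayed isomorphism.

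Third, for the vanishing $E^1_{-N-2,t} = 0$ when $t < N\cdot e(\fT) + e(\fT)$, I would reproduce the stratification $\Str_{-1}, \Str_0, \ldots, \Str_N$ of $R^N_\text{cone}\fX - R^N\fX$ and the cohomological-dimension estimates of Lemma~\ref{lemma:firstestimationFilj}, Lemma~\ref{lemma:crucialestimationlemma}, Lemma~\ref{lemma:estimationcohodimapexofcone}, and Lemma~\ref{lemma:YNcohodimestimation}, then conclude exactly as in Proposition~\ref{prop:lastcolumnvanishingrange}. The one point deserving genuine care—and the place I expect the main obstacle—is the semi-algebraicity argument underlying Lemma~\ref{lemma:crucialestimationlemma}: there, the sets $U$ and $V$ were shown to be semi-algebraic because they are preimages of the semi-algebraic set $\fT$ under a map which, read in charts, is complex algebraic. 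Here the relevant map is $\varphi_k$, which is \emph{not} complex algebraic because of the complex-conjugation operation~\eqref{eqn:complexconjsections} and the division by the never-vanishing section $\eta$ in the trivialisation~\eqref{eqn:trivialisationsections}. I would argue that $\varphi_k$ is nonetheless real semi-algebraic: in charts it is a ratio of real-polynomial maps (the numerator being an algebraic expression in the holomorphic coordinates and their conjugates, the denominator being $|\eta|$, which is a strictly positive real-analytic—indeed real-polynomial in the chosen basis—function), and the image of a semi-algebraic set under a semi-algebraic map is semi-algebraic by \cite[Proposition 2.2.7]{bochnak_real_1998}. Once this is granted, the inequalities $\dim U \ge \dim V$ and $\dim Y_N \le 2\dim_\bC\Gamma - (N+1)e(\fT)$ follow by the identical chain of citations to \cite{bochnak_real_1998}, and the proposition is proved by combining the three parts. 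I would close the proof by remarking that in the case $k=0$ one recovers Proposition~\ref{prop:summingUPresolutionandSS} exactly, since $\varphi_0$ is the inclusion of holomorphic into continuous sections composed with nothing, and $\cN(0)$ is the usual space of non-singular holomorphic sections of $J^r\cE$.
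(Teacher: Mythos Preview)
Your proposal is correct and follows the paper's approach exactly: Proposition~\ref{prop:summingUPadapted} is presented there as a summary of the discussion in Section~\ref{subsection:adaptationSS}, which carries out precisely the lemma-by-lemma adaptation you describe, including the key observation that $\varphi_k$, while not complex algebraic, is real semi-algebraic (being a ratio of algebraic maps and complex conjugates thereof), so that the dimension estimates behind Lemma~\ref{lemma:crucialestimationlemma} and Lemma~\ref{lemma:estimationcohodimapexofcone} still go through. One minor slip in your closing remark: the case $k=0$ yields the spectral sequence for $\cN(0) = \Gamma_{\mathrm{hol,ns}}(J^r\cE)$, not for $\Gamma_{\mathrm{hol,ns}}(\cE)$ as in Proposition~\ref{prop:summingUPresolutionandSS}; these are different spaces, compared only later via the morphism of spectral sequences in Section~\ref{section:compareSS}.
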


Lastly, let us mention that in the particular example where $X = \bC\bP^n$, $\cL = \cO(1)$, $\cE = \cO(d+1)$ and $\fT \subset J^1\cE$ is the zero section, the spectral sequence is completely analogous to that of~\cite{mostovy_truncated_2012}.

\section{Comparison of spectral sequences}\label{section:compareSS}

From our definition of non-singularity, it follows that the jet map $j^r$ sends a non-singular section $f$ of $\cE$ to a non-singular section $j^r(f)$ of $J^r\cE$. Likewise, the stabilisation map described in~\eqref{eqn:stabilisationmap} sends elements in $\cN(k)$ to elements in $\cN(k+1)$. We shall see that these maps induce isomorphisms in homology in a range of degrees up to around $N = N(\cE,r)$. We first explain the argument for the jet map $j^r$ and then go through the required modifications for the stabilisation map.

\subsection{The case of the jet map}

Reading Proposition~\ref{prop:summingUPresolutionandSS} and Proposition~\ref{prop:summingUPadapted}, we may observe that we have similar looking spectral sequences, one converging to the homology of $\Gamma_{\mathrm{hol,ns}}(\cE)$ and the other one to that of $\Gamma_{\mathrm{hol,ns}}(J^r\cE)$. In particular, in the range $-N-1 \leq s \leq -1$, the terms $E^1_{s,t}$ are given by the same cohomology groups 
\[
    E^1_{s,t} \cong \CCH^{-t - 2s \cdot \rank_\bC J^r\cE}(\fT^{(-s)}; \bZ^\mathrm{sign})
    \]
in both spectral sequences. If we had a morphism of spectral sequences that happened to be an isomorphism in this range, then, using the vanishing result $E^1_{-N-2,t} = 0$ for $t < N\cdot e(\fT) + e(\fT)$, the morphism induced on the $E^\infty$-page would be an isomorphism in the range of degrees $* < N(e(\fT)-1) + e(\fT) - 2$. (See Figure~\ref{fig:firstpagefigure} where we have drawn some differentials.) We shall construct such a morphism of spectral sequences, whilst making sure that it is compatible with the morphism induced on homology by the jet map $j^r$:
\[
    \widetilde{H}_{s+t}(\Gamma_{\mathrm{hol,ns}}(\cE)) \lra \widetilde{H}_{s+t}(\Gamma_{\mathrm{hol,ns}}(J^r\cE)).
\]
For the sake of completeness, we recall when a morphism is compatible with a morphism of spectral sequences. (See e.g. \cite[Section 5.2]{weibel_introduction_1994}.) If two spectral sequences $E^r_{p,q}$ and $E^{'r}_{p,q}$ converge respectively to $H_*$ and $H'_*$, we say that a map $h \colon H_* \to H'_*$ is \emph{compatible} with a morphism $f \colon E \to E'$ if $h$ maps $F_p H_n$ to $F_p H'_n$ (here $F_p$ denotes the filtration) and the associated maps $F_p H_n / F_{p-1} H_n \to F_p H'_n / F_{p-1} H'_n$ correspond to $f^\infty_{p,q} \colon E^\infty_{p,q} \to E^{'\infty}_{p,q}$ (where $q = n-p$) under the isomorphisms $E^\infty_{p,q}\cong F_p H_n / F_{p-1} H_n$ and $E^{'\infty}_{p,q} \cong F_p H'_n / F_{p-1} H'_n$. The main point being that if $f$ is an isomorphism in a range, then $h$ also is an isomorphism in a range. (See \cite[Comparison Theorem 5.2.12]{weibel_introduction_1994}.)

\bigskip

Let $d_1 := 2\dim_\bC \Gammahol(\cE)$ and $d_2 := 2\dim_\bC \Gammahol(J^r\cE)$ be the real dimensions of the complex vector spaces of sections. We define the shriek morphism $j^!$ as the unique morphism making the following square commutative:
\begin{equation}\label{eqn:diagramdefshriekmapAlexanderduality}
% https://tikzcd.yichuanshen.de/#N4Igdg9gJgpgziAXAbVABwnAlgFyxMJZABgBpiBdUkANwEMAbAVxiRAB12B3LWPB2MAASAXwD6AKgAUnAOJ0AtgrpjgnZTgAWAJwXBNEBqTBwRImewDGAUQCUtkCNLpMufIRQAmclVqMWbJw8fFgCMMLi0nKKyqrqdFq6+obGpuYAUgB62pw29o7OIBjYeAREZACMvvTMrIgc7ADCjUKZwFBiFQAEALRd3X0S5tFKdAYMFnm9XSOxauwaOnrjqWaTdg5OLiXuRN5V1DUB9ZzNre1intMDXUMW8qPjUlk5VnbTsyrzi0krJmsvXIbRy+GBQADm8CIoAAZtoIAokGQQDgIEhvH5aoErARwSBqAw6AAjGAMAAKrlKHhA2iw4M0OAKsPhiMQGNRSAAzId-HUQAArTIAQnxIEJJPJlN29Vp9MZBKwYD5UDocE0YKZIDhCKQFWoHMQ3Mxxwallxmu1rORBr1xr5UkF2lskhBIiAA
\begin{tikzcd}
{\widetilde{H}_*(\Gamma_{\mathrm{hol,ns}}(\cE))} \arrow[d, "\cong"'] \arrow[rr, "(j^r)_*"]      &  & {\widetilde{H}_*(\Gamma_{\mathrm{hol,ns}}(J^r\cE))} \arrow[d, "\cong"]     \\
{\CCH^{d_1 - 1 - *}(\Gammahol(\cE) - \Gamma_{\mathrm{hol,ns}}(\cE))} \arrow[rr, "j^!"', dashed] &  & {\CCH^{d_2 - 1 - *}(\Gammahol(J^r\cE) - \Gamma_{\mathrm{hol,ns}}(J^r\cE))}
\end{tikzcd}
\end{equation}
where the vertical isomorphisms are given by Alexander duality and the top map is induced by the jet map $j^r$ in homology. As our spectral sequences actually converge to the \v{Cech} cohomology with compact support of the singular subspaces, we will construct our morphism of spectral sequences such that it is compatible with $j^!$. 

\bigskip

The spectral sequences arose from filtrations, so we now recall some notation from Section~\ref{section:resolution}. We let $\fX$ be the functor $\catF^\op \to \catTop$ constructed there using $\Gamma = \Gammahol(\cE)$ and $\Sigma = \Gammahol(\cE) - \Gamma_{\mathrm{hol,ns}}(\cE)$. As we have explained in Section~\ref{subsection:adaptationSS}, the resolution also works for $\Gammahol(J^r\cE)$ and its singular subspace, and we let $\fY \colon \catF^\op \to \catTop$ be the associated functor in this case. We denote the filtration of $R^N_\text{cone}\fX$ by
\[
    F_{-1}^1 = \emptyset \subset F_0^1 = R^0\fX \subset \cdots \subset F_N^1 = R^N\fX \subset F_{N+1}^1 = R^N_\text{cone}\fX,
\]
and the analogous one of $R^N_\text{cone}\fY$ by
\begin{equation}\label{eqn:filtrationFrakY}
    F_{-1}^2 = \emptyset \subset F_0^2 = R^0\fY \subset \cdots \subset F_N^2 = R^N\fY \subset F_{N+1}^2 = R^N_\text{cone}\fY.
\end{equation}
We will slightly abuse notation and also write
\begin{equation}\label{eqn:shriekmapRNcone}
    j^! \colon \CCH^*(R^N_\text{cone}\fX) \to \CCH^{*+d_2-d_1}(R^N_\text{cone}\fY)
\end{equation}
for the bottom map defined by making the following square commutative:
\begin{center}
% https://tikzcd.yichuanshen.de/#N4Igdg9gJgpgziAXAbVABwnAlgFyxMJZABgBpiBdUkANwEMAbAVxiRAB12BhLgCQD0AVAApOAcToBbSXQAWEBqPYBjAKIBKAAQBaTeKkyA+sE4ycsgE6Tg8hqTBwAvo6Vr16kI9LpMufIRQAJnIqWkYWNk4eAWBBAGooQ0DtRIBGF31pOQVXDR09dgks41M6cysbBXsnFwApfgtONw8vH2w8AiJg1ND6ZlZEDm4+fliEpJTDdOEAJX4AOUNOHBgADxxgZQIYR04AMwBNFu8QDHb-IjIe6j6IwaiRkTnF5bWNrbAd-YANFtCYKAAc3gRFAewsEEkSDIIBwECQAGYbuEBkMPoCQNQGHQAEYwBgABV8HQCIAsWEBshwnhO4MhSFS1DhSGCYX6kRUBAxrRAdKhiBhzMQjLZdxAACt+ABCGlgiH8pGw+GIVm3VGSmVY3H4onnTqDcmU6lYrCfNhQOhwWQAzwURxAA
\begin{tikzcd}
{\CCH^*(\Gammahol(\cE) - \Gamma_{\mathrm{hol,ns}}(\cE))} \arrow[d, "\cong"'] \arrow[rr, "j^!"] &  & {\CCH^{*+d_2-d_1}(\Gammahol(\cE) - \Gamma_{\mathrm{hol,ns}}(J^r\cE))} \arrow[d, "\cong"] \\
\CCH^*(R^N_\text{cone}\fX) \arrow[rr, "j^!"', dashed]                                          &  & \CCH^{*+d_2-d_1}(R^N_\text{cone}\fY)                                                    
\end{tikzcd}
\end{center}
Recall from the general theory that the spectral associated to the filtration $F_*^i$, $i=1,2$, arises from an exact couple $(\CCH^\bullet(F_*^i), \CCH^\bullet(F_*^i - F_{*-1}^i))$. The map of spectral sequences that we want is then constructed via a map of exact couples as in the following lemma.
\begin{lemma}\label{lemma:constructingmorphismexactcouple}
Let $\delta = d_2 - d_1 = 2(\dim_\bC \Gammahol(\cE) - \dim_\bC \Gammahol(J^r\cE))$. There exists a morphism of exact couples
\[
    \left(j^!_p, \ j^!_{(p)} \right)_{p \geq 0} \colon \left(\CCH^*(F_p^1),  \CCH^*(F_p^1 - F_{p-1}^1)\right) \lra \left(\CCH^{*+\delta}(F_p^2),  \CCH^{*+\delta}(F_p^2 - F_{p-1}^2)\right)
\]
satisfying the following two assertions:
\begin{enumerate}
    \item For $0 \leq p \leq N$, the map $j^!_{(p)}$ in the following diagram is an isomorphism:
    \begin{equation}\label{eqn:thomisolemmamorphismSScommutes}
    % https://tikzcd.yichuanshen.de/#N4Igdg9gJgpgziAXAbVABwnAlgFyxMJZABgBpiBdUkANwEMAbAVxiRAB12BhLgCQD0AVAAoAYgH00-AIwACALSyJwNPOkBfGQEoQ60uky58hFGWlVajFm048BwQQGpOsBjjrqxk-gCYFS8RU1TR8dPQNsPAIiH3ILemZWRA5uPn5gTgAjJgYGGBxPTgAzABV04TRHaS11AG5ZLIAtfk4AWzocAAsAJ1bgbABzMHUw-RAMSOMY0nNqBOtk2zSM9mzc-ML2UvLK6rqG1ea2jp6+weGwixgoAfgiUCLuiFakMhAcCCRpOaskkAArfgAQkCFRqIGoDDomRgDAACoYoiYQN0sANOjhdGNHs9XtQPkhYpZEjZ2ABjAgDLEPJ4vRDfd6fRAAZh+JMW5MpEJAUJh8MRU2SqPRmPUFHUQA
    \begin{tikzcd}
    \CCH^*(F_p^1 - F_{p-1}^1) \arrow[d, "j^!_{(p)}"'] \arrow[rr, "\cong"] &  & \CCH^{\bullet}(\fT^{(p+1)}; \bZ^\mathrm{sign}) \\
    \CCH^{*+\delta}(F_p^2 - F_{p-1}^2) \arrow[rr, "\cong"']               &  & \CCH^{\bullet}(\fT^{(p+1)}; \bZ^\mathrm{sign})
    \end{tikzcd}
    \end{equation}
    where
    \[
        \bullet = * -2\dim_\bC \Gammahol(\cE) - p + 2(p+1)\rank_\bC J^r\cE
    \]
    and the horizontal isomorphisms are given by Thom isomorphisms as in Proposition~\ref{prop:firstpagethomiso}.

    \item The map $j^!_{N+1}$ is equal to the shriek map~\eqref{eqn:shriekmapRNcone}.
\end{enumerate}
\end{lemma}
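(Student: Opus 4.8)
The plan is to build the morphism of exact couples from a single geometric map and then extract the two asserted properties. The jet map $j^r\colon\Gammahol(\cE)\to\Gammahol(J^r\cE)$ is a $\bC$-linear injection, and, by the very definition of the Taylor condition, $j^r$ identifies $\mathrm{Sing}(f)$ with $\mathrm{Sing}(j^r(f))$ for every section $f$; in particular it preserves the number of singular points. Hence $(f,s_0,\ldots,s_n)\mapsto(j^r(f),s_0,\ldots,s_n)$ is a natural transformation $\fX\Rightarrow\fY$ of functors $\catF^\op\to\catTop$ that is levelwise a closed embedding (its image is the locus where the section lies in the closed linear subspace $j^r\Gammahol(\cE)$). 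Running it through the coend~\eqref{eqn:jthgeometricrealisationcoend} and the homotopy pushout~\eqref{eqn:homotopypushoutresolution}, and checking closedness by the same point-set arguments as in Section~\ref{section:resolution}, one obtains a closed embedding $\Phi\colon R^N_\text{cone}\fX\hookrightarrow R^N_\text{cone}\fY$ preserving the filtrations, with $\Phi^{-1}(F^2_p)=F^1_p$ for all $p$ and with $\pi'\circ\Phi$ equal to the composite of $\pi$ with the inclusion $\Sigma\hookrightarrow\Sigma'$ induced by $j^r$.

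Next I would analyse the normal structure of $\Phi$. Fix a linear splitting $\Gammahol(J^r\cE)=j^r\Gammahol(\cE)\oplus C$, so that the real codimension is $\delta=\dim_\bR C=d_2-d_1$. On each open stratum of the filtration the map $\Phi$ is visibly a fibrewise split linear inclusion over a common base: for $0\leq p\leq N$, Lemma~\ref{lemma:firstbundle} and Lemma~\ref{lemma:secondbundle} present $F^i_p-F^i_{p-1}$ as the total space of an affine bundle over $\fT^{(p+1)}$ (tensored with an open simplex), and $\Phi$ covers the identity of $\fT^{(p+1)}$ and is fibrewise the inclusion of the affine subspace complemented by $C$; the same holds, via the homeomorphism~\eqref{eqn:filtrationfjoncone}, on each stratum $\Str_j$ of $R^N_\text{cone}\fX-R^N\fX$, and for the truncated resolutions $R^N\fX\hookrightarrow R^N\fY$ themselves. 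A closed embedding of this local shape, $U\times\bR^a\hookrightarrow U\times\bR^a\times\bR^\delta$, carries a canonical wrong-way map $\CCH^{*}(-)\to\CCH^{*+\delta}(-)$ — Thom isomorphism of the trivial normal $\bR^\delta$-bundle, followed by extension by zero from a tubular neighbourhood — and these maps are natural for restriction to closed subbases and for the open/closed decompositions coming from the filtration. Applying this over $p=0,\ldots,N$, over the strata of $R^N_\text{cone}\fX-R^N\fX$, and to $R^N\fX\hookrightarrow R^N\fY$, and using naturality of the connecting homomorphisms of the resulting exact couples, yields the morphism $(j^!_p,j^!_{(p)})_{p\geq0}$, in which $j^!_{N+1}$ is the wrong-way map of $\Phi$ itself.

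Assertion~(1) is then formal: the wrong-way map of a split linear subbundle inclusion, composed on both sides with the Thom isomorphisms of Proposition~\ref{prop:firstpagethomiso} and after identifying the pulled-back local systems $\bZ^\mathrm{sign}$ (which depend only on the $\fS_{p+1}$-action on the configuration space), is the identity of $\CCH^{\bullet}(\fT^{(p+1)};\bZ^\mathrm{sign})$ — this is transitivity of Thom isomorphisms across a direct sum, and the shift $\delta$ is exactly the discrepancy between the two shifts appearing in Proposition~\ref{prop:firstpagethomiso}. For assertion~(2), I would unwind $j^!_{N+1}$, the wrong-way map of $\Phi$, through the isomorphisms of Proposition~\ref{prop:piinducesiso} (which intertwine $\Phi$ with $\Sigma\hookrightarrow\Sigma'$ since $\pi'\circ\Phi$ factors as above) and through Alexander duality, and invoke the standard fact that the wrong-way map in compactly supported \v{C}ech cohomology attached to a codimension-$\delta$ closed embedding with trivial normal bundle is Alexander-dual to the map induced on the homology of the open complements; applied to $\Gamma_{\mathrm{hol,ns}}(\cE)\hookrightarrow\Gamma_{\mathrm{hol,ns}}(J^r\cE)$ inside $\Gammahol(\cE)\hookrightarrow\Gammahol(J^r\cE)$, this is exactly the content of the defining square~\eqref{eqn:diagramdefshriekmapAlexanderduality}, so $j^!_{N+1}$ coincides with the shriek map~\eqref{eqn:shriekmapRNcone}.

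I expect the real work to be the coherence of the normal structure of $\Phi$ across the filtration: although $\Phi$ is a split linear inclusion on each open stratum, these local trivialisations must be glued into a single Gysin datum compatible with all the maps of the exact couples, and this is most delicate along the apex stratum $\overline{\Sigma_{\geq N+2}}\hookrightarrow\overline{\Sigma'_{\geq N+2}}$, where the embedding need not meet the ambient cleanly and one must argue more carefully — or work entirely at the level of the filtered spaces, keeping the connecting homomorphisms in sight. Equally, pinning down that the resulting top map is on the nose the Alexander-dual shriek map requires checking that the Thom and duality isomorphisms are compatible with these connecting maps and with Proposition~\ref{prop:piinducesiso}. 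Each such compatibility is routine; keeping all of them straight simultaneously is where the difficulty lies.
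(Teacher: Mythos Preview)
Your strategy is broadly the same as the paper's: build a filtration-preserving map from the natural transformation $\fX\Rightarrow\fY$ induced by $j^r$, and extract shriek maps from the codimension-$\delta$ embedding. Assertion~(1) is handled the same way in both approaches, via transitivity of Thom isomorphisms on the open strata where both sides are affine bundles over $\fT^{(p+1)}$.

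The gap is precisely where you locate it, and it is a genuine one. The closed embedding $\Phi\colon R^N_{\text{cone}}\fX\hookrightarrow R^N_{\text{cone}}\fY$ does \emph{not} admit a global tubular neighbourhood: a small perturbation of $j^r(f)$ in the complementary direction $C$ may acquire new singular points, so the intersection of $R^N_{\text{cone}}\fY$ with a thin slab around the image of $\Phi$ is not a product $R^N_{\text{cone}}\fX\times D_\varepsilon(C)$. Your stratum-by-stratum trivialisations therefore do not glue to a Gysin datum on the closed filtration pieces $F_p^1\hookrightarrow F_p^2$, and you cannot define $j^!_p$ by ``Thom class cup extension by zero'' as written. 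Defining maps on the open strata and on $F_p^1$ separately and then asking for compatibility with connecting homomorphisms is circular unless you already have the maps on the closed pieces.

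The paper avoids this by not seeking a normal structure for $\Phi$ at all. Instead it uses the only place where a tubular neighbourhood is trivially available---the ambient linear inclusion $\Gamma_1\hookrightarrow\Gamma_2$---and pulls back the disc and sphere bundles $D_\varepsilon,S_\varepsilon\subset\Gamma_2$ along the proper projections $\tau_N,\pi$ to obtain subfunctors $\fY_{D_\varepsilon},\fY_{S_\varepsilon}\subset\fY$ and corresponding sub-resolutions $F^2_{p,D_\varepsilon}$ etc. The shriek maps $j^!_p,j^!_{(p)}$ are then defined, following Vok\v{r}\'{i}nek, as the colimit over $\varepsilon\to 0$ of the zigzag
\[
\CCH^*(F^1_p)\xleftarrow{(l_\varepsilon)_*}\CCH^*(F^2_{p,D_\varepsilon})\xrightarrow{-\cup\gamma_\varepsilon}\CCH^{*+\delta}(F^2_{p,D_\varepsilon},F^2_{p,S_\varepsilon})\cong\CCH^{*+\delta}(F^2_{p,\mathring D_\varepsilon})\to\CCH^{*+\delta}(F^2_p),
\]
where $\gamma_\varepsilon$ is the pulled-back Thom class and the leftward arrow becomes an isomorphism in the colimit by \emph{continuity of \v{C}ech cohomology}. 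This construction is uniform in $p$, so the exact-couple compatibilities reduce to naturality of cup product and of long exact sequences of pairs---a single large diagram---and assertion~(2) is immediate because at $p=N+1$ the vertical maps to $\Sigma_i$ are isomorphisms by Proposition~\ref{prop:piinducesiso} and the bottom row is precisely the shriek map $j^!$ by its alternative description. The device you are missing is this $\varepsilon$-colimit trick; once you have it, the coherence you worried about is automatic.
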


Unpacking the definition of a morphism of exact couples, we see that it amounts to providing morphisms $j^!_p$ and $j^!_{(p)}$ for $0 \leq p \leq N+1$ such that the following diagram commutes
\begin{equation*}
% https://tikzcd.yichuanshen.de/#N4Igdg9gJgpgziAXAbVABwnAlgFyxMJZABgBpiBdUkANwEMAbAVxiRAB12BhLgCQD1gAKgC0ARgC+ACgBiAfWBpxE-mICUICaXSZc+QijHkqtRizaceAobLlpVAAhEP5i5ao1ad2PASIAmY2p6ZlZEDm4+fht5e3VNbRAMH30iAGYg01CLSOtbN0kPBO89PxQyMRMQ83DLKOFxAGpOWAYcOmlXJUL-T0Tk0oNkI0rgszCIq0EhZvZW9s67fn8nFwVulV7ipN1fIcDRrJrJ+pmWmDaO23strx2UsuQMw+qJuoFhWfmrrvdbkxgUAA5vAiKAAGYAJwgAFskGQQDgIEhJIkobCUdQkUh-Hd0XDEIFEcjEGk8dCCQAWLEkgCs5IxiFpNKQADYGQTWSzEAB2Dnw7nUo4TABW-AAhOtlNt8ZjiUhmcK2GLJcApGg1BIZRScdyuUrwiq7NrGRl5byxtlDRKpZJNBQJEA
\begin{tikzcd}
\CCH^{*-1}(F_{p-1}^1) \arrow[r] \arrow[d, "j^!_{p-1}"] & \CCH^*(F_p^1 - F_{p-1}^1) \arrow[r] \arrow[d, "j^!_{(p)}"] & \CCH^*(F_p^1) \arrow[r] \arrow[d, "j^!_p"] & \CCH^*(F_{p-1}^1) \arrow[d, "j^!_{p-1}"] \\
\CCH^{*-1+\delta}(F_{p-1}^2) \arrow[r]                 & \CCH^{*+\delta}(F_p^2 - F_{p-1}^2) \arrow[r]               & \CCH^{*+\delta}(F_p^2) \arrow[r]           & \CCH^{*+\delta}(F_{p-1}^2)              
\end{tikzcd}
\end{equation*}
where the horizontal morphisms in the diagram are given by the long exact sequence of the pair $(F_p^i,F_{p-1}^i)$ for $i=1,2$. 

\bigskip

This result says exactly what we need: there a morphism of spectral sequences compatible with $j^!$ (by (2)) and giving an isomorphism in the vertical strip $-N-1 \leq s \leq 1$ (by (1)). The lemma, as well as the strategy of proof, is adapted from~\cite[Proposition 4.7]{vokrinek_generalization_2007}. First, let us state the most important consequence:
\begin{proposition}\label{prop:jetmapisohomology}
For a holomorphic vector bundle $\cE$ on $X$, the jet map 
\[
    j^r \colon \Gamma_{\mathrm{hol,ns}}(\cE) \lra \Gamma_{\mathrm{hol,ns}}(J^r\cE)
\]
induces an isomorphism in homology in the range of degrees $* < N(\cE,r)\cdot (e(\fT)-1) + e(\fT) - 2$. \qed
\end{proposition}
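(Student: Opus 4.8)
The plan is to deduce this directly from Lemma~\ref{lemma:constructingmorphismexactcouple} by comparing the two spectral sequences of Propositions~\ref{prop:summingUPresolutionandSS} and~\ref{prop:summingUPadapted}. First I would use Lemma~\ref{lemma:constructingmorphismexactcouple} to obtain, from the morphism of exact couples $\left(j^!_p, j^!_{(p)}\right)_{p \geq 0}$, an induced morphism of spectral sequences $f \colon E \to E'$, where $E$ is the spectral sequence of Proposition~\ref{prop:summingUPresolutionandSS} --- which, after Alexander duality, converges to $\widetilde H_*(\Gamma_{\mathrm{hol,ns}}(\cE))$ --- and $E'$ is the one of Proposition~\ref{prop:summingUPadapted} with $k = 0$, converging to $\widetilde H_*(\Gamma_{\mathrm{hol,ns}}(J^r\cE))$. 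By assertion (2) of the lemma, $f$ is compatible, in the sense recalled from \cite[Section 5.2]{weibel_introduction_1994}, with the shriek map~\eqref{eqn:shriekmapRNcone}; transporting along the Alexander-duality isomorphisms of diagram~\eqref{eqn:diagramdefshriekmapAlexanderduality}, it is therefore compatible with the map $(j^r)_*$ induced in homology by the jet map.

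Next I would pin down the range in which $f$ is an isomorphism. Write $N = N(\cE,r)$ and $n_0 = N\cdot(e(\fT)-1) + e(\fT) - 2$. On the $E^1$-page: both spectral sequences vanish in the columns $s \leq -N-3$, because the filtrations have only $N+2$ nonzero steps; in the vertical strip $-N-1 \leq s \leq -1$ the map $f^1_{s,t}$ is an isomorphism for every $t$ by assertion (1) of Lemma~\ref{lemma:constructingmorphismexactcouple}, both groups being $\CCH^{-t - 2s \cdot \rank_\bC J^r\cE}(\fT^{(-s)}; \bZ^\mathrm{sign})$; and in the one remaining column $s = -N-2$ both spectral sequences have $E^1_{-N-2,t} = 0$ for $t < N\cdot e(\fT) + e(\fT)$, by the final clauses of Propositions~\ref{prop:summingUPresolutionandSS} and~\ref{prop:summingUPadapted}. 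The bookkeeping point, depicted in Figure~\ref{fig:firstpagefigure}, is that a differential $d^r$ has bidegree $(-r, r-1)$, so it carries the column $s = -N-2$ into the column $s = -N-2-r \leq -N-3$, which is zero: the column $s = -N-2$ therefore only \emph{receives} differentials, and a nonzero one landing in it must originate from total degree $\geq n_0 + 1$, since its target can be nonzero only at total degree $\geq n_0$ and $d^r$ lowers total degree by one. Hence for any $(s,t)$ with $s + t < n_0$, every differential into or out of $E^r_{s,t}$ either is zero or has both source and target in the strip $-N-1 \leq s \leq -1$, where $f$ is already an isomorphism on $E^1$; an induction on the page number $r$ then shows $f^r_{s,t}$, and hence $f^\infty_{s,t}$, is an isomorphism whenever $s + t < n_0$.

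Finally I would apply the Comparison Theorem~\cite[Comparison Theorem 5.2.12]{weibel_introduction_1994}: since $f$ is compatible with the shriek map and is an isomorphism on $E^\infty$ in total degrees $< n_0$, the shriek map~\eqref{eqn:shriekmapRNcone} is an isomorphism in that range, and unwinding the Alexander dualities of~\eqref{eqn:diagramdefshriekmapAlexanderduality} yields that
\[
    (j^r)_* \colon \widetilde H_*(\Gamma_{\mathrm{hol,ns}}(\cE)) \lra \widetilde H_*(\Gamma_{\mathrm{hol,ns}}(J^r\cE))
\]
is an isomorphism for $* < n_0 = N(\cE,r)\cdot(e(\fT)-1) + e(\fT) - 2$. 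As both section spaces are non-empty, reduced and unreduced homology differ only by a free summand concentrated in degree $0$, on which $(j^r)_*$ is plainly an isomorphism, so the statement holds for $H_*$ as well. The only genuinely substantial input here is Lemma~\ref{lemma:constructingmorphismexactcouple}; within this argument the sole delicacy is the degree bookkeeping around the uncontrolled column $s = -N-2$, which is exactly what its vanishing range $t < N\cdot e(\fT) + e(\fT)$ is designed to handle.
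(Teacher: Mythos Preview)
Your proposal is correct and follows exactly the approach sketched in the paper: feed Lemma~\ref{lemma:constructingmorphismexactcouple} into a comparison of the two spectral sequences, use assertion~(1) for the strip $-N-1 \le s \le -1$ and the vanishing in column $-N-2$ to control the range, and invoke assertion~(2) plus Alexander duality to identify the abutment map with $(j^r)_*$. You in fact supply more detail than the paper, which simply asserts the $E^\infty$-range from Figure~\ref{fig:firstpagefigure}; the only place to be slightly more careful is the page-by-page induction, where the incoming differential to a spot with $s+t=n_0-1$ has source at total degree $n_0$---this is handled by noting that such sources lie in the strip and that $f^r$ remains at least an epimorphism there (since their outgoing targets lie in total degree $n_0-1<n_0$), which is enough to close the induction.
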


\bigskip

To understand how to construct the degree-shifting morphisms of Lemma~\ref{lemma:constructingmorphismexactcouple}, it is helpful to give a description of the shriek map between cohomology groups arising from Alexander duality as in the diagram~\eqref{eqn:diagramdefshriekmapAlexanderduality}. We shall do so generally first (following \cite[Appendix D]{vokrinek_generalization_2007}) and then specialise to our situation to prove the lemma at hand.

\subsubsection{Alexander duality and shriek maps}

Let $p \colon E \to B$ be a vector bundle between oriented paracompact topological manifolds of dimension $n$ and $m$ respectively. Let $j \colon K \subset E$ be a closed subset, and let $i \colon B \hookrightarrow E$ be the zero section. We will see $B$ as a submanifold of $E$ via $i$. Using Alexander duality (the vertical isomorphisms in the diagram below), we may define the \emph{shriek map}
\begin{equation}\label{eqn:generalshriekmap}
    i^! \colon \CCH^*(B \cap K) \to \CCH^{*+(n-m)}(K)
\end{equation}
to be the unique morphism making the following diagram commute:
\begin{center}
% https://tikzcd.yichuanshen.de/#N4Igdg9gJgpgziAXAbVABwnAlgFyxMJZABgBpiBdUkANwEMAbAVxiRAAkB9AKgAoAhUvwC0-AAQAdCQGM6aMQGkAlCAC+pdJlz5CKMgEYqtRizZSAwufYA9YAFth3VQKmz5ytRpAZseAkQAmUkNqemZWRBALK1swR2cPdU1fHUDyIzDTSK4+AFFSXOFEoxgoAHN4IlAAMwAnCDskfWocCCQyY3CzGQIyzxr6xsQgkFakAGZQkwionrA+6gY6ACMYBgAFLT9dEFqsMoALHH6QOob2lrbESc6skCweE7Oh5tGrkYYsMBmoOjgD0ogKZdSJYawAQiBICWqw2W1SkT2h2OqgoqiAA
\begin{tikzcd}
{H_*(B,B-B \cap K)} \arrow[rr, "i_*"]                             &  & {H_*(E,E-K)}                      \\
\CCH^{m-*}(B\cap K) \arrow[u, "\cong"] \arrow[rr, "i^!"', dashed] &  & \CCH^{n-*}(K) \arrow[u, "\cong"']
\end{tikzcd}
\end{center}
The goal of this section is to give a more intrinsic definition of $i^!$ that will allow us to define the required morphisms in Lemma~\ref{lemma:constructingmorphismexactcouple}.

\bigskip

Firstly, Vok\v{r}\'{i}nek proves in~\cite[Proposition D.1]{vokrinek_generalization_2007} the following:
\begin{lemma}\label{lemma:vokrinekcommutes}
The diagram below commutes:
\begin{center}
% https://tikzcd.yichuanshen.de/#N4Igdg9gJgpgziAXAbVABwnAlgFyxMJZABgBpiBdUkANwEMAbAVxiRAAkB9AKgAoAhUvwC0-AAQAdCQGM6aMQGkAlCAC+pdJlz5CKMgEYqtRizZSAwufYA9YAFth3VQKmz5ytRpAZseAkX1SQ2p6ZlZEEFcACxhpAGtgdlVOYDRbYX1VaVVbBydeD3VNXx0iACYgo1DTCIsrWzBHZ0KvH21-FArKEJNwjh5eAFFSQeFCoxgoAHN4IlAAMwAnCDskQJAcCCQyYzCzGQIpzwXl1cQAZmpNpAAWHr3ag7Aj6gY6ACMYBgAFLT9dECLLBTKI4Y4gJYrbZXLaIO67GogLA8cGQs4VDaw9bVPpxazcVGnJAY64Xe6I4SSGRMeQAK3xUhwdCYIFeHy+vxKHUBwNBagoqiAA
\begin{tikzcd}
{H_*(B,B-B \cap K)} \arrow[rr, "i_*"]  &                                                                            & {H_*(E,E-K)}                      \\
\CCH^{m-*}(B\cap K) \arrow[u, "\cong"] & \check{H}_{p^{-1}c}^{m-*}(K) \arrow[l, "k^*"] \arrow[r, "- \cup j^*\tau"'] & \CCH^{n-*}(K) \arrow[u, "\cong"']
\end{tikzcd}
\end{center}
where the vertical isomorphisms are given by Alexander duality, $k \colon B \cap K \hookrightarrow K$ is the inclusion, $\tau \in H^\delta(D(E), S(E))$ is the Thom class of $p$, and $p^{-1}c$ is the family of supports defined as:
\[
    p^{-1}c = \left\{ F \subset K \ \middle| \ F \text{ closed and } \overline{p(F)} \subset B \cap K \text{ is compact} \right\}
\]
so that $\check{H}^*_{p^{-1}c}$ denotes \v{C}ech cohomology with supports in $p^{-1}c$. (See e.g.~\cite[Chapter II.2]{bredon_sheaf_1997}.)
\end{lemma}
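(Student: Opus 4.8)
I would follow the argument of Vok\v{r}\'{i}nek \cite[Proposition D.1]{vokrinek_generalization_2007}: the plan is to write both Alexander duality isomorphisms explicitly as cap products with fundamental classes and then to reduce the commutativity of the square to two standard facts about the cap product, namely naturality along the zero section and the characterisation of the Thom class as the Poincar\'e dual of $B$ inside $E$.

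First I would recall the concrete description of Alexander duality here. For an oriented paracompact $n$-manifold $M$ and a closed subset $L \subseteq M$, there is a fundamental class $\mu_M$ (a class in Borel--Moore homology, equivalently a global section of the orientation sheaf) and the duality isomorphism $\CCH^{n-*}(L) \to H_*(M, M - L)$ is given by capping with $\mu_M$ near $L$. Before writing this down one must check that the two maps in the bottom row of the diagram respect the support conditions implicit in the notation. For $k^*$: if $F \subseteq K$ is closed with $\overline{p(F)}$ compact in $B \cap K$, then, since $p$ restricts to the identity on the zero section $B$, the set $F \cap (B \cap K)$ is a closed subset of $\overline{p(F)}$, hence compact, so $k^*$ does land in $\CCH^{*}(B \cap K)$. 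For $- \cup j^*\tau$: choosing a representative of $\tau$ supported in a disc bundle $D(E) \subseteq E$, the support of $c \cup j^*\tau$ is contained in $F \cap D(E)|_{\overline{p(F)}}$, a closed subset of a disc bundle over the compact set $\overline{p(F)}$, hence compact, so $- \cup j^*\tau$ lands in $\CCH^{n-*}(K)$.

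With these identifications in place, the commutativity of the square is the identity
\[
    i_*\bigl(\mu_B \cap k^* c\bigr) = \mu_E \cap \bigl(j^*\tau \cup c\bigr)
\]
in $H_*(E, E - K)$, which I would establish in two steps. Using the standard relation $x \cap (a \cup b) = (x \cap a) \cap b$, the right-hand side equals $(\mu_E \cap j^*\tau) \cap c$. The first and main step is the geometric input $\mu_E \cap j^*\tau = i_*\mu_B$: capping the fundamental class of the total space with the restricted Thom class produces the fundamental class of the zero section (in Borel--Moore homology with supports in $B \cap K$). This is the homological form of ``$\tau$ is Poincar\'e dual to $B$'', and I would verify it over a trivialising chart $U \times \bR^\delta$, where it reduces to the normalisation $\mu_{\bR^\delta} \cap \tau_{\bR^\delta} = [0]$ of the Thom class of $\bR^\delta$, and then patch over $X$ using a partition of unity, fixing orientations so that $\mu_E$, $\mu_B$ and $\tau$ are compatibly normalised. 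The second step is the projection formula for the composite inclusion $B \cap K \hookrightarrow K \hookrightarrow E$, namely $i_*(\mu_B \cap k^* c) = (i_*\mu_B) \cap c$. Composing the two steps yields the displayed equality, hence the commutativity of the diagram.

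The principal difficulty is not any one of these identities but the uniform treatment of families of supports: the intermediate group $\check{H}^{m-*}_{p^{-1}c}(K)$ is precisely what makes the zig-zag well-defined, and one must check at every stage that the relevant map is defined on the correct support family and that both the cap--cup relation and the projection formula hold in \v{C}ech cohomology with supports (equivalently in Borel--Moore homology), not merely for compact pairs. The second delicate point is the sign and orientation bookkeeping behind the degree shift $\delta = n - m$; this is cleanest to settle once and for all in the local model $U \times \bR^\delta$ and then transport to $E$.
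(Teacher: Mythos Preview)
Your proposal is correct and follows essentially the same route as the paper: both arguments write Alexander duality as a cap product with Borel--Moore fundamental classes, split the square into a naturality/projection-formula piece and the identification $\mu_E \cap j^*\tau = i_*\mu_B$ coming from the Thom class being Poincar\'e dual to the zero section. The only cosmetic difference is that the paper carries this out via closed neighbourhoods $U \supset K$ and a diagonal arrow $-\cap g_*(o_B|U)$ in the diagram (taking a colimit over $U$ at the end), whereas you phrase the same two steps directly as the cap--cup relation and the projection formula; your extra verification of the support conditions for $k^*$ and $-\cup j^*\tau$ is a useful addition that the paper leaves implicit.
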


\begin{proof}[Sketch of proof]
We repeat Vok\v{r}\'{i}nek's proof here for convenience. First, we explain the morphisms in Alexander duality. Recall from e.g.~\cite[Corollary V.10.2]{bredon_sheaf_1997} that we have fundamental classes $[B] \in \BMH_m(B)$ and $[E] \in \BMH_n(E)$, where $\BMH_*$ denotes Borel--Moore homology (also known as homology with closed support). Using the proper inclusions $(E,\emptyset) \hookrightarrow (E, E-K)$ and $(B,\emptyset) \hookrightarrow (B, B - B\cap K)$, they give rise to classes $o_E \in \BMH_n(E, E-K)$ and $o_B \in \BMH_m(B, B- B\cap K)$. If $U \subset E$ is a closed neighbourhood of $K$, we get a morphism
\[
    \CCH^{n-*}(U) \overset{- \cap o_E|U}{\lra} H_*(U, U-K) \lra H_*(E, E-K)
\]
where $o_E|U$ is the image of $o_E$ via the excision isomorphism $\BMH_n(E, E-K) \cong \BMH_n(U, U-K)$. (Note that it is important for $U$ to be closed, so that the inclusion $U \hookrightarrow E$ is proper, hence induces a morphism in Borel--Moore homology.) Likewise, we get a morphism
\[
    \CCH^{m-*}(B \cap U) \overset{- \cap o_B|U}{\lra} H_*(B \cap U, B \cap (U-K)) \lra H_*(B, B- B\cap K).
\]
Now, the isomorphisms in Alexander duality are given by taking the colimit over all closed neighbourhoods $U$ of $K$ of the two morphisms constructed above. (This is explained in~\cite[V.9]{bredon_sheaf_1997}.) Hence, to prove the lemma, it suffices to check commutativity of the following diagram:
\begin{center}
% https://tikzcd.yichuanshen.de/#N4Igdg9gJgpgziAXAbVABwnAlgFyxMJZABgBpiBdUkANwEMAbAVxiRAAkB9AKgAoAhAAQAdYQGM6aQQFVSQ0RKm9pAWgDSASg0gAvqXSZc+QijIBGKrUYs2ogMJ32APWABbFdx0CFkmdr0G2HgERGakFtT0zKyIIAoAFjBiANbA7DqcwGguKmY6Yjou7p7K-vogGEHGRABM4ZZRNrH2ji5gHl7SZYFGISh1lJHWMRw8yqSqmrqWMFAA5vBEoABmAE4QrkhhIDgQSGRW0WwqIuK+EJz8AD7SuuVrG0gAzNS7SAAs1Ax0AEYwDAAFQzBEwgVZYObxHAgIZHWInHxSC4AUTuK3Wm0QBzeiE+hyaIDmPDRIAemLqOz2iG2jRGcyc3BJZKQFJxL3xIwR4iYUniDNEODoTBhIG+f0BwOqsXBkOhAVJGJZrypeNpx1OikERL4F2uXRFYv+QKqfRAWDA2Fg0x0QA
\begin{tikzcd}
{H_*(B \cap U,B \cap (U-K))} \arrow[rr, "g_*"] &                                                                                                                        & {H_*(U,U-K)}                           \\
\CCH^{m-*}(B\cap U) \arrow[u, "- \cap o_B|U"]  & \check{H}_{p^{-1}c}^{m-*}(U) \arrow[l, "g^*"] \arrow[r, "- \cup h^*\tau"'] \arrow[ru, "- \cap g_*(o_B|U)" description] & \CCH^{n-*}(U) \arrow[u, "- \cap o_E"']
\end{tikzcd}
\end{center}
where $g \colon B \cap U \hookrightarrow U$ and $h \colon U \hookrightarrow E$ are the inclusions. The left part commutes by naturality of the cap products. The right part commutes by observing that the fundamental classes can be chosen to correspond under the Thom isomorphism, which implies that $h^*\tau \cap o_E|U = g_* o_B|U$, and finishes the proof.
\end{proof}
In the statement of Lemma~\ref{lemma:vokrinekcommutes}, if the morphism $k^*$ were invertible, the shriek map~\eqref{eqn:generalshriekmap} would be given by ``$(k^*)^{-1}$'' followed by taking the cup product with the ``Thom class'' $j^*\tau$. However, it is not invertible in general. There is nevertheless a way around that problem which we explain below, using $\varepsilon$-small neighbourhoods of $B \cap K$ in $K$ and the continuity property of \v{C}ech cohomology.

\bigskip

We choose, once and for all, a bundle metric on $p \colon E \to B$. For a real number $\varepsilon > 0$, denote by $D_\varepsilon$ (resp. $S_\varepsilon$, $\mathring{D}_\varepsilon$) the closed disc (resp. sphere, open disc) sub-bundle of $E \to B$ of radius $\epsilon$ (for the chosen metric). In \cite[Lemma D.2]{vokrinek_generalization_2007}, Vok\v{r}\'{i}nek proves:
\begin{lemma}\label{lemma:vokrinekcommutes2}
The following diagram commutes:
\begin{equation}\label{eqn:diagramgeneralsetupVokrinek}
% https://tikzcd.yichuanshen.de/#N4Igdg9gJgpgziAXAbVABwnAlgFyxMJZABgBpiBdUkANwEMAbAVxiRAAkB9AKgAoAhUvwC0-AAQAdCQGM6aMQGkAlCAC+pdJlz5CKMgEYqtRizZSAwufYA9YAFth3VQKmz5ytRpAZseAkQBmUkNqemZWRBALK1swR2cPdU1fHUDyIzDTSK4+AFFSXOFErx9tfxR9UgAmDJMIqIlLG3t43gVJGTkxABFOKXoAJxg0bAYCFSTvLT9dZCrq2vCzRpjgOKc2jrcevolB4dGCUkUtroBlXf2RrDGwCZLp1JR5kOMlyOjm9YTT+Sk7Og4AAWAywYAA5sBuqpLnQhtdbvdkmVZvNKKE6mwcrxcr8OgDgaCIVCYf04QcbkcxDiiko8f9ASCwZDobD4Yc7mojDAoOD4ERQAAzAYQOxISogHAQJBkN5ZBrSAjgzxCkVixDzSXSxBBOX1VxKkDUBh0ABGMAYAAVHuUQKDwUCcCqQMLRUgACzUKXijHvEC8BhsimInjO13qgBsXu1AFZffKDRCw2qkFGtUgqpNwx7o0g43q2MItkw-hIcHQmEGEf5jWaLdaUrb7Y7k27EGnvYgAOy181Wm26O1YB1O+P6mSGrMp7u5nVTtuyztd1QUVRAA
\begin{tikzcd}
{H_*(B,B-B \cap K)} \arrow[rr]         &                                                                                                        & {H_*(E \cap \mathring{D}_\varepsilon, (E-K) \cap \mathring{D}_\varepsilon)} \arrow[r]        & {H_*(E,E-K)}                      \\
\CCH^{m-*}(B\cap K) \arrow[u, "\cong"] &                                                                                                        & \CCH^{n-*}(K \cap \mathring{D}_\varepsilon) \arrow[d, "\cong"] \arrow[r] \arrow[u, "\cong"'] & \CCH^{n-*}(K) \arrow[u, "\cong"'] \\
                                       & \CCH^{m-*}(K \cap D_\varepsilon) \arrow[lu, "(l_\varepsilon)_*"] \arrow[r, "- \cup \tau_\varepsilon"'] & {\CCH^{n-*}(K \cap D_\varepsilon, K \cap S_\varepsilon)}                                     &                                  
\end{tikzcd}
\end{equation}
where the vertical isomorphisms on the first row are given by Alexander duality, the one on the second row follows from general results about cohomology with compact supports, $l_\varepsilon \colon B \cap K \hookrightarrow K \cap D_\varepsilon$ is the inclusion, $\tau_\varepsilon$ is the restriction of the Thom class of $E \to B$, and the rightmost horizontal arrows are induced by the inclusions. (Recall that cohomology with compact supports in covariant for open inclusions.)
\end{lemma}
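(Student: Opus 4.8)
### Proof proposal for Lemma~\ref{lemma:vokrinekcommutes2}

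The plan is to follow the strategy of Lemma~\ref{lemma:vokrinekcommutes} and interpolate through $\varepsilon$-small neighbourhoods. First I would set up the bottom-left triangle: since $B \cap K \subset K \cap D_\varepsilon$ is the inclusion of the zero-section part, and $B \cap K \subset K$ factors as $B\cap K \hookrightarrow K \cap D_\varepsilon \hookrightarrow K$, the map $(l_\varepsilon)_*$ on compactly supported cohomology together with the Alexander duality identification $\CCH^{m-*}(B\cap K) \xrightarrow{\cong} H_*(B, B - B\cap K)$ reduces the whole diagram to a statement living entirely over $D_\varepsilon$. The key observation is that $K \cap D_\varepsilon$ retracts onto $K \cap \text{(zero section)} = B \cap K$ only up to the radial scaling; but what one actually needs is the compatibility of fundamental classes and Thom classes under restriction, exactly as in the proof of Lemma~\ref{lemma:vokrinekcommutes}.

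Concretely, the key steps in order: (1) Apply Lemma~\ref{lemma:vokrinekcommutes} with $E$ replaced by $E \cap \mathring{D}_\varepsilon$ — note this is again a (trivialisable-in-the-fibre) bundle over $B$, and the Thom class restricts to $\tau_\varepsilon$. This gives commutativity of the upper-left square and the triangle through $\check{H}^*_{p^{-1}c}(K \cap \mathring{D}_\varepsilon)$ after unwinding that the cup-product-with-Thom-class map is the content of that earlier lemma. (2) Check the rightmost square — that the map $H_*(E\cap\mathring D_\varepsilon, (E-K)\cap \mathring D_\varepsilon) \to H_*(E, E-K)$ is compatible under Alexander duality with the restriction $\CCH^{n-*}(K) \to \CCH^{n-*}(K\cap\mathring D_\varepsilon)$; this is the naturality of Alexander duality for the open inclusion $E \cap \mathring{D}_\varepsilon \hookrightarrow E$, using that $K \cap \mathring{D}_\varepsilon$ is open in $K$ and cap product with the fundamental class is natural under the excision/restriction maps in Borel--Moore homology. (3) Identify the bottom row: the isomorphism $\CCH^{n-*}(K\cap\mathring D_\varepsilon) \cong \CCH^{n-*}(K\cap D_\varepsilon, K\cap S_\varepsilon)$ comes from the long exact sequence of the pair together with the fact that $\CCH^*(K \cap S_\varepsilon)$ sits in a region where it does not obstruct — more precisely it is the standard identification of compactly supported cohomology of an open subset with relative cohomology of a compact pair, as cited. (4) Assemble: chase an element of $\CCH^{m-*}(B\cap K)$ around both ways and invoke the already-established commutativity of each constituent square and triangle.

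I would then note that the only genuinely new input beyond Lemma~\ref{lemma:vokrinekcommutes} is the $\varepsilon$-localisation, and everything else is naturality of cap products, naturality of Alexander duality under open inclusions, and the behaviour of Thom classes under restriction to a sub-disc-bundle. The main obstacle I anticipate is step (2): verifying that the Alexander duality isomorphisms for $K$ and for $K \cap \mathring{D}_\varepsilon$ are compatible with the evident maps requires care with the colimit-over-closed-neighbourhoods description of Alexander duality used in the sketch of Lemma~\ref{lemma:vokrinekcommutes}, and one must make sure that a closed neighbourhood of $K \cap \mathring{D}_\varepsilon$ inside $E \cap \mathring{D}_\varepsilon$ can be compared with a closed neighbourhood of $K$ inside $E$ in a way that respects the fundamental classes. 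Once that bookkeeping is done, the result follows as in~\cite[Lemma D.2]{vokrinek_generalization_2007}.
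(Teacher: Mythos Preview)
Your proposal is correct and follows essentially the same approach as the paper: the paper's own proof is a two-sentence sketch stating that the left part commutes by an argument analogous to Lemma~\ref{lemma:vokrinekcommutes} and the right-hand square by direct verification, which is exactly your steps (1)--(2). Your elaboration in steps (3)--(4) and your anticipated bookkeeping in step (2) simply flesh out what the paper leaves implicit; one minor correction is that the isomorphism $\CCH^{n-*}(K\cap\mathring D_\varepsilon) \cong \CCH^{n-*}(K\cap D_\varepsilon, K\cap S_\varepsilon)$ is the excision-type identification of \cite[II.12.3]{bredon_sheaf_1997} rather than a long-exact-sequence argument, as you yourself note in your ``more precisely'' clause.
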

\begin{proof}[Sketch of proof]
The left part of the diagram can be shown to commute by a proof analogous to that of Lemma~\ref{lemma:vokrinekcommutes}. The right-hand square is seen to commute by a direct verification.
\end{proof}

Taking the limit $\varepsilon \to 0$, the morphisms $(l_\varepsilon)_*$ induce a morphism from the colimit
\[
    \colim\limits_{\epsilon \to 0} \CCH^{m-*}(K \cap D_\varepsilon) \lra \CCH^{m-*}(B\cap K)
\]
which is an isomorphism by the continuity property of \v{C}ech cohomology with compact supports (see, e.g. \cite[Theorem 14.4]{bredon_sheaf_1997} where it is stated using sheaf cohomology which agrees with \v{C}ech cohomology here). We finally obtain another description of the shriek map $i^!$:
\begin{proposition}[Compare {\cite[Theorem D.3]{vokrinek_generalization_2007}}]\label{prop:vokrinekfinal}
The shriek map $i^!$ defined in~\eqref{eqn:generalshriekmap} is equal to the composite obtained as one goes along the bottom path in the diagram~\eqref{eqn:diagramgeneralsetupVokrinek} above, i.e.:
\begin{equation*}
\begin{split}
    i^! \colon \CCH^{m-*}(B\cap K) &\overset{\cong}{\longleftarrow} \colim\limits_{\epsilon \to 0} \CCH^{m-*}(K \cap D_\varepsilon) \\
    &\lra \colim\limits_{\epsilon \to 0} \CCH^{n-*}(K \cap D_\varepsilon, K \cap S_\varepsilon) \cong \colim\limits_{\epsilon \to 0} \CCH^{n-*}(K \cap \mathring{D}_\varepsilon) \\
    &\lra \CCH^{n-*}(K).
\end{split}
\end{equation*}
Furthermore, in the case where both $E$ and $B$ are themselves vector bundles over a same base, $K = E$, and $i \colon B \hookrightarrow E$ is the inclusion of a sub-bundle, the shriek map $i^!$ is the Thom isomorphism of the bundle $E \to B$ given by choosing a splitting of $i$.
\end{proposition}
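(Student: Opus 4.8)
Proposition~\ref{prop:vokrinekfinal} has two assertions. The first identifies the shriek map $i^!$ of~\eqref{eqn:generalshriekmap} with the composite along the bottom path of the diagram~\eqref{eqn:diagramgeneralsetupVokrinek}. The second is a naturality/consistency check: when $K=E$ and $i\colon B\hookrightarrow E$ is the inclusion of a sub-bundle (with $E,B$ themselves bundles over a common base), the shriek map is the Thom isomorphism of $E\to B$.

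Let me sketch how I would prove each part.

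**Part 1.**The plan is to deduce the first assertion directly from Lemma~\ref{lemma:vokrinekcommutes2} together with the continuity of \v{C}ech cohomology with compact supports, and to prove the second assertion by a separate Poincar\'e-duality argument.

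For the first assertion, I would first observe that the composite of the two maps in the top row of~\eqref{eqn:diagramgeneralsetupVokrinek},
\[
    H_*(B,B-B\cap K)\lra H_*(E\cap\mathring{D}_\varepsilon,(E-K)\cap\mathring{D}_\varepsilon)\lra H_*(E,E-K),
\]
equals $i_*$, because the zero section $i\colon B\hookrightarrow E$ factors through $\mathring{D}_\varepsilon$ compatibly with the indicated pairs. Combining this with the defining diagram of $i^!$ in~\eqref{eqn:generalshriekmap} and the commutativity of~\eqref{eqn:diagramgeneralsetupVokrinek}, a diagram chase yields, for each $\varepsilon>0$, the identity
\[
    i^!\circ(l_\varepsilon)_*=\Phi_\varepsilon,
\]
where $\Phi_\varepsilon\colon\CCH^{m-*}(K\cap D_\varepsilon)\to\CCH^{n-*}(K)$ is the composite running along the bottom edge of~\eqref{eqn:diagramgeneralsetupVokrinek}: cup product with $\tau_\varepsilon$, then the excision isomorphism $\CCH^{n-*}(K\cap D_\varepsilon,K\cap S_\varepsilon)\cong\CCH^{n-*}(K\cap\mathring{D}_\varepsilon)$, then the map induced by the open inclusion $K\cap\mathring{D}_\varepsilon\hookrightarrow K$. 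Passing to the colimit over $\varepsilon\to 0$, the maps $(l_\varepsilon)_*$ assemble to an isomorphism $\colim_{\varepsilon\to 0}\CCH^{m-*}(K\cap D_\varepsilon)\overset{\cong}{\to}\CCH^{m-*}(B\cap K)$: the sets $K\cap D_\varepsilon$ form a decreasing family of closed subsets of $K$ with intersection $K\cap B=B\cap K$, so this is exactly the continuity property of \v{C}ech cohomology with compact supports (equivalently sheaf cohomology with compact supports, as in~\cite[Theorem 14.4]{bredon_sheaf_1997}). Inverting this isomorphism then identifies $i^!$ with the stated colimit composite.

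For the second assertion, with $K=E$, I would argue directly from the definition~\eqref{eqn:generalshriekmap} rather than from the colimit formula. Here $B\cap K=B$, so the Alexander duality isomorphisms $\CCH^{m-*}(B)\cong H_*(B,\emptyset)=H_*(B)$ and $\CCH^{n-*}(E)\cong H_*(E)$ are the Poincar\'e--Lefschetz duality isomorphisms of the oriented manifolds $B$ and $E$, and the defining square shows that $i^!$ is the Poincar\'e dual of the homology map $i_*\colon H_*(B)\to H_*(E)$. On the other hand, for the vector bundle $p\colon E\to B$ obtained from the chosen splitting, the Thom class is characterised (with the orientation induced on the complement bundle) by being Poincar\'e dual in $E$ to the class of the zero section; since $i$ is precisely that zero section, the Thom isomorphism $\CCH^{m-*}(B)\to\CCH^{n-*}(E)$ is also the Poincar\'e dual of $i_*$, so the two maps agree.

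The main obstacle is a matter of care rather than of idea: making sure that the colimit of the maps $(l_\varepsilon)_*$ is genuinely an isomorphism under the standing hypotheses (local compactness and paracompactness of $K$), i.e. pinning down the precise form of the continuity statement for cohomology with compact supports that applies to a decreasing family of \emph{closed} subsets, and checking that the orientation conventions are chosen so that the Thom class really is the Poincar\'e dual of the zero section, with no sign discrepancy between $i^!$ and the Thom isomorphism.
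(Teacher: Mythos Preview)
Your proposal is correct and, for the first assertion, matches the paper's approach exactly: the paper simply says the claim follows from Lemma~\ref{lemma:vokrinekcommutes} and Lemma~\ref{lemma:vokrinekcommutes2}, and your argument is precisely the unpacking of that --- identify the top-row composite with $i_*$, read off $i^!\circ(l_\varepsilon)_*=\Phi_\varepsilon$ from the commutative diagram, and invoke continuity of $\CCH$ to invert $\colim_\varepsilon (l_\varepsilon)_*$.

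For the second assertion there is a mild difference in route. The paper's ``direct inspection of the construction'' most naturally reads as inspecting the colimit formula just established: when $K=E$ one has $K\cap D_\varepsilon=D_\varepsilon$, $K\cap S_\varepsilon=S_\varepsilon$, the map $(l_\varepsilon)_*$ is the pullback along the zero section (a homotopy equivalence), and the cup with $\tau_\varepsilon$ followed by $\CCH^{n-*}(D_\varepsilon,S_\varepsilon)\cong\CCH^{n-*}(\mathring D_\varepsilon)\to\CCH^{n-*}(E)$ is visibly the Thom isomorphism. You instead go back to the defining square~\eqref{eqn:generalshriekmap}, observe that Alexander duality specialises to Poincar\'e duality when $K=E$, and identify both $i^!$ and the Thom isomorphism as the Poincar\'e dual of $i_*$ (using $\tau\cap[E]=i_*[B]$ and the projection formula). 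Your argument is equally valid and arguably more conceptual; the paper's route has the minor advantage of not needing to revisit the cap-product formalism, since the Thom class is already sitting in the colimit description.
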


\begin{proof}
The first part follows from Lemma~\ref{lemma:vokrinekcommutes} and Lemma~\ref{lemma:vokrinekcommutes2}. The second part is shown by direct inspection of the construction.
\end{proof}

\subsubsection{The proof of Lemma~\ref{lemma:constructingmorphismexactcouple}}

We shall apply the general theory described in the last section to our case. To lighten the notation, we write
\[
    \Gamma_1 := \Gammahol(\cE), \quad \Sigma_1 := \Gammahol(\cE) - \Gamma_{\mathrm{hol,ns}}(\cE)
\]
and
\[
    \Gamma_2 := \Gammahol(J^r\cE), \quad \Sigma_2 := \Gammahol(J^r\cE) - \Gamma_{\mathrm{hol,ns}}(J^r\cE).
\]
The jet map $j^r$ gives a linear embedding of $\Gamma_1$ into $\Gamma_2$ such that the image of the singular subspace is precisely given by the intersection with the bigger singular subspace, i.e.
\[
    j^r(\Sigma_1) = j^r(\Gamma_1) \cap \Sigma_2.
\]
Choosing a complementary linear subspace of $j^r(\Gamma_1)$ inside $\Gamma_2$, we obtain a projection giving a vector bundle
\begin{equation}\label{eqn:jrembeddingvectorbundle}
    \Gamma_2 \lra j^r(\Gamma_1) \cong \Gamma_1
\end{equation}
of real rank $\delta = d_2 - d_1$. Below, we apply Vok\v{r}\'{i}nek's results to this situation.

\bigskip 

We first set up the notation. Let $\epsilon > 0$ be a positive real number and denote by $D_\epsilon$ (resp. $S_\epsilon$, $\mathring{D}_\varepsilon$) the closed disc (resp. sphere, open disc) sub-bundle of radius $\epsilon$ of the vector bundle~\eqref{eqn:jrembeddingvectorbundle}. Recall from~\eqref{eqn:filtrationFrakY} the functor $\fY$ giving rise to the resolution of $\Sigma_2$. We also define $\fY_{D_\epsilon} \colon \catF^\op \to \catTop$ to be the sub-functor of $\fY$ given by
\[
    \fY_{D_\epsilon}[n] := \left\{ (f,s_0,\ldots,s_n) \in \fY[n] \mid f \in D_\epsilon \right\}
\]
and likewise for $\fY_{S_\epsilon} \subset \fY$ and $\fY_{\mathring{D}_\varepsilon} \subset \fY$ using only sections $f \in S_\epsilon$ or $\mathring{D}_\varepsilon$. Let $\tau_\varepsilon \in H^\delta(\Sigma_2 \cap D_\varepsilon, \Sigma_2 \cap S_\varepsilon)$ be the restriction of the Thom class of the vector bundle~\eqref{eqn:jrembeddingvectorbundle} to $\Sigma_2$. (Recall that the Thom class is an element of $H^\delta(D_\varepsilon, S_\varepsilon)$.) In all what follows, we see $\Gamma_1 \subset \Gamma_2$ via the embedding $j = j^r$. Let $l_\varepsilon \colon \Sigma_1 \hookrightarrow \Sigma_2 \cap D_\varepsilon$ be the inclusion (which is proper, hence induces a morphism on compactly supported cohomology). We explained in Proposition~\ref{prop:vokrinekfinal} that the shriek map $j^!$ is obtained from the zigzag
\[
    \CCH^*(\Sigma_1) \overset{(l_\varepsilon)_*}{\leftarrow} \CCH^*(\Sigma_2 \cap D_\varepsilon) \overset{-\cup \tau_\varepsilon}{\to} \CCH^{*+\delta}(\Sigma_2 \cap D_\varepsilon, \Sigma_2 \cap S_\varepsilon) \cong \CCH^{*+\delta}(\Sigma_2 \cap \mathring{D}_\varepsilon) \to \CCH^{*+\delta}(\Sigma_2)
\]
by taking a colimit as $\varepsilon \to 0$. 

\bigskip

We mimic that construction at the level of the resolutions. Let $0 \leq p \leq N+1$ be an integer. Recall from~\eqref{eqn:filtrationFrakY} that $F_p^i$ denoted the $p$-th step of the filtration of the resolution of $\Sigma_i$. We denote by $F^2_{p,D_\varepsilon}$, $F^2_{p,S_\varepsilon}$, and $F^2_{p,\mathring{D}_\varepsilon}$ the analogous filtrations on the resolutions obtained from the subfunctors $\fY_{D_\varepsilon}$, $\fY_{S_\varepsilon}$ and $\fY_{\mathring{D}_\varepsilon}$ respectively. 
Because a singular point of a section $f \in \Gamma_1$ is also a singular point of $j^r(f) \in \Gamma_2$, the jet map gives a map on resolutions
\begin{align*}
    \fX[p] &\lra \fY[p] \\
    (f,s_0,\ldots,s_p) &\longmapsto (j^r(f), s_0, \ldots, s_p).
\end{align*}
which preserves the filtrations. Let $\tilde{l_\varepsilon} \colon F_p^1 \hookrightarrow F^2_{p,D_\varepsilon}$ be the induced inclusion. Let $\gamma_\varepsilon \in H^\delta(F^2_{p,D_\varepsilon}, F^2_{p,S_\varepsilon})$ be the pullback of $\tau_\varepsilon$ along $(F^2_{p,D_\varepsilon}, F^2_{p,S_\varepsilon}) \to (\Sigma_2 \cap D_\varepsilon, \Sigma_2 \cap S_\varepsilon)$. The following diagram then commutes by naturality of all the constructions involved:
\begin{center}
% https://tikzcd.yichuanshen.de/#N4Igdg9gJgpgziAXAbVABwnAlgFyxMJZABgBoBGAXVJADcBDAGwFcYkQAdDgYW4AkAegCoAFFwDKWAOYBbegH1yAShABfUuky58hFOQrU6TVuy69BoidLnyATAAIuAY3pp7AEXlcGAJxhpsRgIVdU1sPAIiWwMaBhY2RE4efgFgIQBqLlhGHHpVMQ5JWQUHZ1cPLw5ff0CCUkdC6xKGlzdxSuqArCCwJRaCKQbzVIysmBy8gqKbUo5WhrkcAAsfLDApYHdVDvo-Lp6QjRAMcJ0iAGYYo3jTZME0zI5s3PyrYrtDsO1IlDJiQziJkSZhSogAYgJyPI0J9jloIrpkPp-rFjAkksNwQJbPJgGhSJ5vLsat0CKpYSdvojoijrkCMSkHmMJvkITi8QSdntamB1PY2bj8e0idzSby+s4BkNGaMnuMXiIBRyuIsVmsNlsuSSeuS1EdKQiLqRaYD0SD7rLnpMBTC1IYYFApPAiKAAGY+CAyJBkEA4CBIAAsoRA7s9SH0vv9iAArMHQ17ENFI0gAGxxj0Jy7JxAAdnTYcQEb93tRN0SIkYWv2wXkQj1bozSGjNGLiADpfpBTwjFgwErIu1ZKUtZANEY9AARuMAArws6JVZSJY4eshxsxltRlMd9EAWhazDcXCk9BkNi4g8I+YTRajSdN7H3zkPDVyzCrPNHIHHU8Ys9OPwgIuy6rvGSBJq25zXqmm5IHmlCqEAA
\begin{forcedcentertikzcd}
\CCH^*(F^1_p)              & {\CCH^*(F^2_{p,D_\varepsilon})} \arrow[l, "(\tilde{l_\varepsilon})_*"'] \arrow[r, "- \cup \gamma_\epsilon"]        & {\CCH^{*+\delta}(F^2_{p,D_\varepsilon}, F^2_{p,S_\varepsilon}) \cong \CCH^{*+\delta}(F^2_{p,\mathring{D}_\varepsilon})} \arrow[r]                             & \CCH^{*+\delta}(F^2_p)              \\
\CCH^*(\Sigma_1) \arrow[u] & \CCH^*(\Sigma_2 \cap D_\varepsilon) \arrow[u] \arrow[l, "(l_\varepsilon)_*"] \arrow[r, "- \cup \tau_\varepsilon"'] & {\CCH^{*+\delta}(\Sigma_2 \cap D_\varepsilon, \Sigma_2 \cap S_\varepsilon) \cong \CCH^{*+\delta}(\Sigma_2 \cap \mathring{D}_\varepsilon)} \arrow[u] \arrow[r] & \CCH^{*+\delta}(\Sigma_2) \arrow[u]
\end{forcedcentertikzcd}
\end{center}
where all the vertical maps are induced by the proper projections $F^i_p \to \Sigma_i$.
The morphism $j^!_p \colon \CCH^*(F^1_p) \to \CCH^{*+\delta}(F^2_p)$ is then defined as the colimit, when $\varepsilon \to 0$, of the top composition in the diagram above. (Recall that $(\tilde{l_\varepsilon})_*$ is an isomorphism in the colimit, by continuity of \v{C}ech cohomology.) In particular, when $p = N+1$, the vertical map are isomorphisms (by \ref{prop:piinducesiso}), which proves the assertion (2) of Lemma~\ref{lemma:constructingmorphismexactcouple} by noticing that the bottom composition is the shriek map $j^!$.

\bigskip

The morphisms $j^!_{(p)} \colon \CCH^*(F^1_p - F^1_{p-1}) \to \CCH^{*+\delta}(F^2_p - F^2_{p-1})$ are defined analogously, i.e. by the colimit as $\varepsilon \to 0$ of the zig-zag:
\begin{equation*}
\begin{split}
    \CCH^*(F^1_p - F^1_{p-1}) &\longleftarrow \CCH^*(F^2_{p,D_\varepsilon} - F^2_{p-1,D_\varepsilon}) \\
        &\lra \CCH^{*+\delta}(F^2_{p,D_\varepsilon} - F^2_{p-1,D_\varepsilon}, F^2_{p,S_\varepsilon} - F^2_{p-1,S_\varepsilon}) \cong \CCH^{*+\delta}(F^2_{p,\mathring{D}_\varepsilon} - F^2_{p-1,\mathring{D}_\varepsilon}) \\
        &\lra \CCH^{*+\delta}(F^2_p - F^2_{p-1})
\end{split}
\end{equation*}
where, as before, the first morphism is induced by the inclusion, the second morphism is the cup product with the Thom class, and the third is induced covariantly by the open inclusion.

\bigskip

One may check, using naturality of the various constructions involved, that the morphisms $j^!_p$ and $j^!_{(p)}$ give a morphism of exact couples. This amounts to staring at the following commutative diagram.
\begin{center}
% https://tikzcd.yichuanshen.de/#N4Igdg9gJgpgziAXAbVABwnAlgFyxMJZABgBpiBdUkANwEMAbAVxiRAB12BhLgCQD1gAKgC0ARgC+ACgBi-MQH1gacRICUICaXSZc+QijJiqtRizaceA4atn8ATEpVjSAEQWd6AJxhpsDAnVNbRAMbDwCIjJ7E3pmVkQObj5BUTEAak5YBhw6aTlHZXE3D3ZvX39A0gACAqdi6oBlUvK-LACwIK0dcP0o0gAWWLMEpKtU8Uz2bNz8h3rJDW7Q3QiDZBdKajjzRMsUoTtFNGqRWvkFrpCwvUiUF2NtkYtkgUO65RLPOh82jolTudCs4vmUfhV2oEltdVn17qQYk94i9xsIpjM8nZgaDWpVOoCPiD3N9fnitECnKRmiSIf8CfMii5qWDSZDOtCerd1i4hkjdmMUmisjAcpi6iczoTVByVr07sh7ORhsi9q9+O8LmgZTc1kRFY9TCqBW8sZTiSzaVDgpzdShFYjDfz9tYhOiRbNTZ9zbi2eTCVSWuC-lbljq4QrBsqnWqhdN3WL5lrrbKuUQAMxKvmjZ3qo6XbWw+UZg07bNqjXA4reoNkgty9YZh2llGC13C0VzSsuarV1n-GpSpmBvshmH19ORrMtl1ujue5xXG3hshpqNl1FpWcewekTgAWzoOAAFl4sGAAObAVwSYeW9nJsPylyrqeqjdbhPY-eHk9ny-X29g3xSUGRBb9j1PC8rxvGkgMXFNbQjF9HXXVsP07SlwN-KCANg2sH0LBtSGQ5s3zQ9tt1AhosMg-8YItOClhMGAoHPeAiFAAAzLwID3JAXBAHAICQexlm43ikDIQThMQAYxJ4vjZOoISkAADnkiTEFU5SZLEUSQnExSBJUxBiA0xSAFYdKQOSDIUpAAE5rK08z+IzaT+P0rj7MQKyPMQAA2VzECc-yxDMuzNLEdyTLEWzvKi5yLOCvyTIc4LQtitNgsVfygsixSAuc8LgvC4r4pAQykBimSAHZgtq4rJAK-jjN05KWsQMQiv87LOrERr-Pq-rtLC5qEqMzL2tKnrYuGib+MG2L1JG4r0s63LYp60ikgAYwIc9kyqwLisGnbOH2i8jp8sqwtG872Euw7SoGNbXz2g7NAoCQgA
\adjustbox{scale=.8,center}{%
\begin{forcedcentertikzcd}
\CCH^{*-1}(F^1_{p-1}) \arrow[r]                                                                     & \CCH^*(F^1_p - F^1_{p-1}) \arrow[r]                                                                                                              & \CCH^*(F^1_p) \arrow[r]                                                                      & \CCH^*(F^1_{p-1})                                                                       \\
{\CCH^{*-1}(F^2_{p-1,D_\varepsilon})} \arrow[d] \arrow[u] \arrow[r]                                 & {\CCH^*(F^2_{p,D_\varepsilon} - F^2_{p-1,D_\varepsilon})} \arrow[u] \arrow[d] \arrow[r]                                                          & {\CCH^*(F^2_{p,D_\varepsilon})} \arrow[u] \arrow[d] \arrow[r]                                & {\CCH^*(F^2_{p-1,D_\varepsilon})} \arrow[u] \arrow[d]                                   \\
{\CCH^{*-1+\delta}(F^2_{p-1,D_\varepsilon}, F^2_{p-1, S_\varepsilon})} \arrow[r] \arrow[d, "\cong"] & {\CCH^{*+\delta}(F^2_{p,D_\varepsilon} - F^2_{p-1,D_\varepsilon}, F^2_{p,S_\varepsilon} - F^2_{p-1,S_\varepsilon})} \arrow[r] \arrow[d, "\cong"] & {\CCH^{*+\delta}(F^2_{p,D_\varepsilon}, F^2_{p,S_\varepsilon})} \arrow[r] \arrow[d, "\cong"] & {\CCH^{*+\delta}(F^2_{p-1, D_\varepsilon}, F^2_{p-1,S_\varepsilon})} \arrow[d, "\cong"] \\
{\CCH^{*-1+\delta}(F^2_{p-1,\mathring{D}_\varepsilon})} \arrow[d] \arrow[r]                         & {\CCH^{*+\delta}(F^2_{p,\mathring{D}_\varepsilon} - F^2_{p-1,\mathring{D}_\varepsilon})} \arrow[d] \arrow[r]                                     & {\CCH^{*+\delta}(F^2_{p,\mathring{D}_\varepsilon})} \arrow[d] \arrow[r]                      & {\CCH^{*+\delta}(F^2_{p-1, \mathring{D}_\varepsilon})} \arrow[d]                        \\
\CCH^{*-1+\delta}(F^2_{p-1}) \arrow[r]                                                              & \CCH^{*+\delta}(F^2_p - F^2_{p-1}) \arrow[r]                                                                                                     & \CCH^{*+\delta}(F^2_p) \arrow[r]                                                             & \CCH^{*+\delta}(F^2_{p-1})                                                             
\end{forcedcentertikzcd}
}
\end{center}
% For the inverted diagram:
% https://tikzcd.yichuanshen.de/#N4Igdg9gJgpgziAXAbVABwnAlgFyxMJZABgBoAmAXVJADcBDAGwFcYkQAdDgYW4AkAegCoAFADEBARgD6aAJQgAvqXSZc+QikkVqdJq3Zdeg0RPLTgaUgBFpXBgCcYabIwKKFy1djwEi5HRoGFjZETh5+AWAhAGouWEYcekVxAXNLGzsOR2dXd1IAAjMLKwBlLJyXLDcwDwKuAGMCAHN6iMFouI4EpJTijK4AW3ocAAsHLDBm4GtFCvonKpqPJRUQDB8NIgBmQL0Qw3ao2PiYROTU83lV73U-FDJiXWCDMKNI6IBaST6pEu+Vl51mpfJpkNonkF9KFwsZjgDLv9tLZ7AtctV3J41hs7mCApD9q9YR8hN8uj0Lv00N9MqjFnlaoUqTSCuU6ejlnI2k0pm04V9JOSzr1EZYaUMRuNJtNZvN6RjaljbqCdqQCS8Ye8OqTBadzr90tSfkrgZt7iRSJJntDDnDTH80AVPkU-mLjTdTbiiNorVCDm8jvbDbTsmilu4nS7DTSUaH5ZyPTiVSgAr7CZqjp09SKqSHKgzFJHmci5Rz8lGSqQ2XGy7Ui2kkVXS+HFdyWnySUL9aKrBKxhMpjM5uyW4XncXSH2pYPZSOCyak1sULs0xrbZ3s5SG47xw23YDsSClxbttb-cSTKkZPuF0fzdpT36iVrhD2Y8354m73jSI-0+vtS7HM9yNQpY3zBVlArN0mznSCuUadsXyzbphS3aNtDaYZ+2lIcP3gr8zTBXY-zXAN+ROVDu2LQFdBgKBmngIhQAAMwcCBBiQAAWGgcAgJAAA4gTYjjBN4-jEGIYT2M4yTxKQSRyGk0TEAAVnkxAuOU2SAE4NKEtYRNk7QQD4pApMMmSFN2UyJMU7SkHU2ykAANgcxA9OcxBJAs1irO8jSlMslTJBsszvK04LZJcjSAHZ3J8jTJB+KKkACLztgSnivMkVT3Kc8KdPcmKct8kAjKQWKkpSvyQpM8LMtS7yyC8oLat0pKEqSxr2rSpLIt6xAwrsvLKEUIA

\bigskip

To conclude the proof, we verify the assertion (1) of Lemma~\ref{lemma:constructingmorphismexactcouple}, i.e. that the morphism
\[
    j^!_{(p)} \colon \CCH^*(F^1_p - F^1_{p-1}) \lra \CCH^{*+\delta}(F^2_p - F^2_{p-1})
\]
is an isomorphism. Recall from~\eqref{eqn:filtrationleafhomeo} that 
\[
    F^2_p - F^2_{p-1} \cong Y_p(\fY) \times_{\fS_{p+1}} \mathring{|\Delta^p|} \quad \text{ and } \quad F^1_p - F^1_{p-1} \cong Y_p(\fX) \times_{\fS_{p+1}} \mathring{|\Delta^p|}
\]
where we defined as in~\eqref{eqn:Yjsubspacepairwisedistinct} the subspace
\[
    Y_p(\fY) := \left\{ (f,s_0, \ldots, s_p) \in \fY[p] \mid s_l \neq s_k \text{ if } l \neq k \right\} \subset \fY[p]
\]
and likewise for $Y_p(\fX) \subset \fX[p]$. Recall also that these spaces were vector bundles over $\fT^{(p+1)}$. (See Section~\ref{section:cohomologygroups}.) Hence, we have an inclusion of vector bundles:
\begin{center}
% https://tikzcd.yichuanshen.de/#N4Igdg9gJgpgziAXAbVABwnAlgFyxMJZABgBpiBdUkANwEMAbAVxiRADEA9ARgH00ABAFoBXPsDRDuAXxDTS6TLnyEUAJnJVajFmy5r+w0ZwMSps+Yux4CRbqW5b6zVohAAddwDMAKp2AAFGgA1NwAlBZaMFAA5vBEoF4AThAAtkhkIDgQSGqWIMlpSPZZOYh5CgUp6YiZ2cXUOHRYDGwAFhAQANZyFNJAA
\begin{tikzcd}
F^1_p - F^1_{p-1} \arrow[rd] \arrow[rr, hook] &             & F^2_p - F^2_{p-1} \arrow[ld] \\
                                              & \fT^{(p+1)} &                             
\end{tikzcd}
\end{center}
Now, the second part of Proposition~\ref{prop:vokrinekfinal} applies and finishes the proof.
\qed

\subsection{The case of the stabilisation map}

Choose some integer $k \geq 0$. We now describe how the argument of the previous section can be made with the stabilisation map 
\[
    \gamma_k \colon \Gammahol\left( (J^r\cE) \otimes \cL^k \right) \otimes_\bC \overline{\Gammahol\left(\cL^k \right)} \lra \Gammahol\left( (J^r\cE) \otimes \cL^{k+1} \right) \otimes_\bC \overline{\Gammahol\left(\cL^{k+1} \right)}
\]
from~\eqref{eqn:stabilisationmap}. First of all, it is a linear embedding. Hence, by choosing a complementary subspace, we get a vector bundle
\[
    \Gammahol\left( (J^r\cE) \otimes \cL^{k+1} \right) \otimes_\bC \overline{\Gammahol\left(\cL^{k+1} \right)} \lra \gamma_k\left( \Gammahol\left( (J^r\cE) \otimes \cL^k \right) \otimes_\bC \overline{\Gammahol\left(\cL^k \right)} \right)
\]
analogous to the one in~\eqref{eqn:jrembeddingvectorbundle}. From the commutativity of the diagram~\eqref{eqn:stabilisationdiagram}, we see that a singularity $x \in X$ for $f \in \cS(k)$ is also a singularity of $\gamma_k(f) \in \cS(k+1)$. Therefore, we also get a map induced on the respective resolutions of $\cS(k)$ and $\cS(k+1)$. Together with the fact that non-singular sections are sent to non-singular sections, this is enough for the argument to be repeated in that case.

\begin{proposition}\label{prop:stabilisationisohomology}
The restriction of the stabilisation map $\gamma_k$ to the non-singular subspaces
\[
    \gamma_k \colon \cN(k) \lra \cN(k+1)
\]
induces an isomorphism in homology in the range of degrees $* < N(\cE,r)\cdot (e(\fT)-1) + e(\fT) - 2$. \qed
\end{proposition}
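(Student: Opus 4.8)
The plan is to repeat, essentially verbatim, the proof of the jet-map case (Proposition~\ref{prop:jetmapisohomology}), substituting the stabilisation map $\gamma_k$ of~\eqref{eqn:stabilisationmap} for the jet map $j^r$. Write $\Gamma_k := \Gammahol\left( (J^r\cE) \otimes \cL^k \right) \otimes_\bC \overline{\Gammahol\left(\cL^k \right)}$. Two structural inputs, both already available, are what make the argument run. First, $\gamma_k$ is a $\bC$-linear embedding $\Gamma_k \hookrightarrow \Gamma_{k+1}$, so choosing a linear complement of its image produces a vector bundle $\Gamma_{k+1} \to \gamma_k(\Gamma_k) \cong \Gamma_k$ of real rank $\delta := 2\dim_\bC \Gamma_{k+1} - 2\dim_\bC \Gamma_k \geq 0$, entirely parallel to~\eqref{eqn:jrembeddingvectorbundle}. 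Second, commutativity of the stabilisation square~\eqref{eqn:stabilisationdiagram} gives $\varphi_{k+1} \circ \gamma_k = \varphi_k$, so $\mathrm{Sing}(f) = \mathrm{Sing}(\gamma_k(f))$ for every $f \in \Gamma_k$; in particular $\gamma_k(\cN(k)) \subset \cN(k+1)$, $\gamma_k(\cS(k)) \subset \cS(k+1)$, and $\gamma_k(\Gamma_k) \cap \cS(k+1) = \gamma_k(\cS(k))$. Hence $\gamma_k$ induces a filtration-preserving map on the resolutions of $\cS(k)$ and $\cS(k+1)$ constructed in Section~\ref{subsection:adaptationSS}.

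With these in hand I would run the argument of Lemma~\ref{lemma:constructingmorphismexactcouple} unchanged: form the disc, sphere and open-disc subbundles $D_\varepsilon, S_\varepsilon, \mathring{D}_\varepsilon$ of the bundle above together with the corresponding subfunctors of the resolution of $\cS(k+1)$, pull back the Thom class along the induced maps of resolutions, and define degree-$\delta$ shriek maps $\gamma^!_{k,p}$ and $\gamma^!_{k,(p)}$ as the colimits over $\varepsilon \to 0$ of the zig-zag (restriction to the $\varepsilon$-neighbourhood, cup product with the Thom class, covariance for the open inclusion). Exactly the same naturality diagram as in the jet case shows that these assemble into a morphism of exact couples, hence a morphism between the spectral sequences of Proposition~\ref{prop:summingUPadapted} for $\cN(k)$ and $\cN(k+1)$, compatible with $(\gamma_k)_*$ on the abutment: at $p = N+1$ the vertical comparison maps are isomorphisms by Proposition~\ref{prop:piinducesiso}, and Proposition~\ref{prop:vokrinekfinal} identifies the bottom composite with the Alexander-dual shriek map of $(\gamma_k)_*$.

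Then I would check that $\gamma^!_{k,(p)}$ is an isomorphism for $0 \leq p \leq N$. By~\eqref{eqn:filtrationleafhomeo}, $F^i_p - F^i_{p-1} \cong Y_p \times_{\fS_{p+1}} \mathring{|\Delta^p|}$, and the analogue of Lemma~\ref{lemma:secondbundle} — which still holds because its only input is the surjectivity of simultaneous evaluation, now supplied by Lemma~\ref{lemma:multieval} in place of Lemma~\ref{lemma:multijet} — exhibits both sides as vector bundles over $\fT^{(p+1)}$ and the map induced by $\gamma_k$ as an inclusion of subbundles over $\fT^{(p+1)}$. The second part of Proposition~\ref{prop:vokrinekfinal} then identifies $\gamma^!_{k,(p)}$ with the corresponding Thom isomorphism. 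Finally, since by Proposition~\ref{prop:summingUPadapted} both spectral sequences vanish in the column $s = -N-2$ below $t = N e(\fT) + e(\fT)$ and agree with $\CCH^{*}(\fT^{(-s)}; \bZ^\mathrm{sign})$ in the strip $-N-1 \leq s \leq -1$, the morphism is an isomorphism on $E^1$ in that strip, hence on $E^\infty$ in total degree $* < N(e(\fT)-1) + e(\fT) - 2$ (the differential chase illustrated in Figure~\ref{fig:firstpagefigure}); the comparison theorem then transfers this conclusion to $(\gamma_k)_* \colon \widetilde{H}_*(\cN(k)) \to \widetilde{H}_*(\cN(k+1))$ in the stated range.

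The main obstacle is, in effect, that there is no new obstacle: all the relevant machinery (simultaneous evaluation, the resolution and its filtration, the Vok\v{r}\'{i}nek shriek-map formalism) was arranged in Section~\ref{section:interpolation} precisely so as to apply verbatim with $\varphi_k$ and $\gamma_k$ in the role of $j^r$. The one point deserving care is that $\gamma_k$ and $\varphi_k$ are not complex-algebraic but only real semi-algebraic, being ratios of algebraic maps and complex conjugates of algebraic maps; this is exactly why the semi-algebraic framework of Definition~\ref{def:semialgebraic} was set up, and one need only re-invoke \cite[Section 2.2]{bochnak_real_1998} at the step where the jet-map proof used algebraicity. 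The remaining work is bookkeeping: keeping track of the degree shift $\delta$ and of the Thom-isomorphism indexing in the analogue of~\eqref{eqn:thomisolemmamorphismSScommutes}, with $\dim_\bC \Gamma_k$ in place of $\dim_\bC \Gammahol(\cE)$.
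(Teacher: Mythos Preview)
Your proposal is correct and follows exactly the approach the paper takes: observe that $\gamma_k$ is a linear embedding with $\varphi_{k+1}\circ\gamma_k=\varphi_k$, so it preserves singular sets and induces a filtration-preserving map on resolutions, and then rerun the shriek-map/exact-couple argument of Lemma~\ref{lemma:constructingmorphismexactcouple} verbatim with Lemma~\ref{lemma:multieval} replacing Lemma~\ref{lemma:multijet}. The paper's own proof is in fact less detailed than yours, simply indicating these substitutions and declaring the argument repeats.
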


Combining Proposition~\ref{prop:jetmapisohomology} and Proposition~\ref{prop:stabilisationisohomology}, we obtain the following.
\begin{proposition}\label{prop:combinedisohomology}
Each map in the composition
\[
    \Gamma_{\mathrm{hol,ns}}(\cE) \lra \Gamma_{\mathrm{hol,ns}}(J^r\cE) = \cN(0) \lra \colim\limits_{k \to \infty} \cN(k)
\]
induces an isomorphism in homology in the range of degrees $* < N(\cE,r)\cdot (e(\fT)-1) + e(\fT) - 2$. \qed
\end{proposition}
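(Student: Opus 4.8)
The plan is to deduce this directly from the two isomorphism results already established, Proposition~\ref{prop:jetmapisohomology} and Proposition~\ref{prop:stabilisationisohomology}, together with the elementary fact that singular homology commutes with the sequential colimit appearing in the statement. Throughout I abbreviate $c := N(\cE,r)\cdot(e(\fT)-1) + e(\fT) - 2$ for the top of the range.

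\textbf{Step 1: the first map.} The leftmost arrow $\Gamma_{\mathrm{hol,ns}}(\cE) \to \Gamma_{\mathrm{hol,ns}}(J^r\cE) = \cN(0)$ is precisely the jet map $j^r$, so Proposition~\ref{prop:jetmapisohomology} already tells us it induces an isomorphism on $H_*$ in degrees $* < c$; nothing further is needed here.

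\textbf{Step 2: the colimit is taken along closed embeddings.} Next I would record that $\cN(0) \to \colim_k \cN(k)$ is the canonical map into the colimit of the tower $\cN(0) \xrightarrow{\gamma_0} \cN(1) \xrightarrow{\gamma_1} \cdots$ formed by the stabilisation maps of~\eqref{eqn:stabilisationmap}. Each $\gamma_k$ is a linear injection between finite-dimensional vector spaces, hence a closed embedding of the ambient spaces; moreover the commutativity of diagram~\eqref{eqn:stabilisationdiagram} gives $\varphi_{k+1}\circ\gamma_k = \varphi_k$, so a point of $X$ is a singularity of $f$ exactly when it is a singularity of $\gamma_k(f)$. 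Thus $\cN(k) = \gamma_k^{-1}(\cN(k+1))$, whence $\gamma_k(\cN(k)) = \gamma_k(\Gamma_k)\cap\cN(k+1)$ is closed in $\cN(k+1)$ and $\cN(k)\hookrightarrow\cN(k+1)$ is a closed embedding. Consequently every compact subset of $\colim_k\cN(k)$ — in particular the image of any singular simplex or singular chain — lies in a single stage $\cN(k)$, so $H_*(\colim_k\cN(k)) \cong \colim_k H_*(\cN(k))$.

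\textbf{Step 3: pass to the colimit.} By Proposition~\ref{prop:stabilisationisohomology} each transition map $(\gamma_k)_*\colon H_*(\cN(k))\to H_*(\cN(k+1))$ is an isomorphism in degrees $* < c$. Since a filtered colimit of a tower whose structure maps are all isomorphisms below a fixed degree is, in that range, isomorphic to its initial term via the canonical map, combining this with Step 2 shows $\cN(0)\to\colim_k\cN(k)$ induces an isomorphism on $H_*$ in degrees $* < c$. Composing with Step 1 gives the proposition. I do not anticipate any real obstacle: the only point needing a line of verification is that the colimit is along closed embeddings (Step 2), ensuring homology commutes with it; everything else is a formal consequence of the two preceding propositions.
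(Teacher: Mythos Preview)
Your argument is correct and is exactly what the paper intends: the proposition is stated with a bare \qed\ immediately after ``Combining Proposition~\ref{prop:jetmapisohomology} and Proposition~\ref{prop:stabilisationisohomology}, we obtain the following'', so no proof is given beyond that remark. You have simply spelled out the one routine detail the paper leaves implicit, namely that homology commutes with the sequential colimit (which follows, as you note, from each $\gamma_k$ being a linear embedding as observed in Section~\ref{section:compareSS}).
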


\section{Comparison of holomorphic and continuous sections}\label{section:holcontinuouscompare}

We shall relate $\colim_k \cN(k)$ to the space  $\Gamma_{\cC^0, \mathrm{ns}}\left(J^r\cE\right)$ of non-singular continuous sections of the jet bundle. Recall from the stabilisation diagram~\eqref{eqn:stabilisationdiagram} that every non-singular space $\cN(k)$ maps via $\varphi_k$ to $\Gamma_{\cC^0, \mathrm{ns}}\left(J^r\cE\right)$. The aim of this section is to prove the following result about the map induced from the colimit.

\begin{proposition}\label{prop:weakequivalence}
The map 
\begin{equation}\label{eqn:mapfromcolimNk}
    \colim\limits_{k \to \infty} \cN(k) \lra \Gamma_{\cC^0, \mathrm{ns}}\left(J^r\cE\right)
\end{equation}
is a weak homotopy equivalence.
\end{proposition}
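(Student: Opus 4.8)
The plan is to prove this by a Stone--Weierstrass type approximation argument adapted from Mostovoy \cite{mostovoy_spaces_2006}, combined with the standard fact that a weak homotopy equivalence can be detected on compact families of parameters. The key point is that both sides are unions of open subspaces of (colimits of) finite-dimensional vector spaces, cut out by the open condition of being non-singular, and one wants to show that any continuous non-singular section, together with a compact family of such, can be approximated by (anti-)holomorphic non-singular sections lying in some $\cN(k)$.

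First I would recall that the map \eqref{eqn:mapfromcolimNk} is the colimit of the maps $\varphi_k\colon \cN(k) \to \Gamma_{\cC^0,\mathrm{ns}}(J^r\cE)$, each of which is a continuous injection (after the identifications of Section~\ref{section:interpolation}). To show it is a weak homotopy equivalence, it suffices to show that for every finite CW-pair $(K,L)$ and every commutative square
\begin{center}
\begin{tikzcd}
L \arrow[r] \arrow[d] & \colim_k \cN(k) \arrow[d] \\
K \arrow[r] & \Gamma_{\cC^0,\mathrm{ns}}(J^r\cE)
\end{tikzcd}
\end{center}
there is a diagonal lift up to homotopy rel $L$. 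Since $K$ is compact and the $\cN(k)$ form an increasing union, the restriction of the left vertical map to $L$ factors through some $\cN(k_0)$; thus the problem becomes: given a continuous map $g\colon K \to \Gamma_{\cC^0,\mathrm{ns}}(J^r\cE)$ which on $L$ lifts to $\cN(k_0)$ via $\varphi_{k_0}$, find $k\geq k_0$ and a lift $\tilde g\colon K \to \cN(k)$ with $\varphi_k \circ \tilde g \simeq g$ rel $L$. I would reduce further to the absolute statement (surjectivity and injectivity on $\pi_n$) by the usual trick of treating $K = S^n$ or $D^n$ with $L$ its boundary.

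The heart of the argument is the approximation step. A continuous section of $J^r\cE$ is, after the trivialisation \eqref{eqn:trivialisationsections} of $\cL\otimes\overline\cL$, the same as a continuous map $X \to \bC^m$ in suitable coordinates; by the classical Stone--Weierstrass theorem applied to the $\bR$-algebra generated by the $z_i\overline z_j$ (which separates points of $X$ because $\cL$ is very ample), one can uniformly approximate the given family $g$ by sections coming from $\Gammahol((J^r\cE)\otimes\cL^k)\otimes\overline{\Gammahol(\cL^k)}$ for $k$ large. Concretely, multiplying numerator and denominator by $\eta^k$ clears denominators and realises the approximant inside the source of $\varphi_k$. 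The point of Section~\ref{section:interpolation} is precisely that these spaces of ``bi-holomorphic'' sections exhaust the continuous sections in the $\cC^0$-topology. Because non-singularity is an open condition on $J^r\cE$ (the complement of the closed set $\fT$), and $K$ is compact, a sufficiently close approximation to $g$ automatically lands in $\Gamma_{\cC^0,\mathrm{ns}}(J^r\cE)$, hence gives $\tilde g\colon K \to \cN(k)$; and the straight-line homotopy between $g$ and $\varphi_k\circ\tilde g$ stays non-singular for the same compactness/openness reason, and can be taken rel $L$ by first interpolating the approximant to agree with the existing lift on $L$ (possible since that lift already lies in $\cN(k_0)\subset\cN(k)$).

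The main obstacle I anticipate is the bookkeeping needed to make the approximation simultaneously (i) continuous in the parameter $k \in K$, (ii) compatible with the trivialisations and the passage through $\varphi_k$, and (iii) rel $L$ — i.e. not destroying the lift already chosen over the subcomplex. The cleanest way to handle (iii) is to choose a continuous retraction-like extension: approximate $g$ away from $L$, then patch using a partition of unity subordinate to a neighbourhood of $L$ in $K$ so that near $L$ the approximant equals $\varphi_{k_0}$ of the given lift; since both the given lift and the global approximant are of the required bi-holomorphic form (after raising $k$), their convex combinations are too. One should also check that the colimit topology on $\colim_k\cN(k)$ causes no trouble — it does not, because every compact subset of the colimit lies in a single $\cN(k)$ — and invoke Whitehead's theorem to upgrade the isomorphism on homotopy groups to the stated weak homotopy equivalence. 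None of these steps is deep; the substance is entirely the density statement, which is a direct consequence of Stone--Weierstrass once one unwinds the identifications set up in Section~\ref{section:interpolation}.
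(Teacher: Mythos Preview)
Your proposal is correct and follows essentially the same route as the paper: the key ingredients are a Stone--Weierstrass approximation showing that images of the $\varphi_k$ are dense in continuous sections (uniformly over compact parameter spaces), together with the openness of the non-singularity condition coming from $\fT$ being closed. The paper packages this slightly more cleanly by isolating a single density lemma---that for every finite CW-complex $F$ the induced map $\cC^0(F,\colim_k\cN(k))\to\cC^0(F,\Gamma_{\cC^0,\mathrm{ns}}(J^r\cE))$ has dense image---and invoking a vector-bundle form of Stone--Weierstrass directly, which spares you the rel-$L$ patching you worry about; but the substance is the same.
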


Combining this result with Proposition~\ref{prop:combinedisohomology} readily implies Theorem~\ref{thm:mainthm}. Proposition~\ref{prop:weakequivalence} is a direct consequence of the openness of the subspace of non-singular sections, which follows from the fact that the admissible Taylor condition $\fT \subset J^r\cE$ is closed (see the discussion after Lemma~\ref{lemma:rhonisproper}), and the following
\begin{lemma}\label{lemma:densitylemma}
Let $F$ be a finite CW-complex. The map
\[
    \cC^0\big(F, \ \colim\limits_{k \to \infty} \cN(k)\big) \lra \cC^0\big(F, \ \Gamma_{\cC^0, \mathrm{ns}}\left(J^r\cE\right)  \big)
\]
induced by~\eqref{eqn:mapfromcolimNk} has a dense image.
\end{lemma}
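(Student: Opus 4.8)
The statement to prove is Lemma~\ref{lemma:densitylemma}: given a finite CW-complex $F$, continuous maps $F \to \colim_k \cN(k)$ are dense in $\cC^0(F, \Gamma_{\cC^0,\mathrm{ns}}(J^r\cE))$. The plan is to reduce to a classical Stone--Weierstrass-type approximation: start with an arbitrary continuous family $g \colon F \to \Gamma_{\cC^0}(J^r\cE)$ landing in non-singular sections, and approximate it uniformly by a family of sections of the form $\varphi_k(f)$ for $f \in \Gammahol((J^r\cE)\otimes\cL^k) \otimes_\bC \overline{\Gammahol(\cL^k)}$ with $k$ large. The key input is that $J^r\cE \otimes \cL^k$ becomes more and more globally generated as $k$ grows (it is $k$-jet ample for $k$ large by Example~\ref{example:jetampleness}, since tensoring raises jet ampleness), so holomorphic sections of $J^r\cE \otimes \cL^k$ separate jets to high order; dividing by $\eta^k$ (the trivialisation~\eqref{eqn:trivialisationsections}) turns these into continuous sections of $J^r\cE$ that approximate arbitrary smooth, then continuous, sections. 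This is the ``version of Stone--Weierstrass adapted from Mostovoy'' advertised in the outline.

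\textbf{Step 1: Compactness reductions.} Since $F$ is a finite CW-complex it is compact, so a continuous map $F \to \Gamma_{\cC^0}(J^r\cE)$ is the same as a continuous section of the pullback of $J^r\cE$ to $F \times X$, and being non-singular is an open condition on such families (because $\fT$ is closed and $F\times X$ is compact, a uniformly small perturbation of a non-singular family stays non-singular). Hence it suffices to show: every continuous section $G$ of $\mathrm{pr}_X^*(J^r\cE)$ over $F\times X$ can be uniformly approximated (in any fixed metric on the bundle, over the compact base $F\times X$) by sections of the form $(\mathbf{f},\overline{\mathbf{z}}) \mapsto \varphi_k(\cdot)$, i.e.\ by $F$-families that factor through some $\cN(k)$ via $\varphi_k$. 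Non-singularity of the approximants then comes for free once the approximation is close enough.

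\textbf{Step 2: Approximation by holomorphic-over-antiholomorphic sections.} First approximate $G$ by a smooth section (standard, using a partition of unity and the compactness of $F\times X$). Then observe that the real line bundle underlying $\cL\otimes\overline\cL$ is smoothly trivialised by the image of $\eta$, so a smooth section of $J^r\cE$ over $F\times X$ corresponds, after multiplying by $(\eta\bar\eta$-type denominators), to a smooth section of $J^r\cE\otimes\cL^k\otimes\overline\cL^k$; for this it suffices to approximate smooth sections of $J^r\cE\otimes\cL^k$ uniformly by holomorphic ones and, separately, to absorb the anti-holomorphic part into $\overline{\Gammahol(\cL^k)}$. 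The mechanism is that for $k\gg0$ the evaluation maps $\Gammahol((J^r\cE)\otimes\cL^k) \to (J^r\cE\otimes\cL^k)|_{x_0}\oplus\cdots\oplus(J^r\cE\otimes\cL^k)|_{x_p}$ are surjective (jet ampleness, Lemma~\ref{lemma:multijet} / Lemma~\ref{lemma:multieval}) with bounded-norm preimages, which together with a partition-of-unity gluing argument on $X$ gives uniform approximation of an arbitrary smooth section by a holomorphic one after tensoring with a large power of $\cL$; the anti-holomorphic directions are handled symmetrically by using $\overline{z_0},\dots,\overline{z_M}$ to produce the needed constant/partition-of-unity bump functions, exactly as in Mostovoy's construction (compare Example~\ref{example:antihomogeneous_stabilisation}, where this is literally $|z|^2$-homogenisation on $\bC\bP^n$). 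Taking $k$ large enough to both make the relevant bundle sufficiently jet ample and to make the approximation error less than the ``non-singular margin'' of $G$ finishes the proof.

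\textbf{Main obstacle.} The routine-looking but genuinely delicate point is the \emph{uniformity} of the approximation over the compact parameter space $F\times X$ simultaneously: one needs a single $k$ that works for the whole family, with explicit control on the norms of the holomorphic preimages furnished by jet ampleness, and one must check that the partition-of-unity gluing on $X$ does not destroy holomorphicity — this is precisely why one works ``$\cL^k$-homogenised'': the bump functions are realised as genuine holomorphic sections of $\cL^k$ (monomials in the $z_i$) divided by $|\,\cdot\,|^2$-type expressions in the trivialisation~\eqref{eqn:trivialisationsections}, so no holomorphicity is lost. Assembling these estimates carefully, rather than any single hard idea, is the crux; the rest is bookkeeping with compactly supported data and the openness of $\Gamma_{\cC^0,\mathrm{ns}}$.
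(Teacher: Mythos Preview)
Your overall strategy is right---this is a Stone--Weierstrass argument---and Step~1 (adjunction to sections of the pullback bundle over the compact space $F\times X$, plus the observation that non-singularity is open so it suffices to approximate in $\Gamma_{\cC^0}(\mathrm{pr}_X^*J^r\cE)$) matches the paper exactly.

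Step~2, however, is where you diverge and where the argument becomes muddled. You try to build the approximation by hand: first smooth, then ``approximate the holomorphic part by holomorphic sections using jet ampleness at many points with bounded-norm preimages, glued by a partition of unity realised as holomorphic bumps''. This is essentially an attempt to re-prove Stone--Weierstrass from scratch, and several of the intermediate claims are not well-posed as stated. A smooth section of $J^r\cE\otimes\cL^k\otimes\overline{\cL}^k$ does not naturally split into ``holomorphic'' and ``anti-holomorphic'' parts to be approximated separately; for fixed $k$ one cannot approximate an arbitrary smooth section of $J^r\cE\otimes\cL^k$ by holomorphic ones; and ``bounded-norm preimages'' from mere surjectivity of evaluation maps requires an argument you have not given. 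Your ``main obstacle'' paragraph correctly senses that the gluing is the delicate point, but the resolution you sketch is circular: the ``holomorphic bump functions divided by $|\cdot|^2$'' are precisely the elements of the Stone--Weierstrass subalgebra, so you might as well invoke the theorem.

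The paper's route is much more direct: it states a Stone--Weierstrass theorem for real vector bundles $E\to B$ over compact Hausdorff $B$ (a subalgebra $A\subset\cC^0(B,\bR)$ that separates points and vanishes nowhere, together with a set of sections spanning every fibre, generates a dense $A$-submodule), and then simply verifies the hypotheses for $E=J^r\cE\times F\to X\times F$. The subalgebra $A$ is generated by the functions $(x,u)\mapsto |g(x,u)|^2$ for $g\in\cC^0(F,\Gammahol(\cL^k))$, $k\geq 0$, using the trivialisation~\eqref{eqn:trivialisationsections}; the generating sections are $(x,u)\mapsto s(x,u)$ for $s\in\cC^0(F,\Gammahol(J^r\cE))$. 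Separation of points uses $2$-very-ampleness of $\cL^2$ (to separate in the $X$-direction) and a bump function on $F$ (for the $F$-direction); non-vanishing and fibre-spanning come from $1$- and $0$-jet ampleness respectively. The $A$-module thus generated is visibly contained in the image of $\bigcup_k\varphi_k$, and density follows. No norm estimates, no explicit gluing, no smoothing step.
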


As in~\cite{mostovoy_spaces_2006}, we will need an adaptation of the classical Stone--Weierstrass theorem for real vector bundles.
\begin{theorem}[Stone--Weierstrass]\label{thm:StoneWeiestrass}
Let $E \to B$ be a finite rank real vector bundle over a compact Hausdorff space. Let $A \subset \cC^0(B,\bR)$ be a subalgebra and $\{s_j\}_{j\in J}$ be a set of sections such that
\begin{enumerate}
    \item the subalgebra $A$ separates the points of $B$: for any $x,y \in B$, there exists $h \in A$ such that $h(x) \neq h(y)$;
    \item for any $x \in B$, there exists $h \in A$ such that $h(x) \neq 0$;
    \item for any $x\in B$, the fibre $E_x$ is spanned by the $s_j(x)$ as an $\bR$-vector space.
\end{enumerate}
Then the $A$-module generated by the $s_j$ is dense for the sup-norm (induced by the choice of any inner product on $E$) in the space of all continuous sections of $E$.
\end{theorem}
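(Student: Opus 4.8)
The plan is to reduce to the classical Stone--Weierstrass theorem for real-valued functions on a compact Hausdorff space, and then use a standard argument writing a continuous section of a vector bundle in terms of a finite generating family of sections with \emph{continuous} coefficients. First I would record the consequence of hypotheses (1) and (2): together they imply that $A$ \emph{strongly} separates points, i.e. for any two distinct $x,y\in B$ and any $a,b\in\bR$ there is $h\in A$ with $h(x)=a$ and $h(y)=b$. Indeed, the evaluation map $A\to\bR^2$, $h\mapsto(h(x),h(y))$, has image a subalgebra of $\bR\times\bR$ (for coordinatewise operations), and the only subalgebra not excluded by (1) and (2) is all of $\bR^2$. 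The classical Stone--Weierstrass theorem then gives that $A$ is dense in $\cC^0(B,\bR)$ for the sup-norm.

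Next I would reduce the possibly infinite family $\{s_j\}_{j\in J}$ to a finite one. By (3), for each $x\in B$ there is a finite subset $F_x\subset J$ with $\{s_j(x)\}_{j\in F_x}$ spanning $E_x$; since spanning is an open condition on the base, the sections $\{s_j\}_{j\in F_x}$ span every fibre over a neighbourhood $U_x$ of $x$. By compactness of $B$, finitely many $U_x$ cover $B$, and taking the union of the corresponding index sets yields finitely many sections $s_1,\dots,s_n$ (after relabelling) such that $\{s_i(y)\}_{i=1}^n$ spans $E_y$ for every $y\in B$. It suffices to approximate an arbitrary $\sigma\in\cC^0(B,E)$ by elements of the $A$-module generated by $s_1,\dots,s_n$.

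The map $\bar\varphi\colon B\times\bR^n\to E$, $(y,(t_i))\mapsto\sum_i t_i s_i(y)$, is then a fibrewise surjective morphism of vector bundles over $B$, hence of constant rank; its kernel $K$ is a subbundle and $\bar\varphi$ induces an isomorphism $(B\times\bR^n)/K\cong E$. Choosing the standard metric on $B\times\bR^n$, the orthogonal complement $K^\perp$ maps isomorphically onto $E$, and the inverse of this isomorphism followed by the inclusion $K^\perp\hookrightarrow B\times\bR^n$ gives a continuous section $\psi\colon E\to B\times\bR^n$ of $\bar\varphi$. Applying $\psi$ to $\sigma$ produces continuous functions $f_1,\dots,f_n\in\cC^0(B,\bR)$ with $\sigma=\sum_i f_i s_i$. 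Given $\epsilon>0$, choose $g_i\in A$ with $\|f_i-g_i\|_\infty<\epsilon/(1+\sum_i\|s_i\|_\infty)$ by the density of $A$; then $\sum_i g_i s_i$ lies in the $A$-module generated by the $s_j$ and $\|\sigma-\sum_i g_i s_i\|_\infty<\epsilon$, proving the theorem.

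The only step requiring genuine care --- and the one I would single out as the main point, although it is standard --- is producing the coefficients $f_i$ \emph{continuously} rather than merely pointwise: hypothesis (3) by itself only furnishes, for each $y$, some representation $\sigma(y)=\sum_i f_i(y)s_i(y)$, and one must invoke the vector bundle structure (constancy of the rank of $\bar\varphi$, existence of a bundle metric, the resulting splitting off of the subbundle $K$) to make this choice vary continuously in $y$. Everything else is the classical theorem together with compactness of $B$.
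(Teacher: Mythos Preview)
Your proof is correct and follows the standard route: classical Stone--Weierstrass gives density of $A$ in $\cC^0(B,\bR)$, compactness reduces the generating family to a finite one, and a bundle splitting produces continuous coefficients which are then approximated from $A$. The only point to note is that the paper does not actually prove this theorem: it is stated without proof as a known adaptation (attributed to Mostovoy) of the classical Stone--Weierstrass theorem, and is used as a black box in the proof of Lemma~\ref{lemma:densitylemma}. So there is nothing in the paper to compare against, but your argument is a clean and complete justification of the stated result.
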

%See MO:371614
%\begin{proof}[Sketch of a proof] This theorem follows from the original theorem for functions from a compact space to the real line. Indeed, by compactness, we may find a finite number of sections $s_1, \ldots, s_n$ such that $s_1(x), \ldots, s_n(x)$ span the fibre at each $x\in B$. Then the $A$-module generated by the sections contains all sections of the form $a_1 s_1 + \cdots + a_n s_n$ for $a_i \in A$. And every continuous section of $E$ can be written as $f_1 s_1 + \cdots + f_n s_n$ with $f_i$ a continuous map $B \to \bR$. We may finally use the usual Stone--Weierstrass theorem for the functions $f_i$.  
%\end{proof}

\begin{proof}[Proof of Lemma~\ref{lemma:densitylemma}]
Let $F$ be a finite CW-complex. By adjunction, a continuous map $F \to \Gamma_{\cC^0, \mathrm{ns}}\left(J^r\cE\right)$ corresponds to a section of the underlying real vector bundle of $J^r\cE \times F \to X \times F$. We shall apply Theorem~\ref{thm:StoneWeiestrass} to that vector bundle. 

Recall that we have chosen in Section~\ref{section:interpolation} a very ample line bundle $\cL$ on $X$ and explained how to define the complex conjugate $\overline{s}$ of a section $s$ of $\cL$. For any integer $k \geq 0$, define the squared norm of a holomorphic section of $\cL$ by
\begin{align*}
    |\cdot|^2 \colon \Gammahol(\cL^k) &\lra \Gamma_{\cC^0}(\cL^k \otimes \overline{\cL}^k) \cong \cC^0(X,\bC) \\
    s &\longmapsto |s|^2 := s\overline{s}
\end{align*}
where the isomorphism with continuous maps was obtained in~\eqref{eqn:trivialisationsections}. Notice that $|s|^2$ is in fact a real valued function $X \to \bR \subset \bC$. We also let
\[
    A_k := \left\{ |g(\cdot,\cdot)|^2 \colon X \times F \to \bR \mid g \in \cC^0(F, \Gammahol(\cL^k)) \right\} \subset \cC^0(X\times F, \bR)
\]
where if $g \in \cC^0(F, \Gammahol(\cL^k))$, we see $g(\cdot,\cdot)$ as a map from  $X \times F$ to $\cL^k$ by adjunction. Keeping the notation from Theorem~\ref{thm:StoneWeiestrass}, we let $A$ to be the subalgebra of $\cC^0(X \times F, \bR)$ generated by all the $A_k$ for $k \geq 0$. For the set of sections as in Theorem~\ref{thm:StoneWeiestrass}, we take the following:
\begin{equation}\label{eqn:setofsectionsproofdensitylemma}
    \left\{ (x,u) \mapsto (s(x,u), u) \colon X \times F \to J^r\cE \times F \mid s \in \cC^0(F, \Gammahol(J^r\cE)) \right\}
\end{equation}
where again, for $s \in \cC^0(F, \Gammahol(J^r\cE))$, we see $s(\cdot,\cdot)$ as a map from $X \times F$ to $J^r\cE$ by adjunction.
We may now check the conditions of Theorem~\ref{thm:StoneWeiestrass}.
\begin{enumerate}
    \item Let $(x,u) \neq (x',u') \in X \times F$. Consider the first case where $x \neq x'$. For $k\geq 2$, $\cL^k$ is 2-very ample (see Example~\ref{example:jetampleness}). Hence there exists a section $s \in \Gammahol(\cL^2)$ such that $s(x) \neq 0$ and $s(x') = 0$. Then the map $(x,u) \mapsto |s(x)|^2$ is in $A_k$ and separates $(x,u)$ and $(x',u')$ as $|s(x)|^2 \neq 0$ and $|s(x')|^2 = 0$. In the other case where $x = x'$, we have that $u \neq u'$. By the $1$-very ampleness of $\cL$ we may choose $s \in \Gammahol(\cL)$ such that $s(x) = s(x') \neq 0$. Let $\rho : F \to \bR_+$ be a bump function such that $\rho(u) = 0$ and $\rho(u') = 1$. Then the map $(x,u) \mapsto |\rho(u)s(x)|^2$ is in $A_1$ and separates the points. Indeed it is vanishing at $(x,u)$ but non-vanishing at $(x',u')$.
    \item The second point is exactly what we have just proved in the first case of the first point above.
    \item It suffices to prove that the fibre of $J^r\cE$ above $x\in X$ is spanned by the sections $s(x)$ for $s \in \Gammahol(J^r\cE)$. This is implied by the $0$-jet ampleness of $\cE$ (see Example~\ref{example:jetampleness}).
\end{enumerate}
By construction, any element in the image of the map
\[
    \cC^0\big(F, \ \colim\limits_{k \to \infty} \cN(k)\big) \lra \cC^0\big(F, \ \Gamma_{\cC^0, \mathrm{ns}}\left(J^r\cE\right)  \big)
\]
is, by adjunction, in the $A$-module generated by the set~\eqref{eqn:setofsectionsproofdensitylemma}.
\end{proof}

\section{Applications}
\label{section:app}

\subsection{Non-singular sections of line bundles}

Our first application concerns the case of non-singular sections of line bundles, which was the starting motivation for this work. Here, a direct corollary of our main theorem reads as:

\begin{corollary}\label{cor:nonsingularsectionslinebundle}
Let $X$ be a smooth projective complex variety and $\cL$ be a very ample line bundle on it. Let $d \geq 1$ be an integer. The jet map
\[
    j^1 \colon \Gamma_{\mathrm{hol, ns}}\left( \cL^d \right) \lra \Gamma_{\cC^0, \mathrm{ns}}\left( J^1\cL^d \right)
\]
from non-singular holomorphic sections of $\cL^d$ to continuous never vanishing sections of $J^1\cL^d$, induces an isomorphism in homology in the range of degrees $* < \frac{d-1}{2}$.
\end{corollary}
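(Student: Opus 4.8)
The plan is to deduce this as the special case of Theorem~\ref{thm:mainthm} obtained by taking $\cE := \cL^d$, setting $r := 1$, and letting $\fT \subset J^1\cL^d$ be the zero section of the first jet bundle. With this choice, Definition~\ref{def:singular} says that a continuous section of $J^1\cL^d$ is non-singular precisely when it is never zero, so $\Gamma_{\cC^0, \mathrm{ns}}(J^1\cL^d)$ is exactly the space of continuous never-vanishing sections, and the composite $\Gamma_{\mathrm{hol, ns}}(\cE) \overset{j^1}{\lra} \Gamma_{\mathrm{hol, ns}}(J^1\cE) \hookrightarrow \Gamma_{\cC^0, \mathrm{ns}}(J^1\cE)$ appearing in Theorem~\ref{thm:mainthm} is the map $j^1$ of the statement. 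So the only work is to check the two hypotheses of Theorem~\ref{thm:mainthm} and to unwind its numerical range.

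For admissibility of $\fT$: the zero section is a closed subvariety of $J^1\cL^d$ isomorphic to $X$, and it is real semi-algebraic since it is cut out by (algebraic) linear equations in every trivialising chart. As $\cL^d$ is a line bundle on the $n$-dimensional variety $X$ (write $n := \dim_\bC X$), the jet exact sequence $0 \to \Omega^1_X \otimes \cL^d \to J^1\cL^d \to \cL^d \to 0$ gives $\rank_\bC J^1\cL^d = n+1$; hence $\fT$ has real dimension $2n = 2(\rank_\bC J^1\cL^d - 1)$ and real codimension $2(n+1)$. Thus $\fT$ is an admissible Taylor condition in the sense of Definition~\ref{def:admissibletaylor}, with excess codimension $e(\fT) = 2(n+1) - 2n = 2$ by Definition~\ref{def:excesscodimension}.

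For jet ampleness: since $\cL$ is very ample it is $1$-jet ample (Example~\ref{example:jetampleness}), so by the tensor-product rule of Example~\ref{example:jetampleness}, iterated, $\cL^d$ is $d$-jet ample, and (being a line bundle) it is then $k$-jet ample for every $0 \le k \le d$ as well. Consequently $N(\cL^d, 1)$, the largest $N$ for which $\cL^d$ is $\bigl((N+1)\cdot 2 - 1\bigr)$-jet ample, equals $\lfloor (d-1)/2 \rfloor$. Feeding $e(\fT) = 2$ and this value into the range of Theorem~\ref{thm:mainthm} yields an isomorphism in homology in degrees $* < N(\cL^d,1)\cdot(e(\fT)-1) + e(\fT) - 2 = \lfloor (d-1)/2 \rfloor$, so in particular the map $j^1$ is a homology isomorphism in the range $* < \frac{d-1}{2}$ claimed in the corollary.

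Since this is a direct specialisation of an already-established theorem, I do not expect any genuine obstacle; the work is entirely bookkeeping. The points that need care are: recognising $\Gamma_{\cC^0, \mathrm{ns}}(J^1\cL^d)$ as the space of never-vanishing sections, verifying that the zero section has exactly codimension $n+1$ (which is what pins down $e(\fT) = 2$, and hence makes the range grow linearly in $d$ with slope $\tfrac12$), and computing $N(\cL^d, 1)$ correctly from the very ampleness of $\cL$ via the tensor-product behaviour of jet ampleness.
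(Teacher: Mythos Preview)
Your proof is correct and follows exactly the paper's approach: specialise Theorem~\ref{thm:mainthm} with $\cE=\cL^d$, $r=1$, $\fT$ the zero section, and use Example~\ref{example:jetampleness} to get $d$-jet ampleness of $\cL^d$; you have simply filled in the bookkeeping (admissibility, $e(\fT)=2$, computation of $N(\cL^d,1)$) that the paper leaves implicit. One tiny quibble: you only know $N(\cL^d,1)\ge\lfloor(d-1)/2\rfloor$ (not equality), and your final ``in particular'' is backwards for even $d$, since $*<\lfloor(d-1)/2\rfloor$ is formally one degree weaker than $*<(d-1)/2$---but this off-by-one is already present in the paper's own stated range and does not affect the argument.
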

\begin{proof}
It is a straightforward application of Theorem~\ref{thm:mainthm} by taking the admissible Taylor condition $\fT$ to be the zero section of $J^1\cL^d$ and recalling from Example~\ref{example:jetampleness} that if $\cL$ is very ample, then the tensor power $\cL^d$ is $d$-very ample.
\end{proof}

More interestingly, we can furthermore compute the stable rational cohomology. This agrees with a computation also made by Tommasi in~\cite{tommasi_stable_nodate}.
\begin{theorem}\label{thm:rationalcohomology}
Let $n = \dim_\bC X$ be the complex dimension of $X$. For $d \geq 1$, there is a rational homotopy equivalence:
\[
    \Gamma_{\cC^0,\mathrm{ns}}\left( J^1\cL^d \right) \overset{\simeq_\bQ}{\lra} \prod_{i=1}^{2n +1} K\left(H_{i-1}(X;\bQ), i\right).
\]
In particular, the rational cohomology of $\Gamma_{\cC^0, \mathrm{ns}}\left( J^1\cL^d \right)$ is given by the free commutative graded algebra
\[
    \Lambda\left(H^{*-1}(X;\bQ)\right)
\]
on the cohomology of $X$ shifted by one degree.
\end{theorem}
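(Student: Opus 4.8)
The plan is to recognise $\Gamma_{\cC^0,\mathrm{ns}}(J^1\cL^d)$ as a space of sections of a sphere bundle and to exploit that, rationally, an odd sphere is an Eilenberg--MacLane space. Write $n=\dim_\bC X$. The jet exact sequence $0\to \Omega^1_X\otimes\cL^d\to J^1\cL^d\to\cL^d\to 0$ shows that $J^1\cL^d$ is a vector bundle of complex rank $n+1$, so its structure group reduces to $U(n+1)$; as in Corollary~\ref{cor:nonsingularsectionslinebundle} the admissible Taylor condition $\fT$ is the zero section, so a non-singular continuous section is a nowhere-zero one, and the fibrewise deformation retraction of $J^1\cL^d\setminus\{0\}$ onto the unit sphere bundle $\mathcal{S}:=S(J^1\cL^d)\to X$ gives a homotopy equivalence $\Gamma_{\cC^0,\mathrm{ns}}(J^1\cL^d)\simeq\Gamma(X;\mathcal{S})$, the latter being the space of continuous sections of a fibre bundle with fibre $S^{2n+1}$ and connected structure group. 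Since $X$ is smooth complex projective, it is a closed oriented manifold of real dimension $2n$. Thus it remains to determine the rational homotopy type of $\Gamma(X;\mathcal{S})$.

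The geometric crux is that $\mathcal{S}$ is \emph{rationally trivial}. The fibre $S^{2n+1}$ is a nilpotent space with rationalisation $K(\bQ,2n+1)$, so the fibrewise rationalisation $\mathcal{S}_\bQ\to X$ is a fibration with fibre $K(\bQ,2n+1)$; because the structure group $U(n+1)$ is connected, $\pi_1(X)$ acts trivially on the fibre, hence $\mathcal{S}_\bQ$ is a principal-type $K(\bQ,2n+1)$-fibration and is classified by a single $k$-invariant in $H^{2n+2}(X;\bQ)$. But $\dim_\bR X=2n<2n+2$, so this group vanishes and $\mathcal{S}_\bQ\simeq X\times K(\bQ,2n+1)$ over $X$.

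To conclude, recall that over a finite CW complex and for a nilpotent fibre, fibrewise rationalisation commutes with passage to section spaces: $\Gamma(X;\mathcal{S}_\bQ)$ is already rational and the localisation-induced map $\Gamma(X;\mathcal{S})\to\Gamma(X;\mathcal{S}_\bQ)$ is a rational homotopy equivalence. Combining with the previous paragraph,
\[
    \Gamma_{\cC^0,\mathrm{ns}}(J^1\cL^d)\;\simeq\;\Gamma(X;\mathcal{S})\;\overset{\simeq_\bQ}{\lra}\;\Gamma(X;\mathcal{S}_\bQ)\;\simeq\;\map\bigl(X,K(\bQ,2n+1)\bigr).
\]
The right-hand space is a connected rational H-space --- a mapping space into a rational topological abelian group, connected because $H^{2n+1}(X;\bQ)=0$ --- and therefore splits as a product $\prod_{i\geq 1}K(\pi_i,i)$ of Eilenberg--MacLane spaces, where the standard computation of the homotopy of a mapping space into an Eilenberg--MacLane space gives $\pi_i\cong H^{2n+1-i}(X;\bQ)$. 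Poincaré duality for the closed oriented $2n$-manifold $X$ identifies $H^{2n+1-i}(X;\bQ)\cong H_{i-1}(X;\bQ)$, which is non-zero only for $1\leq i\leq 2n+1$, yielding the rational homotopy equivalence with $\prod_{i=1}^{2n+1}K\bigl(H_{i-1}(X;\bQ),i\bigr)$. Finally, the rational cohomology of this product is the free graded-commutative algebra on the graded vector space having $H_{i-1}(X;\bQ)$ in degree $i$; since $H_{i-1}(X;\bQ)\cong H^{i-1}(X;\bQ)$, that algebra is $\Lambda\bigl(H^{*-1}(X;\bQ)\bigr)$. The only step requiring genuine care is the interchange of fibrewise rationalisation with section spaces (using nilpotence of the fibre and finiteness of $X$), together with the splitting of a connected rational H-space into Eilenberg--MacLane factors; the rational triviality of $\mathcal{S}$ itself is immediate from the dimension count.
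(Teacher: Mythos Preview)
Your proof is correct and follows essentially the same route as the paper's: reduce to sections of the unit sphere bundle, fibrewise rationalise (with the interchange of rationalisation and section spaces justified by M\o ller--Raussen over a finite complex with nilpotent fibre), observe the rationalised bundle is trivial for dimension reasons, and then decompose $\map(X,K(\bQ,2n+1))$ as a product of Eilenberg--MacLane spaces via Poincar\'e duality. The only cosmetic difference is that the paper phrases the triviality step as ``oriented rational odd sphere bundles are classified by their Euler class, which vanishes for dimensional reasons'' and cites Thom/Haefliger for the splitting of the mapping space, whereas you phrase it as the vanishing of a $k$-invariant in $H^{2n+2}(X;\bQ)$ and invoke the general splitting of connected rational $H$-spaces---but these are the same observation.
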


\begin{remark}\label{remark:cohomologicalstability}
This result implies in particular that the rational (co)homology of $\Gamma_{\mathrm{hol, ns}}\left( \cL^d \right)$ stabilises as $d \to \infty$. As we will see below, the integral cohomology does not stabilise in general.
\end{remark}
\begin{remark}
The stable cohomology only depends on the topology of $X$. This is in accordance with the analogies between topology and arithmetic and motivic statistics mentioned in the introduction. In both the results of Poonen and Vakil--Wood, the limit is expressed by a zeta function which only depends on $X$.
\end{remark}

\begin{example}
For $X = \bC\bP^n$ and $\cL = \cO(1)$, we find that the stable rational cohomology is the exterior algebra
\[
    \Lambda_\bQ(t_1, t_3, \ldots, t_{2n+1})
\]
where $t_i$ is in degree $i$. This agrees with the result of Tommasi in~\cite{tommasi_stable_2014}.
\end{example}

\begin{proof}[Proof of Theorem~\ref{thm:rationalcohomology}]
Recall that the non-singular sections of $J^1\cL^d$ are precisely the never-vanishing ones. We choose a Riemannian metric once and for all and denote by $\Sph\left(J^1\cL^d\right) \to X$ the unit sphere bundle of the vector bundle $J^1\cL^d$. We may scale a never vanishing section to have norm equal to $1$ (for the chosen metric) in each fibre. We thus obtain a homotopy equivalence:
\[
    \Gamma_{\cC^0, \text{ns}}\Big(J^1\cL^d\Big) \overset{\simeq}{\lra} \Gamma_{\cC^0}\Big( \Sph\left(J^1\cL^d\right)\Big).
\]
We now rationalise the sphere bundle in the following sense. By~\cite[Theorem 3.2]{llerena_localization_1985}, there is a fibration $S^{2n+1}_\bQ \to \Sph\left(J^1\cL^d\right)_\bQ \to X$ and a morphism of fibrations:
\begin{equation*}
% https://tikzcd.yichuanshen.de/#N4Igdg9gJgpgziAXAbVABwnAlgFyxMJZABgBoBGAXVJADcBDAGwFcYkQBREAX1PU1z5CKAEwVqdJq3YcA+gB15AIwCKPPiAzY8BIuVIiJDFm0QgAGuv7ahRMsSNTTIAMoA9YCLABqctyuaAjrCyGIONMbSZu6ePn4KymrcEjBQAObwRKAAZgBOEAC2SADMNDgQSMS8OflFiAAsZRWIfhp5hZVNSK01HYilIOVI9dUg7XVkg80io+PdXYgzlNxAA
\begin{tikzcd}
S^{2n+1} \arrow[d] \arrow[rr] &   & S^{2n+1}_\bQ \arrow[d] \\
\Sph\left(J^1\cL^d\right) \arrow[rr] \arrow[rd]       &   & \Sph\left(J^1\cL^d\right)_\bQ \arrow[ld]       \\
                              & X &                       
\end{tikzcd}
\end{equation*}
such that the map induced on the fibres $S^{2n+1} \to S^{2n+1}_\bQ \simeq K(\bQ, 2n+1)$ is a rationalisation. As $X$ is homotopy equivalent to a finite CW-complex and $S^{2n+1}$ is nilpotent (it is indeed simply connected), we may use~\cite[Theorem 5.3]{moller_nilpotent_1987} that shows that the map $\Sph\left(J^1\cL^d\right) \to \Sph\left(J^1\cL^d\right)_\bQ$ induces a map 
\[
    \Gamma_{\cC^0}\Big(\Sph\left(J^1\cL^d\right)\Big) \overset{\simeq_\bQ}{\lra} \Gamma_{\cC^0}\Big(\Sph\left(J^1\cL^d\right)_\bQ\Big)
\]
which is a rationalisation. (In general, one has to restrict to some path component. However both spaces are connected in our situation.) Now, oriented rational odd sphere bundles are classified by their Euler class (see e.g.~\cite[II.15.b]{felix_rational_2001}). In our situation, the orientation is induced from the canonical one on the complex vector bundle $J^1\cL^d$ and the Euler class vanishes for dimensional reasons. It follows directly that $\Sph\left(J^1\cL^d\right)_\bQ \to X$ is a trivial bundle. Therefore
\[
    \Gamma_{\cC^0}\Big(\Sph\left(J^1\cL^d\right)_\bQ\Big) \cong \map(X, K(\bQ,2n+1))
\]
where $\map(-,-)$ denotes the space of continuous functions with its compact open topology. Finally, in~\cite{thom_homologie_1956} (see also~\cite{haefliger_rational_1982} for an accessible reference), Thom proves that this mapping space is homotopy equivalent to a product of Eilenberg--MacLane spaces
\[
    \map(X, K(\bQ,2n+1)) \simeq \prod_{i=0}^{2n +1} K\left(H^{2n+1-i}(X;\bQ), i\right) \simeq \prod_{i=0}^{2n +1} K\left(H_{i-1}(X;\bQ), i\right)
\]
where the last equivalence comes from Poincaré duality. More precisely, he proves that if
\[
    \mathrm{ev} \colon \map(X,K(\bQ,2n+1)) \times X \lra K(\bQ,2n+1)
\]
is the evaluation map, and $\chi \in H^{2n+1}(K(\bQ,2n+1); \bQ)$ is the fundamental class, we may write
\[
    \mathrm{ev}^*(\chi) = \sum_i \chi_i
\]
where $\chi_i \in H^i(\map(X,K(\bQ,2n+1)); H^{2n+1-i}(X;\bQ))$. Then the projection 
\[
    \map(X,K(\bQ,2n+1)) \to K(H^{2n+1-i}(X;\bQ),i)
\]
is determined by the cohomology class $\chi_i$.
\end{proof}

\subsubsection{Geometric description of the stable classes and Mixed Hodge Structures}

As a Zariski open subset of the affine space $\Gammahol(\cL^d)$, the subspace $\Gamma_{\mathrm{hol,ns}}(\cL^d)$ inherits a structure of complex variety and its cohomology thus has a natural mixed Hodge structure. On the other hand, we may endow the stable cohomology computed in Theorem~\ref{thm:rationalcohomology} with a mixed Hodge structure defined as follows. Recall that the cohomology $H^*(X; \bQ)$ can be equipped with a mixed Hodge structure using the structure of complex variety on $X$, and denote by $\bQ(-1)$ the Tate--Hodge structure of pure weight $2$. By first tensoring these structures and then applying the symmetric algebra functor (see e.g.~\cite[Section 3.1]{peters_mixed_2008}), we obtain a mixed Hodge structure on the stable cohomology. In this section, we show the following.
\begin{proposition}
The morphism of Theorem~\ref{thm:rationalcohomology}
\[
    \Lambda\left(H^{*-1}(X;\bQ) \otimes \bQ(-1) \right) \lra H^*(\Gamma_{\mathrm{hol,ns}}(\cL^d); \bQ)
\]
is compatible with the mixed Hodge structures.
\end{proposition}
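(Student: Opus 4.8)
The plan is to verify the assertion on the algebra generators and to realise those generators as pullbacks of an explicit class along a morphism of varieties. Since $\Lambda\!\left(H^{*-1}(X;\bQ)\otimes\bQ(-1)\right)$ is the free graded-commutative algebra on $H^{*-1}(X;\bQ)\otimes\bQ(-1)$ and the comparison morphism of Theorem~\ref{thm:rationalcohomology} is a ring homomorphism, it suffices to prove that for each $i$ in the stable range $* < \tfrac{d-1}{2}$ the restriction to the generators
\[
    \psi\colon H^{i-1}(X;\bQ)\otimes\bQ(-1)\lra H^i\!\left(\Gamma_{\mathrm{hol,ns}}(\cL^d);\bQ\right)
\]
is a morphism of mixed Hodge structures, where the target carries its geometric mixed Hodge structure as the cohomology of the Zariski-open subvariety $U:=\Gamma_{\mathrm{hol,ns}}(\cL^d)$ of the affine space $\Gammahol(\cL^d)$.

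First I would exhibit the evaluation of the universal $1$-jet as an algebraic map. Writing $0_X\subset J^1\cL^d$ for the zero section, the assignment $(\sigma,x)\mapsto j^1(\sigma)(x)$ defines a morphism of smooth quasi-projective varieties
\[
    \Phi\colon U\times X\lra J^1\cL^d\setminus 0_X,
\]
which lands in the complement of $0_X$ precisely because the sections in $U$ are non-singular for the Taylor condition $\fT=0_X$ of Corollary~\ref{cor:nonsingularsectionslinebundle}; as a map of section spaces it is the restriction along $j^1$ of the evaluation map $\Gamma_{\cC^0,\mathrm{ns}}(J^1\cL^d)\times X\to J^1\cL^d\setminus 0_X$, and $J^1\cL^d\setminus 0_X$ is homotopy equivalent to the sphere bundle $\Sph(J^1\cL^d)$. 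Next, using that $J^1\cL^d$ has complex rank $n+1$ and that its Euler class vanishes in $H^{2n+2}(X;\bQ)=0$, the localisation sequence of the closed embedding $0_X\hookrightarrow J^1\cL^d$ is an exact sequence of mixed Hodge structures yielding
\[
    0\lra H^{2n+1}(X;\bQ)\lra H^{2n+1}\!\left(J^1\cL^d\setminus 0_X;\bQ\right)\overset{\mathrm{Res}}{\lra}\bQ(-(n+1))\lra 0,
\]
so that $\mathrm{Gr}^W_{2n+2}H^{2n+1}\!\left(J^1\cL^d\setminus 0_X;\bQ\right)=\bQ(-(n+1))$. I would fix a class $u\in H^{2n+1}\!\left(J^1\cL^d\setminus 0_X;\bQ\right)$ with $\mathrm{Res}(u)$ a generator; then $\mathrm{Gr}^W_{2n+2}(u)$ is pure of type $(n+1,n+1)$, and under the chain of equivalences in the proof of Theorem~\ref{thm:rationalcohomology} — built entirely from maps of section spaces, hence commuting with evaluation maps — the class $u$ corresponds to the fundamental class $\chi$ of that proof, up to classes pulled back from $X$. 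Chasing through the proof and using that $j^1$ is a homology isomorphism in the stable range (Theorem~\ref{thm:mainthm}), I would identify, in the stable range, the Künneth components of
\[
    \Phi^*u\ \in\ H^{2n+1}(U\times X;\bQ)=\bigoplus_i H^i(U;\bQ)\otimes H^{2n+1-i}(X;\bQ)
\]
with the classes $\chi_i$ of Theorem~\ref{thm:rationalcohomology}, so that contracting the $X$-factor of the $i$-th component recovers $\psi$ after the Poincaré-duality identification $H^{2n+1-i}(X;\bQ)^\vee\cong H^{i-1}(X;\bQ)(n)$ used there.

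Granting this description, the Hodge-theoretic conclusion is a weight count. Because $\Phi^*$ is strict for the weight filtration and $u$ has weights $\le 2n+2$, the class $\mathrm{Gr}^W_{2n+2}(\Phi^*u)$ is the image of the generator $\mathrm{Gr}^W_{2n+2}(u)$ of $\bQ(-(n+1))$, hence pure of type $(n+1,n+1)$; since $X$ is smooth projective, $H^{2n+1-i}(X;\bQ)$ is pure of weight $2n+1-i$, so this graded class lies in $\mathrm{Gr}^W_{i+1}H^i(U;\bQ)\otimes H^{2n+1-i}(X;\bQ)$, and contracting against $H^{2n+1-i}(X;\bQ)^\vee$ while combining the twist $\bQ(-(n+1))$ carried by the fibre class with the Poincaré-duality twist $\bQ(n)$ produces a morphism of pure Hodge structures $H^{i-1}(X;\bQ)(-1)\to\mathrm{Gr}^W_{i+1}H^i(U;\bQ)$. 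An induction on the degree $i$ completes the argument: in the stable range the decomposable part of $H^i(U;\bQ)$ is the image, under cup products which are morphisms of mixed Hodge structures, of $\bigoplus_{0<j<i}H^j(U;\bQ)\otimes H^{i-j}(U;\bQ)$, hence by the inductive hypothesis a sum of pure Hodge structures of weights $\ge i+2$; therefore $W_{i+1}H^i(U;\bQ)$ maps isomorphically onto the indecomposable quotient of $H^i(U;\bQ)$, which the previous display then forces to be pure of weight $i+1$ and isomorphic to $H^{i-1}(X;\bQ)(-1)$. This is precisely the statement that $\psi$ is a morphism of mixed Hodge structures.

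The hard part will be the comparison in the second step: showing that the topologically defined identification of the generators in Theorem~\ref{thm:rationalcohomology} — which passes through scaling to the sphere bundle, its fibrewise rationalisation, and Thom's description of a mapping space — is compatible with the evaluation maps, so that the algebraic class $\Phi^*u$ genuinely computes the classes $\chi_i$ in the stable range. Once this bookkeeping is carried out, the remaining Hodge theory is routine; the only point worth emphasising is that the Tate twist in the statement is $\bQ(-1)=\bQ(n)\otimes\bQ(-(n+1))$, the product of the Poincaré-duality twist for $X$ and the twist carried by the fibre class of the complement of the zero section.
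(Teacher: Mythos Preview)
Your core approach is the same as the paper's: reduce to generators, use the jet evaluation
\[
    \Phi = j \colon U \times X \lra J^1\cL^d \setminus 0_X,
\]
pull back the fibre class, and then contract along $X$. The paper packages this last step as the Gysin map $\pi_!$ for the projection $\pi\colon U\times X\to U$, so that $\pi_!\circ j^*$ is \emph{a priori} a morphism of mixed Hodge structures (pullback and proper pushforward both are), and the only remaining point is the topological identification with the map of Theorem~\ref{thm:rationalcohomology} --- which both you and the paper leave as an exercise.

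Where you diverge is in the Hodge-theoretic bookkeeping, and this is where your argument becomes unnecessarily complicated and acquires a gap. First a minor simplification you missed: since $\dim_\bR X = 2n$, the left term $H^{2n+1}(X;\bQ)$ of your localisation sequence vanishes, so $H^{2n+1}(J^1\cL^d\setminus 0_X;\bQ)\cong\bQ(-(n+1))$ is already pure and there is no ambiguity in $u$. More importantly, there is no need to pass to $\mathrm{Gr}^W$ and run an induction. The K\"unneth projection $H^{2n+1}(U\times X)\to H^i(U)\otimes H^{2n+1-i}(X)$ is itself a morphism of mixed Hodge structures (because $X$ is smooth projective, the K\"unneth decomposition splits in MHS), and contraction against $H^{2n+1-i}(X)^\vee$ together with Poincar\'e duality are morphisms of MHS as well. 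So your $\psi$ is directly a composite of MHS morphisms, and the weight--count detour is superfluous. As written, that detour also has a gap: you show $\psi$ lands in $W_{i+1}H^i(U)$ and that $\mathrm{Gr}^W_{i+1}\psi$ is a morphism of pure Hodge structures, but to conclude that $\psi$ itself respects the Hodge filtration you would need $W_iH^i(U)=0$, and your induction does not establish this without circularity. The fix is simply to drop the $\mathrm{Gr}^W$ step and observe, as the paper does, that everything in sight is already a morphism of MHS.
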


\begin{proof}
By the universal property of the (graded) symmetric algebra, it is enough to see that the morphism
\[
    H^{*-1}(X;\bQ) \otimes \bQ(-1) \lra H^*(\Gamma_{\mathrm{hol,ns}}(\cL^d); \bQ)
\]
respects the mixed Hodge structures. We will do this by giving a more geometric description of this map. Let 
\[
    \pi \colon \Gamma_{\mathrm{hol,ns}}(\cL^d) \times X \lra \Gamma_{\mathrm{hol,ns}}(\cL^d)
\]
be the trivial fibre bundle, and let
\[
    j \colon \Gamma_{\mathrm{hol,ns}}(\cL^d) \times X \lra J^1 \cL^d - \{0\}
\]
be the jet evaluation. By integrating along the fibres of $\pi$, we obtain in cohomology a morphism of mixed Hodge structures:
\[
    \pi_! \circ j^* \colon H^*(J^1 \cL^d - \{0\}) \otimes \bQ(n) \lra H^{*-2n}(\Gamma_{\mathrm{hol,ns}}(\cL^d)).
\]
The extra Tate twist $\bQ(n)$ comes from the definition of the Gysin map $\pi_!$ via Poincaré duality. (See~\cite[Corollary 6.25]{peters_mixed_2008}.) As the Euler class of the jet bundle vanishes for dimensional reasons, we compute that
\[
    H^*(J^1 \cL^d - \{0\}; \bQ) \cong H^*(X; \bQ) \otimes H^*(\bC^{n+1} - \{0\}; \bQ).
\]
Now $H^{2n+1}(\bC^{n+1} - \{0\}; \bQ) \cong \bQ(-n-1)$, so we have obtained a morphism of mixed Hodge structures:
\[
    \pi_! \circ j^* \colon H^*(X) \otimes \bQ(-1) \lra H^{*+1}(\Gamma_{\mathrm{hol,ns}}(\cL^d)).
\]
We claim that this coincides with the morphism given in Theorem~\ref{thm:rationalcohomology}. The proof is an exercise in algebraic topology and uses the description of the mapping space given at the end of the proof of Theorem~\ref{thm:rationalcohomology}. 
\end{proof}

\subsection{Integral homology and stability}

In this section, we focus on the special case where $X = \bC\bP^1$ and $\cL = \cO(1)$. That is, we study the space 
\[
    U_d := \Gamma_{\mathrm{hol, ns}}\left(\bC\bP^1, \cO(d) \right)
\]
of non-singular homogeneous polynomials in two variables of degree $d$. From Corollary~\ref{cor:nonsingularsectionslinebundle}, we know that the jet map
\[
    j^1 \colon U_d \lra \Gamma_{\cC^0, \mathrm{ns}}\left( J^1\cO(d) \right)
\]
induces an isomorphism in integral homology in the range of degree $* < \frac{d-1}{2}$. We prove that the section space on the right-hand side does not depend on $d \geq 1$, hence that the integral homology of $U_d$ stabilises.

\begin{theorem}
For $d \geq 1$, we have a homotopy equivalence
\[
    \Gamma_{\cC^0, \mathrm{ns}}\left( J^1\cO_{\bC\bP^1}(d) \right) \simeq \map(S^2,S^3).
\]
In particular
\[
    H_*(U_d; \bZ) \cong H_*(\map(S^2,S^3); \bZ)
\]
in the range of degrees $* < \frac{d-1}{2}$.
\end{theorem}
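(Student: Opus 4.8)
The plan is to reduce the section space to an honest mapping space by trivialising the relevant vector bundle, and then to feed the result of Corollary~\ref{cor:nonsingularsectionslinebundle} through that identification.

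First I would pin down the topological isomorphism type of the rank-$2$ complex vector bundle $J^1\cO_{\bC\bP^1}(d)$. The jet exact sequence $0 \to \Omega^1_X \otimes \cO(d) \to J^1\cO(d) \to \cO(d) \to 0$ on $X = \bC\bP^1$, together with $\Omega^1_{\bC\bP^1} \cong \cO(-2)$, gives by the Whitney formula that $c_1(J^1\cO(d)) = d + (d-2) = 2d-2$. Since every complex vector bundle over $S^2 = \bC\bP^1$ splits as a sum of line bundles and is classified by its first Chern class, $J^1\cO(d)$ is topologically isomorphic to $\cO(2d-2) \oplus \underline{\bC}$. Passing to the underlying real rank-$4$ bundle, I would use that real vector bundles of rank $\geq 3$ over $S^2$ are classified by $\pi_1(SO(n)) \cong \bZ/2$, and that the underlying real bundle of $\cO(k)$ corresponds to the class $k \bmod 2$; as $2d-2$ is even, the real vector bundle underlying $J^1\cO(d)$ is trivial, i.e. $J^1\cO(d) \cong_\bR X \times \bR^4$, and this holds for every $d \geq 1$.

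Second, exactly as in the proof of Theorem~\ref{thm:rationalcohomology}, the non-singular sections of $J^1\cO(d)$ are the never-vanishing ones, and scaling each to unit norm (for a chosen metric) yields a homotopy equivalence $\Gamma_{\cC^0,\mathrm{ns}}(J^1\cO(d)) \simeq \Gamma_{\cC^0}(\Sph(J^1\cO(d)))$. Under the trivialisation from the previous step, $\Sph(J^1\cO(d)) \cong S^2 \times S^3$, so its section space is $\map(S^2, S^3)$, which establishes the claimed homotopy equivalence. Finally, Corollary~\ref{cor:nonsingularsectionslinebundle}, applied with $\fT$ the zero section of $J^1\cO(d)$ and using that $\cO(d)$ is $d$-very ample on $\bC\bP^1$, says that $j^1 \colon U_d \to \Gamma_{\cC^0,\mathrm{ns}}(J^1\cO(d))$ is an integral homology isomorphism in degrees $* < \tfrac{d-1}{2}$; composing with the homotopy equivalence above gives $H_*(U_d;\bZ) \cong H_*(\map(S^2,S^3);\bZ)$ in that range.

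I do not expect a genuine obstacle here: everything of substance has been done in Corollary~\ref{cor:nonsingularsectionslinebundle} and in the sphere-bundle argument of Theorem~\ref{thm:rationalcohomology}. The only point requiring mild care is the bundle bookkeeping over $S^2$ — confirming that it is precisely the parity of $c_1(J^1\cO(d)) = 2d-2$ that controls the underlying real bundle, and hence that the trivialisation (and thus the whole section space, up to homotopy) is independent of $d$.
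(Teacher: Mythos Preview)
Your proposal is correct and follows essentially the same route as the paper: both arguments show that the underlying real (equivalently, the $S^3$-) bundle of $J^1\cO_{\bC\bP^1}(d)$ is trivial by computing that $c_1(J^1\cO(d))$ is even, hence $w_2 = 0$, and then invoke Corollary~\ref{cor:nonsingularsectionslinebundle}. The only cosmetic difference is that the paper reads off $c_1$ from the holomorphic splitting $J^1\cO(d) \cong \cO(d-1)^{\oplus 2}$ of \cite{di_rocco_line_2000}, whereas you extract it from the jet exact sequence; either way one lands on $c_1 = 2d-2$ and the same conclusion.
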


\begin{remark}
One may wonder how this reformulation helps in understanding the integral homology. Indeed, the homology of $\map(S^2,S^3)$ is still quite complicated. We therefore would like to point out that the homotopy theory of mapping spaces has a rich history of results. (See e.g.~\cite{smith_homotopy_2011} for a survey.)
\end{remark}

\begin{remark}
In the next section, we will show that one cannot expect integral homological stability in general. The case $X = \bC\bP^1$ should be seen as a very particular phenomenon.
\end{remark}

\begin{proof}
Recall from the proof of Theorem~\ref{thm:rationalcohomology} that we have to study continuous sections of the sphere bundle of the jet bundle:
\[
    S^3 \lra \Sph(J^1\cO_{\bC\bP^1}(d)) \lra \bC\bP^1.
\]
One sees that this bundle is classified by the second Stiefel--Whitney class of the jet bundle, i.e. the reduction modulo $2$ of its first Chern class. Using that $d \geq 1$ and \cite[Proposition 2.2]{di_rocco_line_2000}, we obtain an isomorphism of vector bundles:
\[
    J^1\cO_{\bC\bP^1}(d) \cong \cO_{\bC\bP^1}(d-1)^{\oplus 2}.
\]
We compute the first Chern class to be
\[
    c_1(J^1\cO_{\bC\bP^1}(d)) = c_1(\cO_{\bC\bP^1}(d-1)^{\oplus 2}) = 2 c_1(\cO_{\bC\bP^1}(d-1))
\]
so its reduction modulo $2$ vanishes regardless of $d$. As the sphere bundle was classified by this class, this shows that it is trivial. Therefore:
\[
    \Gamma_{\cC^0, \mathrm{ns}}\left( J^1\cO_{\bC\bP^1}(d) \right) \simeq \Gamma_{\cC^0}\left( \Sph(J^1\cO_{\bC\bP^1}(d)) \right) \simeq \map(S^2,S^3). \qedhere
\]
\end{proof}

\subsection{Integral homology and non-stability}

As we indicated in Remark~\ref{remark:cohomologicalstability}, the rational cohomology groups of the spaces $\Gamma_{\mathrm{hol,ns}}(\cL^d)$ stabilise. That is, for a fixed $i \geq 0$, the $i$-th rational cohomology group is independent of $d$ as long as $i \leq \frac{d-1}{2}$. In this section, to contrast with the very special case of the previous one, we show that one cannot expect integral stability in general. 

Let us fix some notation for the remainder of this section: $d \geq 1$ is an integer, $\cL$ is a very ample line bundle on a smooth projective complex variety $X$ and $n = \dim_\bC X$ is the complex dimension of $X$. As we will only be considering spaces of continuous sections, we will use $\Gamma$ as a shorthand for $\Gamma_{\cC^0}$.

The main result of this section is Theorem~\ref{thm:comparisoninfiniteloopspace} below. To show its computational potential, we will show the following:
\begin{proposition}\label{prop:zmodtwocomputation}
Let $d \geq 6$ be an integer. We have:
\[
    H_2(\Gamma_{\mathrm{hol,ns}}(\bC\bP^2, \cO(d)); \bZ/2) \cong \begin{cases} \bZ/2 & d \equiv 0 \mod 2 \\ 0 & d \equiv 1 \mod 2. \end{cases}
\]
Furthermore, the same result holds for the quotient $\Gamma_{\mathrm{hol,ns}}(\bC\bP^2, \cO(d)) / \bC^*$.
\end{proposition}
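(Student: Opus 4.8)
The plan is to reduce everything to a computation in the cohomology of a mapping space via the main theorem, and then carry out that computation using Steenrod squares. First I would invoke Corollary~\ref{cor:nonsingularsectionslinebundle}: since $\cO(d)$ is $d$-very ample on $\bC\bP^2$, the jet map induces an isomorphism $H_2(\Gamma_{\mathrm{hol,ns}}(\bC\bP^2,\cO(d));\bZ/2) \cong H_2(\Gamma_{\cC^0,\mathrm{ns}}(J^1\cO(d));\bZ/2)$ as soon as $2 < \frac{d-1}{2}$, i.e. for $d \geq 6$, which is exactly the hypothesis. So it suffices to compute $H_2$ of the space of never-vanishing continuous sections of $J^1\cO(d)$ with $\bZ/2$-coefficients. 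As in the proof of Theorem~\ref{thm:rationalcohomology}, rescaling identifies this (up to homotopy) with $\Gamma_{\cC^0}(\Sph(J^1\cO(d)))$, the section space of the sphere bundle $S^3 \to \Sph(J^1\cO(d)) \to \bC\bP^2$ of the rank-$2$ complex bundle $J^1\cO(d)$.

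Next I would set up the Federer-type spectral sequence (or, equivalently since $\bC\bP^2$ is a $4$-complex with one cell in each even dimension $0,2,4$, a direct obstruction-theoretic analysis) for the section space of this $S^3$-bundle, computing it through degree $2$. The relevant input is the cohomology of $\bC\bP^2$ with coefficients in the homotopy groups $\pi_*(S^3)$, namely $\pi_3(S^3) = \bZ$, $\pi_4(S^3) = \bZ/2$, $\pi_5(S^3) = \bZ/2$; twisting is trivial here since $\bC\bP^2$ is simply connected. The low-degree homotopy/homology of the section space is then governed by $H^{k}(\bC\bP^2;\pi_{k+j}(S^3))$ for small $j$, and the only potentially nontrivial differential affecting degree $2$ is the one detecting the obstruction to lifting a section over the $2$-cell to the $4$-cell; this is where the bundle's characteristic class enters. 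Concretely, the relevant obstruction class is the image of the fundamental class of $\bC\bP^2$ under a map built from $c_1(J^1\cO(d)) \in H^2(\bC\bP^2;\bZ)$ together with the Steenrod square $\Sq^2$, reflecting the fact that the $k$-invariant of $S^3$ in this range is $\Sq^2$ acting on the fundamental class in $H^3(-;\bZ/2)$.

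The heart of the argument — and the step I expect to be the main obstacle — is pinning down exactly this differential and showing its value toggles with the parity of $d$. Using the isomorphism $J^1\cO(d) \cong \cO(d-1)^{\oplus(n+1)}$ from \cite{di_rocco_line_2000} one gets $c_1(J^1\cO(d)) = 3(d-1)h$ on $\bC\bP^2$ (with $h$ the hyperplane class), whose mod $2$ reduction is $(d-1)h \bmod 2$ — this is the parity-sensitive ingredient. I would argue that the relevant term on the $E_\infty$-page contributing $H_2$ is a $\bZ/2$ precisely when this mod $2$ class vanishes, i.e. when $d$ is odd the obstruction kills the class and when $d$ is even it survives; the manipulation of $\Sq^2$ and the Wu formula on $\bC\bP^2$ (where $\Sq^2 h = h^2 \neq 0$) is what makes the bookkeeping delicate, and one must be careful that no other differential or extension interferes in this low degree. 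Finally, for the quotient statement, I would note that $\bC^*$ acts freely on $\Gamma_{\mathrm{hol,ns}}(\bC\bP^2,\cO(d))$ with the quotient map a principal $\bC^* \simeq S^1$-bundle; since $H_1$ of the total space vanishes in our range (the section space is simply connected rationally and one checks integrally too in low degrees), the Gysin sequence for this $S^1$-bundle gives $H_2$ of the quotient isomorphic to $H_2$ of the total space in the relevant range, yielding the same answer.
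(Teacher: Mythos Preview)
Your proposal contains a critical error at the outset: on $\bC\bP^2$ the first jet bundle $J^1\cO(d)$ has complex rank $n+1 = 3$, not $2$ (it sits in $0 \to \Omega^1_{\bC\bP^2} \otimes \cO(d) \to J^1\cO(d) \to \cO(d) \to 0$, and indeed $J^1\cO(d) \cong \cO(d-1)^{\oplus 3}$). Hence the sphere bundle has fibre $S^5$, not $S^3$, and every subsequent ingredient you list --- $\pi_3,\pi_4,\pi_5$ of $S^3$, the specific Federer differential, the $k$-invariant in $H^5(K(\bZ,3);\bZ/2)$ --- is for the wrong fibre. With $S^5$ fibres the Federer picture is different: the sole $E_2$ contribution to $\pi_2$ is $H^4(\bC\bP^2;\pi_6(S^5)) = \bZ/2$, hit by a $d_2$ from $H^2(\bC\bP^2;\pi_5(S^5)) = \bZ$, and this differential involves both the bundle twisting \emph{and} the attaching map $\eta$ of the $4$-cell in $\bC\bP^2$ (for the trivial bundle it is already surjective). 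Your proposed invariant $c_1(J^1\cO(d)) \bmod 2 = (d-1)h$ does not by itself govern this; the paper's computation instead hinges on $w(J^1\cO(d) - TX)$, which has the \emph{opposite} parity behaviour, and your sentence ``a $\bZ/2$ precisely when this mod $2$ class vanishes, i.e.\ when $d$ is odd the obstruction kills the class and when $d$ is even it survives'' is internally contradictory.

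Separately, your claim that ``$H_1$ of the total space vanishes'' is false: the section space has $\pi_1 \cong \bZ$ (visible already from the rational answer $\Lambda(H^{*-1}(\bC\bP^2;\bQ))$, which has a degree-$1$ generator), so the Gysin argument for the $\bC^*$-quotient needs more care.

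For comparison, the paper does not analyse the unstable section space directly. It first applies Theorem~\ref{thm:comparisoninfiniteloopspace} to replace $\Gamma(\Sph(J^1\cO(d)))$ by the infinite loop space $\Omega^{\infty+1}(\bC\bP^2)^{J^1\cO(d)-T\bC\bP^2}$ through degree $2n = 4$, splits off the $S^1$ coming from $\pi_1 = \bZ$, and then reduces $H_2(-;\bZ/2)$ via Hurewicz to computing $\pi_3$ of the Thom spectrum $2$-adically. This is done with the Adams spectral sequence, whose input is the $\cA$-module structure on $H^*$ of the Thom spectrum, determined by $w(J^1\cO(d)-T\bC\bP^2)$; one finds $w = 1$ for $d$ even and $w = 1+x$ for $d$ odd, and reading off the $E_2$-page gives $\pi_3^{\wedge}_2 = \bZ/2$ or $0$ accordingly. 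An unstable Federer/obstruction approach can be made to work once the fibre is corrected to $S^5$, but it requires tracking the interaction between the bundle's characteristic classes and the cell attachments of $\bC\bP^2$; the stable approach packages this into the single Thom-class formula $\Sq^k(yU) = \sum \Sq^i(y)\, w_j\, U$.
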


\subsubsection{A comparison map}

As stated in Corollary~\ref{cor:nonsingularsectionslinebundle}, we are reduced to studying the homotopy type of the space of continuous sections of the sphere bundle $\Sph(J^1\cL^d)$. Even though this is a purely homotopy theoretic problem, its resolution is quite hard. We will therefore ``linearise it'' in the homotopical sense using spectra. This is made precise in the following result:
\begin{theorem}\label{thm:comparisoninfiniteloopspace}
Let $TX$ be the tangent bundle of $X$, and let $X^{J^1\cL^d - TX}$ denote the Thom spectrum of the virtual bundle $J^1\cL^d - TX$ of rank $2$. There is a $2n$-connected map:
\[
    \Gamma(\mathrm{Sph}(J^1\cL^d)) \lra \Omega^{\infty+1} X^{J^1\cL^d - TX}.
\]
\end{theorem}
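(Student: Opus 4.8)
The plan is to realise the section space of a sphere bundle as a mapping space into a Thom space, then apply a scanning/Pontryagin--Thom argument to pass from the mapping space of a Thom space to an infinite loop space of a Thom spectrum, keeping track of connectivity.

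First I would fix a Riemannian metric on $X$ and recall that $\Gamma(\mathrm{Sph}(J^1\cL^d))$ is the space of sections of the bundle of unit spheres in the rank-$(n+1)$ complex vector bundle $J^1\cL^d$; thus the fibre is $S^{2n+1}$. The fibrewise one-point compactification of $J^1\cL^d$ is the Thom space bundle whose total space is the fibrewise Thom space $X^{J^1\cL^d}$, and a section of $\mathrm{Sph}(J^1\cL^d)$ determines (by inserting the section at radius $1$ and collapsing) a section of this fibrewise Thom space which avoids the zero section and the section at infinity; conversely, up to homotopy, sections of $\mathrm{Sph}(J^1\cL^d)$ are equivalent to fibrewise pointed sections of the sphere bundle $S(J^1\cL^d \oplus \underline{\bR}) \to X$ hitting neither pole. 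This is a standard ``unreduced suspension'' manoeuvre. The upshot I want is an equivalence, or at least a highly connected map, between $\Gamma(\mathrm{Sph}(J^1\cL^d))$ and the space of sections of the fibrewise smash $S^{J^1\cL^d}\to X$ with its two canonical sections removed — in effect a space of ``compactly supported'' sections.

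Next I would invoke Poincaré--Lefschetz duality for the manifold $X$: for a fibrewise Thom space over $X$ with fibre a sphere of the virtual bundle $V$, the space of compactly supported sections scans (à la Segal, McDuff, or the parametrised Pontryagin--Thom construction) to $\Omega^\infty$ of the Thom spectrum $X^{V - TX}$, and this scanning map is $c$-connected where $c$ grows with the fibre connectivity. Concretely, with $V = J^1\cL^d$ of complex rank $n+1$, the fibre $S^{2n+1}$ is $2n$-connected, and the parametrised Pontryagin--Thom map
\[
    \Gamma_c\big(X; S^{J^1\cL^d}\big) \lra \Omega^\infty \big(X^{J^1\cL^d - TX}\big)
\]
is $2n$-connected (since $X$ is a closed $2n$-manifold and the fibre is $2n$-connected, collapsing everything above dimension $2n$ in the fibre direction loses nothing below degree $2n$, and the one extra suspension coordinate shifts to $\Omega^{\infty+1}$ when one writes the sphere bundle rather than the Thom space). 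Composing with the equivalence from the previous paragraph gives the desired $2n$-connected map to $\Omega^{\infty+1}X^{J^1\cL^d - TX}$; the virtual bundle $J^1\cL^d - TX$ has rank $2(n+1) - 2n = 2$, matching the stated rank.

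The main obstacle will be pinning down the connectivity estimate precisely and cleanly — i.e. making the scanning argument rigorous for a general closed manifold $X$ rather than a product or an open manifold, and being careful that the relevant map is $2n$-connected and not merely $(2n-1)$-connected. I would handle this by a handle-decomposition (or triangulation) induction on $X$: over a point the statement is the classical fact that $\Gamma(S^{V}\to \ast)$ for a rank-$(n+1)$ space is $\Omega^{2n+2}S^{2n+1} \to \Omega^\infty(S^{-TX}\wedge \cdots)$, i.e. $\Omega^{2n+1}S^{2n+1}$, which is an equality; and the inductive step glues along the $2n$-connectivity of the fibre via a Mayer--Vietoris/homotopy-pushout comparison, exactly as in the standard proofs of the group-completion and scanning theorems. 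An alternative, perhaps cleaner, route is to cite a parametrised Pontryagin--Thom statement directly (the section space of a fibrewise Thom spectrum is the fibrewise $\Omega^\infty$, and truncating the fibre sphere changes nothing in a range controlled by $\dim X$ and the fibre connectivity). Either way, the bookkeeping of which suspension coordinate produces the ``$+1$'' and the precise connectivity is the delicate point; everything else is formal.
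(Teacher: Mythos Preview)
Your overall strategy---relating the section space of the sphere bundle to an infinite loop space of a Thom spectrum via some form of parametrised duality---is sound, but the route you propose differs from the paper's, and several steps are imprecise as written.

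The paper argues as follows. Choose a section of $\Sph(J^1\cL^d)$ to basepoint each fibre, then apply the fibrewise free infinite loop space functor $Q = \Omega^\infty\Sigma^\infty$ to obtain a map of fibrations over $X$ whose fibre map is $S^{2n+1} \to QS^{2n+1}$. By Freudenthal this fibre map is $(4n+1)$-connected, so on section spaces over the $2n$-dimensional base $X$ the induced map is $2n$-connected. The target section space is then identified, by a purely formal adjunction (Lemma~\ref{lemma:infiniteloopspacesectionspace}), with $\Omega^\infty r_*(\Sigma^\infty_{/X}\Sph(J^1\cL^d))$. Costenoble--Waner duality (Lemma~\ref{lemma:CostenobleWanerduality}) converts $r_*$ into $r_!(-\otimes_{\bS_X} \bS_X^{-TX})$, and after identifying $\Sigma^\infty_{/X}\Sph(J^1\cL^d) \simeq \Omega_X \bS_X^{J^1\cL^d}$ via the cofibre sequence $\Sph(V) \to \mathrm{D}(V) \to V^+$ (Lemma~\ref{lemma:spherevscompactification}) one recognises the answer as $\Omega X^{J^1\cL^d - TX}$. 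Every step after the first is an equivalence; the entire connectivity loss is isolated in the single Freudenthal step.

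Your proposal instead aims for a scanning/Pontryagin--Thom argument with a handle-decomposition induction. This can be made to work, and morally it encodes the same duality: Costenoble--Waner duality is the parametrised form of Atiyah duality, which is what a handle-by-handle Pontryagin--Thom construction would reassemble. But as stated, some steps are vague. Your first paragraph's manoeuvre with unreduced suspensions and ``sections avoiding both poles'' is not quite the identification you need (it conflates the $S^{2n+1}$-bundle with the $S^{2n+2}$-fibred Thom space without saying what is gained), and the scanning statement you invoke in the second paragraph is not an off-the-shelf theorem for section spaces of sphere bundles over closed manifolds---you would effectively be re-deriving Costenoble--Waner duality by hand. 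The paper's approach buys a clean separation of concerns: one transparent connectivity estimate from Freudenthal, followed by formal stable manipulations with duality used as a black box; your approach would spread the connectivity bookkeeping across the induction, which is exactly the ``delicate point'' you flag.
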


Our proof uses very lightly the theory of parametrised pointed spaces/spectra and is written using $\infty$-categories. We feel that the latter choice helps in conveying the main ideas more clearly. The unfamiliar reader is encouraged to think of bundles of pointed spaces/spectra, whilst resting assured that there exists a theory which renders all statements made here literally true. An encyclopedic reference is \cite{may_parametrized_2006}. As we shall only use basic adjunctions and Costenoble--Waner duality, we suggest to simply look at~\cite[Appendix A]{land_reducibility_2021} for a very readable introduction.

We denote respectively by $\catS_*$ and $\catSp$ the $\infty$-categories of pointed spaces and spectra. Likewise, we let $\catS_{*_{/X}} = \catFun(X, \catS_*)$ and $\catSp_{/X} = \catFun(X, \catSp)$ be the $\infty$-categories of parametrised pointed spaces/spectra over $X$. (In the definitions, $X$ is seen as an $\infty$-groupoid.) We let $r \colon X \to *$ be the unique map to the point. We will use the following three standard functors:
\begin{align*}
    &\text{the restriction functor: }& r^* \colon \catS_* \lra \catS_{*_{/X}}, \\
    &\text{its right adjoint: }& r_* \colon \catS_{*_{/X}} \lra \catS_*, \\
    &\text{its left adjoint: }& r_! \colon \catS_{*_{/X}} \lra \catS_*.
\end{align*}
The right and left adjoint are given respectively by right and left Kan extensions. In other words, $r_*$ takes the limit of a functor $F \in \catS_{*_{/X}} = \catFun(X, \catS_*)$, whilst $r_!$ takes its colimit. We will also use the analogous functors in the case of parametrised spectra with the same notation. It will be clear from the context which one we are using. The crucial fact for us is that for any bundle $Y \to X$ equipped with a section $s$ (this gives the data of a \emph{pointed} space over $X$), $r_*(Y)$ is the path component of $s$ in the section space.

As a last piece of notation, we will use $\Sigma^\infty_{/X} \dashv \Omega^\infty_{/X}$ to denote the infinite suspension/loop space adjunction between parametrised pointed spaces and spectra, and use $\Sigma^\infty \dashv \Omega^\infty$ to denote the usual adjunction in the unparametrised setting.

\bigskip

On our way to the proof of Theorem~\ref{thm:comparisoninfiniteloopspace}, we first make some formal observations. Loosely speaking, we would like to say that the section space of a fibrewise infinite loop space is the infinite loop space of the ``section spectrum''. This is made precise in the lemma below.
\begin{lemma}\label{lemma:infiniteloopspacesectionspace}
Let $Y \in \catS_{*_{/X}}$ be a parametrised space over $X$. We have a natural equivalence of pointed spaces:
\[
    \Omega^\infty r_* (\Sigma^\infty_{/X} Y) \simeq r_* (\Omega^\infty_{/X} \Sigma^\infty_{/X} Y).
\]
\end{lemma}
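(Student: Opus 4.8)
The statement is a purely formal consequence of the adjunctions in play, so the plan is to chase the defining universal properties rather than compute anything. The key input is that the functor $r_* \colon \catSp_{/X} \to \catSp$ on parametrised spectra is a right adjoint (to $r^*$), and, being a stable right adjoint between stable $\infty$-categories, it commutes with $\Omega^\infty$ in the appropriate sense: more precisely, $r_*$ on spectra is the limit functor, $\Omega^\infty \colon \catSp \to \catS_*$ is a limit-preserving functor (it is a right adjoint), and likewise $\Omega^\infty_{/X}$ is computed pointwise on $X$ and hence commutes with the limit over $X$ that defines $r_*$. So the heart of the argument is the commutation of two limit-type functors.

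First I would record the basic compatibility: since $\Omega^\infty_{/X} \colon \catSp_{/X} \to \catS_{*_{/X}}$ is defined pointwise (it is $\Omega^\infty$ applied in each fibre over the $\infty$-groupoid $X$), and $r_* \colon \catS_{*_{/X}} \to \catS_*$ is the limit over $X$, there is a natural comparison map, and it is an equivalence because $\Omega^\infty$ preserves limits. Concretely, for $Z \in \catSp_{/X}$ one has
\[
    r_*(\Omega^\infty_{/X} Z) \simeq \lim_{X} \Omega^\infty_{/X} Z \simeq \Omega^\infty \lim_X Z \simeq \Omega^\infty r_* Z,
\]
where the middle equivalence uses that $\Omega^\infty \colon \catSp \to \catS_*$ is a right adjoint (to $\Sigma^\infty$), hence commutes with the limit $\lim_X$, and the outer equivalences are the definitions of $r_*$ on parametrised spaces and spectra respectively. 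Applying this with $Z = \Sigma^\infty_{/X} Y$ gives exactly the claimed equivalence
\[
    \Omega^\infty r_*(\Sigma^\infty_{/X} Y) \simeq r_*(\Omega^\infty_{/X}\Sigma^\infty_{/X} Y),
\]
and naturality in $Y$ is inherited from the naturality of all the functors and equivalences used.

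The step that needs the most care — and which I expect to be the main (mild) obstacle — is justifying that $r_*$ on spectra "is" the limit functor and that it interacts with $\Omega^\infty$ pointwise as asserted, since $r_* = r_\ast$ is a priori just defined as the right adjoint of $r^*$ (the right Kan extension along $r \colon X \to *$), and one must identify right Kan extension to the point with the limit over $X$ viewed as an $\infty$-groupoid. This is standard ($\infty$-categorical Kan extension along a map to the terminal object is the (co)limit; see \cite{may_parametrized_2006} or the discussion in \cite{land_reducibility_2021}), but one should state it cleanly. An alternative, perhaps cleaner, route that avoids the pointwise description is adjunction-chasing: for any pointed space $A$,
\[
    \map_{\catS_*}\!\big(A,\ \Omega^\infty r_* \Sigma^\infty_{/X} Y\big) \simeq \map_{\catSp}\!\big(\Sigma^\infty A,\ r_* \Sigma^\infty_{/X} Y\big) \simeq \map_{\catSp_{/X}}\!\big(r^*\Sigma^\infty A,\ \Sigma^\infty_{/X} Y\big),
\]
and since $r^*$ commutes with $\Sigma^\infty$ (both are left adjoints / colimit-preserving, and $r^*$ is symmetric monoidal), $r^*\Sigma^\infty A \simeq \Sigma^\infty_{/X} r^* A$, so this becomes $\map_{\catSp_{/X}}(\Sigma^\infty_{/X} r^* A, \Sigma^\infty_{/X} Y) \simeq \map_{\catS_{*_{/X}}}(r^*A, \Omega^\infty_{/X}\Sigma^\infty_{/X} Y) \simeq \map_{\catS_*}(A, r_*\Omega^\infty_{/X}\Sigma^\infty_{/X}Y)$; by the Yoneda lemma this yields the desired natural equivalence. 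I would present this adjunction-chase version as the proof, as it makes the naturality manifest and sidesteps subtleties about pointwise formulas, citing \cite{land_reducibility_2021} for the relevant compatibilities ($r^*$ monoidal, $r^* \dashv r_*$, $\Sigma^\infty_{/X} \dashv \Omega^\infty_{/X}$).
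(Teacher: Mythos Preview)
Your proposal is correct, and the adjunction-chase via Yoneda that you present as your preferred argument is exactly the paper's proof, step for step (including the key identification $r^*\Sigma^\infty A \simeq \Sigma^\infty_{/X} r^* A$). Your preliminary ``limits commute'' argument is a valid alternative phrasing of the same formal content, but since you yourself opt for the Yoneda version there is nothing to add.
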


\begin{proof}
We use the Yoneda lemma and the adjunction $r^* \dashv r_*$. Let $Z \in \catS_*$ be a pointed space. We have:
\begin{equation*}
\begin{split}
    \map_{\catS_*}(Z,  \Omega^\infty r_* (\Sigma^\infty_{/X} Y)) &\simeq \map_{\catSp}(\Sigma^\infty, r_* (\Sigma^\infty_{/X} Y)) \\
        &\simeq \map_{\catSp_{/X}}(r^*\Sigma^\infty Z, \Sigma^\infty_{/X} Y) \\
        &\simeq \map_{\catSp_{/X}}(\Sigma^\infty_{/X} r^* Z, \Sigma^\infty_{/X} Y) \\
        &\simeq \map_{\catS_{*_{/X}}}(r^*Z, \Omega^\infty_{/X} \Sigma^\infty_{/X} Y) \\
        &\simeq \map_{\catS_*}(Z, r_*(\Omega^\infty_{/X} \Sigma^\infty_{/X} Y)).
\end{split}
\end{equation*}
Almost all manipulations follow from the standard adjunctions. The third equivalence uses the fact that $r^*\Sigma^\infty Z$ is the trivial parametrised spectrum with fibre $\Sigma^\infty Z$, hence is equivalent to $\Sigma^\infty_{/X} r^* Z$.
\end{proof}

We will need two more facts before proving Theorem~\ref{thm:comparisoninfiniteloopspace}. The first one is the following simple observation. If $V \to X$ is a vector bundle such that its associated sphere bundle $\Sph(V) \to X$ has a section $s$, then we may take the fibrewise infinite suspension $\Sigma^\infty_{/X}\Sph(V) \in \catSp_{/X}$ using $s$ to give a basepoint in each fibre. On the other hand, we could have taken the fibrewise one-point compactification and then suspend using the added point at infinity as a basepoint in each fibre. Up to a suspension, these are the same parametrised spectra.
\begin{lemma}\label{lemma:spherevscompactification}
Let $V \to X$ be a vector bundle with a non-vanishing section, and let $\Sph(V) \to X$ be its associated sphere bundle. Let $\bS_X^{V}$ denote the fibrewise infinite suspension of the fibrewise one-point compactification of $V$ (using the point at infinity as the basepoint in each fibre). Then:
\[
    \Sigma^\infty_{/X} \Sph(V) \simeq \Omega_X \bS_X^{V}
\]
where $\Omega_X$ denotes the desuspension in the category $\catSp_{/X}$.
\end{lemma}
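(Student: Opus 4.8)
The plan is to reduce everything to fibrewise stereographic projection after splitting off the trivial line bundle spanned by the given section. First I would fix a fibrewise inner product on $V$ and rescale the non-vanishing section to a section $\sigma$ of the unit sphere bundle $\Sph(V)\to X$; writing $k$ for the rank of $V$, I set $V':=\sigma^{\perp}\subset V$, a rank-$(k-1)$ subbundle, so that $V\cong\underline{\bR}\oplus V'$ as metrized bundles over $X$ with $\underline{\bR}$ the trivial line bundle spanned by $\sigma$. Throughout, $\Sph(V)$ is regarded as an object of $\catS_{*_{/X}}$ pointed by $\sigma$, and $(V')^{+}$ denotes the fibrewise one-point compactification of $V'$, pointed by the section at infinity.

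The geometric heart of the argument is this: fibrewise stereographic projection from $\sigma(x)$ identifies $\Sph(V)_x\smallsetminus\{\sigma(x)\}$ with the affine hyperplane modelled on $V'_x$, and extends to a homeomorphism $(V')^{+}_x\xrightarrow{\ \cong\ }\Sph(V)_x$ carrying $\infty$ to $\sigma(x)$. Since the relevant formulae only involve the metric and the splitting $V=\underline{\bR}\sigma\oplus V'$, both of which are globally defined over $X$, these homeomorphisms assemble to an isomorphism $(V')^{+}\cong\Sph(V)$ in $\catS_{*_{/X}}$, and hence $\Sigma^\infty_{/X}\Sph(V)\simeq\Sigma^\infty_{/X}(V')^{+}$.

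Next I would identify the right-hand side with $\Omega_X\bS_X^{V}$ by purely formal manipulations. One-point compactification carries fibrewise direct sums to fibrewise smash products, so $V^{+}\cong(\underline{\bR})^{+}\wedge_X(V')^{+}$; as $(\underline{\bR})^{+}=r^{*}S^{1}$ is the constant parametrized pointed space, this says $V^{+}\simeq\Sigma_X(V')^{+}$, the fibrewise suspension. Applying $\Sigma^\infty_{/X}$ and using that it commutes with $r^{*}$ and with smashing against pulled-back pointed spaces gives $\bS_X^{V}=\Sigma^\infty_{/X}(V^{+})\simeq\Sigma_X\,\Sigma^\infty_{/X}(V')^{+}$, and since $\catSp_{/X}$ is stable the fibrewise suspension $\Sigma_X$ is invertible with inverse $\Omega_X$. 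Combining, $\Omega_X\bS_X^{V}\simeq\Sigma^\infty_{/X}(V')^{+}\simeq\Sigma^\infty_{/X}\Sph(V)$, which is the claim.

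The only non-formal part is the stereographic identification of the second paragraph, and within it the bookkeeping of basepoints: one must insist that $\Sph(V)$ is pointed by $\sigma$ and $(V')^{+}$ by the point at infinity, and check that the homeomorphism respects this matching. The remaining ingredients (compatibility of $\Sigma^\infty_{/X}$ with $r^{*}$ and with fibrewise smash products, and invertibility of fibrewise suspension on $\catSp_{/X}$) are standard and can be cited from \cite{may_parametrized_2006}.
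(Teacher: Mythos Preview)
Your proof is correct. The stereographic identification $(V')^{+}\cong\Sph(V)$ in $\catS_{*_{/X}}$ is sound (with the basepoints matched as you say), and the remaining formal steps---that one-point compactification takes direct sums to fibrewise smash products, that $\Sigma^\infty_{/X}$ commutes with fibrewise suspension, and that $\Sigma_X$ is invertible on $\catSp_{/X}$---are standard.

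The paper takes a different, somewhat shorter route: rather than splitting off the trivial line and invoking stereographic projection, it writes down the single cofibre sequence $\Sph(V)\to\mathrm{D}(V)\to V^{+}$ of parametrised pointed spaces over $X$ (with $\mathrm{D}(V)$ the unit disc bundle, pointed by the scaled section) and applies $\Sigma^\infty_{/X}$. Since $\mathrm{D}(V)$ is fibrewise contractible to the section, its parametrised suspension spectrum is zero, and the resulting cofibre sequence in $\catSp_{/X}$ immediately gives $\bS_X^{V}\simeq\Sigma_X\Sigma^\infty_{/X}\Sph(V)$. This avoids the explicit splitting $V\cong\underline{\bR}\oplus V'$ and the geometry of stereographic projection; on the other hand, your argument makes the identification $\Sph(V)\simeq(V')^{+}$ visible as an unstable statement in $\catS_{*_{/X}}$, which is slightly more information than the paper extracts.
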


\begin{proof}
Let us scale a non-vanishing section $s$ of $V$ to that it has image in the sphere bundle. We write $\mathrm{D}(V) \subset V$ for the unit disc bundle of $V$ which we point using $s$, and $V^+$ for the fibrewise one-point compactification. We obtain the lemma by applying the fibrewise infinite suspension $\Sigma^\infty_{/X}$ to the cofibre sequence $\Sph(V) \to \mathrm{D}(V) \to V^+$ of parametrised pointed spaces over $X$.
\end{proof}

Recall that the $\infty$-category $\catSp_{/X}$ is symmetric monoidal, with monoidal unit $\bS_X := r^*(\bS)$. (Here, and everywhere else, $\bS$ denotes the sphere spectrum.) The usefulness of the whole machinery set up so far is contained in the following result. A classical reference is \cite[Chapter 18]{may_parametrized_2006}. In the language of $\infty$-categories, one may read the second section of~\cite[Appendix A]{land_reducibility_2021}.
\begin{lemma}[Costenoble--Waner duality]\label{lemma:CostenobleWanerduality}
The Costenoble--Waner dualising spectrum of $X$ is $\bS_X^{-TX}$, the spherical fibration associated to the stable normal bundle of $X$. That is, we have an equivalence of functors:
\[
    r_*(-) \simeq r_!(- \otimes_{\bS_X} \bS_X^{-TX}).
\]
\end{lemma}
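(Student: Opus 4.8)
The plan is to recognise Lemma~\ref{lemma:CostenobleWanerduality} as the standard instance of Costenoble--Waner duality for a closed manifold, and to identify the dualising spectrum via Atiyah duality. First I would note that, being a smooth projective complex variety, $X$ has underlying space a closed (compact, without boundary) smooth manifold, of real dimension $2n$. The general theory of parametrised spectra (see \cite[Chapter 18]{may_parametrized_2006}, or \cite[Appendix A]{land_reducibility_2021} for an $\infty$-categorical treatment) then tells us that for such an $X$ the projection $r\colon X\to *$ is Costenoble--Waner dualisable, and that the right adjoint $r_*$ is recovered from the left adjoint $r_!$ by tensoring with the dualising spectrum; what is left to pin down is the identification of that dualising spectrum with the fibrewise Thom spectrum $\bS_X^{-TX}$ of the virtual stable normal bundle $-TX$.

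To make that identification I would run the usual Atiyah duality argument fibrewise over $X$. Choosing a smooth embedding $X\hookrightarrow\bR^{M}$ for $M$ large, with normal bundle $\nu\to X$ (so that $\nu\oplus TX\cong X\times\bR^{M}$ and $\Sigma^{-M}\Th(\nu)$ is a model for $X^{-TX}$), the Pontryagin--Thom collapse $S^{M}\to\Th(\nu)$ together with the diagonal $X\to X\times X$ exhibits $\Sigma^\infty_+ X$ as Spanier--Whitehead dual to $X^{-TX}$ in $\catSp$. Because the tubular neighbourhood and the collapse map are defined over $X$, this duality upgrades to a self-duality datum in $\catSp_{/X}$ with dualising spectrum $\bS_X^{-TX}$, and the formal consequence of possessing such a datum — obtained by comparing the adjunction $r^*\dashv r_*$ with the duality — is the twisted Wirthm\"uller equivalence of functors $\catSp_{/X}\to\catSp$
\[
    r_*(-)\;\simeq\;r_!\bigl(-\otimes_{\bS_X}\bS_X^{-TX}\bigr),
\]
which is the statement.

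I do not expect a conceptual obstacle, but the point that needs care is the variance and overall degree shift: the dualising spectrum is the spherical fibration of the \emph{virtual stable normal} bundle $-TX$, not of $+TX$. The cleanest way to fix this is the consistency check $r_!\bigl(\bS_X\otimes_{\bS_X}\bS_X^{-TX}\bigr)=r_!\bS_X^{-TX}\simeq X^{-TX}$ versus $r_*\bS_X\simeq F(\Sigma^\infty_+ X,\bS)$, the Spanier--Whitehead dual of $\Sigma^\infty_+ X$: these two must agree, which is exactly Atiyah duality and fixes the sign. Once this is settled, the lemma follows, and it feeds directly into Theorem~\ref{thm:comparisoninfiniteloopspace} via Lemma~\ref{lemma:spherevscompactification} and the projection formula for $r_!$.
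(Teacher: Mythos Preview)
Your proposal is a correct sketch of the standard argument, but note that the paper does not prove this lemma at all: it is stated as a known result with references to \cite[Chapter 18]{may_parametrized_2006} and \cite[Appendix A]{land_reducibility_2021}, and no proof is given. What you have written is essentially an outline of the content of those references (Costenoble--Waner dualisability of closed manifolds, identification of the dualising object via Atiyah duality, and the sign check), so your approach is consistent with the paper's intent but goes further in spelling things out.
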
 

We are now ready to prove the result announced at the beginning of this section.

\begin{proof}[Proof of Theorem~\ref{thm:comparisoninfiniteloopspace}]
We start by choosing once and for all a section $s$ of the sphere bundle $\mathrm{Sph}(J^1\cL^d)$, which provides us with a basepoint in every fibre. We may therefore apply the free infinite loop space functor $Q = \Omega^\infty \Sigma^\infty \colon \catS_* \to \catS_*$ fibrewise and obtain the following diagram of fibrations:
\begin{center}
% https://tikzcd.yichuanshen.de/#N4Igdg9gJgpgziAXAbVABwnAlgFyxMJZABgBoBGAXVJADcBDAGwFcYkQAdDgW3pwAsATt2ABlNPwC+ACgBSAPXJcAxgBl5UAJQhJpdJlz5CKMgCZqdJq3YANHXpAZseAkXKlzNBizaIQd3X1nIyIyYgtvaz9ReWBTMABqckl7IMNXFHdwrytfTg4AeW4YAHN6eS4sMAAzHABPAAIuUSwS3gqOKtrGmLjE5NTHAxdjZHcqHJ92AFEdCxgoEvgiUGrBCG4kAGYaHAgkYkCQNY2D3f3EZIcTzcQAFnOkAFYjm+fHxFNX9dv3ED2kKYaIwqnkoBBmAAjRhsGj8GD0KDsSBgWH-ehYRjIghsb6nRA7f4XO5425kInPSSUSRAA
\begin{tikzcd}
S^{2n+1} \arrow[d] \arrow[r]               & \Omega^\infty \Sigma^\infty S^{2n+1} \arrow[d] \\
\mathrm{Sph}(J^1\cL^d) \arrow[d] \arrow[r] & \Omega^\infty_{/X} \Sigma^\infty_{/X} \Sph(J^1\cL^d) \arrow[d]                                    \\
X \arrow[r, equal]           & X                                             
\end{tikzcd}
\end{center}
It is a standard fact that the map $S^{2n+1} \to \Omega^\infty \Sigma^\infty S^{2n+1}$ is $(4n+1)$-connected. Hence, on section spaces, the map
\[
    \Gamma(\Sph(J^1\cL^d)) \to \Gamma(\Omega^\infty_{/X} \Sigma^\infty_{/X} \Sph(J^1\cL^d))
\]
is $2n$-connected. (Notice that both spaces are connected, so the choice of $s$ was immaterial.) Using Lemma~\ref{lemma:infiniteloopspacesectionspace}, we obtain:
\[
    \Gamma(\Omega^\infty_{/X} \Sigma^\infty_{/X} \Sph(J^1\cL^d)) \simeq r_*(\Omega^\infty_{/X} \Sigma^\infty_{/X} \Sph(J^1\cL^d)) \simeq \Omega^\infty r_*(\Sigma^\infty_{/X} \Sph(J^1\cL^d)).
\]
We now make the purely formal following computation:
\begin{equation*}
\begin{split}
    r_*(\Sigma^\infty_{/X} \Sph(J^1\cL^d)) &\simeq r_!(\Sigma^\infty_{/X} \Sph(J^1\cL^d) \otimes_{\bS_X} \bS_X^{-TX}) \\
        &\simeq r_!(\Omega_X \bS_X^{J^1\cL^d} \otimes_{\bS_X} \bS_X^{-TX}) \\
        &\simeq r_!(\Omega_X \bS_X^{J^1\cL^d - TX}) \\
        &\simeq \Omega r_!(\bS_X^{J^1\cL^d - TX}) \\
        &\simeq \Omega X^{J^1\cL^d - TX}
\end{split}
\end{equation*}
where we used Lemma~\ref{lemma:CostenobleWanerduality} for the first equivalence, Lemma~\ref{lemma:spherevscompactification} for the second, and recognised that the value of $r_!$ on a spherical fibration is the associated Thom spectrum. Summing up, we get the result.

\end{proof}

\subsubsection{An example when $X = \bC\bP^2$}

To show how Theorem~\ref{thm:comparisoninfiniteloopspace} can be applied in practice, we use it to prove Proposition~\ref{prop:zmodtwocomputation}. We hope that this will convince the reader of the computational power of homotopy theoretic methods to study spaces of algebraic sections. 

Following Theorem~\ref{thm:comparisoninfiniteloopspace}, we should investigate $\Omega^{\infty+1} X^{J^1\cL^d - TX}$ when $X = \bC\bP^2$ and $\cL = \cO(1)$. Because $J^1\cL^d - TX$ is of rank $2$, the spectrum $\Omega X^{J^2\cL^d - TX}$ is $1$-connective and the bottom homotopy group is $\pi_1 \cong \bZ$ by Hurewicz theorem. We consider the fibration
\[
    F \lra \Omega^{\infty+1} X^{J^1\cL^d - TX} \lra S^1
\]
where $F$ is the homotopy fibre of the right-most map, which is taken to induce an isomorphism on $\pi_1$. A generator of $\pi_1(\Omega^{\infty+1} X^{J^1\cL^d - TX}) \cong \bZ$ gives a section of that fibration, and we obtain:
\[
    \Omega^{\infty+1} X^{J^1\cL^d - TX} \simeq S^1 \times F.
\]
In particular, $F$ is $2$-connective with $\pi_2(F) \cong \pi_2(\Omega^{\infty+1} X^{J^1\cL^d - TX})$. By Hurewicz theorem and the universal coefficient theorem, $H_2(F;\bZ/2) \cong H_2(F;\bZ) \otimes \bZ/2 \cong \pi_2(F) \otimes \bZ/2$. We thus wish to compute $\pi_2(\Omega^{\infty+1} X^{J^1\cL^d - TX})$, which we will do using the Adams spectral sequence at the prime $2$:
\[
    E^{s,t}_2 = \mathrm{Ext}^{s,t}_{\cA}\left(H^*(X^{J^1\cL^d - TX}; \bZ/2), \bZ/2\right) \implies \pi_*(X^{J^1\cL^d - TX})^\wedge_2.
\]
(Hence we will only compute the $2$-completed group, but this will be enough for our purposes.) The $E_2$-page is computed by knowing the cohomology $H^*(X^{J^1\cL^d - TX}; \bZ/2)$ as an algebra over the mod $2$ Steenrod algebra $\cA$. (See~\cite[Section 3.3]{beaudry_guide_2018} for a very readable introduction.) If $U$ denotes the Thom class of $J^1\cL^d - TX$, the classes in the cohomology of the Thom spectrum $X^{J^1\cL^d - TX}$ are given via the Thom isomorphism as $yU$ where $y \in H^*(X; \bZ/2)$. At the prime $2$, the Steenrod squares can then be computed from the formula:
\[
    \Sq^k(yU) = \sum_{i+j = k} \Sq^i(y) \Sq^j(U) = \sum_{i+j = k} \Sq^i(y) w_j U
\]
where $w_j$ is the $j$-th Stiefel--Whitney class of $J^1\cL^d - TX$. In our case, writing $\bZ/2[x]/(x^3)$ for the cohomology ring of $X = \bC\bP^2$, the total Stiefel--Whitney class is given by:
\[
    w(J^1\cL^d - TX) = \begin{cases}1 & d \equiv 0 \mod 2 \\ 1+x & d \equiv 1 \mod 2.\end{cases}
\]
We used the handy tool \cite{chatham_spectral_nodate} to compute the $E_2$-page for us, and obtained the following:
\captionsetup[subfigure]{labelformat=simple,singlelinecheck=on,textfont=normalfont,labelsep=period}
\begin{figure}[H]
\centering
\begin{subfigure}{0.49\textwidth}
  \centering
  \includegraphics[width=.8\linewidth]{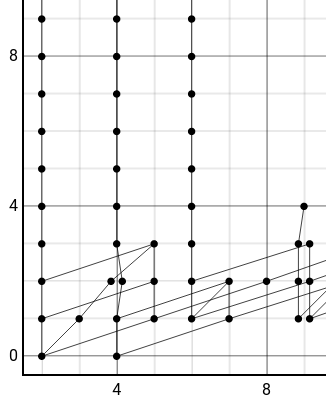}
  \caption{Case $d \equiv 0 \mod 2$}
\end{subfigure}
\begin{subfigure}{.49\textwidth}
  \centering
  \includegraphics[width=.8\linewidth]{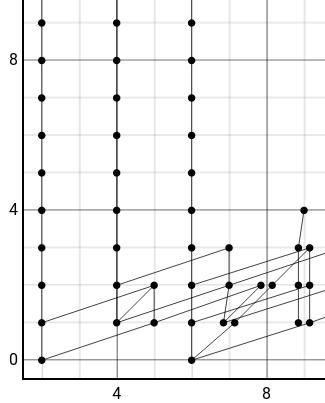}
  \caption{Case $d \equiv 1 \mod 2$}
\end{subfigure}
\caption*{Following the established convention, we use the Adams grading: the horizontal axis is indexed by $t-s$, the vertical one by $s$. Every dot represents a copy of $\bZ / 2$. The vertical lines represent multiplication by $h_0 \in \mathrm{Ext}^{1,1}_{\cA}(\bZ/2,\bZ/2)$. We suggest to the unfamiliar reader to look at \cite[Section 4.3]{beaudry_guide_2018} for more explanations.}
\end{figure}
From this, standard arguments about differentials (see e.g.~\cite[Section 4.8]{beaudry_guide_2018}) show that
\[
    \pi_3(X^{J^1\cL^d - TX})^{\wedge}_2 \cong \begin{cases}\bZ/2 & d \equiv 0 \mod 2 \\ 0 & d \equiv 1 \mod 2. \end{cases}
\]
Therefore
\[
    H_2(F;\bZ/2) \cong \pi_2(F) \otimes \bZ/2 \cong \pi_3(X^{J^1\cL^d - TX}) \otimes \bZ/2 \cong \begin{cases}\bZ/2 & d \equiv 0 \mod 2 \\ 0 & d \equiv 1 \mod 2. \end{cases}
\]
Using Künneth theorem, we obtain:
\[
    H_2(\Omega^{\infty+1} X^{J^1\cL^d - TX}; \bZ/2) \cong H_2(S^1 \times F; \bZ/2) \cong H_2(F;\bZ/2)
\]
which finishes the proof of Proposition~\ref{prop:zmodtwocomputation}.

\subsection{Stability of $p$-torsion}

In this final section, we study the $p$-torsion in the homology of the space $\Gamma_{\cC^0}\left( \Sph(J^1\cL^d) \right)$. On the one hand, we have just seen in Proposition~\ref{prop:zmodtwocomputation} that it depends on $d$ in general. On the other hand, the result below shows that when the prime $p$ is slightly bigger the dimension of $X$, the $p$-torsion is independent of $\cL$.

\begin{proposition}\label{prop:stableptorsion}
Let $X$ be a smooth complex projective variety of complex dimension $n$ and $\cL$ be a holomorphic line bundle on it. Let $p$ be a prime such that $p \geq n + 2$. Then the fibrewise $p$-localisation of the sphere bundle $\Sph(J^1\cL) \to X$ is trivial. In particular, we have an equivalence of $p$-local spaces
\[
    \Gamma_{\cC^0}\left( \Sph(J^1\cL) \right)_{(p)} \simeq \map(X, S^{2n+1}_{(p)}).
\]
\end{proposition}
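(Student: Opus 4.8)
The plan is to show that the sphere bundle $\Sph(J^1\cL) \to X$ becomes trivial after fibrewise $p$-localisation, for $p \geq n+2$, and then deduce the statement about section spaces exactly as in the proof of Theorem~\ref{thm:rationalcohomology}. The fibre is $S^{2n+1}$, so fibrewise $p$-localisation replaces it by $S^{2n+1}_{(p)}$. The first observation is that $S^{2n+1}_{(p)}$ is a simply connected $p$-local space whose first nonvanishing homotopy group above the bottom cell sits in degree $2n+1 + (2p-3)$: indeed, the first $p$-torsion in the homotopy of an odd sphere $S^{2n+1}$ occurs in $\pi_{2n+2p-2}$, so $\pi_i(S^{2n+1}_{(p)}) = 0$ for $2n+1 < i < 2n + 2p - 2$ and $= \bZ_{(p)}$ for $i = 2n+1$. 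Since $p \geq n+2$ gives $2p - 3 \geq 2n+1 > n$, the space $S^{2n+1}_{(p)}$ looks like a $p$-local Eilenberg--MacLane space $K(\bZ_{(p)}, 2n+1)$ through a range of degrees well past $2n$.

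The key step is then an obstruction-theoretic argument. A fibrewise $p$-local sphere bundle $S^{2n+1}_{(p)} \to \Sph(J^1\cL)_{(p)} \to X$ over the $2n$-dimensional base $X$ is classified by a map $X \to B\Aut(S^{2n+1}_{(p)})$, where $\Aut$ denotes the space of (unpointed) self-homotopy-equivalences. The obstructions to trivialising this bundle, i.e.\ to lifting/compressing this classifying map to a point, lie in the cohomology groups $H^{i+1}(X; \pi_i(\Aut(S^{2n+1}_{(p)})))$. Because $X$ has (real) dimension $2n$, only $i \leq 2n - 1$ contribute, so it suffices to show $\pi_i(\Aut(S^{2n+1}_{(p)})) = 0$ for $0 \leq i \leq 2n-1$. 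Now $\Aut(S^{2n+1}_{(p)})$ has two components (degree $\pm 1$); the identity component $\Aut_1(S^{2n+1}_{(p)})$ has $\pi_i \cong \pi_i(\map_*(S^{2n+1}_{(p)}, S^{2n+1}_{(p)}))$ shifted appropriately, and these are built from $\pi_{i+2n+1}(S^{2n+1}_{(p)})$ and lower stable/unstable homotopy of the $p$-local sphere. By the connectivity estimate above, all of these vanish in the required range $0 \leq i \leq 2n-1$ precisely because $p \geq n+2$. (One uses here that $X$ is connected so the component count $\pi_0$ causes no trouble, or rather that orientation considerations pin us in $\Aut_1$ since $J^1\cL$ is a complex, hence oriented, bundle.) Hence the classifying map is null and the bundle is fibrewise trivial after $p$-localisation.

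Given triviality, $\Sph(J^1\cL)_{(p)} \simeq X \times S^{2n+1}_{(p)}$ over $X$, so its section space is $\map(X, S^{2n+1}_{(p)})$. Finally, one invokes the compatibility of fibrewise $p$-localisation with taking section spaces over a finite CW base — exactly the kind of statement used in the proof of Theorem~\ref{thm:rationalcohomology} via \cite{moller_nilpotent_1987}, applied with $p$-localisation in place of rationalisation — to conclude $\Gamma_{\cC^0}(\Sph(J^1\cL))_{(p)} \simeq \Gamma_{\cC^0}(\Sph(J^1\cL)_{(p)}) \simeq \map(X, S^{2n+1}_{(p)})$. The main obstacle is the homotopy-theoretic bookkeeping in the middle step: one must carefully justify that the relevant obstruction groups $\pi_i(\Aut(S^{2n+1}_{(p)}))$ genuinely vanish for $i \leq 2n-1$ under the hypothesis $p \geq n+2$, tracking the first $p$-torsion class in the homotopy of an odd sphere and checking the off-by-one constants, and to make sure the section-space localisation theorem applies (nilpotence of the fibre and finiteness of $X$, both of which hold here).
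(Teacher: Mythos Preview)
Your proposal is correct and follows essentially the same route as the paper. The paper packages the connectivity statement as a separate lemma---that $\map_1(S^{k}_{(p)}, S^{k}_{(p)})$ is $(k-1)$-connected for $p \geq \tfrac{k}{2}+\tfrac{3}{2}$, proved via the evaluation fibration $\Omega^k_1 S^k_{(p)} \to \map_1(S^k_{(p)},S^k_{(p)}) \to S^k_{(p)}$ and Serre's $p$-torsion calculation---and then argues exactly as you do: the oriented bundle is classified by a map $X \to B\map_1(S^{2n+1}_{(p)},S^{2n+1}_{(p)})$, the target is $(2n+1)$-connected, $X$ is $2n$-dimensional, so the map is null; the section-space statement then follows from \cite{moller_nilpotent_1987} as in Theorem~\ref{thm:rationalcohomology}.
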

As an immediate consequence, combining the proposition above with Corollary~\ref{cor:nonsingularsectionslinebundle} shows that the $p$-torsion in the homology of $\Gamma_{\mathrm{hol,ns}}(X; \cL^d)$ stabilises when $p \geq \dim_\bC X + 2$ and $d \to \infty$.

\bigskip

The proof uses the following result which we learned from \cite[Proposition~4.1]{bendersky_localization_2014}.
\begin{lemma}\label{lemma:ptorsionconnectedness}
For $p \geq \frac{k}{2} + \frac{3}{2}$, the space $\map_1(S^k_{(p)}, S^k_{(p)})$ of maps homotopic to the identity is $(k-1)$-connected.
\end{lemma}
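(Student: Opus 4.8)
The plan is to follow the argument of \cite{bendersky_localization_2014}, analysing $\map_1(S^k_{(p)},S^k_{(p)})$ via the evaluation fibration together with the fact that, when $p$ is large relative to $k$, the homotopy groups of spheres are $p$-locally trivial in a wide range (the first $p$-torsion in the stable stems occurring in degree $2p-3$). First I would dispose of the trivial cases $k\le 1$: for $k=0$ the space is a point, and for $k=1$ one has $\map_1(S^1_{(p)},S^1_{(p)})\simeq S^1_{(p)}$, which is $0$-connected. So assume $k\ge 2$; then the hypothesis $p\ge\tfrac{k}{2}+\tfrac{3}{2}$ forces $p$ to be an odd prime, and it is equivalent to the numerical inequality $k\le 2p-3$.

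Next I would reduce to an unstable homotopy computation. Since $\map_1(S^k,S^k)$ is a nilpotent space (a component of a mapping space out of a finite complex into the simply connected space $S^k$), the theory of localization of mapping spaces gives equivalences $\map_1(S^k_{(p)},S^k_{(p)})\simeq(\map_1(S^k,S^k))_{(p)}\simeq\map_1(S^k,S^k_{(p)})$, the first from localizing the target fibrewise, the second from the universal property of localization applied to the source. Then I would run the long exact sequence of the evaluation fibration
\[
    \Omega^k_1(S^k_{(p)})\lra\map_1(S^k,S^k_{(p)})\xrightarrow{\ \mathrm{ev}\ }S^k_{(p)},
\]
whose fibre is the degree-one component of $\Omega^k(S^k_{(p)})$, with $\pi_i(\Omega^k_1(S^k_{(p)}))\cong\pi_{i+k}(S^k)_{(p)}$ for $i\ge1$. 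As $S^k_{(p)}$ is $(k-1)$-connected, the sequence yields $\pi_i(\map_1(S^k,S^k_{(p)}))\cong\pi_{i+k}(S^k)_{(p)}$ for $1\le i\le k-2$ and $\pi_{k-1}(\map_1(S^k,S^k_{(p)}))\cong\operatorname{coker}\!\big(\partial\colon\pi_k(S^k)_{(p)}\to\pi_{2k-1}(S^k)_{(p)}\big)$. So it remains to prove two things: (1) $\pi_{k+i}(S^k)_{(p)}=0$ for $1\le i\le k-2$; and (2) the connecting map $\partial\colon\pi_k(S^k)_{(p)}\to\pi_{2k-1}(S^k)_{(p)}$ is surjective.

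For (1), Freudenthal gives $\pi_{k+i}(S^k)\cong\pi_i^s$ for $1\le i\le k-2$, and since $k-2\le 2p-5<2p-3$ these groups vanish after $p$-localization. For (2), the target $\pi_{2k-1}(S^k)$ lies just outside the stable range. When $k$ is odd it is finite with $\pi_{2k-1}(S^k)_{(p)}\cong(\pi_{k-1}^s)_{(p)}$, and this vanishes because $k-1\le 2p-4<2p-3$ — this is where the sharp bound $k\le 2p-3$ enters — so $\partial$ is onto trivially. When $k$ is even, $\pi_{2k-1}(S^k)$ has $\bQ$-rank one and its torsion is $p$-locally trivial for the same reason, so $\pi_{2k-1}(S^k)_{(p)}\cong\bZ_{(p)}$, generated by the Whitehead square $[\iota_k,\iota_k]$ (using that $[\iota_k,\iota_k]$ is twice a generator of the free part and $2$ is a unit in $\bZ_{(p)}$); and the classical identification of the connecting homomorphism of the evaluation fibration over the identity component as Whitehead product with the class of the identity gives $\partial(\iota_k)=\pm[\iota_k,\iota_k]$, hence surjectivity. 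Combining (1) and (2) shows $\pi_i(\map_1(S^k_{(p)},S^k_{(p)}))=0$ for $1\le i\le k-1$, which is the assertion.

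The main obstacle is step (2): one must correctly identify $\partial(\iota_k)$ with the Whitehead square and pin down the $p$-local structure of $\pi_{2k-1}(S^k)$ in both parities. It is precisely the even-$k$ case (the free $\bZ$-summand of $\pi_{2k-1}(S^k)$) together with the torsion estimate $k-1<2p-3$ that makes the numerical hypothesis $p\ge\tfrac{k}{2}+\tfrac{3}{2}$ exactly what is needed for this line of argument.
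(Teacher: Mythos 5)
Your proof is correct and takes essentially the same route as the paper: the evaluation fibration $\Omega^k_1 S^k_{(p)} \to \map_1(S^k_{(p)},S^k_{(p)}) \to S^k_{(p)}$, its long exact sequence, and the fact that $\pi_{k+i}(S^k)_{(p)}$ vanishes in the relevant range because the first $p$-torsion appears in stem $2p-3$. The only difference is that the paper assumes $k$ odd (the sole case needed, $k=2n+1$) so that $\pi_{2k-1}(S^k)_{(p)}=0$ and the cokernel issue never arises, whereas you also treat even $k$ via the identification of the connecting map with the Whitehead square $[\iota_k,\iota_k]$ — a correct additional step, just not required for the application.
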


\begin{proof}
The proof is given in \cite{bendersky_localization_2014}, but we sketch it here for convenience. We shall assume that $k$ is odd, as we will only use this case in this paper. Recall the evaluation fibration
\[
    \Omega^k_1 S^k_{(p)} \lra \map_1(S^k_{(p)}, S^k_{(p)}) \lra S^k_{(p)}.
\]
Using the associated long exact sequence of homotopy groups, it suffices to show that $\pi_i(\Omega^k_1 S^k_{(p)})$ vanishes for $i \leq k-1$. Using the assumption $p \geq \frac{k}{2} + \frac{3}{2}$, this follows from Serre's calculations on $p$-torsion in the homotopy groups of spheres.
\end{proof}

\begin{proof}[Proof of Proposition~\ref{prop:stableptorsion}]
Let 
\[
    S^{2n+1}_{(p)} \lra \Sph(J^1\cL)_{(p)} \lra X
\]
be the fibrewise $p$-localisation of $\Sph(J^1\cL) \to X$. By \cite[Theorem 5.3]{moller_nilpotent_1987}, we have a homotopy equivalence
\[
    \Gamma_{\cC^0}\left( \Sph(J^1\cL) \right)_{(p)} \simeq \Gamma_{\cC^0}\left( \Sph(J^1\cL)_{(p)} \right).
\]
As the sphere bundle is canonically oriented (using the complex orientation of $J^1\cL$), the fibration $\Sph(J^1\cL)_{(p)} \to X$ is classified by a map
\[
    X \lra B\map_1(S^{2n+1}_{(p)}, S^{2n+1}_{(p)}).
\]
By Lemma~\ref{lemma:ptorsionconnectedness}, the codomain of that map is $(2n+1)$-connected. As the domain has real dimension $2n$, the classifying map must be null-homotopic thus showing that the fibration is trivial.
\end{proof}

\bibliography{references.bib} 
\bibliographystyle{alpha}

\end{document}